\documentclass[11pt]{amsart}
\usepackage{amsmath,amssymb,mathtools}
\usepackage[margin=1.15in]{geometry}
\usepackage[hidelinks]{hyperref}
\usepackage{iftex}
\ifPDFTeX
  \input{glyphtounicode}
\fi
\newcommand{\CC}{\mathbb C}
\newcommand{\HH}{\mathbb H}
\newcommand{\PP}{\mathbb P}
\newcommand{\ZZ}{\mathbb Z}
\newcommand{\vth}{\vartheta}
\newcommand{\ii}{\mathrm i}

\newcommand{\Span}{\operatorname{span}}
\newcommand{\rel}{\operatorname{rel}}
\makeatletter
\@namedef{subjclassname@2020}{%
  \textup{2020} Mathematics Subject Classification}
\makeatother
\newtheorem{theorem}{Theorem}
\newtheorem{proposition}[theorem]{Proposition}
\newtheorem{lemma}[theorem]{Lemma}
\newtheorem{corollary}[theorem]{Corollary}
\numberwithin{theorem}{section}
\numberwithin{equation}{section}

\theoremstyle{remark}
\newtheorem{remark}[theorem]{Remark}
\begin{document}
\title{The four-dimensional Sklyanin algebras revisited}
\author{S. Paul Smith}
\address{Department of Mathematics, Box 354350, University of Washington,
Seattle, WA 98195}
\email{smith@math.washington.edu}
\date{}
\subjclass[2020]{Primary 16S38; Secondary 14A22, 14H52, 33E05}
\keywords{four-dimensional Sklyanin algebra, elliptic algebra,
theta function, elliptic normal quartic, noncommutative projective geometry,
quadratic relation space, point module, line module}
\begin{abstract}
We give a theta-function and projective-geometric account of the
four-dimensional Sklyanin algebras.  Starting with the space of order-four
theta functions whose zero divisors sum to zero on an elliptic curve, we
define a family of algebras $A(\eta\,\vert\,\tau)$ by means of the
translation-by-$\eta$ graph on
$E_\tau=\CC/(\ZZ\oplus\ZZ\tau)$ and the vertices
$e_0,e_1,e_2,e_3$ of the four singular quadrics that contain the associated
elliptic normal quartic $E_\tau$.  We use Jacobi's addition formulas to
produce six quadratic relations for $A(\eta\,\vert\,\tau)$ that coincide with the six relations that Sklyanin
obtained from Baxter's elliptic solution to the quantum Yang--Baxter
equation when $\eta\notin\frac12(\ZZ+\ZZ\tau)$.  Thus, the family of four-dimensional Sklyanin algebras coincides
with the algebras $A(\eta\,\vert\,\tau)$.  The Yang--Baxter equation plays no role in our definition of $A(\eta)$.  We
use the geometric definition of $A(\eta)$ to show that the Sklyanin algebra is
isomorphic to Odesskii and Feigin's algebra
$Q_{4,1}(-\tfrac12\eta)$.  The proof avoids the calculations in previous
proofs of this result.  Following Rains--Ruijsenaars, we describe a two-dimensional
space $C_\eta$ of $2$-tensors (theta functions in two variables) whose
non-zero elements vanish on the translation graph but not at all four
vertex pairs $(e_i,e_i)$; the image
of $C_\eta$ in $A(\eta\,\vert\,\tau)_2$ is a two-dimensional space of central
elements.  We show that the description of $C_\eta$, the centrality
of its image, and several other results follow from
the geometric definition of $A(\eta)$.
When $\overline\eta\notin E_\tau[4]$, an important example of this is
an explicit description, in terms of theta functions in two variables,
of the central elements $\Omega(\zeta)$ in
$A(\eta\,\vert\,\tau)_2$ that annihilate the various families of line modules;
the existence of such elements, without explicit formulas, was proved by
Levasseur--Smith~\cite{LS}. The relations of the form $u_1 \otimes v_2-u_2 \otimes v_1$ that were found in \cite{LS} (associated to secant lines and line modules)
are essentially the same as the theta functions $F_\eta(a,b,c_1,c_2;z_1,z_2)$ defined by Rains--Ruijsenaars.
\end{abstract}
\maketitle
\tableofcontents
\enlargethispage{4pt}

\section{Introduction}

Throughout this paper, $\HH$ denotes the upper-half complex plane and
$(\eta,\tau)$ denotes a point in $\CC\times\HH$.
We will use the notation
\[
 \Lambda_\tau=\ZZ+\ZZ\tau
 \qquad\text{and}\qquad
 E_\tau=\CC/\Lambda_\tau,
 \qquad
 \overline\eta=\eta+\Lambda_\tau\in E_\tau.
\]

For $n\geq1$, we write
$E_\tau[n]=\{\xi\in E_\tau\mid n\xi=0\}$.

Several different-looking algebras depending on $(\eta,\tau)$ have been
called four-dimensional Sklyanin algebras.  There are Sklyanin's
algebras $S(\eta\,\vert\,\tau)$ in
\cite{Sklyanin1982,Sklyanin1983}, which are defined when
$\overline{\eta}\notin E_\tau[2]$, and the algebras
$Q_{4,1}(\eta\,\vert\,\tau)$, defined by Feigin and Odesskii in
\cite{FeiginOdesskiiSklyanin}.\footnote{%
Feigin and Odesskii denoted their algebras by
$Q_{n,k}(E_\tau,\overline{\eta})$, but Chirvasitu--Kanda--Smith denoted
them by $Q_{n,k}(\eta\,\vert\,\tau)$ in \cite{CKSModular} and proved that
$Q_{n,k}(\eta\,\vert\,\tau)$ depends, up to isomorphism, only on the pair
$(E_\tau,\overline{\eta})$, thus justifying Feigin and Odesskii's
notation.  Thus $Q_{4,1}(E_\tau,\overline{\eta})$ and the
present $Q_{4,1}(\eta\,\vert\,\tau)$ denote the same algebra.}
The algebras $S(\eta\,\vert\,\tau)$ belong to a slightly larger class of
algebras denoted $S(\alpha,\beta,\gamma)$ by Smith--Stafford
\cite{SmithStafford}.  Stafford later introduced several related
algebras in \cite{StaffordRegularity}; their defining relations are
examined geometrically in \cite{StaffordRelations}.  These families are usually defined by
generators and relations.  In Appendix~B of \cite{RR},
Rains--Ruijsenaars define
$\mathcal R(\eta\,\vert\,\tau)=TV/(R_\eta^{\mathrm{RR}})$ for every
$\eta\in\CC$, where $V$ is the space of order-four theta functions whose zeros sum to 0 in $E_\tau$ and $R_\eta^{\mathrm{RR}}$ is a particular subspace of
$V^{\otimes2}$ spanned by certain two-variable theta functions.  They prove that
$\mathcal R(\eta\,\vert\,\tau)\cong S(\eta\,\vert\,\tau)$ when
$\overline{\eta}\notin E_\tau[2]$; their appendix was an important
stimulus for this paper.

One purpose of this paper is to bring more order to this situation by
introducing a geometrically defined algebra $A(\eta\,\vert\,\tau)$.  The
geometric definition allows us to prove some fundamental properties of
the Sklyanin algebras using algebraic-geometric and theta-function
arguments rather than calculations that manipulate generators and
relations.  Although it is not a panacea, it offers a new viewpoint, and
permits some new proofs and some new results.

Since $\tau$ will be fixed throughout the paper, we will often
abbreviate the names of these algebras by omitting the $\tau$ in the
notation $(\eta\,\vert\,\tau)$.

The ``new'' geometrically defined algebra
$A(\eta\,\vert\,\tau)=A(\eta)$ is defined for all
$(\eta,\tau)\in\CC\times\HH$.  We will show that
\begin{equation}\label{eq.three.main.isomorphisms}
 \begin{aligned}
 A(\eta)&\cong S(\eta)
   &&\text{if }\overline{\eta}\notin E_\tau[2],\\
 A(\eta)&= \mathcal R(\eta)
   &&\text{for all }\eta\in\CC,\\
 A(\eta)&\cong Q_{4,1}\left(-\tfrac12\eta\right)
   \cong Q_{4,1}\left(\tfrac12\eta\right)^{\mathrm{op}}
   &&\text{for all }\eta\in\CC.
 \end{aligned}
\end{equation}
It is defined as follows.  Embed $E_\tau$ in a projective space
$\PP V^*\cong\PP^3$ as an elliptic normal quartic via the complete
linear system $|\mathcal O_{E_\tau}(4[0])|$.  Let
$e_0,e_1,e_2,e_3$ be the vertices of the four singular
quadrics in $\PP V^*$ that contain $E_\tau$, and define
$R_\eta\subset V^{\otimes2}$ to be the subspace of elements  that
vanish at $(z,z+\eta)$ for all $z\in E_\tau$ and at all four $(e_i,e_i)$.
Define
\[
 A(\eta):=\frac{TV}{(R_\eta)},
\]
where, throughout this paper, $TV$ and $SV$ denote the tensor algebra
and the symmetric algebra on $V$, respectively, and $(R_\eta)$ is the
ideal generated by $R_\eta$.  That a four-dimensional Sklyanin algebra can
be defined in this way was first stated by Levasseur--Smith
\cite[Theorem~1.2]{LS}.  The roots of this observation lie in the earlier
paper by Smith and Stafford \cite{SmithStafford}.

We use Mumford's notation $\vartheta_{ab}(z\mid\tau)$ or, more
succinctly, $\vartheta_{ab}(z)$, with $a,b\in\{0,1\}$, for the theta
functions with half-integer characteristics and modulus $\tau$
\cite[p.~17]{MumfordTataI}.  We also use the numbered theta function notation of
Kharchev--Zabrodin, which is related to
Mumford's by
\begin{equation}\label{eq.numbered.theta.conventions}
 \theta_1=-\vartheta_{11},\qquad \theta_2=\vartheta_{10},\qquad
 \theta_3=\vartheta_{00},\qquad \theta_4=\vartheta_{01}.
\end{equation}
We also define
\begin{equation}\label{eq.theta.definition}
 \theta(z):=\theta_1(z\mid\tau)=-\vartheta_{11}(z\mid\tau);
\end{equation}
this is an odd function whose zeros are the points of
$\Lambda_\tau$.  The definitions of $\theta_1,\ldots,\theta_4$ and
$\theta$ agree with those in Appendix~B of
Rains--Ruijsenaars~\cite{RR} and with the notation in
\cite{Sklyanin1983}.  Appendix~\ref{app:theta.dictionary} gives a translation
dictionary among theta-function notations and those used
in this paper.

Sklyanin defined his eponymous algebras via generators and relations
that he derived from Baxter's elliptic solution to the $2\times2$
quantum Yang--Baxter equation
\cite{Sklyanin1982,Sklyanin1983}.
When $\overline\eta\notin E_\tau[2]$, we denote his algebra by
$S(\eta)$.  It is generated by elements $S_0,S_1,S_2,S_3$ subject to
the six relations
\begin{equation}\label{eq.Sklyanin.original.relations}
 \begin{cases}
 [S_0,S_\alpha]=\ii J_{\beta\gamma}\{S_\beta,S_\gamma\},\\
 [S_\alpha,S_\beta]=\ii\{S_0,S_\gamma\},
 \end{cases}
\end{equation}
where $(\alpha,\beta,\gamma)$ ranges over the cyclic permutations of
$(1,2,3)$, and
\[
 J_{12}=-\alpha_{01},\qquad
 J_{23}=-\alpha_{10},\qquad
 J_{31}=-\alpha_{00},
\]
where the $\alpha_{ab}$ are the elliptic functions of $\eta$ with
modulus $\tau$ defined in \eqref{eq.alpha.ab}, and, for
elements $x,y$ in a tensor algebra,
\[
 [x,y]=x\otimes y-y\otimes x,
 \qquad
 \{x,y\}=x\otimes y+y\otimes x.
\]
Sklyanin notes that the $\alpha_{ab}$ satisfy the identity
\begin{equation}\label{eq.Sklyanin.parameter.identity}
 \alpha_{00}+\alpha_{01}+\alpha_{10}
   +\alpha_{00}\alpha_{01}\alpha_{10}=0.
\end{equation}

For this reason Smith--Stafford \cite{SmithStafford} study a family of
algebras $S(\alpha_{00},\alpha_{01},\alpha_{10})$, defined over an
arbitrary algebraically closed field $\Bbbk$ of characteristic
not~$2$, where
$\alpha_{00},\alpha_{01},\alpha_{10}$ are arbitrary scalars satisfying
\eqref{eq.Sklyanin.parameter.identity}.
Here we relabel their generators $x_0,x_1,x_2,x_3$ as
$x_{11},x_{00},x_{01},x_{10}$, respectively;
$S(\alpha_{00},\alpha_{01},\alpha_{10})$ is the free algebra
$\Bbbk\langle x_{11},x_{00},x_{01},x_{10}\rangle$ modulo the six
relations
\begin{equation}\label{eq.Sklyanin.relations}
 \begin{cases}
 [x_{11},x_{ab}]-\alpha_{ab}\{x_{ij},x_{kl}\}=0,\\
 \{x_{11},x_{ab}\}-[x_{ij},x_{kl}]=0,
 \end{cases}
\end{equation}
as $(ab,ij,kl)$ ranges over the cyclic permutations of
$(00,01,10)$.  (In this paper we always work over the base field
$\CC$.)
Sklyanin's $S(\eta)$ equals
$S(\alpha_{00},\alpha_{01},\alpha_{10})$ with
\begin{equation}\label{eq.Sklyanin.generators.to.x}
 S_0=x_{11},\qquad S_1=-\ii x_{10},\qquad
 S_2=\ii x_{00},\qquad S_3=-\ii x_{01}.
\end{equation}
The main results in \cite{SmithStafford} are proved under the assumption
that
$\{\alpha_{00},\alpha_{01},\alpha_{10}\}\cap\{0,\pm1\}=\varnothing$;
Corollary~\ref{cor.alpha.parameters.and.four.torsion} will show that this
is equivalent to the assumption that
$\overline\eta\notin E_\tau[4]$.

Smith--Stafford \cite[p.~270]{SmithStafford} remark that
\eqref{eq.Sklyanin.parameter.identity} follows from Jacobi's identity,
\begin{equation}\label{eq.Jacobi.quartic.identity.intro}
 \vartheta_{00}(z)^4+\vartheta_{11}(z)^4
 =\vartheta_{01}(z)^4+\vartheta_{10}(z)^4.
\end{equation}
For \eqref{eq.Jacobi.quartic.identity.intro}, see Weber
\cite[vol.~III, p.~74, (13) and (14)]{Weber} and Jacobi's
theta-function lectures \cite[pp.~505--507, especially formula
(A)]{JacobiTheta}.  Jacobi's formula is the more general five-term
quartic identity; \eqref{eq.Jacobi.quartic.identity.intro} is one of
its specializations (see \cite[Chapter~I, \S5]{MumfordTataI} and
\cite{Koornwinder}).

Theorem~\ref{prop:addition-formulas-relations} shows that
$\dim R_\eta=6$ for every $\eta$.  We will use
Jacobi's addition formulas to give an explicit basis for $R_\eta$ that
depends holomorphically on $\eta$.  When
$\overline\eta\notin E_\tau[2]$, these basis elements are, after a
suitable change of generators, exactly Sklyanin's six
relations.  It is only on this locus that $S(\eta)$ is defined,
and there $A(\eta)\cong S(\eta)$.

Although the definition of $R_\eta$ requires its elements to vanish at
all four vertex pairs, Corollary~\ref{cor.two.vertices.determine.Reta}
shows that, when $\overline\eta\notin E_\tau[4]$, the translation graph
together with any two distinct vertex pairs already determines
$R_\eta$.  Vanishing at the other two vertex pairs is then automatic.

The purpose of this paper is to show that this geometric definition of
$A(\eta)$ is effective: structural properties of the four-dimensional Sklyanin
algebras can be derived from the graph, the four vertices, and theta
identities, without using calculations involving the generators and
relations in \eqref{eq.Sklyanin.relations} and those for $Q_{4,1}$ in
\S3.4 (see Remark~3.20).  For example, we will show that
\[
 A(\eta)\cong Q_{4,1}\left(-\tfrac12\eta\right),
\]
where $Q_{4,1}$ denotes the four-dimensional elliptic algebra defined
by Feigin and Odesskii~\cite{FeiginOdesskiiSklyanin}.  This isomorphism
has been asserted and sometimes proved by several people using
calculations involving the generators and relations in
\eqref{eq.Sklyanin.relations}.
We will prove the isomorphism by showing that the explicit relations
for $Q_{4,1}(-\tfrac12\eta)$ vanish on
$\{(z,z+\eta)\}$ and at the points $(e_i,e_i)$, so are contained in
$R_\eta$.  Since the relations for $Q_{4,1}$ span a six-dimensional
vector space, the isomorphism follows.

Section~3 proves the isomorphisms in \eqref{eq.three.main.isomorphisms}.
Each isomorphism is proved in a separate subsection and, for the
most part, those subsections can be read independently of the others.
There is one other subsection, namely \S3.1, which concerns a
two-dimensional subspace $C_\eta\subseteq V^{\otimes2}$  with the property that $C_\eta\cap R_\eta=\{0\}$. It is important for two reasons:  the algebra with relations $C_\eta\oplus R_\eta=K_\eta$ is the twisted
homogeneous coordinate ring for $E_\tau$ that is a quotient of $A(\eta)$; its image in $A(\eta)$ consists of degree-two central elements. 

Starting with the translation-by-$\{\frac{1}{4},\frac{1}{4}\tau\}$-action on theta functions, there is a canonical action of the Heisenberg group $H_4$ of order $4^3$ on $V$ that induces well-known 
$H_4$-actions on $S(\eta)$, $A(\eta)$, $R_\eta$, $K_\eta$, and $C_\eta$. Proposition~\ref{prop.H4.unique.complement} shows that $C_\eta$ is
canonical in the sense that it is the only complement to $R_\eta$ in
$K_\eta$ that is an $H_4$-submodule of $K_\eta$. 

{\emergencystretch=1em
Section~4 gives explicit descriptions of the point and line modules for
$A(\eta)$ that are parametrized by $E_\tau$ and the secant lines to
$E_\tau$.\footnote{A line $L$ in $\PP V^*$ is called a secant if $L\cap E_\tau$ is a scheme of length two. Thus, tangent lines are secant lines.}  The action of an element $f\in V$ on such a module 
is given by a formula involving the theta function $f(z)$.  Such formulas are
prominent in the work of Feigin--Odesskii, and we adapt their formulas
to the present situation.\par}

Section~5 concerns the degree-two central elements in $A(\eta)$.  We
show that the image of $C_\eta$ in $A(\eta)$ belongs to the center of
$A(\eta)$ for every $\eta$, including a direct argument at the
nonzero four-torsion parameters not covered by
\cite[Prop.~6.1]{CS}; we also give explicit representatives for $C_\eta$.
When $\overline\eta\notin E_\tau[4]$, Levasseur--Smith show that, for
each $\zeta\in\CC$, there is a
nonzero central element $\Omega(\overline\zeta)$ in $A(\eta)_2$ that
annihilates the line modules $M(p,q)$ for which
$p+q=\overline\zeta$, but they do not give a formula for it.  In
\S5.4, under the same hypothesis, we define an element (theta function)
$\Omega_\zeta(z_1,z_2)\in C_\eta$, denote its image
in $A(\eta)_2$ by $\Omega(\zeta)$, and show that it annihilates all
those line modules.  They also prove that
\[
 \CC\Omega(\overline\zeta)
 =\CC\Omega(-\overline\zeta-2\overline\eta).
\]
Our normalization is such that
$\Omega(\zeta)=\Omega(-\zeta-2\eta)$.  The proof of
Theorem~\ref{thm.explicit.Omega.z} uses the Rains--Ruijsenaars
description of $R_\eta^{\mathrm{RR}}$.  

Section~6 obtains the automorphisms in
\cite{SmithStaniszkis} by geometric rather than algebraic considerations.

Appendix~B of Rains--Ruijsenaars~\cite{RR} is an important point of
departure for this paper.  They define an algebra $\mathcal R(\eta)$ for every
$\eta$, introduce the two-dimensional space $C_\eta$, use Sklyanin's
analytic-difference-operator representations to prove that their
$C_\eta$ meets their relation space trivially, and prove that
$\mathcal R(\eta)\cong S(\eta)$ when
$\overline\eta\notin E_\tau[2]$.  Their formulas describe the entire
six-dimensional relation space and remain meaningful when $\overline{\eta} \in E_\tau[2]$.  We use their theta-function conventions and their
$C_\eta$, and their formulas later provide explicit relations and
central tensors.  Our approach complements theirs: we replace the difference-operator argument
they use to prove $C_\eta\cap R_\eta=\{0\}$ by four elementary vertex evaluations; for every $\eta$, their relation space is our $R_\eta$; 
our proof of this equality produces a basis of six relations  for $A(\eta)$ that are associated to some special secant and tangent lines.  Thus
our results overlap with theirs away from $E_\tau[2]$, and they also 
identify the limiting algebras when $\overline{\eta} \in E_\tau[2]$.

This paper reinforces the implicit, and sometimes explicit, message in
\cite{RR,CKS1} that the relation space, rather than individually normalized
relations or structure constants, is the natural object that varies with
the parameters.  Thus we regard $R_\eta$ as a point of
$\operatorname{Gr}(6,V^{\otimes2})$.  When
$\overline\eta\in E_\tau[2]$, Sklyanin's algebra is not defined, but the
varying six-plane $R_\eta$ has a well-defined limit at these points.
Chirvasitu--Kanda--Smith use the same principle for all $Q_{n,k}$, extending
the relation spaces across parameters at which individual displayed
relations are not defined.

\subsection{Acknowledgment of computational assistance}

This paper was developed through an extended collaboration between the
author and OpenAI's ChatGPT 5.6 Sol, using its Codex environment.  The system
assisted in reorganizing drafts, checking notation and cross-references,
locating sources, suggesting formulations and arguments, and compiling
and inspecting the typeset manuscript.  Anthropic's Claude Sonnet 5.5
was also used to write referee reports at various stages of the project.
The author reviewed and revised
the resulting text and is responsible for all mathematical statements
and proofs.

\section{Preliminaries}

\subsection{Theta-function conventions and the spaces
\texorpdfstring{$V_c$}{V-c}}

Our theta function notation was explained above in the paragraph that
contains \eqref{eq.numbered.theta.conventions}.  A summary of its
relation to the notation used by Mumford, Weber, Sklyanin,
Smith--Stafford, Kharchev--Zabrodin, and Rains--Ruijsenaars appears in
Appendix~\ref{app:theta.dictionary}.

We use the abbreviation
\[
\theta(u\pm v)=\theta(u+v)\theta(u-v).
\]
Define $V$ to be the space of order-$4$
theta functions whose zeros, regarded as points of $E_\tau$ and counted
with multiplicity, sum to $0$ in $E_\tau$. Thus $\dim V=4$. 
Since a theta function is determined up to a scalar multiple by the location of its zeros,
\begin{equation}\label{eq.V.zeros}
V=\left\{\lambda\prod_{j=1}^4\theta(z+a_j) \ \middle|\
 \lambda,a_1,a_2,a_3,a_4\in\CC,\ \sum_{j=1}^4a_j=0\right\}.
\end{equation}
Our $V$ equals the $V$ in Rains--Ruijsenaars~\cite[(B.1)]{RR}.
The space $V$ has basis
\[
\{\vartheta_{00}(2z),\vartheta_{01}(2z),
  \vartheta_{10}(2z),\vartheta_{11}(2z)\}.
\]

\subsubsection{The notation \texorpdfstring{$Y_{ab}$, $X_{ab}$, and
$x_{ab}$}{Yab, Xab, and xab}.}

We define
$(\gamma_{00},\gamma_{01},\gamma_{10},\gamma_{11}):=(1,i,i,1)$, and
\begin{equation}\label{eq.definition.Xhat}
 Y_{ab}(z):=\gamma_{ab}\vartheta_{ab}(2z).
\end{equation}
For every $\eta\in\CC$,
$\{Y_{00},Y_{01},Y_{10},Y_{11}\}$ is a basis for $V$.
When $\overline\eta\notin E_\tau[2]$, we define
\begin{equation}\label{eq.definition.Xab}
 X_{ab}:=\vartheta_{ab}(\eta)Y_{ab}.
\end{equation}
Each $X_{ab}$ is nonzero, so
$\{X_{00},X_{01},X_{10},X_{11}\}$ is a basis for $V$ when
$\overline{\eta}\notin E_\tau[2]$.
Let $TV\to A(\eta)$ be the natural quotient map.  We use $X_{ab}$ for
the elements of $V\subset TV$ defined above and $x_{ab}$ for their
images in $A(\eta)$.  Under the isomorphism $A(\eta)\cong S(\eta)$
proved in Theorem~\ref{prop:addition-formulas-relations}, these images
correspond to the standard generators $x_{ab}$ of
$S(\eta)=S(\alpha_{00},\alpha_{01},\alpha_{10})$ introduced in the
introduction.  Thus uppercase symbols will be used for elements of
$TV$, and lowercase symbols for their images in $A(\eta)$ or, under
this isomorphism, in $S(\eta)$. $\lozenge$

Let $W$ be the space of theta functions of order $2$
whose two zeros, counted with multiplicity, sum to $0$ in $E_\tau$.
Thus $\dim W=2$, and
\[
W=\{c\,\theta(v\pm\gamma)\mid c,\gamma\in\CC\}.
\]
When we wish to emphasize the argument $v$, we write $W(v)$ for this
space.

The  holomorphic map $\CC \to \PP V^*$ given  by 
\[
z\longmapsto
(\vartheta_{00}(2z),\vartheta_{01}(2z),
 \vartheta_{10}(2z),\vartheta_{11}(2z))
\]
factors through $E_\tau$. The induced map $E_\tau \to \PP V^*$ is a morphism of algebraic varieties whose image is isomorphic to $E_\tau$. 
This is the embedding associated to the complete linear system $|\mathcal O_{E_\tau}(4[0])|$. 
We identify $E_\tau$ with its image in $\PP V^*\cong\PP^3$. 

For $i,j\in\{0,1\}$, let $\epsilon_{ij}\in V^*$ be dual to
$\vartheta_{ij}(2z)$, and put $e_{ij}=[\epsilon_{ij}]\in\PP V^*$.
Corollary~\ref{cor.eij.are.vertices} shows that these are the vertices
of the singular quadrics containing $E_\tau$.

Sometimes, when we do not need to be specific, we will write
$e_0,e_1,e_2,e_3$ for the vertices.

When the $Y_{ab}$ are used as homogeneous coordinates on $\PP V^*$,
the points $e_{00},e_{01},e_{10},e_{11}$ are precisely the points at which
three of the four   $Y_{ab}$ vanish.

We identify $V^{\otimes2}$ with the space of holomorphic functions
$f(z_1,z_2)$ such that
\begin{enumerate}
\item for each fixed $z_1$, the function $z\mapsto f(z_1,z)$ belongs to $V$;
\item for each fixed $z_2$, the function $z\mapsto f(z,z_2)$ belongs to $V$.
\end{enumerate}

We now introduce the translated theta spaces and the basis used in
\cite{CKS1}.

For each $c\in\CC$, we write $V_c$ for the space of order-four theta
functions whose zeros, counted with multiplicity, in a period
parallelogram sum to $c$ in $E_\tau$.  Hence $V_c$ is the space denoted
by $\Theta_{4,c}(\Lambda_\tau)$ in \cite{CKS1}, which we will denote by
$\Theta_{4,c}(\tau)$ in this paper.  Equivalently,
\begin{align}
 V_c
 &=\left\{\lambda\prod_{j=1}^4\theta(z+a_j) \ \middle|\
 \lambda,a_1,a_2,a_3,a_4\in\CC,\
 \sum_{j=1}^4a_j=c\text{ in }E_\tau\right\}
 \label{eq.Vc.zeros}\\
 &=H^0(E_\tau,\mathcal L_c),
 \qquad
 \mathcal L_c=\mathcal O_{E_\tau}\!\left(4\left[\tfrac14c\right]\right).
 \notag
\end{align}
Clearly, $V_{c+1}=V_c$ and $V_0$ is the space $V$ that we defined
above.  Following the notation in \cite[p.~179]{CKS1}, almost, we
define
\[
 \Theta_4(\tau)=\Theta_{4,3/2}(\tau)=V_{3/2}=V_{1/2}.
\]

Let $e(z)=\exp(2\pi\ii z)$.  

The order-one theta function used in
\cite[Section~2.1]{CKS1} is, in terms of the function in
\eqref{eq.theta.definition},
\[
 \theta^{\mathrm{CKS}}(z)
   =-\ii e\!\left(\tfrac12z-\tfrac18\tau\right)\theta(z).
\]
Specializing \cite[Prop.~2.6]{CKS1} to
$n=4$, define
\[
 \theta_\alpha(z)
 =e\!\left(
       \alpha z+\tfrac{\alpha}{8}
       +\tfrac{\alpha(\alpha-4)}{8}\tau
     \right)
   \prod_{m=0}^{3}
   \theta^{\mathrm{CKS}}\!\left(
       z+\tfrac m4+\tfrac{\alpha}{4}\tau
   \right),
 \qquad \alpha\in\ZZ.
\]
Then $\theta_{\alpha+4}=\theta_\alpha$ and
$\{\theta_0,\theta_1,\theta_2,\theta_3\}$ is the basis for
$\Theta_4(\tau)$ that is used in \cite{CKS1}.  These functions are
\emph{not} the same as those defined in the introduction.  This basis is
characterized up to scalar multiples by 
\begin{equation}
\label{eq.CKS.shift.quarter}
\theta_\alpha\!\left(z+\tfrac14\right)
   =\ii^\alpha\theta_\alpha(z).
\end{equation}

\begin{lemma}
\label{prop.translation.V.Vprime}
Let $d\in\CC$. The map
\[
 \Psi_d:V\longrightarrow V_{4d},
 \qquad (\Psi_df)(z)=f(z-d),
\]
is a vector-space isomorphism.  In particular, 
\[
 \Psi_{1/8}:V\xrightarrow{\sim}\Theta_4(\tau),
 \qquad (\Psi_{1/8}f)(z)=f\!\left(z-\tfrac18\right).
\]
\end{lemma}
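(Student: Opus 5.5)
The plan is to argue directly from the description \eqref{eq.V.zeros} of $V$ by zeros and its analogue \eqref{eq.Vc.zeros} for $V_c$. First we check that $\Psi_d$ maps $V$ into $V_{4d}$. Take $f=\lambda\prod_{j=1}^4\theta(z+a_j)$ with $\sum_j a_j=0$. Then
\[
 (\Psi_df)(z)=\lambda\prod_{j=1}^4\theta(z+a_j-d),
\]
and the shifted parameters $a_j-d$ satisfy $\sum_j(a_j-d)=-4d$.

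At this point the sign conventions must be handled carefully. The zeros of $\theta(z+a)$ sit at $z=-a$ modulo $\Lambda_\tau$. So the zeros of $f$ sum to $-\sum a_j=0$, while the zeros of $\Psi_df$ are the points $d-a_j$, which sum to $4d$. Hence $\Psi_df$ is an order-four theta function whose zeros sum to $4d$ in $E_\tau$, so $\Psi_df\in V_{4d}$. This matches the statement.

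A small point needs attention: \eqref{eq.Vc.zeros} writes the constraint as $\sum a_j=c$, but what actually determines membership is the sum of the zeros. To avoid this ambiguity I will rely on the defining phrase "zeros sum to $c$," which is unambiguous. One should also confirm that a translate of an order-four theta function is again an order-four theta function, possibly with different multipliers. This is clear from the product form, since each factor $\theta(z+a_j-d)$ is an order-one theta function.

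Linearity of $\Psi_d$ is immediate. Injectivity is also clear, because $f(z-d)\equiv0$ forces $f\equiv0$. For surjectivity I will use $\Psi_{-d}$ as an inverse. By the same zero computation, $\Psi_{-d}$ maps $V_{4d}$ into $V_{4d-4d}=V_0=V$. Since $\Psi_d\Psi_{-d}=\mathrm{id}$ and $\Psi_{-d}\Psi_d=\mathrm{id}$, $\Psi_d$ is bijective. Alternatively, one can note that both spaces are $4$-dimensional, because each is $H^0$ of a degree-four line bundle on $E_\tau$.

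The special case follows by setting $d=\tfrac18$. This gives $4d=\tfrac12$, and $V_{1/2}=\Theta_4(\tau)$ by the definition above.

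The only real obstacle is the sign and convention check. One must verify that translating by $-d$ moves each zero by $+d$ and therefore moves the sum of the zeros by $4d$ rather than $-4d$. With a wrong sign the statement would land in $V_{-4d}$. This is harmless for $d=\tfrac18$, because $V_{-1/2}=V_{1/2}$, but it matters for general $d$.
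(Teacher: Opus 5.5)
Your proposal is correct and follows the paper's proof: the zeros of $f(z-d)$ are those of $f$ shifted by $d$, so their sum moves from $0$ to $4d$, and bijectivity follows from the invertibility of translation (or from the equality of dimensions). Your sign remark is apt: the product parametrization in \eqref{eq.Vc.zeros}, with $\sum_j a_j=c$, literally describes functions whose zeros sum to $-c$, and resolving this via the verbal definition ``zeros sum to $c$'' is exactly what the paper's proof does implicitly; as you note, the discrepancy is immaterial for $d=\tfrac18$ because $V_{-1/2}=V_{1/2}$.
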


\begin{proof}
If the zeros of $f$ in a period parallelogram sum to $0$, then the zeros of $f(z-d)$ are
obtained by adding $d$ to each of the four zeros.  Their sum is
$4d$ so $\Psi_d(V)\subseteq V_{4d}$.  Translation is
invertible, and both spaces have dimension four.
\end{proof}

We note that $\Psi_d$ does not depend on $\eta$.

The only $\Psi_d$ that appears in the rest of the paper is $\Psi_{1/8}$ so we will denote it by $\Psi$.  Thus,
$\Psi:V\to\Theta_4(\tau)$ is the linear isomorphism
$(\Psi f)(z)=f(z-\tfrac18)$.

\subsection{Notation}

Recall that $e_0,e_1,e_2,e_3$ denote the vertices of the singular
quadrics in $\PP V^*$ that contain $E_\tau$.

For all $\eta\in\CC$, we define
\begin{align*}
(t_\eta:E_\tau\to E_\tau)&:=(z\mapsto z+\overline\eta),
\\
\Gamma_\eta & : =\{(z,z+\overline\eta)\mid z\in E_\tau\}
\subset\PP V^*\times\PP V^*, 
\\
K_\eta & :=\{r\in V^{\otimes2}\mid r|_{\Gamma_\eta}=0\}, 
\\
R_\eta & :=\{r\in V^{\otimes2}\mid r|_{\Gamma_\eta}=0\ \text{and}\
r(e_i,e_i)=0\ \text{for }i=0,1,2,3\},
\\
A(\eta)& :=TV/(R_\eta),
\\
B(\eta)& :=TV/(K_\eta),
\\
D&:=\operatorname{span}\{\vartheta_{ab}(2z_1)\vartheta_{ab}(2z_2)
\mid a,b\in\{0,1\}\}\subset V^{\otimes2},
\\
C_\eta&:=D\cap K_\eta.
\end{align*}
Each of $\Gamma_\eta$, $K_\eta$, $R_\eta$, $C_\eta$, $A(\eta)$,
and $B(\eta)$ depends only on the image of $\eta$ in $E_\tau$;
$D$ is independent of $\eta$, so the image of $C_\eta$ in
$A(\eta)_2$ depends only on $\overline\eta$.

Let $B(E,\sigma,\mathcal L)$ denote the twisted homogeneous coordinate
ring as defined in \cite{AVdB}.\footnote{Feigin
and Odesskii also defined these algebras in their Kiev preprint
\cite{FeiginOdesskiiKiev} and in
\cite[\S3.4]{FeiginOdesskiiSklyanin}.}  The next result implies that
\[
B(\eta)\cong B(E_\tau,t_\eta,\mathcal O_{E_\tau}(4[0])).
\]

\begin{proposition}\label{prop.graph-relations-give-THCR}
\cite[Thm. 3.4]{SmithStafford}
Let $E$ be an elliptic curve, let $\sigma$ be a translation automorphism
of $E$, and let $\mathcal L$ be a line bundle on $E$ of degree at least
four.  Put $V=H^0(E,\mathcal L)$, and embed $E$ in
$\PP V^*$ by the complete linear system $|\mathcal L|$.  Define
\[
 \Gamma_\sigma=\{(p,\sigma(p))\mid p\in E\}
 \subset\PP V^*\times\PP V^*,
 \qquad 
 K_\sigma=\{r\in V^{\otimes2}\mid r|_{\Gamma_\sigma}=0\}.
\]
Then
\[
 TV/(K_\sigma)\cong B(E,\sigma,\mathcal L),
\]
where $(K_\sigma)$ denotes the two-sided ideal generated by $K_\sigma$.
\end{proposition}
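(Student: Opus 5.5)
The plan is to compare $TV/(K_\sigma)$ with $B=B(E,\sigma,\mathcal L)=\bigoplus_{n\ge0}H^0(E,\mathcal L_n)$, where $\mathcal L_n=\mathcal L\otimes\sigma^*\mathcal L\otimes\cdots\otimes(\sigma^{n-1})^*\mathcal L$ and multiplication is $f\cdot g=f\otimes(\sigma^m)^*g$ for $f\in B_m$. Since $\mathcal L_n$ has degree $n\deg\mathcal L\ge 4n$, the following two standard facts will do all the work; I will quote them from \cite{AVdB}/\cite{SmithStafford} or reprove them by Castelnuovo--Mumford/Koszul arguments on the curve.
\begin{enumerate}
\item[(a)] $B$ is generated in degree one, $B_1=V$.
\item[(b)] $B$ is quadratic, i.e.\ the ideal of relations of $B$ is generated by the kernel of $V\otimes V\to B_2$.
\end{enumerate}
With (a) and (b) in hand, the proof reduces to identifying that kernel with $K_\sigma$.

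The surjection $\phi:TV\to B$ is the identity on $V$. By (a), $\phi$ is onto, and in degree two it is the multiplication map
\[
V\otimes V\to H^0(E,\mathcal L\otimes\sigma^*\mathcal L),\qquad f\otimes g\mapsto f\cdot\sigma^*g .
\]
Under the identification of $V^{\otimes 2}$ with sections of $\mathcal O(1,1)$ on $\PP V^*\times\PP V^*$, restricting $r=\sum f_i\otimes g_i$ to $\Gamma_\sigma\cong E$ (via $p\mapsto(p,\sigma p)$) gives exactly $\sum f_i\,\sigma^*g_i$. Hence $\ker\phi_2=\{r: r|_{\Gamma_\sigma}=0\}=K_\sigma$. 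Then (b) gives $\ker\phi=(K_\sigma)$, so $TV/(K_\sigma)\cong B$.

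For (a), I need $H^0(\mathcal L_m)\otimes H^0(\sigma^{m*}\mathcal L)\to H^0(\mathcal L_{m+1})$ to be surjective. This is the statement that multiplication of global sections of two line bundles of degrees $\ge2$ and $\ge3$ on an elliptic curve is surjective; it follows from Mumford's lemma (base-point-free pencil trick) since $H^1$ of the twisted bundles vanishes.

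The main obstacle is (b), quadraticity. My approach is to show that the degree-$n$ relations $\ker\phi_n$ equal $\sum_{i}V^{\otimes i}\otimes K_\sigma\otimes V^{\otimes n-i-2}$, by induction on $n$, using a dimension count. The kernel of $H^0(\mathcal L_{n-1})\otimes V\to H^0(\mathcal L_n)$ has dimension computable by Riemann--Roch, because the relevant $H^1$ vanishes by Mumford's result on degrees $\ge 2g+1$ bundles for $\deg\mathcal L\ge4$. The goal is to show this kernel is generated by $B_{n-2}\otimes K_\sigma$, which amounts to the surjectivity of
\[
H^0(\mathcal L_{n-2})\otimes M\to M',
\]
where $M$ and $M'$ are the kernel bundles of the evaluation maps (Green's $N_1$-type property, valid because $\deg\ge4$ makes the curve cut out by quadrics). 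If that becomes messy, I would instead simply cite \cite[Thm.~3.4]{SmithStafford} or the Artin--Tate--Van den Bergh result that $B(E,\sigma,\mathcal L)$ is Koszul for $\deg\mathcal L\ge3$, which contains (b).
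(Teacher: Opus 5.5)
The paper does not prove this proposition. It quotes it from \cite[Thm.~3.4]{SmithStafford}, so your fallback citation is exactly what the paper does. Your skeleton is the right one, and the steps you actually carry out are correct.
\begin{itemize}
\item With the product $f\cdot g=f\otimes\sigma^*g$ on $B_1\otimes B_1$, the restriction of $r=\sum f_i\otimes g_i$ to $\Gamma_\sigma$ is its image $\sum f_i\,\sigma^*g_i$ in $B_2$. Hence $\ker\phi_2=K_\sigma$.
\item Generation in degree one is Mumford's surjectivity theorem.
\end{itemize}
One correction in (a): your justification that ``$H^1$ of the twisted bundles vanishes'' fails for $m=1$ when $\sigma^*\mathcal L\cong\mathcal L$. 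There the pencil-trick kernel is $\mathcal O_E$, so that case needs projective normality; this is why Proposition~\ref{prop.dim.Keta} treats it separately.

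The substance of the statement is (b), and what you wrote does not prove it. Two of your supporting claims are false.
\begin{itemize}
\item There is no Koszulity result for $\deg\mathcal L=3$. Take $\sigma=\mathrm{id}$ and $\deg\mathcal L=3$: then $K_\sigma=\bigwedge^2V$, so $TV/(K_\sigma)=SV$, while $B$ is the coordinate ring of a plane cubic and needs a cubic relation. The hypothesis $\deg\mathcal L\ge4$ is essential.
\item Quadric generation of the commutative embedding does not give what you need. It concerns the powers $\mathcal L^{\otimes n}$, whereas $B_n$ is built from $\mathcal L\otimes\sigma^*\mathcal L\otimes\cdots\otimes(\sigma^{n-1})^*\mathcal L$. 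These are different bundles unless $\sigma$ is $d$-torsion, where $d=\deg\mathcal L$.
\end{itemize}

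Your reduction itself is correct. Put $M_{\mathcal L}=\ker(V\otimes\mathcal O_E\to\mathcal L)$ and $\mathcal F=(\sigma^{n-2})^*(\mathcal L\otimes\sigma^*M_{\mathcal L})$. You need $H^0(\mathcal L_{n-2})\otimes H^0(\mathcal F)\to H^0(\mathcal L_{n-2}\otimes\mathcal F)$ to be onto. Here is one way to finish.
\begin{itemize}
\item Since $M_{\mathcal L}$ is semistable, $\mathcal F$ is semistable of slope $d(d-2)/(d-1)>0$.
\item Write $\mathcal L_{n-2}$ as a product of $n-2$ line bundles of degree $d$, and peel them off one at a time with the base-point-free pencil trick. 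Every kernel that appears has slope at least $d(d-2)/(d-1)>0$, hence no $H^1$.
\item The exception is the last, single-factor step, which is the case $n=3$. There no pencil can suffice: for $N$ of degree $d$, $2h^0(\mathcal F)=2d^2-4d<2d^2-3d=h^0(N\otimes\mathcal F)$.
\item For that step you need a genuine input. One option is Butler's theorem: multiplication of sections of semistable bundles with slopes $>2g$ and $\ge 2g$ is surjective. Here the slopes are $d$ and $d(d-2)/(d-1)$, and $d(d-2)/(d-1)\ge2$ holds exactly when $d\ge4$. This is where the degree hypothesis enters.
\end{itemize}
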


Clearly, $R_0=\bigwedge^2V$ and $A(0)=SV$.

We will show that $A(\eta)$ is the $4$-dimensional Sklyanin
algebra when $\overline{\eta}\notin E_\tau[2]$
(Theorem~\ref{prop:addition-formulas-relations}).

\subsection{Quartic elliptic normal curves in
\texorpdfstring{$\PP^3$}{P3}}

Fix $c\in\CC$ and let $E_c$ denote the image of $E_\tau$ in
$\PP V_c^*$ under the embedding defined by the complete linear system
associated to $V_c$, i.e., the complete linear system
$|\mathcal L_c|$.  Thus $E_c$ is an elliptic normal quartic; because
every hyperplane in $\PP V_c^*$ is the zero locus of a nonzero function
in $V_c$, every hyperplane in $\PP V_c^*$ meets $E_c$ with multiplicity
four and the sum of those four points equals $c$ in $E_\tau$.
Conversely, it follows from \eqref{eq.Vc.zeros} that if four points on
$E_c$, counted with multiplicity, sum to $c$ in $E_\tau$, then there is
a hyperplane that meets $E_c$ at exactly those four points.

If $L$ is a secant line to $E_c$, then the scheme-theoretic intersection
$L\cap E_c$ is a divisor of degree two, say $(p)+(q)$, and we write
$L=\overline{pq}$.  This notation includes the case $p=q$, when $L$ is
the tangent line at $p$.

When working in $\PP V_c^*$, we identify $E_\tau$ with its image $E_c$.

\begin{proposition}
\label{prop.geometric.vertices}
The pencil of quadrics containing $E_c$ can be labelled
$Q_c(w)$, $w\in E_\tau$, in such a way that
\[
 Q_c(w)=\bigcup_{z\in E_\tau}\overline{z,w-z}.
\]
Thus
\[
 Q_c(w)=Q_c(c-w),
\]
and there are no other equalities among these quadrics.  The singular
members of the pencil are precisely the four quadrics $Q_c(w)$ for
which $2w=c$ in $E_\tau$.

The lines that lie on these quadrics are precisely the secant lines to
$E_c$.

If $p\in\PP V_c^*-E_c$, then $p$ lies on a unique quadric in
the pencil.  If that quadric is smooth, exactly two lines on
the quadric pass through $p$.  If the quadric is not smooth, then it has
rank three (because $E_c$ does not lie on a plane), so every line on it
passes through its vertex.  Thus the four vertices are precisely
the points in $\PP V_c^*-E_c$ that lie on infinitely many secant lines.
\end{proposition}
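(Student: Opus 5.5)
Fix $w\in E_\tau$. The plan is to show that the union of the secant lines $\overline{z,w-z}$ is a quadric containing $E_c$, and to identify the whole pencil this way. For a point $z$, the line $\overline{z,w-z}$ is defined because $z$ and $w-z$ are points of $E_c$; when $z=w-z$ it is the tangent line. The involution $z\mapsto w-z$ of $E_\tau$ pairs the points of each secant, so the lines are parametrized by $E_\tau/\langle z\mapsto w-z\rangle\cong\PP^1$. Two such lines $\overline{z,w-z}$ and $\overline{z',w-z'}$ with $\{z,w-z\}\neq\{z',w-z'\}$ are disjoint. If they met, the four points would span a plane $H$, hence $H\cap E_c$ would contain them. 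Their sum is $2w$, so the sum condition forces $2w=c$; this is exactly the singular case.

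Next consider a quadric $Q$ in the pencil $|I_{E_c}(2)|$; this pencil has dimension $h^0(\mathcal O(2))-h^0(\mathcal O_E(8[\cdot]))=10-8=2$. Pick a point $p\in E_c$ and a line $\ell\subset Q$ through $p$. Then $\ell$ is a secant: intersecting $Q$ with a plane through $\ell$ gives $\ell$ plus a residual line $\ell'$ (or $2\ell$), and the plane meets $E_c$ in four points lying on $\ell\cup\ell'$. A line meeting $E_c$ in three or more points would force a plane section to contain infinitely many points, which is impossible. So $\ell$ and $\ell'$ are each secants $\overline{pq}$ and $\overline{p'q'}$, with $p+q+p'+q'=c$.

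On a smooth quadric, the lines of one ruling are pairwise disjoint and each meets $E_c$ (which has bidegree $(2,2)$) in two points. One ruling therefore gives an involution $z\mapsto w-z$ for some $w$, namely the fixed-point-free-free pairing $q\mapsto q$. The line $\overline{pq}$ from one ruling and $\overline{p'q'}$ from the other ruling are coplanar, so the other ruling corresponds to $c-w$. This shows that each smooth quadric equals $Q_c(w)=Q_c(c-w)$. Since $w$ is determined by a line of $Q$ together with $E_c$, the quadrics for different unordered pairs $\{w,c-w\}$ are distinct. Conversely, $w\mapsto Q_c(w)$ gives a map $E_\tau/(w\sim c-w)\cong\PP^1$ to the pencil $\cong\PP^1$. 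This map is nonconstant and injective, hence bijective.

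When $2w=c$ the two rulings coincide. The quadric then has a single family of lines all meeting pairwise, so it is a cone; there are exactly four such $w$, giving the four singular members. A rank-two quadric would be a pair of planes, which cannot contain $E_c$. All lines on every $Q$ are secants by the plane argument above.

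For the final claims, take $p\notin E_c$. Restricting the pencil to $p$ is a linear condition, so $p$ lies on exactly one member of the pencil (the base locus is $E_c$). On a smooth quadric, exactly one line of each ruling passes through $p$. On a cone, every line passes through the vertex, and these lines are the infinitely many secants through it. A point lying on infinitely many secants lies on infinitely many lines of its unique quadric, so it must be a vertex.

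The main obstacle is making the ruling-to-$w$ argument rigorous, in particular checking that the pairing induced by a ruling really is of the form $z\mapsto w-z$. The plan is to handle this by noting that the pairing is an involution of $E$ with quotient $\PP^1$ whose fibers are degree-two divisors in a pencil. Such a pencil is $|\mathcal O(q_1+q_2)|$, so the fibers are the pairs with constant sum $w$.
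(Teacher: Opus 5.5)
Your strategy is the paper's: on a smooth member, use the two rulings, the bidegree $(2,2)$, and the plane spanned by two lines from opposite rulings to get endpoint sums $w$ and $c-w$. Your Abel-theorem argument for the constancy of the sum along a ruling, and your observation that two distinct lines $\overline{z,w-z}$ can meet only if $2w=c$, are useful details the paper leaves implicit. (The phrase ``fixed-point-free pairing $q\mapsto q$'' should go: $z\mapsto w-z$ has four fixed points, giving the four tangent lines in the ruling.)

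There is, however, a genuine gap. You prove only that each smooth member \emph{is} a union $\bigcup_z\overline{z,w-z}$ and that it determines $\{w,c-w\}$. You never prove that, for an \emph{arbitrary} $w\in E_\tau$, this union is a member of the pencil. The sentence ``$w\mapsto Q_c(w)$ is a map $E_\tau/(w\sim c-w)\to$ pencil, nonconstant and injective, hence bijective'' is circular: that map is defined only once one knows $Q_c(w)$ lies in the pencil. Even then, injectivity would not give surjectivity without knowing the map is a morphism. The same omission affects the singular case. ``When $2w=c$ the two rulings coincide'' presupposes a member with such a $w$, and you never attach a $w$ to a cone. It also affects the claim that every secant lies on some member; you show only that lines on members are secants. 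The paper's sketch is equally brief here, but an argument is needed.

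The missing step is one you half-use in your last paragraph. Let $\ell=\overline{z_0,w-z_0}$ and $p\in\ell-E_c$. The unique member $Q$ through $p$ meets $\ell$ in a scheme of length at least three, so $\ell\subset Q$.
\begin{itemize}
\item If $Q$ is smooth, $\ell$ lies in one ruling, whose constant endpoint sum must be $w$, so $Q=\bigcup_z\overline{z,w-z}$.
\item If $2w=c$, any two lines $\overline{z,w-z}$ and $\overline{z',w-z'}$ have endpoints summing to $c$. They are therefore coplanar and meet. Since lines in one ruling of a smooth quadric are disjoint, $Q$ must then be a cone.
\item If $Q$ is a cone with vertex $v$, then $v\notin E_c$; otherwise projection from $v$ would map $E_c$ onto the base conic with $2\deg\pi=3$. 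So projection from $v$ has degree two onto a conic $\cong\PP^1$. By your Abel argument its fibers, which are the intersections of $E_c$ with the lines of the cone, have constant sum, necessarily $w$. Since all these lines meet at $v$, your first paragraph gives $2w=c$.
\end{itemize}
This yields the following:
\begin{itemize}
\item the surjectivity of the labelling;
\item exactly four singular members, one for each $w$ with $2w=c$, distinct because a cone determines its pairing;
\item the converse statement about secants;
\item the final characterization of the vertices, since every secant through $p\notin E_c$ then lies on the unique member through $p$.
\end{itemize}
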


\begin{proof}
Four points of $E_c$, counted with multiplicity, lie on a common
hyperplane in $\PP V_c^*$ if and only if their sum in $E_\tau$ is $c$.
Let $Q$ be a smooth quadric containing $E_c$, and choose a
hyperplane $H$ such that $Q\cap H=L\cup L'$ is a union of two distinct
lines, one from each ruling.  Since an
elliptic normal quartic on a smooth quadric has bidegree $(2,2)$, the
scheme-theoretic intersections $L\cap E_c$ and $L'\cap E_c$ both
have length two.  Write $L\cap E_c=(p)+(q)$ and
$L'\cap E_c=(p')+(q')$.  Because $E_\tau\cap H$ is a hyperplane
section, $p+q+p'+q'=c$.
Thus, if the secants in one ruling have ``endpoint sum'' $w$, those in the
other ruling have endpoint sum $c-w$.  Hence
$Q_c(w)=Q_c(c-w)$.  Conversely, the two rulings determine these two
sums, so there are no further equalities.  The two rulings coincide
exactly when the quadric is singular; this occurs when $w=c-w$, or
equivalently $2w=c$.  The remaining assertions about secants and
vertices follow from the two rulings on a smooth quadric and the unique
vertex of a singular quadric.
\end{proof}

\subsection{The action of the Heisenberg group on
\texorpdfstring{$V_c$}{V-c}}

Let
\[
 H_4=\langle S,T,\epsilon\mid
 S^4=T^4=\epsilon^4=1,\ \epsilon=[S,T],\
 [S,\epsilon]=[T,\epsilon]=1\rangle .
\]
This is the Heisenberg group of order $4^3$.  For every $c\in\CC$,
define its action on $V_c$ by
\begin{equation}
\label{eq.H4.action.on.Vc}
 \begin{aligned}
  (S\cdot f)(z)&=f\!\left(z+\tfrac14\right),\\
  (T\cdot f)(z)&=
  e\!\left(z-\tfrac14c+\tfrac12-\tfrac38\tau\right)
  f\!\left(z+\tfrac14\tau\right).
 \end{aligned}
\end{equation}
and $\epsilon\cdot f=\ii f$.  With these actions $V_c$ becomes an
irreducible $H_4$-module.

\begin{lemma}
\label{lem.H4.action.on.V}
Let $\lambda=-e(-\tfrac12\tau)$. The action of $H_4$ on $V=V_0$ is such that 
\[
 S^2\cdot\vartheta_{ab}(2z)=(-1)^a\vartheta_{ab}(2z),
 \qquad
 T^2\cdot\vartheta_{ab}(2z)=\lambda^2(-1)^b\vartheta_{ab}(2z).
\]
For the contragredient action on $V^*$,    each $\epsilon_{ab}$ is a simultaneous eigenvector for $S^2$ and $T^2$.
\end{lemma}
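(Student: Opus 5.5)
The plan is to verify both formulas directly from the definition \eqref{eq.H4.action.on.Vc} with $c=0$, using the standard quasi-periodicity of Mumford's theta functions with half-integer characteristics:
\[
 \vartheta_{ab}(w+1)=(-1)^a\vartheta_{ab}(w),
 \qquad
 \vartheta_{ab}(w+\tau)=(-1)^b\,e\!\left(-w-\tfrac12\tau\right)\vartheta_{ab}(w).
\]
Both follow termwise from the series $\vartheta_{ab}(w)=\sum_n \exp\bigl(\pi\ii(n+\tfrac a2)^2\tau+2\pi\ii(n+\tfrac a2)(w+\tfrac b2)\bigr)$: the first because each exponent shifts by $2\pi\ii(n+\tfrac a2)$, and the second by the substitution $n\mapsto n+1$. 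I would quote these from \cite[Chapter~I]{MumfordTataI} rather than reprove them.

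For $S^2$ the computation is immediate. We have $(S^2 f)(z)=f(z+\tfrac12)$, so with $f(z)=\vartheta_{ab}(2z)$ we get $\vartheta_{ab}(2z+1)=(-1)^a\vartheta_{ab}(2z)$.

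For $T^2$, I first compose the formula for $T$ with itself at $c=0$:
\[
 (T^2f)(z)=e\!\left(z+\tfrac12-\tfrac38\tau\right)e\!\left(z+\tfrac14\tau+\tfrac12-\tfrac38\tau\right)f\!\left(z+\tfrac12\tau\right)
 =e\!\left(2z-\tfrac12\tau\right)f\!\left(z+\tfrac12\tau\right),
\]
using $e(1)=1$. Putting $f(z)=\vartheta_{ab}(2z)$ gives $f(z+\tfrac12\tau)=\vartheta_{ab}(2z+\tau)=(-1)^b e(-2z-\tfrac12\tau)\vartheta_{ab}(2z)$. The factors $e(\pm2z)$ cancel, leaving $(-1)^b e(-\tau)\vartheta_{ab}(2z)$. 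Since $\lambda^2=e(-\tau)$, this is the claimed formula. The only step needing care is bookkeeping the exponential prefactors in this composition; there is no real obstacle.

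For the last assertion, note that $\{\vartheta_{ab}(2z)\}$ is a basis of $V$ on which $S^2$ and $T^2$ act diagonally. The dual basis $\{\epsilon_{ab}\}$ of $V^*$ therefore diagonalizes the contragredient action $(g\cdot\phi)(f)=\phi(g^{-1}\cdot f)$. Concretely, evaluating $g\cdot\epsilon_{ab}$ on each basis vector $\vartheta_{cd}(2z)$ shows that $\epsilon_{ab}$ is an eigenvector of $S^2$ with eigenvalue $(-1)^a$ and of $T^2$ with eigenvalue $\lambda^{-2}(-1)^b$.
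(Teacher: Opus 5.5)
Your proof is correct, but it takes a slightly different route from the paper's.

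The paper computes $S$ and $T$ individually on the basis $\vartheta_{ab}(2z)$, using the half-period entries of Mumford's Table~0. It finds that each operator permutes the four basis vectors up to the scalars $\pm1$, $\lambda$, $-\ii\lambda$, and then squares these monomial matrices. You square the operators first, so only the full-period shifts $2z\mapsto 2z+1$ and $2z\mapsto 2z+\tau$ occur. Nothing beyond quasi-periodicity under $\Lambda_\tau$ is then needed, and you read that off the series. Your bookkeeping $e(2z-\tfrac12\tau)\,e(-2z-\tfrac12\tau)=e(-\tau)=\lambda^2$ is right.

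What each route buys: yours is shorter and self-contained from the series. The paper's costs a little more but yields the explicit matrices of $S$ and $T$, which it reuses immediately afterwards. These give the formulas for $S\cdot Y_{ab}$ and $T\cdot Y_{ab}$, and the statement that $S$ and $T$ permute the spanning tensors of $D$ in Lemma~\ref{lem.H4.stability}.

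One small caveat on the dual statement. Your eigenvalue $\lambda^{-2}(-1)^b$ for $T^2$ on $\epsilon_{ab}$ uses the inverse of the linear operator $T^2$. That inverse is not $T^2$ itself, even though $T^4=1$ in $H_4$. The displayed $T$ satisfies $T^4=\lambda^4=e(-2\tau)\neq1$ on $V$, so the relation holds only projectively, which is why the paper later rescales to $\widetilde T$. The eigenvector assertion, which is all the lemma claims, is unaffected.
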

\begin{proof}
Since the action of $S$ and $T$ on $V=V_0$ is
\begin{align*}
 (S\cdot f)(z)&=f\!\left(z+\tfrac14\right),\\
 (T\cdot f)(z)&=
 e\!\left(z+\tfrac12-\tfrac38\tau\right)
 f\!\left(z+\tfrac14\tau\right), 
\end{align*}
it follows from the translation formulas in \cite[Table~0, p.~19]{MumfordTataI} that $S$ and $T$ act on the
basis $\{\vartheta_{00}(2z),\vartheta_{01}(2z),
\vartheta_{10}(2z),\vartheta_{11}(2z)\}$ for $V$ as
\begin{align*}
 S\cdot\vartheta_{00}(2z)&=\vartheta_{01}(2z),&
 S\cdot\vartheta_{01}(2z)&=\vartheta_{00}(2z),\\
 S\cdot\vartheta_{10}(2z)&=\vartheta_{11}(2z),&
 S\cdot\vartheta_{11}(2z)&=-\vartheta_{10}(2z),\\
 T\cdot\vartheta_{00}(2z)&=\lambda\vartheta_{10}(2z),&
 T\cdot\vartheta_{01}(2z)&=-\ii\lambda\vartheta_{11}(2z),\\
 T\cdot\vartheta_{10}(2z)&=\lambda\vartheta_{00}(2z),&
 T\cdot\vartheta_{11}(2z)&=-\ii\lambda\vartheta_{01}(2z).
\end{align*}
The result follows.
\end{proof}

It follows from the description of the actions of $S$ and $T$ on the $\vartheta_{ab}(2z)$ that
\begin{alignat*}{4}
 S\cdot Y_{00}&=-\ii Y_{01},&\qquad
 S\cdot Y_{01}&=\ii Y_{00},&\qquad
 S\cdot Y_{10}&=\ii Y_{11},&\qquad
 S\cdot Y_{11}&=\ii Y_{10},\\
 T\cdot Y_{00}&=-\ii\lambda Y_{10},&
 T\cdot Y_{01}&=\lambda Y_{11},&
 T\cdot Y_{10}&=\ii\lambda Y_{00},&
 T\cdot Y_{11}&=-\lambda Y_{01}.
\end{alignat*}

As noted in  \cite[Lem.~2.8]{CKS1}, and elsewhere, the formula in
\eqref{eq.H4.action.on.Vc}, with $c=3/2$, gives the action on
$\Theta_4(\tau)$ for which
\[
 S\cdot\theta_\alpha=\ii^\alpha\theta_\alpha,
 \qquad T\cdot\theta_\alpha=\theta_{\alpha+1}.
\]

As noted in \eqref{eq.Vc.zeros},
$V=H^0(E_\tau,\mathcal L_0)$, and
$\Theta_4(\tau)=H^0(E_\tau,\mathcal L_{1/2})$.  Translation by $1/8$
carries $\mathcal L_0$ to $\mathcal L_{1/2}$ and induces the
isomorphism $\Psi_{1/8}$ defined in Lemma \ref{prop.translation.V.Vprime}.
Transporting the action of $H_4$ on $V$ through $\Psi_{1/8}$ gives
an action of $H_4$ on $\Theta_4(\tau)$ that is {\emph not} the same as that obtained by putting $c=3/2$ in \eqref{eq.H4.action.on.Vc}: 
for $h\in H_4$ and
$F\in\Theta_4(\tau)$, define
\[
 h\star F:=\Psi_{1/8}\bigl(h\cdot\Psi_{1/8}^{-1}(F)\bigr).
\]
In particular,
\begin{align*}
 (S\star F)(z)&=F\!\left(z+\tfrac14\right),\\
 (T\star F)(z)&=
 e\!\left(z+\tfrac38-\tfrac38\tau\right)
 F\!\left(z+\tfrac14\tau\right).
\end{align*}
Thus $S\star=S\cdot$ and $T\star=\ii T\cdot$; the two $S$-actions agree,
but the two $T$-actions differ by the scalar $\ii$.  Although the two
actions of $H_4$ on $\Theta_4(\tau)$ differ, they induce the same projective action on
$\PP\Theta_4(\tau)^*$.  By construction, $\Psi_{1/8}$ intertwines the
action on $V$ with the transported action $\star$ on
$\Theta_4(\tau)$.

The functions
\[
u_0=Y_{00}-\ii Y_{01},\quad u_1=Y_{10}+Y_{11},\quad
u_2=Y_{00}+\ii Y_{01},\quad u_3=Y_{10}-Y_{11}
\]
provide a basis for $V$ consisting of eigenvectors for $S\cdot$:
\[
 S\cdot u_\alpha
 =u_\alpha\!\left(z+\tfrac14\right)
   =\ii^\alpha u_\alpha(z).
\]
By \cite[Lem.~2.8]{CKS1}, the $\theta_\alpha$ are also eigenvectors for
$S\cdot$, with these same eigenvalues.
Since the four eigenspaces are one-dimensional, there are unique
nonzero constants $\kappa_\alpha$ such that
\begin{equation}
\label{eq.basis.comparison}
 \begin{aligned}
 \theta_0(z)&=\kappa_0
   \bigl(Y_{00}-\ii Y_{01}\bigr)\!\left(z-\tfrac18\right),\\
 \theta_1(z)&=\kappa_1
   \bigl(Y_{10}+Y_{11}\bigr)\!\left(z-\tfrac18\right),\\
 \theta_2(z)&=\kappa_2
   \bigl(Y_{00}+\ii Y_{01}\bigr)\!\left(z-\tfrac18\right),\\
 \theta_3(z)&=\kappa_3
   \bigl(Y_{10}-Y_{11}\bigr)\!\left(z-\tfrac18\right).
 \end{aligned}
\end{equation}
The constants $\kappa_\alpha$ are given by the formula
\[
 \kappa_\alpha=
 \frac{\theta_\alpha(z_\alpha)}
      {u_\alpha(z_\alpha-\tfrac18)},
\]
where $z_\alpha$ is any point for which the denominator is nonzero.
Consequently,
\begin{equation}
\label{eq.Psi.inverse.on.CKS.basis}
 \Psi_{1/8}^{-1}(\theta_\alpha)=\kappa_\alpha u_\alpha,
 \qquad \alpha\in\ZZ_4.
\end{equation}
Equation \eqref{eq.Psi.inverse.on.CKS.basis} is the vector-space
isomorphism that will be extended to the tensor algebras.

The dual projective isomorphism
\[
 \Psi_{1/8}^*:\PP\Theta_4(\tau)^*\longrightarrow\PP V^*
\]
sends the evaluation point represented by $z\in E_\tau$ to the
evaluation point represented by $z-\tfrac18$.  Thus translation changes
the chosen degree-four line bundle but does not change a displacement
between two points of $E_\tau$.

There is a more intrinsic way to recognize the four vertices from the
Heisenberg action.  We now use the basis in
\cite[Prop.~2.7]{CKS1}.  For $\alpha\in\ZZ_4$, set
\[
 \theta_{\alpha,c}(z)
   =\theta_\alpha\!\left(z-\tfrac14c+\tfrac38\right).
\]
Then $\{\theta_{0,c},\theta_{1,c},\theta_{2,c},\theta_{3,c}\}$ is a
basis for $V_c$.  Proposition~2.7 of CKS1 gives
\begin{align*}
 \theta_{\alpha,c}\!\left(z+\tfrac14\right)
   &=\ii^\alpha\theta_{\alpha,c}(z),\\
 \theta_{\alpha,c}\!\left(z+\tfrac14\tau\right)
   &=-e\!\left(-z+\tfrac14c+\tfrac38\tau\right)
       \theta_{\alpha+1,c}(z).
\end{align*}
With the $H_4$-action on $V_c$ defined in
\eqref{eq.H4.action.on.Vc}, these identities
give
\begin{equation}
\label{eq.H4.action.on.theta.alpha.c}
 S\cdot\theta_{\alpha,c}=\ii^\alpha\theta_{\alpha,c},
 \qquad
 T\cdot\theta_{\alpha,c}=\theta_{\alpha+1,c}.
\end{equation}

\begin{proposition}
\label{prop.vertices.Heisenberg}
The vertices of the four singular quadrics containing $E_c$ are the four common fixed points of the
projective transformations induced by $S^2$ and $T^2$.  Equivalently,
they are the four simultaneous eigenlines for $S^2$ and $T^2$ on
$V_c^*$.

They are not, in general, the fixed points of $S$.  The element $S$
permutes the singular quadrics, and hence permutes their vertices.
\end{proposition}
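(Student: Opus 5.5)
The plan is to compare two finite sets of four points in $\PP V_c^*$: the common fixed points of $S^2$ and $T^2$, and the vertices. I would first show that $S^2$ and $T^2$ have exactly four common fixed points. Then I would show that every vertex is such a fixed point and that the four vertices are distinct. Together these force the two sets to coincide.

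\emph{Step 1 (the common eigenlines).} In $H_4$ we have $[S^2,T^2]=\epsilon^4=1$, so $S^2$ and $T^2$ commute on $V_c$. I would compute their action on the basis $\theta_{\alpha,c}$ using \eqref{eq.H4.action.on.theta.alpha.c}:
\[
S^2\cdot\theta_{\alpha,c}=(-1)^\alpha\theta_{\alpha,c},\qquad T^2\cdot\theta_{\alpha,c}=\theta_{\alpha+2,c}.
\]
Hence the vectors $\theta_{0,c}\pm\theta_{2,c}$ and $\theta_{1,c}\pm\theta_{3,c}$ are simultaneous eigenvectors. Their eigenvalue pairs are $(1,\pm1)$ and $(-1,\pm1)$, which are four distinct characters. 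So $V_c$, and dually $V_c^*$, splits into four one-dimensional joint eigenspaces with distinct characters.

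A point of $\PP V_c^*$ is fixed by both projective transformations exactly when it is a common eigenline of $S^2$ and $T^2$. Because the operators commute and each joint eigenspace is a line, any common eigenline must be one of these four. So there are exactly four common fixed points, and the two formulations in the statement agree.

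\emph{Step 2 (the vertices are fixed).} Projectively, $S$ acts on evaluation points of $E_c$ as translation by $\mp\tfrac14$. The action of $T$ is translation by $\mp\tfrac14\tau$; its automorphy factor $e(\cdots)$ is only a scalar on each evaluation functional, so it is invisible projectively. Consequently $S^2$ and $T^2$ preserve $E_c$ and act on it by translation by the $2$-torsion points $\tfrac12$ and $\tfrac12\tau$.

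Being linear, these maps send secant lines to secant lines. Translation by a point $\omega$ sends $\overline{z,w-z}$ to $\overline{z+\omega,\,w-z+\omega}$, whose endpoint sum is $w+2\omega$. By Proposition~\ref{prop.geometric.vertices}, it therefore sends $Q_c(w)$ to $Q_c(w+2\omega)$. When $2\omega=0$, each quadric of the pencil is preserved. In particular each singular quadric is preserved, so its unique singular point, the vertex, is fixed by $S^2$ and $T^2$.

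\emph{Step 3 (the vertices are distinct).} Suppose two distinct singular quadrics shared a vertex $p$. Then every member of the pencil they span would be singular at $p$. This contradicts Proposition~\ref{prop.geometric.vertices}, which shows that only four members of the pencil are singular. So there are four distinct vertices, all lying among the four common fixed points found in Step 1, and the two sets are equal. I expect this counting step, together with a careful check that Step 1 really produces one-dimensional joint eigenspaces, to be the main point needing care.

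\emph{Step 4 (the action of $S$).} For the last sentence of the statement, the same computation as in Step 2 with $\omega=\pm\tfrac14$ shows that $S$ sends $Q_c(w)$ to $Q_c(w\pm\tfrac12)$. If $2w=c$, then also $2(w\pm\tfrac12)=c$, so $S$ permutes the four singular quadrics. Suppose $Q_c(w+\tfrac12)=Q_c(w)$. Proposition~\ref{prop.geometric.vertices} would then require $w+\tfrac12=w$ or $w+\tfrac12=c-w$. The first is impossible. The second gives $2w=c-\tfrac12$, which contradicts $2w=c$ because $\tfrac12\neq0$ in $E_\tau$. Thus $S$ fixes no singular quadric, and hence it moves every vertex.
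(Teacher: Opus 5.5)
Your proposal is correct and follows essentially the same route as the paper: translation by a $2$-torsion point sends each $Q_c(w)$ to itself, so $S^2$ and $T^2$ fix every vertex, and because the commuting operators $S^2,T^2$ have exactly four one-dimensional joint eigenspaces, their common fixed locus is precisely the set of four vertices. Your Step~3 proves that the four vertices are distinct, which the paper's proof uses without comment, and your Step~4 shows that $S$ fixes no singular quadric, which is slightly more than the paper states.
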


\begin{proof}
The projective action of $S$ on the embedded elliptic curve is
translation by the point $a=-1/4$.  Indeed, the action on $V_c^*$ is
contragredient to the action on $V_c$.  Translation by $a$ sends the secant
line $\overline{x,w-x}$, which lies on $Q_c(w)$, to
\[
 \overline{x+a,w-x+a},
\]
which lies on $Q_c(w+2a)$.  Thus $S$ sends $Q_c(w)$ to
$Q_c(w+2a)$.
Since $2a$ is a nonzero point of order two, $S$ permutes the four
singular quadrics.  A similar
argument applies to $T$.

On the other hand, $S^2$ acts by translation by $2a$.  It therefore
sends $Q_c(w)$ to $Q_c(w+4a)=Q_c(w)$.  It therefore preserves every
member of the pencil.  In particular it fixes the unique vertex of each singular
quadric.  Similarly, $T^2$ fixes all four vertices.

The operators $S^2$ and $T^2$ commute and have four one-dimensional
simultaneous eigenspaces in the irreducible four-dimensional
Heisenberg representation.  Their common projective fixed locus
therefore consists of exactly four points.  Since it contains the four
vertices, it is precisely the set of vertices.
\end{proof}

\begin{corollary}\label{cor.eij.are.vertices}
The points $e_{ij}$, $i,j\in\{0,1\}$, are the vertices of the four
singular quadrics containing $E_\tau\subset\PP V^*$.
\end{corollary}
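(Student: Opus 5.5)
This follows by specializing Proposition~\ref{prop.vertices.Heisenberg} to $c=0$, so that $V_c=V$ and $E_c=E_\tau\subset\PP V^*$. That proposition says the vertices of the four singular quadrics containing $E_\tau$ are exactly the four points of $\PP V^*$ given by the simultaneous eigenlines of $S^2$ and $T^2$ acting on $V^*$. It therefore suffices to show that the four lines $\CC\epsilon_{ij}$, $i,j\in\{0,1\}$, are these simultaneous eigenlines.

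By Lemma~\ref{lem.H4.action.on.V}, the basis $\{\vartheta_{ab}(2z)\}$ of $V$ diagonalizes both $S^2$ and $T^2$. The eigenvalues are
\[
S^2:\ (-1)^a, \qquad T^2:\ \lambda^2(-1)^b .
\]
The contragredient operators are diagonal in the dual basis $\{\epsilon_{ab}\}$, with the inverse eigenvalues. The last sentence of that lemma records exactly this: each $\epsilon_{ab}$ is a simultaneous eigenvector.

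Now compare the pairs of eigenvalues $\bigl((-1)^a,\lambda^2(-1)^b\bigr)$ over the four choices of $(a,b)$. They are pairwise distinct, because $(a,b)$ is recovered from the two signs. Hence each joint eigenspace of $S^2,T^2$ on $V^*$ is one-dimensional and equals some $\CC\epsilon_{ab}$, so the four simultaneous eigenlines are precisely the $\CC\epsilon_{ab}$. Their images $e_{ab}$ are therefore the vertices.

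There is no real obstacle here. The only point needing care is the distinctness of the joint eigenvalues, which guarantees that the common fixed points are exactly the $e_{ij}$ and not larger fixed subspaces. This distinctness is immediate from the explicit signs in Lemma~\ref{lem.H4.action.on.V}, and it also matches the counting statement in the proof of Proposition~\ref{prop.vertices.Heisenberg}.
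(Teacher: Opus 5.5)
Your proposal is correct and follows the paper's proof. The paper likewise reads off from Lemma~\ref{lem.H4.action.on.V} that the lines $\CC\epsilon_{ij}$ are the simultaneous eigenlines of $S^2$ and $T^2$ on $V^*$, and then applies Proposition~\ref{prop.vertices.Heisenberg} with $c=0$; your explicit check that the four joint eigenvalue pairs are distinct makes precise a step the paper leaves implicit.
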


\begin{proof}
The formulas above show that the four lines $\CC\epsilon_{ij}$ are the
simultaneous eigenlines for $S^2$ and $T^2$ on $V^*$.  The result now
follows from Proposition~\ref{prop.vertices.Heisenberg} with $c=0$.
\end{proof}

\begin{lemma}\label{lem.H4.stability}
For every $\eta\in\CC$, the subspaces
$D$, $K_\eta$, $R_\eta$, and $C_\eta$ of $V^{\otimes2}$ are
$H_4$-submodules.
\end{lemma}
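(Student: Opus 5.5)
The plan is to let $H_4$ act on $V^{\otimes2}$ diagonally, $h\cdot(f\otimes g)=(h\cdot f)\otimes(h\cdot g)$. In the function model, $S$ acts by $(S\cdot F)(z_1,z_2)=F(z_1+\tfrac14,z_2+\tfrac14)$. The element $T$ acts by
\[
(T\cdot F)(z_1,z_2)=e\!\left(z_1+\tfrac12-\tfrac38\tau\right)e\!\left(z_2+\tfrac12-\tfrac38\tau\right)F\!\left(z_1+\tfrac14\tau,z_2+\tfrac14\tau\right).
\]
The central element $\epsilon$ acts by the scalar $\ii^2=-1$. Since $S$, $T$ and $\epsilon$ generate $H_4$, and each of these acts by an invertible map, it suffices to show that $S$ and $T$ preserve each of the four subspaces.

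For $K_\eta$, the key point is that both $S$ and $T$ act by a nowhere-vanishing factor times translation of both variables by a common amount $a\in\{\tfrac14,\tfrac14\tau\}$. If $F(z,z+\eta)=0$ for all $z$, then
\[
(h\cdot F)(z,z+\eta)=(\text{nonzero factor})\cdot F(z+a,z+a+\eta)=0.
\]
So $K_\eta$ is stable. Equivalently, the projective action of $h$ on $\PP V^*$ is translation on $E_\tau$, which commutes with $t_\eta$ and so preserves $\Gamma_\eta$.

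For $R_\eta$, we additionally need the set of four vertex pairs $(e_i,e_i)$ to be permuted. By Proposition~\ref{prop.vertices.Heisenberg}, the projective actions of $S$ and $T$ permute the singular quadrics, and hence permute the vertices $e_i$. Evaluation at $(e_i,e_i)$ means pairing with $\epsilon_i\otimes\epsilon_i$. For this we use the identity
\[
(h\cdot r)(\epsilon_i\otimes\epsilon_i)=r\bigl(h^{-1}\epsilon_i\otimes h^{-1}\epsilon_i\bigr),
\]
where $h^{-1}\epsilon_i$ is a nonzero multiple of some $\epsilon_j$. Hence $h\cdot r$ vanishes at every $(e_i,e_i)$ whenever $r$ does, and $R_\eta$ is stable. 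Alternatively, one can read the permutation off the explicit formulas in Lemma~\ref{lem.H4.action.on.V}: $S$ swaps the pairs $\{\vartheta_{00},\vartheta_{01}\}$ and $\{\vartheta_{10},\vartheta_{11}\}$ up to sign, $T$ swaps $00\leftrightarrow10$ and $01\leftrightarrow11$ up to scalars, and so the dual basis vectors $\epsilon_{ab}$ are permuted up to scalars.

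For $D$, we check it directly on the basis. By Lemma~\ref{lem.H4.action.on.V}, each of $S$ and $T$ sends $\vartheta_{ab}(2z)$ to $c_{ab}\vartheta_{\pi(ab)}(2z)$ for a permutation $\pi$ and scalars $c_{ab}$. Therefore
\[
h\cdot\bigl(\vartheta_{ab}(2z_1)\vartheta_{ab}(2z_2)\bigr)=c_{ab}^2\,\vartheta_{\pi(ab)}(2z_1)\vartheta_{\pi(ab)}(2z_2)\in D.
\]
Finally, $C_\eta=D\cap K_\eta$ is an intersection of two submodules, so it is a submodule.

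The only point needing care is the $R_\eta$ step: we must confirm that the contragredient action permutes the lines $\CC\epsilon_{ij}$ rather than merely preserving the vertex set as a set of points. The explicit monomial form of $S$ and $T$ in Lemma~\ref{lem.H4.action.on.V} settles this immediately, since their inverse transposes are also monomial matrices.
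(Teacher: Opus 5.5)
Your proposal is correct and follows the paper's proof essentially step for step: $D$ is stable because $S$ and $T$ act monomially on the $\vartheta_{ab}(2z)$ (Lemma~\ref{lem.H4.action.on.V}), $K_\eta$ is stable because they act by the same translation in both variables and so preserve $\Gamma_\eta$, $R_\eta$ is stable because they also permute the vertices (Proposition~\ref{prop.vertices.Heisenberg}), and $C_\eta=D\cap K_\eta$. The extra ``point needing care'' is not a separate issue, since the vertices are by definition the points $[\epsilon_{ij}]\in\PP V^*$, i.e.\ the lines $\CC\epsilon_{ij}$, so permuting the vertices and permuting these lines are the same statement.
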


\begin{proof}
The formulas in Lemma~\ref{lem.H4.action.on.V} show that $S$ and $T$
permute, up to nonzero scalar multiples, the four tensors
$\vartheta_{ab}(2z_1)\vartheta_{ab}(2z_2)$ that span $D$.  Hence $D$
is $H_4$-stable.  The projective transformations induced by $S$ and
$T$ on $E_\tau\subset\PP V^*$ are translations by points of
$E_\tau[4]$.  Their diagonal actions on
$E_\tau\times E_\tau$ therefore preserve $\Gamma_\eta$, so $K_\eta$
is $H_4$-stable.  Proposition~\ref{prop.vertices.Heisenberg} shows
that these transformations also permute the four vertices.  It follows
from the definition of $R_\eta$ that $R_\eta$ is $H_4$-stable.  Finally,
$C_\eta=D\cap K_\eta$, so $C_\eta$ is $H_4$-stable as well.
\end{proof}

Let $\xi_0,\xi_1,\xi_2,\xi_3$ be the basis for $V_c^*$ dual to
$\theta_{0,c},\theta_{1,c},\theta_{2,c},\theta_{3,c}$, and write
$[x_0:x_1:x_2:x_3]$ for the corresponding homogeneous coordinates on
$\PP V_c^*$.  Thus the coordinate functions $x_\alpha$ are identified
with the basis elements $\theta_{\alpha,c}$ of $V_c$.  The preceding
proposition now gives the vertices
without writing equations for the quadrics:
\begin{equation}
\label{eq.vertices.CKS.Prop.2.7.basis}
 \begin{aligned}
 e_{0,c}&=[1:0:1:0],
 &e_{1,c}&=[1:0:-1:0],\\
 e_{2,c}&=[0:1:0:1],
 &e_{3,c}&=[0:1:0:-1].
 \end{aligned}
\end{equation}
Indeed, by \eqref{eq.H4.action.on.theta.alpha.c}, $S^2$ acts on
$\xi_\alpha$ with eigenvalue $(-1)^\alpha$, so its even and odd
eigenspaces on $V_c^*$ are $\langle\xi_0,\xi_2\rangle$ and
$\langle\xi_1,\xi_3\rangle$.  The same formula shows, after passing
to the contragredient action, that $T^2$ interchanges $\xi_0$ with
$\xi_2$ and $\xi_1$ with $\xi_3$.
The four points
in \eqref{eq.vertices.CKS.Prop.2.7.basis} are therefore exactly their
four simultaneous eigenlines.  Notice that the formula is independent
of $c$; the dependence on $c$ is carried by the basis functions
$\theta_{\alpha,c}$ themselves.

Specialize now to $c=3/2$, so $V_c=\Theta_4(\tau)$ and
$\theta_{\alpha,3/2}=\theta_\alpha$.  Thus the four vertices in CKS
coordinates are obtained from
\eqref{eq.vertices.CKS.Prop.2.7.basis} by setting $c=3/2$.
Translation of the line bundle carries the pencil of quadrics containing
this copy of $E_\tau$ to the pencil of quadrics containing the copy of
$E_\tau$ in $\PP V^*$ and,
after a suitable ordering, gives
\begin{equation}
\label{eq.vertices.transport}
 \Psi^*(e_{\alpha,3/2})=e_\alpha,
 \qquad \alpha \in \ZZ_4.
\end{equation}
Here the $e_\alpha$ are the vertices used in the definition of $R_\eta$;
Proposition~\ref{prop.geometric.vertices} characterizes them by secant
lines, and Proposition~\ref{prop.vertices.Heisenberg} characterizes them
as the simultaneous eigenlines for $S^2$ and $T^2$.

\subsection{Jacobi's addition theorems}

The theta identities used below are specializations of Jacobi's
fundamental addition theorem for products of four theta functions.
Jacobi developed elementary addition formulas in the
\emph{Fundamenta Nova}~\cite[\S\S54, 61]{JacobiFundamenta} and treated
the general quartic identity in his lectures on elliptic functions,
published posthumously in \cite[pp.~497--538, especially
pp.~505--507]{JacobiTheta}.  For a modern derivation of the addition
formulas from Jacobi's five-term quartic identity, see
Mumford~\cite[Chapter~I, \S5]{MumfordTataI}.  Koornwinder
\cite{Koornwinder} proves that Weierstrass's three-term identity
\eqref{eq.Weierstrass.three-term} and Jacobi's five-term identity
\eqref{eq.Jacobi.five-term.four-theta} are equivalent, and discusses
their history and use.

The addition identities express products in the sum and
difference variables as linear combinations of diagonal products.  If
$a,b\in W$, there are constants $c_{rs}(a,b)$ such that
\[
 a(z_1-z_2)b(z_1+z_2)
 =\sum_{r,s\in\{0,1\}}c_{rs}(a,b)
   \vartheta_{rs}(2z_1)\vartheta_{rs}(2z_2).
\]
In particular, $a(z_1-z_2)b(z_1+z_2)\in D$.
Conversely, the same identities show that every element of $D$ is a
linear combination of products of this form.  Hence they induce an
isomorphism
\begin{equation}\label{eq.addition.isomorphism}
 W(u)\otimes W(v)\xrightarrow{\sim}D,
 \qquad
 a(u)\otimes b(v)\longmapsto a(z_1-z_2)b(z_1+z_2).
\end{equation}
This isomorphism is used in the proof of
Proposition~\ref{thm.dim.Ceta} to determine $C_\eta$.
Explicit formulas can be found in
Kharchev--Zabrodin~\cite[Sections~3.2 and~4.2]{KZ}.

The next four identities are
(4.7a), (4.8a), (4.9a), and (4.10a) in \cite{KZ}:
\begin{align}
\theta_2(0)^2\theta_1(u+v)\theta_1(u-v)
 &=\phantom{-}\theta_2(v)^2\theta_1(u)^2
   -\theta_1(v)^2\theta_2(u)^2,\label{eq:add1}\\
\theta_2(0)^2\theta_2(u+v)\theta_2(u-v)
 &=-\theta_1(v)^2\theta_1(u)^2
   +\theta_2(v)^2\theta_2(u)^2,\label{eq:add2}\\
\theta_2(0)^2\theta_3(u+v)\theta_3(u-v)
 &=\phantom{-}\theta_4(v)^2\theta_1(u)^2
   +\theta_3(v)^2\theta_2(u)^2,\label{eq:add3}\\
\theta_2(0)^2\theta_4(u+v)\theta_4(u-v)
 &=\phantom{-}\theta_3(v)^2\theta_1(u)^2
   +\theta_4(v)^2\theta_2(u)^2.\label{eq:add4}
\end{align}
These identities are used in the calculation immediately
preceding Theorem~\ref{thm:explicit-C}.

Assume $\overline\eta\notin E_\tau[2]$.  In this case the elements
$X_{ab}^{\otimes2}$ form a basis for $D$.  We rewrite the identities
above in terms of this basis.  Put
\[
 u=z_1+z_2,\qquad v=z_1-z_2.
\]
Since $u+v=2z_1$ and $u-v=2z_2$, equations
\eqref{eq:add1}--\eqref{eq:add4} give
\begin{align*}
X_{11}(z_1)X_{11}(z_2)
 &=\frac{\theta_1(\eta)^2}{\theta_2(0)^2}
   \bigl(\theta_2(v)^2\theta_1(u)^2
         -\theta_1(v)^2\theta_2(u)^2\bigr),\\
X_{10}(z_1)X_{10}(z_2)
 &=\frac{\theta_2(\eta)^2}{\theta_2(0)^2}
   \bigl(\theta_1(v)^2\theta_1(u)^2
         -\theta_2(v)^2\theta_2(u)^2\bigr),\\
X_{00}(z_1)X_{00}(z_2)
 &=\frac{\theta_3(\eta)^2}{\theta_2(0)^2}
   \bigl(\theta_4(v)^2\theta_1(u)^2
         +\theta_3(v)^2\theta_2(u)^2\bigr),\\
X_{01}(z_1)X_{01}(z_2)
 &=-\frac{\theta_4(\eta)^2}{\theta_2(0)^2}
   \bigl(\theta_3(v)^2\theta_1(u)^2
         +\theta_4(v)^2\theta_2(u)^2\bigr).
\end{align*}
These four identities give explicitly the change of basis between the
bases
\[
 \{\vartheta_{00}(2z_1)\vartheta_{00}(2z_2),
   \vartheta_{01}(2z_1)\vartheta_{01}(2z_2),
   \vartheta_{10}(2z_1)\vartheta_{10}(2z_2),
   \vartheta_{11}(2z_1)\vartheta_{11}(2z_2)\}
\]
and
\[
 \{\theta_1(v)^2\theta_1(u)^2,
   \theta_1(v)^2\theta_2(u)^2,
   \theta_2(v)^2\theta_1(u)^2,
   \theta_2(v)^2\theta_2(u)^2\}
\]
for $D$ under the isomorphism in \eqref{eq.addition.isomorphism}.

The proof of Theorem~\ref{thm:R-eta-basis} uses the following two
families of mixed addition identities:
\begin{subequations}\label{eq.mixed.addition.identities}
\begin{align}
\vartheta_{ab}(0)\vartheta_{ij}(0)
 \vartheta_{kl}(u+v)\vartheta_{11}(u-v)
&=\vartheta_{11}(u)\vartheta_{kl}(u)
  \vartheta_{ab}(v)\vartheta_{ij}(v)\notag\\
&\quad-\vartheta_{ab}(u)\vartheta_{ij}(u)
  \vartheta_{kl}(v)\vartheta_{11}(v),
\label{eq.mixed.addition.first}\\
\vartheta_{ab}(0)\vartheta_{ij}(0)
 \vartheta_{ab}(u+v)\vartheta_{ij}(u-v)
&=\vartheta_{ab}(u)\vartheta_{ij}(u)
  \vartheta_{ab}(v)\vartheta_{ij}(v)\notag\\
&\quad+(-1)^{k+l}\vartheta_{kl}(u)\vartheta_{11}(u)
  \vartheta_{kl}(v)\vartheta_{11}(v).
\label{eq.mixed.addition.second}
\end{align}
\end{subequations}
In \eqref{eq.mixed.addition.first} and
\eqref{eq.mixed.addition.second}, take $(ab,ij,kl)$ successively to be
\[
 (00,01,10),\qquad (01,10,00),\qquad (10,00,01).
\]
The two displayed families thereby give the six identities used in the
proof of Theorem~\ref{thm:R-eta-basis}.  In numbered theta
notation these are equations (4.11a)--(4.11f) in
Kharchev--Zabrodin~\cite{KZ}, translated from numbered theta notation
to characteristic notation.

We will also use Jacobi's five-term four-theta identity.  Given
$x_1,x_2,x_3,x_4\in\CC$, define
\begin{align*}
 y_1&=\tfrac12(x_1+x_2+x_3+x_4),&
 y_2&=\tfrac12(x_1+x_2-x_3-x_4),\\
 y_3&=\tfrac12(x_1-x_2+x_3-x_4),&
 y_4&=\tfrac12(x_1-x_2-x_3+x_4).
\end{align*}
Then
\begin{equation}\label{eq.Jacobi.five-term.four-theta}
 2\prod_{r=1}^4\vartheta_{11}(x_r)
 =\prod_{r=1}^4\vartheta_{11}(y_r)
     -\prod_{r=1}^4\vartheta_{10}(y_r)
     +\prod_{r=1}^4\vartheta_{00}(y_r)
     -\prod_{r=1}^4\vartheta_{01}(y_r).
\end{equation}

We will also use Weierstrass's three-term identity
\begin{equation}\label{eq.Weierstrass.three-term}
 \theta(z\mathbin\pm\alpha)\theta(\beta\mathbin\pm\gamma)
 +\theta(z\mathbin\pm\beta)\theta(\gamma\mathbin\pm\alpha)
 +\theta(z\mathbin\pm\gamma)\theta(\alpha\mathbin\pm\beta)=0.
\end{equation}

\subsection{Isomorphisms and anti-isomorphisms}

\begin{proposition}\label{prop:A-under-elliptic-automorphism}
Let $\mu:E_\tau\to E_\tau$ be an isomorphism of elliptic curves, so
$\mu(0)=0$, and choose a linear lift of the projective automorphism of
$\PP V^*$ induced by $\mu$.  Let $\eta,\eta_1,\eta_2\in\CC$.
\begin{enumerate}
\item If $\overline{\eta_2}=\mu(\overline{\eta_1})$, then there are
graded algebra isomorphisms
\[
 A(\eta_1)\cong A(\eta_2)
 \qquad\text{and}\qquad
 A(-\eta_2)\cong A(\eta_1)^{\mathrm{op}}.
\]
\item There is a canonical graded algebra isomorphism
\[
 A(-\eta)\cong A(\eta)^{\mathrm{op}}.
\]
\end{enumerate}
\end{proposition}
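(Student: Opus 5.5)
The plan is to work entirely with the relation spaces $R_\eta\subset V^{\otimes2}$, viewed as spaces of functions on $\PP V^*\times\PP V^*$. An algebra isomorphism will come from a linear automorphism $g$ of $V$ with $(g\otimes g)(R_{\eta_1})=R_{\eta_2}$. An anti-isomorphism will come from the flip $\sigma:V^{\otimes2}\to V^{\otimes2}$, $\sigma(f)(z_1,z_2)=f(z_2,z_1)$; this is legitimate because $TV/(\sigma R)\cong (TV/(R))^{\mathrm{op}}$ canonically.

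\emph{Step 1: $\mu$ respects the geometric data.} Since $\mu(0)=0$, we have $\mu^*\mathcal O_{E_\tau}(4[0])\cong\mathcal O_{E_\tau}(4[0])$. Hence $\mu$ extends to a projective automorphism $\hat\mu$ of $\PP V^*$ preserving $E_\tau$, and we choose a linear lift $g\in GL(V^*)$.

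The automorphism $\hat\mu$ carries each secant line $\overline{z,w-z}$ to $\overline{\mu z,\mu w-\mu z}$. So it permutes the pencil of quadrics containing $E_\tau$, sending $Q_0(w)$ to $Q_0(\mu w)$. Since $\mu$ preserves $E_\tau[2]$, it permutes the four singular members. By Proposition~\ref{prop.geometric.vertices}, the vertices are intrinsically the points off $E_\tau$ lying on infinitely many secants, so $\hat\mu$ permutes $\{e_0,e_1,e_2,e_3\}$. Since $\mu$ is a group homomorphism, $(\hat\mu\times\hat\mu)(\Gamma_{\eta_1})=\{(\mu z,\mu z+\mu\overline{\eta_1})\}=\Gamma_{\eta_2}$. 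Also $\hat\mu\times\hat\mu$ permutes the diagonal vertex pairs $(e_i,e_i)$.

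\emph{Step 2: transport the relations.} Pulling back along $\hat\mu^{-1}\times\hat\mu^{-1}$ uses the dual linear map $g^{-*}\otimes g^{-*}$ on $V^{\otimes2}$. This map sends functions vanishing on $\Gamma_{\eta_1}$ and at the vertex pairs to functions vanishing on $\Gamma_{\eta_2}$ and at the vertex pairs. Vanishing is well defined for homogeneous forms up to scalars, so the choice of lift is irrelevant. Thus $(g'\otimes g')(R_{\eta_1})=R_{\eta_2}$ with $g'=g^{-*}$, and $g'$ induces $TV\to TV$ descending to $A(\eta_1)\cong A(\eta_2)$.

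\emph{Step 3: the opposite algebra.} The flip $\sigma$ sends $\Gamma_\eta=\{(z,z+\eta)\}$ to $\{(z+\eta,z)\}=\Gamma_{-\eta}$, and fixes each pair $(e_i,e_i)$. Hence $\sigma(R_\eta)=R_{-\eta}$, giving the canonical isomorphism $A(-\eta)\cong A(\eta)^{\mathrm{op}}$ of (2). This requires no choice, which justifies "canonical".

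Combining (2) with Step 2 applied to $\eta_1$ and $\eta_2$ gives $A(-\eta_2)\cong A(\eta_2)^{\mathrm{op}}\cong A(\eta_1)^{\mathrm{op}}$, which is the remaining part of (1). Alternatively, compose $\sigma$ with $g'\otimes g'$ directly.

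The only point needing care is Step 1, namely that $\mu$ genuinely induces a linear automorphism of $\PP V^*$ that permutes the vertices. The cleanest justification of the vertex permutation is the intrinsic characterization via secant lines in Proposition~\ref{prop.geometric.vertices}, rather than any coordinate computation. If one wants a formula instead, $\mu=-1$ acts by $f(z)\mapsto f(-z)$, which fixes each $\vartheta_{ab}(2z)$ up to sign and hence fixes each $e_{ab}$.
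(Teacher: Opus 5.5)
Your proposal is correct and follows essentially the same route as the paper. You extend $\mu$ to $\PP V^*$ using $\mu^*\mathcal O_{E_\tau}(4[0])\cong\mathcal O_{E_\tau}(4[0])$, note that the lift carries $\Gamma_{\eta_1}$ onto $\Gamma_{\eta_2}$ and permutes the vertices, transport $R_{\eta_1}$ to $R_{\eta_2}$ by the contragredient map, and use the flip identity $\sigma(R_\eta)=R_{-\eta}$ for the opposite algebra. Your justification of the vertex permutation through the intrinsic secant-line characterization in Proposition~\ref{prop.geometric.vertices} is a slightly more explicit version of the paper's remark that the lift permutes the singular quadrics and hence their vertices.
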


\begin{proof}
For the first isomorphism in (1), put
$\mathcal L=\mathcal O_{E_\tau}(4[0])$.  Since $\mu(0)=0$, one has
$\mu^*\mathcal L\cong\mathcal L$.  Hence $\mu$ extends to a projective
automorphism $\widetilde\mu$ of $\PP V^*$; choose
$\Phi_\mu:V\to V$ so that the projective transformation induced by
$(\Phi_\mu^{-1})^*$ is $\widetilde\mu$.  The map
$\widetilde\mu\times\widetilde\mu$ carries
\[
 (z,z+\overline{\eta_1})
 \longmapsto
 \bigl(\mu(z),\mu(z)+\mu(\overline{\eta_1})\bigr),
\]
and therefore carries $\Gamma_{\eta_1}$ to $\Gamma_{\eta_2}$.  It also
permutes the four singular quadrics containing $E_\tau$, and hence
their four vertices.  Consequently,
$(\Phi_\mu\otimes\Phi_\mu)(R_{\eta_1})=R_{\eta_2}$, which gives
$A(\eta_1)\cong A(\eta_2)$.

To prove (2), let $\sigma:V^{\otimes2}\to V^{\otimes2}$ be the flip
$\sigma(u\otimes v)=v\otimes u$.  If $r$ is regarded as a form on
$\PP V^*\times\PP V^*$, then $\sigma(r)(p,q)=r(q,p)$, so the same graph
and vertex calculation gives $\sigma(R_\eta)=R_{-\eta}$.
The identity map from $V$ to the degree-one subspace of $(TV)^{\mathrm{op}}$
extends uniquely to an algebra isomorphism $TV\to(TV)^{\mathrm{op}}$;
on $V^{\otimes2}$ it is $\sigma$.  It therefore induces
\[
 A(\eta)\cong A(-\eta)^{\mathrm{op}}.
\]
Replacing $\eta$ by $-\eta$ gives (2).  Finally, (2), applied to
$\eta_2$, and the first isomorphism in (1) give
$A(-\eta_2)\cong A(\eta_1)^{\mathrm{op}}$.
\end{proof}

\section{Relations between the algebras
\texorpdfstring{$A(\eta)$, $S(\eta)$,
$Q_{4,1}(-\tfrac12\eta)$, and $\mathcal R(\eta)$}
{A(eta), S(eta), Q41(-eta/2), and R(eta)}}

Recall that $S(\eta)$ is defined only when
$\overline\eta\notin E_\tau[2]$.

In this section we prove the results in (1-1).

\subsection{The complementary subspace \texorpdfstring{$C_\eta$}{C-eta}}

The space $C_\eta$ has three important properties:
\begin{enumerate}
\item $K_\eta=C_\eta\oplus R_\eta$, hence the term
``complementary subspace.''  This is proved as part of the proof that
$A(\eta)\cong S(\eta)$ when $\overline\eta\notin E_\tau[2]$
(Theorem~\ref{prop:addition-formulas-relations}); the decomposition for
all $\eta$ is recorded in Corollary~\ref{cor:Keta.decomposition}.
\item $C_\eta$ is canonical in the sense that it is the only complement
to $R_\eta$ in $K_\eta$ that is an $H_4$-submodule of $K_\eta$; see
Proposition~\ref{prop.H4.unique.complement}.
\item The image of $C_\eta$ in $A(\eta)_2$ belongs to the center of
$A(\eta)$; see Theorem~\ref{thm:Ceta-central-all}.
\end{enumerate}
The significance of $C_\eta$ was first recognized by
Rains--Ruijsenaars~\cite[Appendix~B]{RR}.

Restriction to $\Gamma_\eta$ is the linear map
\[
 V^{\otimes2}\longrightarrow
 H^0\bigl(\Gamma_\eta,\mathcal O(1,1)|_{\Gamma_\eta}\bigr)
\]
whose kernel is $K_\eta$.  The geometric dimension calculation in
Proposition~\ref{prop.dim.Keta} below
shows that this map is surjective.  Consequently,
\[
 H^0\bigl(\Gamma_\eta,\mathcal O(1,1)|_{\Gamma_\eta}\bigr)
 \cong V^{\otimes2}/K_\eta.
\]
Hence $C_\eta=D\cap K_\eta={\rm ker}\bigl(D\longrightarrow
H^0(\Gamma_\eta,\mathcal O(1,1)|_{\Gamma_\eta})\bigr)$.
In theta-function terms,
the target is the space of restrictions to the graph: after identifying
$\Gamma_\eta$ with $E_\tau$ by $z\mapsto(z,z+\overline\eta)$, an element
$r\in D$ maps to the order-eight theta function
$z\mapsto r(z,z+\overline\eta)$,
which is a section of $\mathcal L\otimes t_\eta^*\mathcal L$.

\begin{proposition}\label{thm.dim.Ceta}
For all $\eta\in\CC$, the following statements hold.
\begin{enumerate}
\item $\dim C_\eta=2$.
\item $C_\eta=\theta(z_1-z_2\pm\eta)W(z_1+z_2)$.
\item $C_\eta=\{c\,\theta(z_1-z_2\pm\eta)
\theta(z_1+z_2\pm\gamma)\mid c,\gamma\in\CC\}$.
\item Our $C_\eta$ is the subspace denoted by $C_\eta$ in
Rains--Ruijsenaars~\cite[Appendix~B]{RR}.
\end{enumerate}
\end{proposition}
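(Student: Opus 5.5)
We will use the addition isomorphism \eqref{eq.addition.isomorphism},
\[
 W(u)\otimes W(v)\xrightarrow{\sim}D,\qquad a(u)\otimes b(v)\longmapsto a(z_1-z_2)b(z_1+z_2),
\]
and compute $C_\eta=D\cap K_\eta$ on the $W\otimes W$ side. An element of $D$ lies in $K_\eta$ exactly when it vanishes at $(z,z+\eta)$ for all $z$, i.e.\ when $z_1-z_2=-\eta$.

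Write a general element of $D$ as $F(z_1,z_2)=\sum_j a_j(z_1-z_2)b_j(z_1+z_2)$, with $\{b_1,b_2\}$ a basis of $W(v)$ and $a_1,a_2\in W(u)$. Restricting to the graph gives
\[
 F(z,z+\eta)=a_1(-\eta)\,b_1(2z+\eta)+a_2(-\eta)\,b_2(2z+\eta).
\]
The functions $z\mapsto b_j(2z+\eta)$ are linearly independent, since $v\mapsto b_j(v)$ are. Hence $F\in K_\eta$ if and only if $a_1(-\eta)=a_2(-\eta)=0$.

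The subspace of $W(u)$ vanishing at $u=-\eta$ is one-dimensional. Every element of $W$ is even, because $\theta(u\pm\gamma)$ is even in $u$. So the vanishing at $-\eta$ forces vanishing at $\pm\eta$, and this line is spanned by $\theta(u\pm\eta)$. This needs one check: $\theta(u\pm\eta)$ is not identically zero for any $\eta$, and the evaluation functional $W\to\CC$ at $-\eta$ is nonzero. Both hold since $W$ has no base points: the zeros $\pm\gamma$ can be chosen avoiding $-\eta$. It follows that
\[
 C_\eta=\CC\,\theta(z_1-z_2\pm\eta)\otimes W(z_1+z_2),
\]
which has dimension $1\cdot 2=2$. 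This proves (1) and (2).

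For (3), substitute the description $W=\{c\,\theta(v\pm\gamma)\}$ into (2).

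For (4), compare with the definition of $C_\eta$ in \cite[Appendix~B]{RR}. We expect it to be given either as exactly this family $\theta(z_1-z_2\pm\eta)\theta(z_1+z_2\pm\gamma)$, or as a span of two such functions with specific $\gamma$ (for example half-periods). In either case (3) identifies it as a two-dimensional subspace of our $C_\eta$, hence equal to it. We also need to check that the conventions agree: the definitions of $\theta$ and of $V$ coincide with theirs, as noted earlier.

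The main obstacle is justifying that the addition map is really an isomorphism onto $D$ for all $\eta$, including the degenerate ones. This is asserted in the text preceding the proposition and depends only on $\tau$, not on $\eta$, so it can be cited. The only remaining subtlety is the linear independence and evaluation argument above, which is uniform in $\eta$.
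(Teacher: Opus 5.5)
Your proposal is correct and follows the paper's proof essentially step for step. You transport $C_\eta=D\cap K_\eta$ through the addition isomorphism \eqref{eq.addition.isomorphism}, note that restriction to $\Gamma_\eta$ sends $a(u)b(v)$ to $a(-\eta)b(2z+\eta)$, and conclude that $C_\eta=\ker(\operatorname{ev}_{-\eta})\otimes W(v)$, with the kernel spanned by $\theta(u\pm\eta)$; your extra checks (independence of the $b_j(2z+\eta)$ and nonvanishing of $\operatorname{ev}_{-\eta}$ on $W$) only make explicit what the paper leaves implicit, and your handling of (4), like the paper's, is a direct comparison with the definition in \cite[Appendix~B]{RR}.
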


\begin{proof}
The restriction of a function $f(z_1,z_2)\in V^{\otimes2}$ to
$\Gamma_\eta$ is the function $z\mapsto f(z,z+\eta)$.  Under the
addition-theorem isomorphism \eqref{eq.addition.isomorphism}, where
$u=z_1-z_2$ and $v=z_1+z_2$, the restriction of $a(u)b(v)\in D$ is
$a(-\eta)b(2z+\eta)$.  Hence
\[
C_\eta=\ker\bigl(W(u)\xrightarrow{\operatorname{ev}_{-\eta}}\CC\bigr)
\otimes W(v).
\]
The kernel of evaluation at $-\eta$ is one-dimensional and is spanned
by $\theta(u\pm\eta)$.  Since $\dim W(v)=2$, it follows that
$\dim C_\eta=2$ and
$C_\eta=\theta(u\pm\eta)W(v)
=\theta(z_1-z_2\pm\eta)W(z_1+z_2)$.
This proves (1) and (2).  The notation introduced after
\eqref{eq.definition.Xab} gives
\[
 W(z_1+z_2)
 =\{c\,\theta(z_1+z_2\pm\gamma)\mid c,\gamma\in\CC\}.
\]
This proves (3).  The description in (3) is exactly the subspace
that Rains--Ruijsenaars denote by $C_\eta$ in
\cite[Appendix~B]{RR}, proving (4).
\end{proof}

\begin{proposition}\label{prop:Cnu-Ceta}
If $\nu\in\CC$ and $\eta\not\equiv\pm\nu\pmod{\Lambda_\tau}$, then
$C_\nu\cap C_\eta=\{0\}$ and $D=C_\nu\oplus C_\eta$.
\end{proposition}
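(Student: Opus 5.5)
We work entirely inside $D$, transported to $W(u)\otimes W(v)$ by the addition-theorem isomorphism \eqref{eq.addition.isomorphism}, with $u=z_1-z_2$ and $v=z_1+z_2$. By Proposition~\ref{thm.dim.Ceta}(2), this isomorphism identifies $C_\eta$ with $\CC\,\theta(u\pm\eta)\otimes W(v)$ and $C_\nu$ with $\CC\,\theta(u\pm\nu)\otimes W(v)$.

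Since $\dim D=4$ and $\dim C_\eta=\dim C_\nu=2$, it suffices to prove $C_\nu\cap C_\eta=\{0\}$. The equality $D=C_\nu\oplus C_\eta$ then follows by counting dimensions.

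The key step is to show that $\theta(u\pm\eta)$ and $\theta(u\pm\nu)$ are linearly independent in $W(u)$. Suppose $\theta(u\pm\nu)=c\,\theta(u\pm\eta)$ for some scalar $c$.
\begin{itemize}
\item If $c=0$, then $\theta(u\pm\nu)\equiv0$, which is false because $\theta$ has isolated zeros. So $c\neq0$.
\item If $c\neq0$, the two functions have the same zero set. The zeros of $\theta(u\pm\eta)$ are $u\equiv\pm\eta$ and those of $\theta(u\pm\nu)$ are $u\equiv\pm\nu$, modulo $\Lambda_\tau$.
\end{itemize}
In particular $\nu\equiv\eta$ or $\nu\equiv-\eta$ modulo $\Lambda_\tau$, contradicting the hypothesis. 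Hence the two functions are independent, and since $\dim W(u)=2$, they form a basis of $W(u)$.

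Tensoring with $W(v)$ gives
\[
W(u)\otimes W(v)=\bigl(\CC\,\theta(u\pm\nu)\otimes W(v)\bigr)\oplus\bigl(\CC\,\theta(u\pm\eta)\otimes W(v)\bigr).
\]
Transporting this decomposition back through \eqref{eq.addition.isomorphism} yields $D=C_\nu\oplus C_\eta$, and in particular $C_\nu\cap C_\eta=\{0\}$.

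The only subtle point is the zero-set comparison, which must hold modulo the lattice. It is handled by the fact that a theta function is determined up to a scalar by its zeros, as used in \eqref{eq.V.zeros}.
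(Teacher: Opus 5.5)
Your proof is correct and is essentially the paper's argument: both use Proposition~\ref{thm.dim.Ceta}(2) to write $C_\nu=\theta(u\pm\nu)W(v)$ and $C_\eta=\theta(u\pm\eta)W(v)$, and then observe that the zero divisors $(\nu)+(-\nu)$ and $(\eta)+(-\eta)$ differ, so the lines $\CC\theta(u\pm\nu)$ and $\CC\theta(u\pm\eta)$ in $W(u)$ are distinct. You only need the trivial fact that proportional functions have the same zeros, so the closing appeal to \eqref{eq.V.zeros}, which is the converse statement, is unnecessary.
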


\begin{proof}
By Proposition~\ref{thm.dim.Ceta},
$C_\nu=\theta(u\pm\nu)W(v)$ and
$C_\eta=\theta(u\pm\eta)W(v)$.
The zero divisors of the two functions in the first factors are
$(\nu)+(-\nu)$ and $(\eta)+(-\eta)$.  If these divisors were equal,
then $\eta=\nu$ or $\eta=-\nu$ in $E_\tau$.  Under the stated
hypothesis the one-dimensional subspaces
$\CC\theta(u\pm\nu)$ and $\CC\theta(u\pm\eta)$ of $W(u)$ are distinct,
proving both assertions.
\end{proof}

\begin{proposition}\label{prop:intersection-R-C}
For every $\eta\in\CC$, $C_\eta\cap R_\eta=\{0\}$.
\end{proposition}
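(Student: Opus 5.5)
The plan is to avoid the explicit description of $C_\eta$ from Proposition~\ref{thm.dim.Ceta} and to use only the inclusion $C_\eta\subseteq D$, together with the way the vertices sit relative to the basis of $D$. The key observation is that the vertex pairs $(e_{ij},e_{ij})$ act as coordinate functionals on $D$: evaluation at them detects every coefficient of an element of $D$.

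First, recall that $e_{ij}=[\epsilon_{ij}]$, where $\{\epsilon_{ij}\}$ is the basis of $V^*$ dual to $\{\vartheta_{ij}(2z)\}$. By Corollary~\ref{cor.eij.are.vertices}, these four points are exactly the vertices $e_0,e_1,e_2,e_3$ occurring in the definition of $R_\eta$. A tensor $r\in V^{\otimes2}$ is a bilinear form on $V^*\times V^*$, so $r(e_{ij},e_{ij})$ means $r$ evaluated at $(\epsilon_{ij},\epsilon_{ij})$. Vanishing there does not depend on the chosen representatives.

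Second, let $r\in C_\eta\cap R_\eta$. Since $C_\eta=D\cap K_\eta\subseteq D$, write
\[
 r=\sum_{a,b\in\{0,1\}}c_{ab}\,\vartheta_{ab}(2z_1)\vartheta_{ab}(2z_2).
\]
Using $\vartheta_{ab}(2z)(\epsilon_{ij})=\delta_{(a,b),(i,j)}$, evaluation gives
\[
 r(\epsilon_{ij},\epsilon_{ij})=c_{ij}.
\]
Since $r\in R_\eta$, it vanishes at all four vertex pairs, so every $c_{ij}=0$ and hence $r=0$. This proves $C_\eta\cap R_\eta=\{0\}$ for every $\eta$, including $\overline\eta\in E_\tau[2]$, because nothing about $\eta$ was used beyond $C_\eta\subseteq D$.

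The only point needing care is the identification of the abstract vertices with the coordinate points $e_{ij}$, but that is exactly Corollary~\ref{cor.eij.are.vertices}. The rest is a direct evaluation, so I expect no real obstacle. The same computation also shows that $D$ meets the subspace of tensors vanishing at all four $(e_i,e_i)$ trivially. This explains why this argument can replace the difference-operator argument of Rains--Ruijsenaars.
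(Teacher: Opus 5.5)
Your proof is correct and is essentially the paper's own argument. Both use only $C_\eta\subseteq D$, together with the fact that evaluation at the four vertex pairs $(e_{ij},e_{ij})$ (identified with the vertices via Corollary~\ref{cor.eij.are.vertices}) reads off the four coefficients $c_{ij}$ of an element of $D$ in the basis $\vartheta_{ab}(2z_1)\vartheta_{ab}(2z_2)$.
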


\begin{proof}
Write an element $r\in D$ in the form
\[
r=\sum_{a,b\in\{0,1\}}c_{ab}
 \vartheta_{ab}(2z_1)\vartheta_{ab}(2z_2).
\]
Recall from the paragraph preceding the definition of $A(\eta)$
that $e_i=[\epsilon_i]$ and that the $\epsilon_i$ are dual to the ordered
basis $(\vartheta_{00}(2z),\vartheta_{01}(2z),
\vartheta_{10}(2z),\vartheta_{11}(2z))$ for $V$. Thus evaluation at the
four pairs $(e_i,e_i)$ gives, in this order, $(c_{00},c_{01},c_{10},c_{11})$. 
If $r\in C_\eta\cap R_\eta$, the defining conditions for $R_\eta$ give
$r(e_i,e_i)=0$ for every $i$. Hence every $c_{ab}$ is zero, so $r=0$.
\end{proof}

\begin{remark}
In \cite[Appendix~B]{RR}, Rains--Ruijsenaars prove that 
$R^{\rm RR}_\eta\cap C_\eta=\{0\}$ where $R^{\rm RR}_\eta$ is the space of relations for their algebra ${\mathcal R}(\eta)$ \cite[(B.16), as a consequence of Lemma B.2]{RR}. 
In \S\ref{sect.RR-algebra}, we will show that $R^{\rm RR}_\eta=R_\eta$.
Thus Proposition~\ref{prop:intersection-R-C} agrees with their result.
Nevertheless it is noteworthy
that the paths to that conclusion are very different: the argument in \cite{RR} is analytic whereas the geometric definition of $A(\eta)$ reduces the problem to four elementary evaluations at the points $(e_i,e_i)$, and since $r \in D$, this 
reduces to the evaluations $\pi(r)(e_i)$.\footnote{See \S\ref{sect.RR-algebra} for the definition of  $R^{\rm RR}_\eta$: it is the span of an explicitly parametrized family of theta functions
$F_\eta(a,b,c_1,c_2)(z_1,z_2)\in V^{\otimes2}$. The proof of \cite[Lemma B.2]{RR} uses Sklyanin's homomorphism
$\mathcal A_\nu$ from $TV$ to an algebra of analytic difference operators which is implicit in \cite[(26)]{Sklyanin1983}; they show that $R^{\rm RR}_\eta \subset {\rm ker}(\mathcal A_\nu)$. 
A key step in their argument is analytic (their Lemma B.2, whose proof they say is {\it not obvious}); on the other hand, $\mathcal A_\nu$ sends the element
$
 \theta(z_1-z_2\pm\eta)\theta(z_1+z_2\pm\gamma)
$
in $C_\eta$ to the scalar $2\theta(2\nu+\eta\pm\gamma)$, which is
nonzero for generic $\nu$; thus $C_\eta \cap {\rm ker}(\mathcal A_\nu)=0$. Thus Rains--Ruijsenaars use
Sklyanin's analytic-difference-operator representation to prove the intersection result as part of a broader analysis of that
representation.}
\end{remark}

\begin{proposition}
\label{prop.dim.Keta}
For every $\eta\in\CC$, $\dim K_\eta=8$.
\end{proposition}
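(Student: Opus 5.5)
The plan is to compute $K_\eta$ as the kernel of the restriction map
\[
 \rho:V^{\otimes2}\longrightarrow H^0\bigl(\Gamma_\eta,\mathcal O(1,1)|_{\Gamma_\eta}\bigr),
 \qquad r\longmapsto\bigl(z\mapsto r(z,z+\eta)\bigr),
\]
and to show that $\rho$ is surjective onto an $8$-dimensional space. Then $\dim K_\eta=16-8=8$ by rank--nullity.

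\textbf{Target.} Identify $\Gamma_\eta$ with $E_\tau$ via $z\mapsto(z,z+\overline\eta)$. Under this identification the target is $H^0(E_\tau,\mathcal L\otimes t_\eta^*\mathcal L)$, where $\mathcal L=\mathcal O_{E_\tau}(4[0])$. This line bundle has degree $8$, so by Riemann--Roch on an elliptic curve its space of sections has dimension $8$. In theta-function language, $\rho(r)$ is an order-eight theta function in $z$ with a fixed sum of zeros, and that space is $8$-dimensional.

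\textbf{Surjectivity.} Here I would use the standard fact that for line bundles $\mathcal M,\mathcal N$ of degree $\ge 2$ on an elliptic curve, with at least one of degree $\ge3$, the multiplication map
\[
H^0(\mathcal M)\otimes H^0(\mathcal N)\to H^0(\mathcal M\otimes\mathcal N)
\]
is surjective (Mumford's lemma / Castelnuovo--Mumford). Apply it with $\mathcal M=\mathcal L$ and $\mathcal N=t_\eta^*\mathcal L$, both of degree $4$. Since $H^0(t_\eta^*\mathcal L)=\{z\mapsto g(z+\eta)\mid g\in V\}$, the map $f\otimes g\mapsto f(z)g(z+\eta)$ is exactly $\rho$ restricted to pure tensors. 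Hence $\rho$ is surjective. This argument is uniform in $\eta$, including $\eta=0$ (where $K_0$ becomes the kernel of $V^{\otimes2}\to S^2V\to H^0(\mathcal L^2)$) and torsion $\eta$.

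\textbf{Main obstacle.} The main obstacle is justifying this surjectivity cleanly without appealing to an external lemma. If a self-contained argument is needed, I would give a direct one. Fix a point $p\in E_\tau$ and let $V(-p)\subset V$ be the $3$-dimensional subspace of sections vanishing at $p$; then
\[
\dim\rho\bigl(V(-p)\otimes V\bigr)+\dim\rho\bigl(V\otimes V(-p)\bigr)
\]
can be analysed using base-point-freeness and the base-point-free pencil trick. Alternatively, the images $f(z)g(z+\eta)$, with $f$ and $g$ ranging over products of $\theta$'s, realise every divisor of degree $8$ with the correct sum: such a divisor can be split as $D_1+D_2$, where $D_1$ has degree $4$ with sum $0$ and $D_2$ has degree $4$ with sum $4\eta$. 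The image of $\rho$ therefore contains every element of the target up to scalar, and since the image is a linear subspace it must be all of the target.

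The splitting claim is easy: choose $7$ of the $8$ points freely and let the sum condition determine the last one. I would present this zero-divisor argument, since it uses only \eqref{eq.V.zeros} and Lemma~\ref{prop.translation.V.Vprime}.
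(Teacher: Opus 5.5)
Your framework matches the paper's. $K_\eta$ is the kernel of restriction to $\Gamma_\eta$, which is the multiplication map $H^0(E_\tau,\mathcal L)\otimes H^0(E_\tau,t_\eta^*\mathcal L)\to H^0(E_\tau,\mathcal L\otimes t_\eta^*\mathcal L)$ with $8$-dimensional target, and everything rests on its surjectivity. Citing the standard surjectivity lemma for multiplication maps on an elliptic curve (degrees $\ge 2$, at least one $\ge 3$) is legitimate and would finish the proof. The paper proves the same surjectivity directly. When $t_\eta^*\mathcal L\not\cong\mathcal L$ it uses the base-point-free pencil trick. When $t_\eta^*\mathcal L\cong\mathcal L$ it uses the fact that the elliptic normal quartic lies on exactly two independent quadrics, so the image of $S^2V$ has dimension $10-2=8$.

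The self-contained zero-divisor argument you say you would present, however, fails at its key step. Not every nonzero section $h$ of $\mathcal L\otimes t_\eta^*\mathcal L$ is, up to scalar, of the form $f(z)g(z+\eta)$ with $f,g\in V$. If $\operatorname{div}(h)=p_1+\cdots+p_8$, such a factorization forces $\operatorname{div}(f)$ to be four of the $p_i$ with sum $0$ in $E_\tau$. For general $h$, none of the $\binom{8}{4}=70$ four-point sub-multisets satisfies that nontrivial condition. ``Choose $7$ of the $8$ points freely'' produces divisors that split; it does not split a prescribed divisor. A dimension count makes this transparent: the rank-one images form the image of $\PP V\times\PP V$, of dimension at most $6$, inside $\PP H^0(E_\tau,\mathcal L\otimes t_\eta^*\mathcal L)\cong\PP^7$. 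What you need is that the rank-one images \emph{span} the target, not that they fill it. (Minor: the zeros of $z\mapsto g(z+\eta)$ sum to $-4\overline\eta$, not $4\eta$, by Lemma~\ref{prop.translation.V.Vprime} with $d=-\eta$.)

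A repair in your elementary spirit is the base-point-free pencil idea you mention but do not carry out. Choose $f_1,f_2\in V$ with no common zero and put $\mathcal N=t_\eta^*\mathcal L$. Then $f_1H^0(\mathcal N)\cap f_2H^0(\mathcal N)=f_1f_2H^0(\mathcal N\otimes\mathcal L^{-1})$. Hence $f_1H^0(\mathcal N)+f_2H^0(\mathcal N)$ has dimension $8-h^0(\mathcal N\otimes\mathcal L^{-1})$. This is $8$ unless $\mathcal N\cong\mathcal L$, i.e.\ unless $\overline\eta\in E_\tau[4]$. In that case it is only $7$, and you need an extra input, such as the two-quadric count above.
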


\begin{proof}
Put $\mathcal L=\mathcal O_{E_\tau}(4[0])$.  Under the identification
$\Gamma_\eta\cong E_\tau$ given by $(z,z+\eta)\leftrightarrow z$,
restriction to $\Gamma_\eta$ of the bidegree-$(1,1)$ forms in
$H^0(\PP V^*\times\PP V^*,\mathcal O(1,1))=V^{\otimes2}$ is the
multiplication map
\[
H^0(E_\tau,\mathcal L)\otimes H^0(E_\tau,t_\eta^*\mathcal L)
\longrightarrow H^0(E_\tau,\mathcal L\otimes t_\eta^*\mathcal L).
\]
This map is surjective. Here is a geometric verification. If
$t_\eta^*\mathcal L\not\cong\mathcal L$, choose a base-point-free pencil
$U\subset H^0(E_\tau,\mathcal L)$.  Thus $U$ is a two-dimensional space
of sections of $\mathcal L$ having no common zero; equivalently, the
evaluation map $U\otimes\mathcal O_{E_\tau}\to\mathcal L$ is surjective.
Its kernel is $\mathcal L^{-1}$.  Tensoring this evaluation sequence by
$t_\eta^*\mathcal L$ gives the ``base-point-free pencil trick''
\[
0\longrightarrow t_\eta^*\mathcal L\otimes\mathcal L^{-1}
\longrightarrow U\otimes t_\eta^*\mathcal L
\longrightarrow\mathcal L\otimes t_\eta^*\mathcal L\longrightarrow0.
\]
The first line bundle is nontrivial of degree $0$, so both its zeroth and
first cohomology vanish. Taking global sections therefore proves
surjectivity. If $t_\eta^*\mathcal L\cong\mathcal L$, the map is the quadratic
multiplication map for the elliptic quartic defined by $|\mathcal L|$. An elliptic
quartic is the complete intersection of two quadrics in $\PP^3$; hence
the image has dimension
\[
\dim S^2H^0(E_\tau,\mathcal L)-2=10-2=8,
\]
and is again all of $H^0(E_\tau,\mathcal L^2)$.

Finally, $\mathcal L\otimes t_\eta^*\mathcal L$ has degree $8$, so
$h^0(E_\tau,\mathcal L\otimes t_\eta^*\mathcal L)=8$.
Since $\dim V^{\otimes2}=16$, the kernel of the restriction map has
dimension $16-8=8$.
\end{proof}

\subsection{Proof that \texorpdfstring{$A(\eta)\cong S(\eta)$}
{A(eta) is isomorphic to S(eta)}}

When $\overline{\eta}\notin E_\tau[2]$ and  $(ab,ij,kl)$ is a cyclic permutation of $(00,01,10)$, we define
\begin{equation}\label{eq.alpha.ab}
\alpha_{ab}=(-1)^{a+b}
\left(
\frac{\vartheta_{ab}(\eta)\vartheta_{11}(\eta)}
     {\vartheta_{ij}(\eta)\vartheta_{kl}(\eta)}
\right)^{\!2}.
\end{equation}

\begin{theorem}\label{thm:R-eta-basis}
\label{prop:addition-formulas-relations}
The following statements hold.
\begin{enumerate}
\item If $\overline\eta\notin E_\tau[2]$, then $R_\eta$ has a basis
consisting of the six elements
\[
 \begin{cases}
 [X_{11},X_{ab}]-\alpha_{ab}\{X_{ij},X_{kl}\},\\
 \{X_{11},X_{ab}\}-[X_{ij},X_{kl}],
 \end{cases}
\]
as $(ab,ij,kl)$ ranges over the three cyclic permutations of
$(00,01,10)$.  Hence
\[
 A(\eta)\cong S(\eta)
\]
via the isomorphism that sends the class $x_{ab}$ of $X_{ab}$ to the
standard generator $x_{ab}$ of $S(\eta)$.

\item Suppose $\overline\eta\in E_\tau[2]$.  If
$\overline\eta=0$, then a basis for $R_\eta$ consists of the six
commutators of distinct elements in the basis
$\{Y_{00},Y_{01},Y_{10},
Y_{11}\}$.

If $\overline\eta\ne0$, let $V_+$ and $V_-$ be the two
two-dimensional eigenspaces for a lift to $V$ of translation by
$-\overline\eta$.  Up to interchanging the signs, they are
\[
\begin{array}{c|c|c}
\overline\eta&V_+&V_-\\ \hline
\frac12&
\Span\{Y_{00},Y_{01}\}&
\Span\{Y_{10},Y_{11}\}\\
\frac12\tau&
\Span\{Y_{00},Y_{10}\}&
\Span\{Y_{01},Y_{11}\}\\
\frac12(1+\tau)&
\Span\{Y_{00},Y_{11}\}&
\Span\{Y_{01},Y_{10}\}.
\end{array}
\]
If $(u_1,u_2)$ and $(v_1,v_2)$ are the displayed bases for $V_+$
and $V_-$, respectively, then
\[
 [u_1,u_2],\quad [v_1,v_2],\quad
 \{u_i,v_j\}\quad(1\leq i,j\leq2)
\]
is a basis for $R_\eta$.

\item For every $\eta\in\CC$ and every cyclic permutation
$(ab,ij,kl)$ of $(00,01,10)$, write
$\vartheta_{pq}=\vartheta_{pq}(\eta)$, put
$\bar a=1-a$ and $\bar b=1-b$, and define
\begin{align*}
\rho_{\bar a\bar b}(\eta)&:=\vartheta_{ij}\vartheta_{kl}[Y_{11},Y_{ab}]
-(-1)^{a+b}\vartheta_{ab}\vartheta_{11}\{Y_{ij},Y_{kl}\},\\
\rho'_{\bar a\bar b}(\eta)&:=\vartheta_{11}\vartheta_{ab}\{Y_{11},Y_{ab}\}
-\vartheta_{ij}\vartheta_{kl}[Y_{ij},Y_{kl}].
\end{align*}
Then the six tensors $\rho_{ab}(\eta),\rho'_{ab}(\eta)$,
$ab\in\{11,10,01\}$, form a basis for $R_\eta$.
\end{enumerate}
\end{theorem}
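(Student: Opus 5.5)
The strategy is to prove (3) first, since it holds for every $\eta$, and then read off (1) and (2) as specializations. I would proceed in three steps: compute $\dim R_\eta$, show that each of the six tensors lies in $R_\eta$, and show that the six are linearly independent.

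\emph{Dimension.} By Proposition~\ref{prop.dim.Keta}, $\dim K_\eta=8$. By Propositions~\ref{thm.dim.Ceta} and~\ref{prop:intersection-R-C}, $C_\eta$ is a $2$-dimensional subspace of $K_\eta$ that meets $R_\eta$ trivially. Now consider the map $K_\eta\to\CC^4$, $r\mapsto(r(e_i,e_i))_i$. Its kernel is $R_\eta$, so $\dim R_\eta\ge 8-4=4$, and $\dim R_\eta\le 8-2=6$. Hence it suffices to exhibit six linearly independent elements of $R_\eta$.

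\emph{Membership.} Vanishing at $(e_i,e_i)$ is immediate. Indeed, $e_{pq}$ is the point where all $Y$'s except $Y_{pq}$ vanish, so evaluating $[Y_{11},Y_{ab}]$ or $\{Y_{ij},Y_{kl}\}$ (distinct indices) at $(e_{pq},e_{pq})$ gives $0$. For vanishing on $\Gamma_\eta$, substitute $z_1=z$, $z_2=z+\eta$. Then
\[
\vartheta_{11}(2z)\vartheta_{ab}(2z+2\eta)-\vartheta_{ab}(2z)\vartheta_{11}(2z+2\eta)
\]
and the corresponding symmetric combination of $\vartheta_{ij},\vartheta_{kl}$ should be expressed through the mixed addition identities \eqref{eq.mixed.addition.first} and \eqref{eq.mixed.addition.second}. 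For this I would take $u=2z+\eta$ and $v=\eta$ (or $v=-\eta$), so that $u\pm v\in\{2z+2\eta,\,2z\}$. Using the parity of $\vartheta_{11}$ (odd) and of the others (even), the antisymmetrized and symmetrized products become
\[
\vartheta_{ab}(0)\vartheta_{ij}(0)^{-1}\times\bigl(\text{products of }\vartheta(u)\text{'s times }\vartheta_{\cdot}(\eta)\text{'s}\bigr).
\]
The coefficients $\vartheta_{ij}(\eta)\vartheta_{kl}(\eta)$ and $(-1)^{a+b}\vartheta_{ab}(\eta)\vartheta_{11}(\eta)$ are chosen exactly so that these cancel. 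The factors $\gamma_{ab}\in\{1,\ii\}$ in $Y_{ab}$ should absorb the sign $(-1)^{k+l}$ in \eqref{eq.mixed.addition.second}. This sign and index bookkeeping, which fixes which identity and which assignment of $u,v$ applies, is the step I expect to be the main obstacle. I would check it first for the cyclic triple $(00,01,10)$ and then transport it to the other triples using the $H_4$-action (Lemma~\ref{lem.H4.stability}), which permutes the $Y_{ab}$ up to scalars.

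\emph{Independence and specializations.} The six tensors lie in the six distinct planes $\Span\{Y_{11}\otimes Y_{ab},Y_{ab}\otimes Y_{11},Y_{ij}\otimes Y_{kl},Y_{kl}\otimes Y_{ij}\}$, taken pairwise: $\rho_{\bar a\bar b}$ and $\rho'_{\bar a\bar b}$ share a $4$-dimensional block, and different $ab$ give disjoint blocks. So independence reduces to showing that the $2\times 4$ coefficient matrix in each block has rank $2$. In the block, $\rho$ has coefficients $(c,-c,-d,-d)$ and $\rho'$ has coefficients $(d',d',-c,c)$, with $c=\vartheta_{ij}\vartheta_{kl}$, $d=\pm\vartheta_{ab}\vartheta_{11}$, $d'=\vartheta_{11}\vartheta_{ab}$. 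The rank is $2$ unless $c=d=0$, and $\vartheta_{ij}(\eta)$ and $\vartheta_{kl}(\eta)$ have no common zero. This proves (3).

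For (1), when $\overline\eta\notin E_\tau[2]$, all $\vartheta_{pq}(\eta)\ne0$. Rewrite in terms of $X_{pq}=\vartheta_{pq}(\eta)Y_{pq}$ and divide $\rho$ by $\vartheta_{ij}\vartheta_{kl}\vartheta_{11}\vartheta_{ab}$ to obtain the displayed relations, with $\alpha_{ab}$ as in \eqref{eq.alpha.ab}; the isomorphism with $S(\eta)$ follows from \eqref{eq.Sklyanin.relations}. For (2), substitute $\overline\eta\in E_\tau[2]$: at $\overline\eta=0$ only $\vartheta_{11}$ vanishes, and at the other $2$-torsion points exactly one $\vartheta_{pq}(\eta)$ vanishes. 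The rescaled $\rho,\rho'$ then become the listed commutators and anticommutators, matching the tabulated $V_\pm$ after a short case check.
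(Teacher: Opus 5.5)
Your overall route is the paper's: squeeze $6\le\dim R_\eta\le 6$ using $\dim K_\eta=8$, $\dim C_\eta=2$ and $C_\eta\cap R_\eta=\{0\}$, prove membership with the mixed addition identities at $u=2z+\eta$, $v=\mp\eta$, get independence from the three disjoint four-dimensional blocks, and then specialize. One step of your plan fails as stated: the $H_4$-action cannot carry the $\Gamma_\eta$-verification for the triple $(00,01,10)$ to the other two triples. By the formulas following Lemma~\ref{lem.H4.action.on.V}, $S$ and $T$ act on the index set $\{00,01,10,11\}$ (up to scalars on the $Y_{pq}$) by the double transpositions $(00\;01)(10\;11)$ and $(00\;10)(01\;11)$. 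Both preserve each of the three pairings $\{\{11,ab\},\{ij,kl\}\}$. Hence $H_4$ maps each block $\Span\{Y_{11}\otimes Y_{ab},Y_{ab}\otimes Y_{11},Y_{ij}\otimes Y_{kl},Y_{kl}\otimes Y_{ij}\}$ into itself. For instance, a direct computation gives $(S\otimes S)\rho_{11}=\rho'_{11}$. This matches Proposition~\ref{prop.H4.unique.complement}: the spans $\Span\{\rho_{\bar a\bar b},\rho'_{\bar a\bar b}\}$ are the summands $K_{\eta,\bar a\bar b}$, on which $S^2$ and $\widetilde T^2$ act by different characters, so no element of $H_4$ moves one onto another. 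Without a replacement argument, membership of four of your six tensors in $R_\eta$ is therefore unproved.

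The repair is to do the theta computation once per cyclic triple; $H_4$ can at most give you $\rho'_{\bar a\bar b}$ from $\rho_{\bar a\bar b}$, reducing six checks to three. This is what the paper does. For the block of $\rho_{\bar a\bar b},\rho'_{\bar a\bar b}$, take \eqref{eq.mixed.addition.first} and \eqref{eq.mixed.addition.second} for the cyclic triple whose third entry is $ab$, since those identities pair $\vartheta_{11}$ with that third index. Replace $v$ by $-v$ and add and subtract, which separates symmetric from antisymmetric parts. Then cross-multiply the two resulting identities that share the same $u$-factor, and set $(u,v)=(2z+\eta,-\eta)$. The factors $\gamma_{pq}$ then produce the signs: for $\rho$ the sign $(-1)^{a+b}$ comes from the $\gamma$'s alone, and for $\rho'$ the $\gamma$'s combine with $(-1)^{k+l}$.

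One smaller correction, in the independence step. The fact you need is not that $\vartheta_{ij}(\eta)$ and $\vartheta_{kl}(\eta)$ have no common zero. It is that $\vartheta_{ij}\vartheta_{kl}$ and $\vartheta_{ab}\vartheta_{11}$ have no common zero, which holds because no two of the four $\vartheta_{pq}(\eta)$ vanish at the same $\eta$.
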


\begin{proof}
We first prove (3).  Every tensor displayed in part~(3) vanishes at
$(e_s,e_s)$ for $s=0,1,2,3$, because every term involves two distinct
members of the basis $\{Y_{ab}\}$.

It remains to prove vanishing on $\Gamma_\eta$.  For compactness, write
\[
\vartheta_{p_1,\ldots,p_m}(t_1,\ldots,t_m)
=\prod_{r=1}^m\vartheta_{p_r}(t_r).
\]
Use \eqref{eq.mixed.addition.first} and
\eqref{eq.mixed.addition.second} for each of the three cyclic
permutations $(ab,ij,kl)$ listed after those equations.
Put $c=\vartheta_{ab}(0)\vartheta_{ij}(0)$ and
$\epsilon=(-1)^{k+l}$.  Replace $v$ by $-v$ in the two identities and
use the fact that $\vartheta_{11}$ is odd while the other three theta
functions are even.  Adding and subtracting the original and the
resulting identities gives
\begin{align*}
c(\vartheta_{kl,11}+\vartheta_{11,kl})(u+v,u-v)
 &=2\vartheta_{11}(u)\vartheta_{kl}(u)
      \vartheta_{ab}(v)\vartheta_{ij}(v),\\
c(\vartheta_{kl,11}-\vartheta_{11,kl})(u+v,u-v)
 &=-2\vartheta_{ab}(u)\vartheta_{ij}(u)
      \vartheta_{kl}(v)\vartheta_{11}(v),\\
c(\vartheta_{ab,ij}-\vartheta_{ij,ab})(u+v,u-v)
 &=2\epsilon\vartheta_{kl}(u)\vartheta_{11}(u)
      \vartheta_{kl}(v)\vartheta_{11}(v),\\
c(\vartheta_{ab,ij}+\vartheta_{ij,ab})(u+v,u-v)
 &=2\vartheta_{ab}(u)\vartheta_{ij}(u)
      \vartheta_{ab}(v)\vartheta_{ij}(v).
\end{align*}
Cross-multiplying the first with the third, and the second with the
fourth, gives
\begin{align}
\frac{(\vartheta_{kl,11}+\vartheta_{11,kl})(u+v,u-v)}
     {(\vartheta_{ab,ij}-\vartheta_{ij,ab})(u+v,u-v)}
&=(-1)^{k+l}
  \frac{\vartheta_{ab,ij}(v,v)}{\vartheta_{kl,11}(v,v)},
\label{eq:addition-ratio-one}\\
\frac{(\vartheta_{kl,11}-\vartheta_{11,kl})(u+v,u-v)}
     {(\vartheta_{ab,ij}+\vartheta_{ij,ab})(u+v,u-v)}
&=-\frac{\vartheta_{kl,11}(v,v)}{\vartheta_{ab,ij}(v,v)}.
\label{eq:addition-ratio-two}
\end{align}
These cross-multiplied identities remain valid when one or more of the
theta constants vanish.  Setting $(u,v)=(2z+\eta,-\eta)$ and using
$Y_{pq}(z)=\gamma_{pq}\vartheta_{pq}(2z)$ shows directly that the six
tensors in (3) vanish on $\Gamma_\eta$.  Hence they belong to $R_\eta$
for every $\eta$.

For a fixed cyclic triple, $\rho_{\bar a\bar b}$ is a linear combination of
$[Y_{11},Y_{ab}]$ and
$\{Y_{ij},Y_{kl}\}$, whereas $\rho'_{\bar a\bar b}$ is a
linear combination of
$\{Y_{11},Y_{ab}\}$ and
$[Y_{ij},Y_{kl}]$.  Neither tensor is zero: if
$\overline{\eta}\notin E_\tau[2]$, none of the four theta constants
vanishes, and if $\overline{\eta}\in E_\tau[2]$, exactly one of them
vanishes.
The two tensors are therefore independent.  The three cyclic triples
involve disjoint ordered basis tensors, so all six are independent.
Thus $\dim R_\eta\geq6$.  Since
$\dim C_\eta=2$, $R_\eta\oplus C_\eta$ is a subspace of $K_\eta$, and
$\dim K_\eta=8$, we conclude that $\dim R_\eta=6$.  The six independent
tensors therefore form a basis, proving (3).

If $\overline\eta\notin E_\tau[2]$, substitute
$X_{pq}=\vartheta_{pq}(\eta)Y_{pq}$ in
the formulas in part~(3) and divide each relation by
its nonzero common scalar.  The six resulting tensors are precisely the
six relations displayed in part~(1).  After passage to the quotient,
these are Sklyanin's defining relations under the identification of the
class $x_{ab}$ of $X_{ab}$ with the standard generator $x_{ab}$ of
$S(\eta)$, proving (1).

If $\overline\eta=0$, then $\vartheta_{11}(\eta)=0$ and the other
three theta constants are nonzero.  The six tensors in (3) reduce, up
to nonzero scalars, to the six commutators.  At a nonzero point of
$E_\tau[2]$, specializing the same formulas gives two commutators
within the two translation eigenspaces and the four anticommutators
between them.  The usual translation formulas for theta functions give
the three displayed decompositions of $V$.  Hence (2) follows.
\end{proof}

We highlight one fact that appears in the proof of
Theorem~\ref{prop:addition-formulas-relations}.

\begin{corollary}\label{cor:dim.Reta}
For all $\eta\in\CC$, $\dim R_\eta=6$.
\end{corollary}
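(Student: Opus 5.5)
The corollary is a dimension count that sandwiches $R_\eta$ inside $K_\eta$. I would combine a lower bound from explicit elements with an upper bound from the complement $C_\eta$. No new computation is required; every ingredient is stated earlier.

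\emph{Lower bound.} Part~(3) of Theorem~\ref{prop:addition-formulas-relations} supplies the six tensors $\rho_{ab}(\eta),\rho'_{ab}(\eta)$ for $ab\in\{11,10,01\}$. They are defined for every $\eta\in\CC$, and they lie in $R_\eta$ for two reasons. First, vanishing at each $(e_s,e_s)$ is automatic, since every term pairs two distinct $Y$'s. Second, vanishing on $\Gamma_\eta$ follows from the mixed addition identities \eqref{eq.mixed.addition.identities} evaluated at $(u,v)=(2z+\eta,-\eta)$. These six tensors are linearly independent. For each cyclic triple, the two tensors $\rho,\rho'$ live in spans of disjoint sets of basis tensors (commutator/anticommutator pairs), and each is nonzero because at most one of the four theta constants $\vartheta_{pq}(\eta)$ vanishes. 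The three triples also use pairwise disjoint ordered basis tensors. Hence $\dim R_\eta\ge 6$.

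\emph{Upper bound.} By definition $R_\eta\subseteq K_\eta$, and $C_\eta=D\cap K_\eta\subseteq K_\eta$. Proposition~\ref{prop:intersection-R-C} gives $C_\eta\cap R_\eta=\{0\}$, which reduces to reading off the coefficients $c_{ab}$ at the four vertex pairs. Therefore $R_\eta\oplus C_\eta\subseteq K_\eta$. Proposition~\ref{thm.dim.Ceta} gives $\dim C_\eta=2$ and Proposition~\ref{prop.dim.Keta} gives $\dim K_\eta=8$, so $\dim R_\eta\le 6$, and hence $\dim R_\eta=6$. As a byproduct, $K_\eta=C_\eta\oplus R_\eta$ for every $\eta$.

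\emph{Main obstacle.} The only delicate point is nonvanishing of the $\rho$'s uniformly in $\eta$, including on $E_\tau[2]$. There I would use that $\vartheta_{pq}(\eta)=0$ for exactly one characteristic $pq$ when $\overline\eta\in E_\tau[2]$. Each $\rho,\rho'$ has two coefficients, each a product of two distinct theta constants, so at least one coefficient survives. The surjectivity argument behind $\dim K_\eta=8$, via the base-point-free pencil trick or the complete-intersection case, is already in hand.
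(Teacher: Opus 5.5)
Your proposal is correct and is essentially the argument the paper gives inside the proof of Theorem~\ref{prop:addition-formulas-relations}(3): the six tensors $\rho_{ab}(\eta),\rho'_{ab}(\eta)$ give $\dim R_\eta\ge 6$, and $R_\eta\oplus C_\eta\subseteq K_\eta$ with $\dim C_\eta=2$ and $\dim K_\eta=8$ gives the reverse bound. One small point about the nonvanishing step: at least one coefficient of each $\rho$ or $\rho'$ survives because its two coefficients involve \emph{complementary} pairs of the four theta constants; the fact that each coefficient is a product of two distinct constants is not enough on its own.
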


\begin{proposition}\label{prop:order-four-relations}
Suppose $\overline\eta$ has order $4$.  After a cyclic
relabelling of $x_1,x_2,x_3$, there is an
$\alpha\in\CC-\{0,\pm1\}$ such that $A(\eta)$ has generators
$x_0,x_1,x_2,x_3$ and defining relations
\begin{align*}
 [x_0,x_1]&=\alpha\{x_2,x_3\},&
 \{x_0,x_1\}&=[x_2,x_3],\\
 x_0x_2&=x_3x_1,& x_2x_0&=-x_1x_3,\\
 x_0x_3&=-x_2x_1,& x_3x_0&=x_1x_2.
\end{align*}
Equivalently, $A(\eta)\cong S(\alpha,1,-1)$.
\end{proposition}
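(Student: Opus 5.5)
Since $\overline\eta$ has order $4$, it does not lie in $E_\tau[2]$. So Theorem~\ref{prop:addition-formulas-relations}(1) applies, and $A(\eta)\cong S(\alpha_{00},\alpha_{01},\alpha_{10})$ with the $\alpha_{ab}$ given by \eqref{eq.alpha.ab}. The plan is to compute these three parameters explicitly when $2\overline\eta$ is a nonzero $2$-torsion point. I expect to find that, after a cyclic relabelling of $(00,01,10)$, two of them equal $1$ and $-1$. Then I will rewrite the corresponding four relations as the monomial-type relations in the statement.

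\emph{Step 1.} Write $\omega=2\eta\in\{\tfrac12,\tfrac12\tau,\tfrac12(1+\tau)\}$ modulo $\Lambda_\tau$. Then $\eta\equiv-\eta+\omega$. The translation formulas for the $\vartheta_{ab}$ by half periods \cite[Table~0]{MumfordTataI} give
\[
\vartheta_{ab}(\eta)=c_{ab}\,\vartheta_{\sigma(ab)}(-\eta),
\]
where $\sigma$ is the involution of the characteristics determined by $\omega$ and the $c_{ab}$ are explicit factors. For $\omega=\tfrac12$, $\sigma$ swaps $00\leftrightarrow01$ and $10\leftrightarrow11$, and all $c_{ab}=\pm1$. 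For the other $\omega$ there is a common exponential factor $e(\cdot)$, which cancels in the ratios below. Since $\vartheta_{11}$ is odd and the other three are even, this gives $\vartheta_{ab}(\eta)^2=\pm\vartheta_{\sigma(ab)}(\eta)^2$ for the two $\sigma$-pairs. Substituting into \eqref{eq.alpha.ab} yields, for $\omega=\tfrac12$, $\alpha_{00}=1$ and $\alpha_{01}=-1$. The remaining parameter is then
\[
\alpha:=\alpha_{10}=-\left(\vartheta_{11}(\eta)/\vartheta_{00}(\eta)\right)^{4}.
\]
The other two values of $\omega$ are handled the same way, with a cyclic relabelling. One can also reduce to $\omega=\tfrac12$ using Proposition~\ref{prop:A-under-elliptic-automorphism} together with the $H_4$-symmetry. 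The identity \eqref{eq.Sklyanin.parameter.identity} is then automatic, which is a useful check on signs.

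\emph{Step 2.} Substitute $\alpha_{00}=1$ and $\alpha_{01}=-1$ into \eqref{eq.Sklyanin.relations}. The pair $[x_{11},x_{00}]=\{x_{01},x_{10}\}$ and $\{x_{11},x_{00}\}=[x_{01},x_{10}]$ gives, on adding and subtracting,
\[
x_{11}x_{00}=x_{01}x_{10},\qquad x_{00}x_{11}=-x_{10}x_{01}.
\]
The pair for $\alpha_{01}=-1$ similarly gives $x_{11}x_{01}=-x_{10}x_{00}$ and $x_{01}x_{11}=x_{00}x_{10}$. Renaming $x_0=x_{11}$, $x_1=x_{10}$, $x_2=x_{00}$, $x_3=x_{01}$ (possibly with sign or scalar adjustments of generators to match the displayed signs) gives the six relations in the statement and $A(\eta)\cong S(\alpha,1,-1)$.

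\emph{Step 3, the main obstacle.} It remains to show $\alpha\notin\{0,\pm1\}$. First, $\alpha\neq0$ because $\vartheta_{11}(\eta)\neq0$ off $E_\tau[2]$. Excluding $\alpha=\pm1$ means showing $\vartheta_{11}(\eta)^4\neq\pm\vartheta_{00}(\eta)^4$ at the quarter period. The Jacobi quartic identity is trivially satisfied there, so it does not help.

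I would first try the quarter-period (Landen-type) formulas, which express $\vartheta_{00}(\tfrac14)^2$ and $\vartheta_{11}(\tfrac14)^2$ through theta constants of modulus $2\tau$. The ratio then becomes a function of $\tau$ related to the modular $\lambda$-function, and I would check that $\lambda$ omits the relevant values ($0,1$).

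Failing that, I would argue geometrically. If $\alpha=\pm1$, two of the three parameters would coincide up to sign. Then $\{\alpha_{00},\alpha_{01},\alpha_{10}\}$ would have the degenerate shape treated in Corollary~\ref{cor.alpha.parameters.and.four.torsion}. Via the $H_4$-action and the vertex characterization (Proposition~\ref{prop.vertices.Heisenberg}), this would force $\overline\eta$ into $E_\tau[2]$, a contradiction.
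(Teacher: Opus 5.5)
Steps~1 and~2 follow the paper's route: identify $A(\eta)$ with $S(\alpha_{00},\alpha_{01},\alpha_{10})$ by Theorem~\ref{prop:addition-formulas-relations}(1), show that two parameters are $1$ and $-1$, and substitute. The paper simply asserts the $(1,-1)$ pair. Your half-period computation is correct: at $\eta=\tfrac14$ it gives $(\alpha_{00},\alpha_{01},\alpha_{10})=(1,-1,-(\vartheta_{11}(\eta)/\vartheta_{00}(\eta))^4)$.

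A shorter, uniform justification is the one in the proof of Corollary~\ref{cor.alpha.parameters.and.four.torsion}. Exactly one of $\vartheta_{00}(2\eta),\vartheta_{01}(2\eta),\vartheta_{10}(2\eta)$ vanishes, so exactly one parameter equals $-1$. Then \eqref{eq.Sklyanin.parameter.identity} forces a second to equal $1$. This also gives $\alpha\ne-1$ for free.

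In Step~2 you slipped on the second pair. The relations $\alpha_{01}=-1$ give $x_{11}x_{01}=-x_{00}x_{10}$ and $x_{01}x_{11}=x_{10}x_{00}$. With these, your renaming $(x_0,x_1,x_2,x_3)=(x_{11},x_{10},x_{00},x_{01})$ produces the displayed relations exactly, so your hedge about sign or scalar adjustments is unnecessary.

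Your alternative reduction to $\omega=\tfrac12$ via Proposition~\ref{prop:A-under-elliptic-automorphism} and $H_4$ does not work. For generic $\tau$ the only automorphisms of $E_\tau$ fixing $0$ are $\pm1$, and these fix $2\overline\eta$. The group $H_4$ acts by automorphisms of $A(\eta)$ and does not change $\eta$. So the cases $2\overline\eta=\tfrac12\tau$ and $2\overline\eta=\tfrac12(1+\tau)$ must be computed directly.

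The genuine gap is Step~3: nothing in the proposal proves $\alpha\ne1$. The paper does not prove this either; it cites \cite[\S1.9]{SmithStafford}.

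Your geometric fallback cannot supply it. The triple $(1,-1,1)$ satisfies \eqref{eq.Sklyanin.parameter.identity} and is compatible with everything proved in Corollary~\ref{cor.alpha.parameters.and.four.torsion}. Neither the $H_4$-action nor the vertex characterization involves $\alpha$ at all.

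Your Landen idea does work. At $\eta=\tfrac14$ the product formulas give $\alpha=-\vartheta_{10}(0\mid2\tau)^2/\vartheta_{00}(0\mid2\tau)^2$. Hence $\alpha^2=\lambda(2\tau)$, and $\lambda$ omits $0$ and $1$ on $\HH$.

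A more elementary route uses an identity already in the paper. Put $P=\vartheta_{00}(\tfrac14)=\vartheta_{01}(\tfrac14)$ and $S=\vartheta_{11}(\tfrac14)=-\vartheta_{10}(\tfrac14)$. Setting $u=v=\tfrac14$ in \eqref{eq.mixed.addition.second} for the triples $(00,01,10)$ and $(01,10,00)$ gives $P^4-S^4=\vartheta_{00}(0)\vartheta_{01}(0)^3$ and $2P^2S^2=\vartheta_{00}(0)\vartheta_{01}(0)\vartheta_{10}(0)^2$. If $\alpha=1$, i.e.\ $S^4=-P^4$, then squaring the second identity and using the first gives $\vartheta_{10}(0)^4=-\vartheta_{01}(0)^4$. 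By \eqref{eq.Jacobi.quartic.identity.intro} at $z=0$ this forces $\vartheta_{00}(0)=0$, which is absurd. The other order-four points are treated the same way.
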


\begin{proof}
Theorem~\ref{prop:addition-formulas-relations}(1) identifies $A(\eta)$
with $S(\alpha_1,\alpha_2,\alpha_3)$.  When $\overline\eta$ has order
$4$, two of the structure constants are $1$ and $-1$; after a cyclic
relabelling, the parameter triple is $(\alpha,1,-1)$ with
$\alpha\notin\{0,\pm1\}$.  Smith and Stafford isolate these three
cyclically equivalent cases in \cite[\S1.9]{SmithStafford}.
Substituting $\alpha_2=1$ and $\alpha_3=-1$ into the six Sklyanin
relations gives the displayed presentation.
\end{proof}

\begin{corollary}\label{cor.alpha.parameters.and.four.torsion}
If $\overline\eta\notin E_\tau[2]$, then
$\{\alpha_{00},\alpha_{01},\alpha_{10}\}\cap\{0,1,-1\}=\varnothing$
if and only if $\overline\eta\notin E_\tau[4]$.
\end{corollary}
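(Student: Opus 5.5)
We prove the corollary by showing that each $\alpha_{ab}$ is never $0$ off $E_\tau[2]$ and equals $\pm1$ exactly at points of order four. Throughout, assume $\overline\eta\notin E_\tau[2]$. Then the four constants $\vartheta_{pq}(\eta)$ are nonzero, because the zeros of $\vartheta_{pq}$ lie in half-periods. Hence every $\alpha_{ab}$ is finite and nonzero. It therefore suffices to prove
\[
\alpha_{ab}(\eta)=\pm1 \text{ for some } ab \iff 4\overline\eta=0 .
\]

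We study the elliptic function
\[
f_{ab}(\eta):=\alpha_{ab}(\eta)^2=\Bigl(\frac{\vartheta_{ab}\vartheta_{11}}{\vartheta_{ij}\vartheta_{kl}}\Bigr)^4(\eta).
\]
The condition $\alpha_{ab}=\pm1$ is the same as $f_{ab}=1$. Using the zero loci of the four theta functions, the divisor of $f_{ab}$ on $E_\tau$ is
\[
4\bigl([0]+[\omega_{ab}]\bigr)-4\bigl([\omega_{ij}]+[\omega_{kl}]\bigr),
\]
where $\omega_{pq}$ denotes the nonzero half-period at which $\vartheta_{pq}$ vanishes. So $f_{ab}$ has degree $8$, and the equation $f_{ab}=1$ has exactly eight solutions on $E_\tau$, counted with multiplicity.

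\textbf{Reverse direction.} We show $f_{ab}\equiv 1$ on a set of eight points of order $4$. Take $\eta\in E_\tau[4]\setminus E_\tau[2]$. Then $2\eta=\omega$ is a nonzero half-period, and translation by $\omega$ permutes the $\vartheta_{pq}$ up to exponential factors (Mumford's Table~0). Writing $\eta=-\eta+\omega$, we compare $\vartheta_{pq}(\eta)=\vartheta_{pq}(-\eta+\omega)$ with $\pm\vartheta_{pq}(\eta)$. For each $\omega$ this yields identities $\vartheta_{p'q'}(\eta)=c\,\vartheta_{pq}(\eta)$ with explicit constants $c$, since the exponential factors evaluate cleanly at $\eta=\omega/2$.

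Substituting into $\alpha_{ab}$, we claim that for each of the three half-periods $\omega$ exactly two of the three $\alpha$'s become $\pm1$. (Proposition~\ref{prop:order-four-relations} already records the outcome $(\alpha,1,-1)$, but we prove it here rather than assume it.) Each $\omega$ contributes four points $\eta$ with $2\eta=\omega$, so each $\alpha_{ab}$ picks up $8$ solutions of $f_{ab}=1$, coming from the two appropriate half-periods.

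\textbf{Forward direction.} Since $f_{ab}-1$ has exactly eight zeros counted with multiplicity, the eight points of order four found above exhaust all solutions. In particular each of them is a simple root. Therefore $\alpha_{ab}=\pm1$ forces $\eta\in E_\tau[4]$, which proves the corollary.

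\textbf{Alternative for the constant computation.} If Mumford's translation constants prove awkward, use Jacobi's identity \eqref{eq.Sklyanin.parameter.identity} instead. Setting $\alpha_{01}=1$ there forces $\alpha_{00}+\alpha_{10}+1+\alpha_{00}\alpha_{10}=(1+\alpha_{00})(1+\alpha_{10})=0$. Hence one $\alpha$ being $1$ forces another to be $-1$, which pairs up the solutions. This reduces the problem to identifying the loci $\alpha_{ab}=\pm1$ by their invariance under the involution $\eta\mapsto\omega-\eta$.

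\textbf{Main obstacle.} The hardest step is the bookkeeping of the constants $c$ at $\eta=\omega/2$, that is, verifying $f_{ab}(\eta)=1$ there, including the signs $(-1)^{a+b}$. I would carry out this check directly.
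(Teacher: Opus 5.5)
Your argument is correct, but it takes a different route from the paper's.

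\emph{The paper's route.} The paper never counts zeros. It sets $u=v=\eta$ in \eqref{eq.mixed.addition.second}, which gives
$\vartheta_{ab}(0)\vartheta_{ij}(0)^2\vartheta_{ab}(2\eta)=\vartheta_{ab}(\eta)^2\vartheta_{ij}(\eta)^2(1+\alpha_{kl})$.
So $\alpha_{kl}=-1$ exactly when $\vartheta_{ab}(2\eta)=0$, i.e.\ when $2\overline\eta$ is the half-period $\omega_{ab}$. The value $+1$ is then reduced to $-1$ by \eqref{eq.Sklyanin.parameter.identity}, which forces another parameter to equal $-1$.

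\emph{Your route.} You use that $\alpha_{ab}^2$ is elliptic of degree $8$, and you exhibit eight points of exact order four where it equals $1$. This needs only the half-period translation table and a degree count, with no addition theorem and no parameter identity. It also yields more: for each $ab$, the roots of $\alpha_{ab}^2=1$ are simple and are exactly the points with $2\overline\eta\in\{\omega_{ij},\omega_{kl}\}$, the half-period $\omega_{ab}$ being the one excluded. The paper's argument is shorter and pins down the $-1$ locus directly.

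\emph{The bookkeeping you flag.} It is easier if you work at the level of functions rather than at the single lift $\eta=\omega/2$. As written, that lift does not cover the other three points, where $\omega-\eta$ differs from $\eta$ by a nonzero lattice vector. Mumford's Table~0 gives $\alpha_{ab}(\eta+\omega_{ab})=\alpha_{ab}(\eta)$, and $\alpha_{ab}(\eta+\omega)=\alpha_{ab}(\eta)^{-1}$ for the other two nonzero half-periods $\omega$. The factors $\pm i$ cancel in the ratio up to a sign, which disappears on squaring, and the sign $(-1)^{a+b}$ is its own inverse. Since $\alpha_{ab}$ is even and $\eta+\omega\equiv-\eta$ whenever $2\overline\eta=\omega$, this gives $\alpha_{ab}(\eta)^2=1$ at all eight points at once.

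\emph{Your alternative.} The argument via \eqref{eq.Sklyanin.parameter.identity} cannot replace the count: it only pairs a value $1$ with a value $-1$ and does not locate either.
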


\begin{proof}
The hypothesis $\overline\eta\notin E_\tau[2]$ implies that the three
parameters in \eqref{eq.alpha.ab} are defined and nonzero.  For a cyclic
triple $(ab,ij,kl)$, set $u=v=\eta$ in
\eqref{eq.mixed.addition.second}.  This gives
\[
 \vartheta_{ab}(0)\vartheta_{ij}(0)^2\vartheta_{ab}(2\eta)
 =\vartheta_{ab}(\eta)^2\vartheta_{ij}(\eta)^2
  +(-1)^{k+l}\vartheta_{kl}(\eta)^2\vartheta_{11}(\eta)^2.
\]
All the theta constants on the right are nonzero.  On comparing this
identity with \eqref{eq.alpha.ab}, we see that
$\vartheta_{ab}(2\eta)=0$ if and only if $\alpha_{kl}=-1$.
Now $2\overline\eta$ is a nonzero point of $E_\tau[2]$ if and only if
one of $\vartheta_{00}(2\eta),\vartheta_{01}(2\eta),
\vartheta_{10}(2\eta)$ is zero.  Thus $\overline\eta$ has order four
if and only if one of the three parameters is $-1$.

It remains only to account for the value $1$.  If one parameter is
$1$, then the identity \eqref{eq.Sklyanin.parameter.identity} for a cyclically
ordered parameter triple $(\alpha,\beta,\gamma)$ gives, for example,
$\alpha+\beta+1+\alpha\beta=(1+\alpha)(1+\beta)=0$ when
$\gamma=1$.  Hence one of the other two parameters is $-1$.
Consequently, one of the parameters belongs to $\{1,-1\}$ if and only
if $\overline\eta$ has order four, which proves the assertion.
\end{proof}

\begin{corollary}\label{cor:Keta.decomposition}
For every $\eta\in\CC$,
$K_\eta=R_\eta\oplus C_\eta$.
\end{corollary}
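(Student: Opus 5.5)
The plan is a dimension count assembled from three facts already established. First, $R_\eta\subseteq K_\eta$ holds by definition, since elements of $R_\eta$ vanish on $\Gamma_\eta$ together with the extra vertex conditions. Second, $C_\eta=D\cap K_\eta\subseteq K_\eta$, also by definition. Hence $R_\eta+C_\eta\subseteq K_\eta$.

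Next I will show the sum is direct. Proposition~\ref{prop:intersection-R-C} gives $C_\eta\cap R_\eta=\{0\}$ for every $\eta$; that argument only evaluates an element of $D$ at the four vertex pairs $(e_i,e_i)$. So $R_\eta\oplus C_\eta$ is a subspace of $K_\eta$.

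Finally I will compare dimensions. Corollary~\ref{cor:dim.Reta} gives $\dim R_\eta=6$, and Proposition~\ref{thm.dim.Ceta}(1) gives $\dim C_\eta=2$, so $\dim(R_\eta\oplus C_\eta)=8$. Proposition~\ref{prop.dim.Keta} gives $\dim K_\eta=8$. Therefore the inclusion $R_\eta\oplus C_\eta\subseteq K_\eta$ is an equality, for every $\eta\in\CC$, including $\overline\eta\in E_\tau[2]$.

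There is no real obstacle. The only point to check is that each cited ingredient holds for all $\eta$ rather than only away from $E_\tau[2]$:
\begin{enumerate}
\item the surjectivity argument in Proposition~\ref{prop.dim.Keta} treats both cases $t_\eta^*\mathcal L\cong\mathcal L$ and $t_\eta^*\mathcal L\not\cong\mathcal L$;
\item part~(3) of Theorem~\ref{prop:addition-formulas-relations} was proved for every $\eta$;
\item the description of $C_\eta$ via the isomorphism \eqref{eq.addition.isomorphism} is uniform in $\eta$.
\end{enumerate}
All three hold, so the corollary follows immediately.
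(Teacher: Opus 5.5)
Your proposal is correct and is essentially the paper's proof: the paper combines $\dim R_\eta=6$ (Theorem~\ref{prop:addition-formulas-relations}(3)), $\dim C_\eta=2$, $R_\eta\cap C_\eta=\{0\}$, and $\dim K_\eta=8$, and concludes equality from the inclusion $R_\eta\oplus C_\eta\subseteq K_\eta$. Your explicit check that each ingredient holds for every $\eta$, including $\overline\eta\in E_\tau[2]$, is a worthwhile addition that the paper leaves implicit.
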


\begin{proof}
Theorem~\ref{prop:addition-formulas-relations}(3) gives
$\dim R_\eta=6$,
Proposition~\ref{thm.dim.Ceta} gives $\dim C_\eta=2$, and
Proposition~\ref{prop:intersection-R-C} gives
$R_\eta\cap C_\eta=\{0\}$.  Since $R_\eta\oplus C_\eta\subseteq K_\eta$
and $\dim K_\eta=8$, equality follows.
\end{proof}

\begin{corollary}\label{cor.two.vertices.determine.Reta}
Suppose $\overline\eta\notin E_\tau[4]$.  If $i$ and $j$ are distinct
elements of $\{0,1,2,3\}$, then
\begin{equation}\label{eq.two.vertices.determine.Reta}
 R_\eta=\{r\in V^{\otimes2}\;\vert\;
 \text{$r$ vanishes on
 $\Gamma_\eta\cup\{(e_i,e_i),(e_j,e_j)\}$}\}.
\end{equation}
\end{corollary}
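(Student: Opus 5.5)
Write $R'$ for the right-hand side of \eqref{eq.two.vertices.determine.Reta}. Clearly $R_\eta\subseteq R'\subseteq K_\eta$. By Corollary~\ref{cor:Keta.decomposition}, $K_\eta=R_\eta\oplus C_\eta$. Hence $R'=R_\eta\oplus(R'\cap C_\eta)$, because any $r\in R'$ decomposes as $r=r_0+c$ with $r_0\in R_\eta\subseteq R'$, so $c=r-r_0\in R'\cap C_\eta$. It therefore suffices to show that no nonzero element of $C_\eta$ vanishes at both $(e_i,e_i)$ and $(e_j,e_j)$.

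Write $c\in C_\eta\subseteq D$ as $c=\sum c_{ab}\vartheta_{ab}(2z_1)\vartheta_{ab}(2z_2)$. As in the proof of Proposition~\ref{prop:intersection-R-C}, evaluation at the four vertex pairs returns the four coefficients $c_{ab}$. So the map $C_\eta\to\CC^2$, $c\mapsto(c(e_i,e_i),c(e_j,e_j))$, is the composite of $C_\eta\hookrightarrow D\cong\CC^4$ with projection onto two coordinates. We must show this map is injective, i.e.\ that no nonzero $c\in C_\eta$ has two vanishing coordinates $c_{ab}$.

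By Proposition~\ref{thm.dim.Ceta}, $C_\eta$ is spanned by $\theta(z_1-z_2\pm\eta)\theta(z_1+z_2\pm\gamma)$ with $\gamma$ arbitrary. Using the isomorphism \eqref{eq.addition.isomorphism}, I would compute the coordinates $c_{ab}$ of such an element explicitly. The identities \eqref{eq:add1}--\eqref{eq:add4} give $\theta(u\pm\eta)$ as a combination $\theta_2(\eta)^2\theta_1(u)^2-\theta_1(\eta)^2\theta_2(u)^2$, and similarly for $\theta(v\pm\gamma)$. Inverting the displayed change of basis between $\{\vartheta_{ab}(2z_1)\vartheta_{ab}(2z_2)\}$ and $\{\theta_r(v)^2\theta_s(u)^2\}$ then expresses $c_{ab}$ in closed form. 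By Jacobi-type product formulas I expect
\[
c_{ab}\propto\vartheta_{ab}(\eta)^{2}\cdot\ell_{ab}(\gamma),
\]
or a similar factorization. The cleaner route is to evaluate directly: $c(e_{ab},e_{ab})$ is the coefficient extracted by evaluating at points $z_1,z_2$ where three of the four $\vartheta_{pq}(2z)$ vanish simultaneously on each factor (or a limit thereof).

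With such a formula in hand, the set $\{c_{ab}=0\}$ for fixed $\eta$ cuts out, in the two-dimensional space $C_\eta$, a line that depends on $ab$. Two distinct coordinates $c_{ab},c_{a'b'}$ vanish on a common nonzero $c$ exactly when the corresponding two linear functionals on $C_\eta$ are proportional. I expect the resulting $2\times2$ determinant to be a product of nonzero theta constants times something like $\vartheta_{pq}(2\eta)$. Its vanishing would force $2\overline\eta\in E_\tau[2]$, i.e.\ $\overline\eta\in E_\tau[4]$, which the hypothesis excludes. This is consistent with Corollary~\ref{cor.alpha.parameters.and.four.torsion}, where $\vartheta_{ab}(2\eta)=0$ marks order four.

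The main obstacle is obtaining this determinant cleanly. To exploit symmetry, I would first use Lemma~\ref{lem.H4.stability}: $C_\eta$ is $H_4$-stable and $H_4$ permutes the vertices, so it suffices to treat pairs $\{i,j\}$ up to the $H_4$-action, possibly only one or two representative cases. For a representative pair I would compute the two functionals via the explicit $X_{ab}^{\otimes2}$ formulas above and simplify the determinant with \eqref{eq.mixed.addition.second} at $u=v=\eta$. For $\overline\eta\in E_\tau[2]$, which is not in $E_\tau[4]$ only when $\overline\eta=0$, the case $\overline\eta=0$ must also be handled. There $C_0=\theta(u)^2W(v)$, and the check is immediate from \eqref{eq:add1}--\eqref{eq:add4}.
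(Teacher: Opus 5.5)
Your reduction is correct and is the same as the paper's. Because $K_\eta=R_\eta\oplus C_\eta$, and evaluation at $(e_{ab},e_{ab})$ returns the coefficient $c_{ab}$ of an element of $D$, the corollary is equivalent to this: for each pair of distinct vertices, the functionals $c\mapsto c_{ab}$ and $c\mapsto c_{a'b'}$ are linearly independent on the two-dimensional space $C_\eta$. That is the whole content of the corollary, and you do not prove it. The formula for $c_{ab}$ is only ``expected'', and the determinant is only predicted to be ``something like $\vartheta_{pq}(2\eta)$''. Nothing in the proposal rules out a nonzero element of $C_\eta$ with two vanishing coordinates. The hypothesis has to enter through an actual computation, because such elements do exist when $\overline\eta$ has exact order four: see the tensors $\widetilde P,\widetilde Q$ in the proof of Theorem~\ref{thm:Ceta-central-all}.

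Some side remarks are also wrong.
\begin{itemize}
\item Your ``cleaner route'' cannot work. The vertices do not lie on $E_\tau$, and at most one of the $\vartheta_{pq}(2z)$ vanishes at any $z$. So no evaluation on $E_\tau\times E_\tau$, nor any limit of such evaluations, isolates a single $c_{ab}$.
\item The case $\overline\eta=0$ needs no treatment, since $E_\tau[2]\subseteq E_\tau[4]$.
\item $H_4$ acts on the vertices through a simply transitive Klein four-group. This leaves three orbits of unordered pairs, not one or two.
\end{itemize}

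The paper supplies the missing step with the basis $\widetilde\Omega_0,\widetilde\Omega_1$ of $C_\eta$ from Lemma~\ref{lem.Omega.tilde.basis.Ceta}, with $\beta=\alpha_{01}$ and $\gamma=\alpha_{10}$. The coefficient columns at the four vertices are $(-1,1)$, $(1,\beta\gamma)$, $(1,-\gamma)$, $(1,\beta)$. The six $2\times2$ minors are $-(1+\beta\gamma)$, $\gamma-1$, $-(1+\beta)$, $-\gamma(1+\beta)$, $\beta(1-\gamma)$, $\beta+\gamma$. These are nonzero by Corollary~\ref{cor.alpha.parameters.and.four.torsion} together with \eqref{eq.Sklyanin.parameter.identity}.

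A route closer to your plan also works. Jacobi's five-term identity, used exactly as in the proof of Proposition~\ref{prop:diagonal-expansion}, gives $c_{ab}=\pm\tfrac12\vartheta_{ab}(\xi+\eta)\vartheta_{ab}(\xi-\eta)$ for $c=\theta(z_1-z_2\pm\eta)\theta(z_1+z_2\pm\xi)$. Let $\omega_{ab}$ be the half-period at which $\vartheta_{ab}$ vanishes. Under $C_\eta\cong W(z_1+z_2)$, the functional $c\mapsto c_{ab}$ is then a nonzero multiple of evaluation at $\eta+\omega_{ab}$. Two such functionals are proportional if and only if $\eta+\omega_{ab}\equiv\pm(\eta+\omega_{a'b'})$ modulo $\Lambda_\tau$. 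Since $\omega_{ab}\ne\omega_{a'b'}$, this happens if and only if $2\eta\equiv\omega_{ab}+\omega_{a'b'}$, a nonzero half-period. That forces $\overline\eta$ to have exact order four. Incidentally, it shows the conclusion also holds on $E_\tau[2]$.
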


\begin{proof}
The right-hand side of \eqref{eq.two.vertices.determine.Reta} is the
kernel of the evaluation map
\[
 K_\eta\longrightarrow\CC^2,
 \qquad h\longmapsto
 \bigl(h(\epsilon_i,\epsilon_i),h(\epsilon_j,\epsilon_j)\bigr).
\]
Clearly, $R_\eta$ is contained in the kernel, so it remains to show
that the map $K_\eta\to\CC^2$ is surjective; i.e., that it has rank two.

Use the standard Sklyanin generators $x_0,x_1,x_2,x_3$, relabelling the
four vertices accordingly, and put
$\beta=\alpha_{01}$ and $\gamma=\alpha_{10}$.  The elements
$-x_0^2+x_1^2+x_2^2+x_3^2$ and
$x_0^2+\beta\gamma x_1^2-\gamma x_2^2+\beta x_3^2$
provide a basis for $C_\eta$.
The four coefficient columns, one for each vertex, are
\[
 \binom{-1}{\phantom{-}1},\qquad
 \binom{\phantom{-}1}{\phantom{-}\beta\gamma},\qquad
 \binom{\phantom{-}1}{-\gamma},\qquad
 \binom{\phantom{-}1}{\phantom{-}\beta}.
\]
The six determinants obtained by choosing two columns are
\[
 -(1+\beta\gamma),\quad \gamma-1,\quad -(1+\beta),
 \quad -\gamma(1+\beta),\quad \beta(1-\gamma),
 \quad \beta+\gamma.
\]
They are non-zero:
Corollary~\ref{cor.alpha.parameters.and.four.torsion} gives
$\alpha_{00},\beta,\gamma\notin\{0,\pm1\}$, so four of the six assertions
are immediate; the remaining assertions, $1+\beta\gamma\ne0$ and
$\beta+\gamma\ne0$, follow from
\eqref{eq.Sklyanin.parameter.identity}, since either equality would
force $\beta^2=1$.
Evaluation at any two distinct $(\epsilon_r,\epsilon_r)$ and
$(\epsilon_s,\epsilon_s)$ therefore provides a surjective map
$K_\eta\to\CC^2$.  The result follows.
\end{proof}

The next result will show that $C_\eta$ is a ``canonical'' complement
to $R_\eta$ in $K_\eta$ by showing that it is the only complement that
is an $H_4$-module.

Put $\widetilde T=-e(\tfrac12\tau)T$.  For $a,b\in\{0,1\}$, let
$W_{ab}$ denote the irreducible two-dimensional $H_4$-module on which
$S^2$ and $\widetilde T^2$ act as multiplication by $(-1)^a$ and
$(-1)^b$,
respectively, and put
\[
 K_{\eta,ab}:=
 \{r\in K_\eta\mid S^2r=(-1)^ar,
       \ \widetilde T^2r=(-1)^br\}.
\]

\begin{proposition}\label{prop.H4.unique.complement}
For every $\eta\in\CC$ there is a multiplicity-free decomposition
\[
 K_\eta=K_{\eta,00}\oplus K_{\eta,10}
          \oplus K_{\eta,11}\oplus K_{\eta,01},
 \qquad K_{\eta,ab}\cong W_{ab},
\]
such that
\[
 C_\eta=K_{\eta,00},\qquad
 R_\eta=K_{\eta,10}\oplus K_{\eta,11}\oplus K_{\eta,01}.
\]
Consequently, $C_\eta$ is the unique two-dimensional
$H_4$-submodule of $K_\eta$ having zero intersection with $R_\eta$.
\end{proposition}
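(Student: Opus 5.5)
The plan is to decompose all of $V^{\otimes2}$ into isotypic components under the commuting operators $S^2$ and $\widetilde T^2$. We then locate $C_\eta$ and the explicit basis of $R_\eta$ from Theorem~\ref{prop:addition-formulas-relations}(3) inside these components.

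\emph{Structure of $V^{\otimes 2}$ as an $H_4$-module.} Since $\epsilon$ acts on $V$ by $\ii$, it acts on $V^{\otimes2}$ (diagonal action) by $-1$. Hence $[S^2,T]=\epsilon^2$ and $[S,T^2]=\epsilon^2$ act trivially on $V^{\otimes2}$. So $S^2$ and $\widetilde T^2$ commute with the whole image of $H_4$ in $\mathrm{End}(V^{\otimes2})$. On $V^{\otimes 2}$ we have $ST=-TS$, while $S^2$ and $\widetilde T^2$ are central. Consequently every irreducible constituent is two-dimensional and is determined by the eigenvalues of $S^2$ and $\widetilde T^2$; these are the modules $W_{ab}$. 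By Lemma~\ref{lem.H4.action.on.V}, on $V$ we have $S^2Y_{pq}=(-1)^pY_{pq}$ and $T^2Y_{pq}=\lambda^2(-1)^qY_{pq}$ with $\lambda=-e(-\tfrac12\tau)$. On $V^{\otimes2}$, $\widetilde T^2$ acts as $e(\tau)^2\,T^2\otimes T^2$, and this gives eigenvalue $e(2\tau)\lambda^4(-1)^{q+q'}=(-1)^{q+q'}$ on $Y_{pq}\otimes Y_{p'q'}$. The normalization of $\widetilde T$ is designed exactly to remove the factor $\lambda^4$, and I expect this scalar bookkeeping to be the only delicate step. The conclusion is that the $(ab)$-isotypic component of $V^{\otimes2}$ is
\[
 I_{ab}=\Span\{Y_{pq}\otimes Y_{p+a,\,q+b}\}\cong W_{ab}^{\oplus2},
\]
which is four-dimensional. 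In particular $I_{00}=D$.

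\emph{Locating $C_\eta$ and $R_\eta$.} By Lemma~\ref{lem.H4.stability}, $K_\eta$ and $R_\eta$ are $H_4$-submodules, so $K_\eta=\bigoplus_{ab}K_{\eta,ab}$ with $K_{\eta,ab}=K_\eta\cap I_{ab}$. Then
\[
 K_{\eta,00}=K_\eta\cap D=C_\eta,
\]
which is two-dimensional by Proposition~\ref{thm.dim.Ceta}, hence isomorphic to $W_{00}$. Now fix a cyclic triple $(ab,ij,kl)$. The tensors $\rho_{\bar a\bar b}(\eta)$ and $\rho'_{\bar a\bar b}(\eta)$ of Theorem~\ref{prop:addition-formulas-relations}(3) are built from $Y_{11}\otimes Y_{ab}$, $Y_{ab}\otimes Y_{11}$, $Y_{ij}\otimes Y_{kl}$ and $Y_{kl}\otimes Y_{ij}$. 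Since $11+ab=\bar a\bar b=ij+kl$ in $(\ZZ/2)^2$, both tensors lie in $I_{\bar a\bar b}$. As $\bar a\bar b$ ranges over $\{11,10,01\}$, this shows that $R_\eta\cap I_{xy}$ has dimension at least $2$ for each $xy\ne00$.

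\emph{Dimension count and uniqueness.} Since $\dim R_\eta=6$ by Corollary~\ref{cor:dim.Reta}, each $R_\eta\cap I_{xy}$ with $xy\ne 00$ has dimension exactly $2$, and $R_\eta\cap I_{00}=0$. By Corollary~\ref{cor:Keta.decomposition}, $K_\eta=R_\eta\oplus C_\eta$. Comparing this with the isotypic decomposition forces $K_{\eta,xy}=R_\eta\cap I_{xy}\cong W_{xy}$ for $xy\neq 00$. This gives the multiplicity-free decomposition and the stated descriptions of $C_\eta$ and $R_\eta$. For uniqueness, let $M\subset K_\eta$ be a two-dimensional $H_4$-submodule with $M\cap R_\eta=0$. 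Then $M\cong K_\eta/R_\eta\cong W_{00}$. Because $K_\eta$ is multiplicity-free, $M$ is contained in the $W_{00}$-isotypic part of $K_\eta$, which is $C_\eta$, and so $M=C_\eta$.
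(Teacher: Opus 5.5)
Your proposal is correct and follows essentially the same route as the paper. Both split $K_\eta$ into the simultaneous eigenspaces of $S^2$ and $\widetilde T^2$, identify $K_{\eta,00}=K_\eta\cap D=C_\eta$, place $\rho_{ab},\rho'_{ab}$ in the three eigenspaces with $ab\neq00$, and finish by a dimension count. You also make explicit two points the paper leaves implicit: the scalar bookkeeping showing $\widetilde T^2$ acts by $(-1)^{q+q'}$ on $Y_{pq}\otimes Y_{p'q'}$, and the isotypic-component argument for uniqueness.
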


\begin{proof}
The formulas in Lemma~\ref{lem.H4.action.on.V} show that the displayed
spaces are the simultaneous eigenspaces of $S^2$ and
$\widetilde T^2$ on
$K_\eta$.  Proposition~\ref{thm.dim.Ceta} and its explicit formula
$C_\eta=\theta(z_1-z_2\pm\eta)W(z_1+z_2)$ give
$C_\eta=K_{\eta,00}\cong W_{00}$.  For the three nonzero pairs
$ab$, the elements $\rho_{ab}$ and $\rho'_{ab}$ in
Theorem~\ref{thm:R-eta-basis}\textup{(3)} are a basis for
$K_{\eta,ab}$.  Proposition
\ref{prop.dim.Keta} now forces all four eigenspaces to have dimension
two and gives both displayed decompositions.

The four summands are pairwise nonisomorphic irreducible
$H_4$-modules.  Hence an $H_4$-submodule of dimension two having zero
intersection with $R_\eta$ must be the remaining irreducible summand
$K_{\eta,00}=C_\eta$.
\end{proof}

The isomorphism $A(\eta)\to S(\eta)$ in
Theorem~\ref{prop:addition-formulas-relations} sends the class
$x_{ab}$ of $X_{ab}$ to the standard generator $x_{ab}$ of $S(\eta)$.
Smith--Stafford's generators $x_0,x_1,x_2,x_3$ are our
$x_{11},x_{00},x_{01},x_{10}$, respectively.  Hence, by
\eqref{eq.Sklyanin.generators.to.x}, the correspondence between the
generators of $A(\eta)$ and Sklyanin's generators is
\[
 x_{11}\longmapsto S_0,\qquad
 x_{10}\longmapsto \ii S_1,\qquad
 x_{00}\longmapsto-\ii S_2,\qquad
 x_{01}\longmapsto \ii S_3.
\]
Rains--Ruijsenaars also use Sklyanin's generators
$S_0,S_1,S_2,S_3$ in \cite[(B.31)--(B.34)]{RR}.

\subsection{The Rains--Ruijsenaars algebra
\texorpdfstring{$\mathcal R(\eta)$ and a proof that
$A(\eta)=\mathcal R(\eta)$}
{R(eta) and a proof that A(eta)=R(eta)}}
\label{sect.RR-algebra}

In this section we use the notation $\nu:=\frac12\eta$.

We copy Rains--Ruijsenaars~\cite[(B.2)--(B.5)]{RR} and define
\begin{align}
f(a,b,c;z)
&:=\theta(z+a)\theta(z+b)\theta(z+c)\theta(z-a-b-c),
\label{eq.RR.f.definition}\\
F_\eta(a,b,c_1,c_2;z_1,z_2)
&:=\begin{aligned}[t]
&f(a-\nu,b-\nu,c_1+\nu;z_1)
 f(a+\nu,b+\nu,c_2-\nu;z_2)\\
&-f(a-\nu,b-\nu,c_2+\nu;z_1)
 f(a+\nu,b+\nu,c_1-\nu;z_2),
\end{aligned}
\label{eq.RR.F.definition}\\
R_\eta^{\mathrm{RR}}
&:=\begin{aligned}[t]
&\text{the linear span of the functions in
\eqref{eq.RR.F.definition}}\\[-2pt]
&\text{as $a,b,c_1,c_2$ vary},
\end{aligned}\notag\\
\mathcal R(\eta)&:=\frac{TV}{(R_\eta^{\mathrm{RR}})}.\notag
\end{align}

Each
$f(a,b,c)=f(a,b,c;z)$ belongs to $V=V(z)$ and each
$F_\eta(a,b,c_1,c_2;z_1,z_2)$ belongs to
$V^{\otimes2}=V(z_1)\otimes V(z_2)$.

Relations of the form $u_1\otimes v_2-u_2\otimes v_1$, with
$u_1,u_2,v_1,v_2\in V$, play a crucial role in showing that the
isomorphism classes of line modules for the Sklyanin algebra are in
natural bijection with the secant lines to $E_\tau$ in $\PP V^*$;
see \cite[Section~4]{LS}.  Corollary~\ref{prop:RR.relations.and.LS.secants}
shows that the functions
$F_\eta(a,b,c_1,c_2)$ are essentially the same as the
Levasseur--Smith relations.  The next result is the first step towards
that conclusion.

\begin{lemma}\label{lem:RR.functions.vanish.on.graph}
Let $a,b,c_1,c_2\in\CC$.  The following statements hold.
\begin{enumerate}
\item $F_\eta(a,b,c_1,c_2)
=u_{c_1}\otimes v_{c_2}-u_{c_2}\otimes v_{c_1}$, where
\[
 u_c(z)=f(a-\nu,b-\nu,c+\nu;z),
 \qquad
 v_c(z)=f(a+\nu,b+\nu,c-\nu;z).
\]

\item $F_\eta(a,b,c_1,c_2)$ vanishes on $\Gamma_\eta$.

\item The following conditions are equivalent:
\begin{enumerate}
\item $F_\eta(a,b,c_1,c_2)\ne0$;
\item $c_1-c_2\notin\Lambda_\tau$ and
$a+b+c_1+c_2\notin\Lambda_\tau$;
\item $u_{c_1}$ and $u_{c_2}$ are linearly independent;
\item $v_{c_1}$ and $v_{c_2}$ are linearly independent.
\end{enumerate}
When these conditions hold,
 \[
 \{u_{c_1}=u_{c_2}=0\}
 =\overline{-\overline a+\overline\nu,\;-\overline b+\overline\nu}
 \quad\text{and}\quad
 \{v_{c_1}=v_{c_2}=0\}
 =\overline{-\overline a-\overline\nu,\;-\overline b-\overline\nu}.
 \]

\item The first of the secant lines in \textup{(3)} is obtained from the
second by applying $t_{\overline\eta}$.

\item Under the equivalent conditions in \textup{(3)},
$F_\eta(a,b,c_1,c_2)$ is a tensor of rank two.

\item Replacing $c_1,c_2$ by another pair satisfying the equivalent
conditions in \textup{(3)} changes
$F_\eta(a,b,c_1,c_2)$ by a nonzero scalar multiple.
\end{enumerate}
\end{lemma}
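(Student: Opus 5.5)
Part (1) is immediate from \eqref{eq.RR.F.definition}: the two products are $u_{c_1}(z_1)v_{c_2}(z_2)$ and $u_{c_2}(z_1)v_{c_1}(z_2)$. Each $f(a,b,c;z)$ has zeros $-a,-b,-c,a+b+c$, which sum to $0$, so $u_c,v_c\in V$ by \eqref{eq.V.zeros}.

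For (2), put $z_1=z$ and $z_2=z+\eta=z+2\nu$. The key observation is the identity
\[
 u_{c_1}(z)\,v_{c_2}(z+2\nu)=u_{c_2}(z)\,v_{c_1}(z+2\nu).
\]
Expanding, the left side is
\[
 \theta(z+a-\nu)\theta(z+b-\nu)\theta(z+c_1+\nu)\theta(z-a-b-c_1+\nu)
 \cdot\theta(z+a+3\nu)\theta(z+b+3\nu)\theta(z+c_2+\nu)\theta(z-a-b-c_2+\nu).
\]
This product is symmetric under $c_1\leftrightarrow c_2$, because the $\nu$-shifts were chosen so that the $c$-dependent factors all take the form $\theta(z+c_i+\nu)\theta(z-a-b-c_i+\nu)$. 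So the difference vanishes identically, and $F_\eta$ vanishes on $\Gamma_\eta$.

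For (3), $u_{c_1}$ and $u_{c_2}$ are proportional iff their zero divisors agree, i.e.\ iff $\{-c_1,a+b+c_1\}=\{-c_2,a+b+c_2\}$ modulo $\Lambda_\tau$ (the common zeros $-a+\nu,-b+\nu$ cancel). This happens iff $c_1\equiv c_2$ or $a+b+c_1+c_2\equiv0$. The same computation applies to $v$, giving (b)$\Leftrightarrow$(c)$\Leftrightarrow$(d). If (c) and (d) hold, then $F_\eta\neq0$ because a tensor $u_1\otimes v_2-u_2\otimes v_1$ with both pairs independent is nonzero; in fact it has rank two, which is (5). Conversely, if (b) fails then $u_{c_2}=\lambda u_{c_1}$ and $v_{c_2}=\lambda' v_{c_1}$. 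The symmetric product identity above, or a direct check using that the same swap holds for $v$, gives $\lambda=\lambda'$. Then $F_\eta=u_{c_1}\otimes(\lambda' v_{c_1})-\lambda u_{c_1}\otimes v_{c_1}=0$.

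For the secant lines, the pencil $\langle u_{c_1},u_{c_2}\rangle$ consists of all $f(a-\nu,b-\nu,c+\nu)$ as $c$ varies. It is the pencil of hyperplanes through the points $-a+\nu$ and $-b+\nu$ of $E_\tau$, since every hyperplane section containing those two points has remaining two zeros summing to $a+b-2\nu$. Hence its base locus is the secant line through those points; when $a\equiv b$ this is the tangent line, which requires a multiplicity check. Similarly, the base locus of $\langle v_{c_1},v_{c_2}\rangle$ is $\overline{-a-\nu,\,-b-\nu}$. Translating by $\eta=2\nu$ carries the second line to the first, giving (4).

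For (6), the span $\langle u_{c_1},u_{c_2}\rangle$ is independent of the admissible pair, and likewise for $v$. A nonzero element of $U\otimes V'$ vanishing on $\Gamma_\eta$ lies in $\wedge$-type position. The restriction of $U\otimes V'$ to $\Gamma_\eta$ has a one-dimensional kernel, because $U\otimes V'\to H^0$ has rank $3$: the only kernel element is the one forced by the identity above. Thus all such $F_\eta$ lie on one line. The main obstacle I expect is this rank computation; I would attempt it by choosing bases $u_{c_1},u_{c_2},v_{c_1},v_{c_2}$ and showing that the three products other than the antisymmetric one restrict independently.
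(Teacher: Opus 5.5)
Parts (1)--(5) are correct and essentially follow the paper. Your identity in (2) is exactly the paper's observation that the $c$-dependent factors of $u_c(z)$ and $v_c(z+\eta)$ coincide. Your treatment of (3) is, if anything, more explicit than the paper's: you compare the zero divisors of the $v_c$ directly, and you deduce $\lambda=\lambda'$ from that identity when (b) fails. There is one harmless slip: the $c$-dependent zeros of $u_c$ are $-c-\nu$ and $a+b+c-\nu$, not $-c$ and $a+b+c$.

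The gap is in (6). Your argument there rests on the claim that restriction to $\Gamma_\eta$ has rank $3$ on $U\otimes V'$. The only reason you give (``the only kernel element is the one forced by the identity above'') assumes the conclusion, and you then defer the computation.

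The claim is true, and it follows quickly from the factorization behind (2). Write $u_c(z)=g(z)k_c(z)$ and $v_c(z+\eta)=h(z)k_c(z)$, where $k_c(z)=\theta(z+c+\nu)\theta(z-a-b-c+\nu)$ and $g,h$ are nonzero and independent of $c$. Put $\mathcal K=\Span\{k_{c_1},k_{c_2}\}$. Then $U=g\mathcal K$ and $\{z\mapsto v(z+\eta)\mid v\in V'\}=h\mathcal K$. So restriction to $\Gamma_\eta$ is $gh$ times the multiplication map from $\mathcal K\otimes\mathcal K$ to functions of $z$. Its image is spanned by $k_{c_1}^2$, $k_{c_1}k_{c_2}$, $k_{c_2}^2$. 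These are linearly independent, because $k_{c_1}/k_{c_2}$ is nonconstant and so satisfies no nontrivial quadratic equation. Hence the kernel is $\wedge^2\mathcal K$, which is one-dimensional, and your argument closes.

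The paper avoids the rank count altogether. The same factorization shows that a linear relation holds among the $u_c$ (for any finite set of values of $c$) if and only if the same relation holds among the corresponding $v_c$. So the change-of-basis matrix $M$ from $\{u_{c_1},u_{c_2}\}$ to $\{u_{d_1},u_{d_2}\}$ is also the change-of-basis matrix for the $v$'s. This gives $F_\eta(a,b,d_1,d_2)=\det(M)F_\eta(a,b,c_1,c_2)$ at once.
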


\begin{proof}
Part~\textup{(1)} follows directly from the definitions of $F_\eta$,
$u_c$, and $v_c$.

For $z\in\CC$, the factors in $u_c(z)$ and $v_c(z+\eta)$ that
involve $c$ are the same, namely
\[
 \theta(z+c+\nu)\theta(z-a-b-c+\nu).
\]
Thus, as functions of $c$, $u_c(z)$ and $v_c(z+\eta)$ are
proportional, with proportionality factor independent of $c$.
The determinant of the matrix
\[
 \begin{pmatrix}
 u_{c_1}(z)&u_{c_2}(z)\\
 v_{c_1}(z+\eta)&v_{c_2}(z+\eta)
 \end{pmatrix}
\]
is therefore zero, i.e.,
$F_\eta(a,b,c_1,c_2;z,z+\eta)=0$.
This proves~\textup{(2)}.

The equivalence of \textup{(3a)} and \textup{(3b)} follows at once from
the definition of $F_\eta(a,b,c_1,c_2)$.
The functions $u_{c_1}$ and $u_{c_2}$ are divisible by
$\theta(z+a-\nu)\theta(z+b-\nu)$.  The other two zeros of $u_c$ are
$-\overline c-\overline\nu$ and
$\overline a+\overline b+\overline c-\overline\nu$.  Hence
$u_{c_1}$ and $u_{c_2}$ are linearly dependent if and only if their
remaining zero divisors agree, i.e., if and only if
$c_1-c_2\in\Lambda_\tau$ or
$a+b+c_1+c_2\in\Lambda_\tau$.  Thus the first three
conditions in~\textup{(3)} are equivalent.  When they hold, the common
zero locus of $u_{c_1}$ and $u_{c_2}$ is as claimed.  Fix such $c_1$
and $c_2$.

Let $(\alpha_1,\alpha_2)\in\CC^2$.  Because the
determinant vanishes,
$\alpha_1u_{c_1}(z)+\alpha_2u_{c_2}(z)=0$ for all $z$ if and only if
$\alpha_1v_{c_1}(z+\eta)+\alpha_2v_{c_2}(z+\eta)=0$ for all $z$; but the latter
equality holds if and only if
$\alpha_1v_{c_1}(z)+\alpha_2v_{c_2}(z)=0$ for all $z$.  Hence there is a linear
isomorphism $\CC u_{c_1}\oplus\CC u_{c_2}\to
\CC v_{c_1}\oplus\CC v_{c_2}$ that sends $u_{c_i}$ to $v_{c_i}$.
In particular,
$v_{c_1},v_{c_2}$ are linearly independent.  Since they are divisible by
$\theta(z+a+\nu)\theta(z+b+\nu)$, their common zero locus is the
second secant line in~\textup{(3)}.  Since $2\overline\nu=\overline\eta$,
applying $t_{\overline\eta}$ to the second secant line gives the first,
which proves~\textup{(4)}.  The same argument with $u$ and $v$
interchanged shows that linear independence of $v_{c_1},v_{c_2}$
implies linear independence of $u_{c_1},u_{c_2}$.  This completes the
proof of all the equivalences in~\textup{(3)}.

For such $c_1,c_2$, both $\{u_{c_1},u_{c_2}\}$ and
$\{v_{c_1},v_{c_2}\}$ are linearly independent.  It follows
from~\textup{(1)} that
$F_\eta(a,b,c_1,c_2)$ is a nonzero tensor of rank two, proving~\textup{(5)}.
If $d_1,d_2$ is another pair for which $u_{d_1},u_{d_2}$ are linearly
independent, let $M$ be the change-of-basis matrix from the basis
$\{u_{c_1},u_{c_2}\}$ to the basis $\{u_{d_1},u_{d_2}\}$.  The
isomorphism above shows that $M$ is also the change-of-basis matrix
from the basis $\{v_{c_1},v_{c_2}\}$ to the basis
$\{v_{d_1},v_{d_2}\}$.  Hence
\[
 F_\eta(a,b,d_1,d_2)=\det(M)F_\eta(a,b,c_1,c_2).
\]
Since $\det(M)\ne0$, this proves~\textup{(6)}.
\end{proof}

The fact that $F_\eta(a,b,c_1,c_2)$ vanishes on $\Gamma_\eta$ is
equivalent to the fact that $\theta(z_1-z_2+\eta)$ divides
$F_\eta(a,b,c_1,c_2;z_1,z_2)$.  This divisibility is an important
part of the proof of \cite[Lemma~B.1]{RR}, where it is established
using Weierstrass's three-term identity
\eqref{eq.Weierstrass.three-term}.

The next result describes the zero locus of
$F_\eta(a,b,c_1,c_2)$ in $E_\tau\times E_\tau$; it will be used in
the proof of Lemma~\ref{prop:six.independent.RR.relations}.

\begin{lemma}\label{lem:RR.secant.zero.test}
Let $a,b\in\CC$, and choose $c_1,c_2\in\CC$ so that
$\{u_{c_1},u_{c_2}\}$ is linearly independent.  The zero locus of
$F_\eta(a,b,c_1,c_2)$ is described, with multiplicities, by the
following equality of effective divisors on $E_\tau\times E_\tau$:
\begin{equation}\label{eq.RR.secant.zero.test}
\begin{aligned}
 \operatorname{div}\bigl(F_\eta(a,b,c_1,c_2)\bigr)
 ={}&\bigl(\{-\overline a+\overline\nu\}\times E_\tau\bigr)
    +\bigl(\{-\overline b+\overline\nu\}\times E_\tau\bigr)\\
 &+\bigl(E_\tau\times\{-\overline a-\overline\nu\}\bigr)
    +\bigl(E_\tau\times\{-\overline b-\overline\nu\}\bigr)\\
 &+\Gamma_\eta
    +\bigl(\{(x,y)\in E_\tau\times E_\tau
       \mid x+y=\overline a+\overline b\}\bigr).
\end{aligned}
\end{equation}
\end{lemma}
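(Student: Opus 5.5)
The plan is to exhibit explicitly the effective divisor on the right of \eqref{eq.RR.secant.zero.test} as contained in $\operatorname{div}(F)$, where $F=F_\eta(a,b,c_1,c_2)$, and then compare classes in $\operatorname{Pic}(E_\tau\times E_\tau)$ (or degrees against test curves) to conclude equality.

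\emph{Step 1: the four fibre components.} By Lemma~\ref{lem:RR.functions.vanish.on.graph}(1), $F=u_{c_1}\otimes v_{c_2}-u_{c_2}\otimes v_{c_1}$. Each $u_c$ is divisible by $\theta(z_1+a-\nu)\theta(z_1+b-\nu)$, and each $v_c$ is divisible by $\theta(z_2+a+\nu)\theta(z_2+b+\nu)$. So we can factor
\[
F=\theta(z_1+a-\nu)\theta(z_1+b-\nu)\theta(z_2+a+\nu)\theta(z_2+b+\nu)\,G(z_1,z_2).
\]
The cofactor $G$ is the $2\times2$ determinant built from the order-two theta functions $g_c(z_1)=\theta(z_1+c+\nu)\theta(z_1-a-b-c+\nu)$ and $h_c(z_2)=\theta(z_2+c-\nu)\theta(z_2-a-b-c-\nu)$. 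These four factors give the four fibre divisors, with multiplicity (if $\overline a=\overline b$ they combine to multiplicity two, matching the right side).

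\emph{Step 2: the remaining zeros.} The cofactor $G(z_1,z_2)=g_{c_1}(z_1)h_{c_2}(z_2)-g_{c_2}(z_1)h_{c_1}(z_2)$ has bidegree $(2,2)$. The same proportionality argument as in Lemma~\ref{lem:RR.functions.vanish.on.graph}(2) shows $G(z,z+\eta)=0$, since $g_c(z)$ and $h_c(z+\eta)$ have identical $c$-dependent factors. Next, $g_c(z_1)$ and $h_c(z_2)$ are both of the form $\theta(w+c')\theta(w-s-c')$, where $w=z_1+\nu$ with $s=a+b$, respectively $w=-z_2+\nu$ after using oddness of $\theta$. On the locus $z_1+z_2=\overline a+\overline b$ one checks that $h_c(z_2)$ equals $g_c(z_1)$ up to a factor independent of $c$: substituting $z_2=a+b-z_1$ gives $h_c=\theta(a+b-z_1+c-\nu)\theta(-z_1-c-\nu)$, which is $\pm g_c(z_1)$ times an automorphy factor. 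Hence $G$ also vanishes on the anti-diagonal curve $\{x+y=\overline a+\overline b\}$. Thus $\operatorname{div}(G)\ge\Gamma_\eta+\{x+y=\overline a+\overline b\}$.

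\emph{Step 3: equality.} Both curves $\Gamma_\eta$ and $\{x+y=s\}$ meet each fibre $\{x\}\times E_\tau$ and $E_\tau\times\{y\}$ in one point, so their sum has bidegree-type intersection $(2,2)$ with fibres, exactly that of $\operatorname{div}(G)$. Since $G\neq0$ (by Lemma~\ref{lem:RR.functions.vanish.on.graph}(3),(5)), the residual effective divisor $\operatorname{div}(G)-\Gamma_\eta-\{x+y=s\}$ has zero intersection with both fibre classes. An effective divisor on $E_\tau\times E_\tau$ meeting a general fibre in degree zero in both directions must be zero, because any curve component projects onto some factor or is a fibre, and in either case it meets one of the fibre classes positively. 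This yields \eqref{eq.RR.secant.zero.test} with multiplicities.

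The main obstacle is Step 2's anti-diagonal vanishing: we have to check carefully that the $c$-dependent factors of $g_c(z_1)$ and $h_c(a+b-z_1)$ agree up to a $c$-independent constant. The quasi-periodicity factors from shifting by lattice representatives of $\overline a+\overline b$ need to be shown not to depend on $c$. I would first verify this by pairing $\theta(z_1+c+\nu)$ with $\theta(-z_1-c-\nu)=-\theta(z_1+c+\nu)$, and $\theta(z_1-a-b-c+\nu)$ with $\theta(a+b-z_1+c-\nu)$. This pairing is exact, so no automorphy factor arises when $z_2=a+b-z_1$ exactly in $\CC$.
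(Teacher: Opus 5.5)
Your proof is correct, but it finds the antidiagonal component by a different mechanism than the paper.

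\textbf{The paper's route.} The paper never factors $F$. It restricts $F_\eta(a,b,c_1,c_2)$ to a general fibre $\{\overline x\}\times E_\tau$. The restriction is a nonzero element of $V$ vanishing at $\overline x+\overline\eta$ (from the graph) and at $-\overline a-\overline\nu$, $-\overline b-\overline\nu$ (from the divisibility of $v_c$). The zero-sum condition defining $V$ then forces the fourth zero to be $\overline a+\overline b-\overline x$. Repeating this with the second variable fixed accounts for the components of the form $\{x_0\}\times E_\tau$. So in the paper the curve $x+y=\overline a+\overline b$ comes for free from Abel's theorem, and the argument stays one-variable.

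\textbf{Your route.} You split off the four fibre components and reduce to the bidegree-$(2,2)$ determinant $G=g_{c_1}\otimes h_{c_2}-g_{c_2}\otimes h_{c_1}$. You then show that $\Gamma_\eta$ and the antidiagonal are both loci where the two columns of this determinant are proportional, via the exact identities $h_c(z+\eta)=g_c(z)$ and $h_c(a+b-z)=g_c(z)$. The second identity follows from two applications of oddness, so, as you eventually note, no sign or automorphy factor appears. Your intermediate description of $h_c$ using $w=-z_2+\nu$ is off by $a+b$; the correct substitution is $w=a+b-z_2+\nu$. You never use that description, so nothing breaks.

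To exclude a residual divisor you need a little intersection theory on $E_\tau\times E_\tau$: the fibre classes are nef, and no curve has zero intersection with both. The paper instead needs genericity of the fibre in each direction.

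\textbf{What each buys.} Your route proves $\operatorname{div}(G)=\Gamma_\eta+\{x+y=\overline a+\overline b\}$. Hence $G$ is a constant multiple of $\theta(z_1-z_2+\eta)\,\theta(z_1+z_2-a-b)$, which is the divisor-level content of the Rains--Ruijsenaars product formula (B.44), obtained without Weierstrass's three-term identity. The paper's route is shorter and uses only the defining property of $V$.
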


\begin{proof}
The formulas for $u_c$ and $v_c$ show that the first four components
on the right-hand side of \eqref{eq.RR.secant.zero.test} occur with
the indicated multiplicities.  Thus, if $\overline a=\overline b$,
the corresponding vertical and horizontal components each occur with
multiplicity two.  Lemma~\ref{lem:RR.functions.vanish.on.graph} shows
that $F_\eta(a,b,c_1,c_2)$ also vanishes on $\Gamma_\eta$.

Fix a general $x\in\CC$ such that
$\overline x\notin\{-\overline a+\overline\nu,
                    -\overline b+\overline\nu\}$.
The function $F_\eta(a,b,c_1,c_2;x,-)$ belongs to $V$.  By
Lemma~\ref{lem:RR.functions.vanish.on.graph}, it vanishes at
$\overline x+\overline\eta$,
$-\overline a-\overline\nu$, and
$-\overline b-\overline\nu$.  It is nonzero because
$\{v_{c_1},v_{c_2}\}$ is linearly independent and
$(u_{c_1}(x),u_{c_2}(x))\ne(0,0)$.  Its four zeros, counted with
multiplicity, sum to $0$ in $E_\tau$,
so it also vanishes at
$\overline a+\overline b-\overline x$.
Its remaining zero is therefore the intersection of
$\{\overline x\}\times E_\tau$ with the last term on the right-hand
side of \eqref{eq.RR.secant.zero.test}.  The analogous argument after fixing a
general value of the second variable accounts for the vertical
components and proves the equality of divisors in
\eqref{eq.RR.secant.zero.test}.
\end{proof}

We now use secant lines through certain $2$-torsion and $4$-torsion
points to obtain six independent
Rains--Ruijsenaars relations without using the product formula (B.44).

\begin{lemma}\label{prop:six.independent.RR.relations}
For every $\eta\in\CC$, there are six linearly independent elements of
$R_\eta^{\mathrm{RR}}$ associated to six secant or tangent lines.
More precisely, choose $t_0,t_1,t_2\in E_\tau$ as follows.
\begin{enumerate}
\item If $\overline\eta$ does not have exact order four, choose
$p_1,p_2$ such that
$E_\tau[2]=\{0,p_1,p_2,p_1+p_2\}$ and put
$(t_0,t_1,t_2)=(0,p_1,p_2)$.
\item If $\overline\eta$ has exact order four, choose
$\xi\in E_\tau[4]$ such that $\{\overline\eta,\xi\}$ is a basis for
$E_\tau[4]$ and put $(t_0,t_1,t_2)=(0,\xi,2\xi)$.
\end{enumerate}
Choose lifts $\widetilde t_i\in\CC$ of $t_i$, and put
$a_i=-\widetilde t_i-\nu$.  Then the six relations
\[
 r_{t_i,t_j}:=
 F_\eta(a_i,a_j,c_1^{ij},c_2^{ij})
 \qquad(0\leq i\leq j\leq2),
\]
where, for each $(i,j)$, $c_1^{ij},c_2^{ij}$ are chosen as in the
statement of Lemma~\ref{lem:RR.secant.zero.test}, are linearly independent.  They are
associated to the three tangent lines $T_{t_i}E_\tau$ and the three
secant lines $\overline{t_it_j}$, $i<j$.
\end{lemma}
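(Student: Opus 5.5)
The plan is to prove independence by restricting a hypothetical linear dependence to horizontal lines $E_\tau\times\{t_k\}$. On such a line most of the $r_{t_i,t_j}$ vanish identically, which produces a triangular system. Lemma~\ref{lem:RR.secant.zero.test} gives everything needed.

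\textbf{Divisors of the six relations.} With $a_i=-\widetilde t_i-\nu$ we have $-\overline a_i-\overline\nu=t_i$ and $-\overline a_i+\overline\nu=t_i+\overline\eta$. So \eqref{eq.RR.secant.zero.test} says that $\operatorname{div}(r_{t_i,t_j})$ consists of:
\begin{itemize}
\item the vertical lines $\{t_i+\overline\eta\}\times E_\tau$ and $\{t_j+\overline\eta\}\times E_\tau$;
\item the horizontal lines $E_\tau\times\{t_i\}$ and $E_\tau\times\{t_j\}$;
\item the graph $\Gamma_\eta$;
\item the antidiagonal $\{x+y=-t_i-t_j-\overline\eta\}$.
\end{itemize}
Lemma~\ref{lem:RR.functions.vanish.on.graph}(6) shows that $r_{t_i,t_j}$ is well defined up to a nonzero scalar. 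Its second tensor factor spans the forms vanishing on $\overline{t_it_j}$, and its first factor spans the forms vanishing on the translate by $\overline\eta$. This gives the association with the three tangent lines and three secant lines.

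\textbf{Restriction to horizontal lines.} Suppose $\sum_{i\le j}\lambda_{ij}r_{t_i,t_j}=0$. For each $k$, evaluate the second variable at $t_k$, giving an identity in $V(z_1)$. By the divisor description, $r_{t_i,t_j}(\,\cdot\,,t_k)\equiv0$ if $k\in\{i,j\}$. If $k\notin\{i,j\}$, this restriction is a nonzero element of $V$, provided $t_k$ is not a horizontal component. Its four zeros are then:
\begin{itemize}
\item $t_i+\overline\eta$ and $t_j+\overline\eta$;
\item $t_k-\overline\eta$, from $\Gamma_\eta$;
\item $-t_i-t_j-t_k-\overline\eta$, from the antidiagonal.
\end{itemize}
The $t$'s are distinct, so the surviving terms are:
\begin{itemize}
\item at $z_2=t_2$, only $r_{t_0,t_1}$;
\item at $z_2=t_1$, only $r_{t_0,t_0}$, $r_{t_0,t_2}$, $r_{t_2,t_2}$;
\item at $z_2=t_0$, only $r_{t_1,t_1}$, $r_{t_1,t_2}$, $r_{t_2,t_2}$.
\end{itemize}
So $\lambda_{01}=0$ immediately.

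It then suffices to show two independence statements:
\begin{itemize}
\item the three restrictions at $z_2=t_0$ are independent, which gives $\lambda_{11}=\lambda_{12}=\lambda_{22}=0$;
\item $r_{t_0,t_0}(\cdot,t_1)$ and $r_{t_0,t_2}(\cdot,t_1)$ are independent, which then gives $\lambda_{00}=\lambda_{02}=0$.
\end{itemize}

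\textbf{Independence via zero divisors.} Each independence statement is a comparison of divisors in $V$, where a function is determined up to scalar by its zeros. Two restrictions are proportional iff their divisors agree. For a triple, evaluate at a zero of one member and check the other two are nonzero there with non-proportional values. Alternatively, use that $r_{t_i,t_i}(\cdot,t_k)$ has a \emph{double} zero at $t_i+\overline\eta$ while the others do not. The points involved are explicit in terms of $t_0,t_1,t_2,\overline\eta$. In case (1) the relation $2t_i=0$ simplifies them. For example, at $z_2=t_0=0$:
\begin{itemize}
\item $r_{t_1,t_1}(\cdot,0)$ has divisor $2(t_1+\overline\eta)+2(-\overline\eta)$;
\item $r_{t_2,t_2}(\cdot,0)$ has divisor $2(t_2+\overline\eta)+2(-\overline\eta)$;
\item $r_{t_1,t_2}(\cdot,0)$ has divisor $(t_1+\overline\eta)+(t_2+\overline\eta)+(-\overline\eta)+(-t_1-t_2-\overline\eta)$.
\end{itemize}
Independence follows by evaluating at $t_1+\overline\eta$ and $t_2+\overline\eta$, as long as these points avoid the other divisors. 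Equivalently, we need $2\overline\eta\notin\{t_1,t_2,t_1+t_2\}$, i.e.\ $\overline\eta$ not of exact order four. This is exactly the case (1) hypothesis.

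\textbf{Main obstacle.} The hardest step is verifying these non-coincidences uniformly, including the degenerate subcases. When $\overline\eta\in E_\tau[2]$, the graph component may coincide with vertical lines. In case (2), the choice $(0,\xi,2\xi)$ with $\{\overline\eta,\xi\}$ a basis of $E_\tau[4]$ is designed so that $t_i+2\overline\eta\ne t_j$, and I would check the same evaluation scheme there. If a coincidence does occur, I would first try permuting the roles of $t_0,t_1,t_2$ in the triangular scheme. Failing that, I would use vertical restrictions $z_1=t_k+\overline\eta$, which kill the relations by the symmetric criterion.
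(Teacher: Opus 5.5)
Your horizontal-slice approach is genuinely different from the paper's and can be made to work, but as written it has one false step and leaves case (2) unverified.

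\textbf{The false step.} By your own rule, $r_{t_i,t_j}(\cdot,t_k)\equiv0$ exactly when $k\in\{i,j\}$. So the slice $z_2=t_2$ retains three terms, $r_{t_0,t_0}$, $r_{t_0,t_1}$ and $r_{t_1,t_1}$. The tangent relations survive there just as they do in the other two slices. Hence ``$\lambda_{01}=0$ immediately'' is false.

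The fix is to use the slices in order.
\begin{enumerate}
\item The $t_0$-slice gives $\lambda_{11}=\lambda_{12}=\lambda_{22}=0$.
\item The $t_1$-slice then gives $\lambda_{00}=\lambda_{02}=0$.
\item Only now does the $t_2$-slice reduce to $\lambda_{01}r_{t_0,t_1}(\cdot,t_2)=0$, which gives $\lambda_{01}=0$.
\end{enumerate}

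A smaller point concerns case (1). The $t_0$-slice needs only $2\overline\eta\ne t_1,t_2$. The condition $2\overline\eta\ne t_1+t_2$ comes from the $t_1$-slice. There $r_{t_0,t_0}(\cdot,t_1)$ has divisor $2(\overline\eta)+2(t_1-\overline\eta)$, and it must not vanish at the zero $t_2+\overline\eta$ of $r_{t_0,t_2}(\cdot,t_1)$.

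\textbf{Case (2).} This case is part of the statement, and ``I would check'' does not settle it. It does go through, with no permutation and no vertical fallback. Take $(t_0,t_1,t_2)=(0,\xi,2\xi)$.

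In the $t_0$-slice, the divisors of $r_{t_1,t_1}(\cdot,0)$, $r_{t_1,t_2}(\cdot,0)$ and $r_{t_2,t_2}(\cdot,0)$ are, respectively,
\[
2(\xi+\overline\eta)+(-\overline\eta)+(-2\xi-\overline\eta),\quad
(\xi+\overline\eta)+(2\xi+\overline\eta)+(-\overline\eta)+(\xi-\overline\eta),\quad
2(2\xi+\overline\eta)+2(-\overline\eta).
\]
Evaluating at $\xi+\overline\eta$ and then at $2\xi+\overline\eta$ works provided $\xi\ne2\overline\eta$, $2\xi\ne2\overline\eta$ and $2\overline\eta\ne0$.

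In the $t_1$-slice (with $\lambda_{22}=0$ already known), the divisors of $r_{t_0,t_0}(\cdot,\xi)$ and $r_{t_0,t_2}(\cdot,\xi)$ are
\[
2(\overline\eta)+(\xi-\overline\eta)+(-\xi-\overline\eta)
\quad\text{and}\quad
(\overline\eta)+(2\xi+\overline\eta)+2(\xi-\overline\eta).
\]
Evaluation at $2\xi+\overline\eta$ separates them provided $2\xi\ne0$ and $\xi\ne2\overline\eta$.

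All of these conditions hold because $\{\overline\eta,\xi\}$ is a basis of $E_\tau[4]$. You should write this out.

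\textbf{Comparison with the paper.} The paper never needs independence of several restricted functions. Take $\{i,k,l\}=\{0,1,2\}$ and evaluate at the single point $(t_k+\overline\eta,t_l)$. Every relation except $r_{t_i,t_i}$ vanishes there, because $t_k+\overline\eta$ lies on its left base secant or $t_l$ lies on its right base secant. So $\lambda_{ii}=0$ as soon as $t_k-t_l+2\overline\eta\ne0$ and $t_k+t_l+2t_i+2\overline\eta\ne0$; the paper tabulates these in both cases.

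Next, evaluating only the first tensor factor at $t_k+\overline\eta$ isolates $r_{t_i,t_j}$ for $i<j$. This gives $\lambda_{ij}=0$ with no further condition, because the tensor has rank two and $t_k+\overline\eta$ is off its left base secant.

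Your route instead needs an ordered block-triangular elimination and a divisor comparison in each block. The non-coincidence conditions it ends up requiring are essentially the same as the paper's.
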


\begin{proof}
Suppose that
$\sum_{0\leq i\leq j\leq2}\lambda_{ij}r_{t_i,t_j}=0$.
For $0\leq i\leq j\leq2$, Lemma~\ref{lem:RR.functions.vanish.on.graph}
shows that the common zero locus of the two functions in the left
tensor factors of $r_{t_i,t_j}$ is
$\overline{t_i+\overline\eta,\,t_j+\overline\eta}$, whereas the
common zero locus of the two functions in its right tensor factors is
$\overline{t_i,t_j}$.  We call these two lines the left and right base
secants of $r_{t_i,t_j}$, respectively.

We first show that the coefficients of the three tangent relations
vanish.  For each $i\in\{0,1,2\}$, let $k<l$ be the two elements of
$\{0,1,2\}-\{i\}$.  Thus
\[
 (k,l)=(1,2),\ (0,2),\ (0,1)
 \qquad\text{for }i=0,1,2,\text{ respectively}.
\]
Evaluate the linear relation at $(t_k+\eta,t_l)$.  Each of its five
terms other than $\lambda_{ii}r_{t_i,t_i}$ vanishes: for each such
term, either $t_k+\eta$ belongs to its left base secant or $t_l$
belongs to its right base secant.  By
\eqref{eq.RR.secant.zero.test}, the value
$r_{t_i,t_i}(t_k+\eta,t_l)$ is nonzero provided that
$t_k-t_l+2\eta\ne0$ and $t_k+t_l+2t_i+2\eta\ne0$ in $E_\tau$.

In the first case of the lemma, the two quantities are equal for each
$i$, and their common values for $i=0,1,2$ are, respectively,
$p_1+p_2+2\overline\eta$, $p_2+2\overline\eta$, and
$p_1+2\overline\eta$.
They are nonzero because $2\overline\eta$ is not a nonzero
two-torsion point.  In the exact-order-four case, the two quantities
are displayed in the corresponding row of
\[
\begin{array}{c|cc}
i&t_k-t_l+2\overline\eta&t_k+t_l+2t_i+2\overline\eta\\ \hline
0&2\overline\eta-\xi&2\overline\eta-\xi\\
1&2\overline\eta-2\xi&2\overline\eta\\
2&2\overline\eta-\xi&2\overline\eta+\xi .
\end{array}
\]
None is zero because $\{\overline\eta,\xi\}$ is a basis for $E_\tau[4]$.
Thus $r_{t_i,t_i}(t_k+\eta,t_l)\ne0$ in both cases of the lemma.
The evaluated relation therefore gives $\lambda_{ii}=0$.  Since this
holds for each $i$, $\lambda_{00}=\lambda_{11}=\lambda_{22}=0$.

It remains to consider the three secant relations.  Fix $i<j$, and let
$k$ be the unique element of $\{0,1,2\}-\{i,j\}$.  Apply evaluation at
$t_k+\eta$ to the first tensor factor of the relation.  Among the
three terms that remain, the two other than
$\lambda_{ij}r_{t_i,t_j}$ become zero because their left base secants
contain $t_k+\eta$.  The image of $r_{t_i,t_j}$ is nonzero: this
tensor has rank two and $t_k+\eta$ does not belong to its left base
secant.  Hence
$\lambda_{ij}=0$.  This proves the independence of all six relations.
\end{proof}

\begin{corollary}\label{cor:dim.RR.at.least.six}
For every $\eta\in\CC$, $\dim R_\eta^{\mathrm{RR}}\geq6$.
\end{corollary}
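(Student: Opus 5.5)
This follows directly from Lemma~\ref{prop:six.independent.RR.relations}. That lemma produces, for every $\eta\in\CC$, six tensors $r_{t_i,t_j}$ with $0\le i\le j\le 2$. Each of them has the form $F_\eta(a,b,c_1,c_2)$ for suitable parameters, so by the definition of $R_\eta^{\mathrm{RR}}$ as the linear span of all such functions, each one lies in $R_\eta^{\mathrm{RR}}$. The lemma also asserts that these six tensors are linearly independent. A vector space containing six linearly independent vectors has dimension at least six, which gives the corollary.

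The only point to check is that the six chosen relations are legitimate members of the spanning family for every $\eta$, including the exceptional cases. There are two such cases.

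\begin{enumerate}
\item When $\overline\eta$ has exact order four, the lemma substitutes the choice $(0,\xi,2\xi)$ for the two-torsion triple. This is still a valid choice of $t_0,t_1,t_2$.
\item For each pair $(i,j)$, one needs parameters $c_1^{ij},c_2^{ij}$ with $\{u_{c_1},u_{c_2}\}$ linearly independent. By Lemma~\ref{lem:RR.functions.vanish.on.graph}(3) this only requires $c_1-c_2\notin\Lambda_\tau$ and $a_i+a_j+c_1+c_2\notin\Lambda_\tau$. Generic $c_1,c_2$ satisfy both conditions.
\end{enumerate}

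So the corresponding $F_\eta$ are nonzero elements of $R_\eta^{\mathrm{RR}}$, and no real obstacle remains: all the substance was already handled in the independence argument of the preceding lemma.
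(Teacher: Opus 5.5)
Your argument is correct and is the paper's own: the corollary is stated without a separate proof, as an immediate consequence of Lemma~\ref{prop:six.independent.RR.relations}. That lemma supplies six relations $r_{t_i,t_j}$ drawn from the spanning family of functions $F_\eta(a,b,c_1,c_2)$, and shows they are linearly independent for every $\eta$; your side checks on $(t_0,t_1,t_2)$ and on $c_1^{ij},c_2^{ij}$ are already part of that lemma's setup.
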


Appendix~\ref{app:uniform.RR.basis} gives a different, parameter-uniform
choice of six independent relations.
\begin{theorem}\label{thm:Reta.equals.RR}
For all $\eta\in\CC$, $R_\eta^{\mathrm{RR}}=R_\eta$ and
\[
 \mathcal R(\eta)=A(\eta).
\]
\end{theorem}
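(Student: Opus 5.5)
The plan is to prove the inclusion $R_\eta^{\mathrm{RR}}\subseteq R_\eta$ and then finish by dimension count. Corollary~\ref{cor:dim.RR.at.least.six} gives $\dim R_\eta^{\mathrm{RR}}\ge 6$, and Corollary~\ref{cor:dim.Reta} gives $\dim R_\eta=6$. So once the inclusion is established, the two spaces are equal, and hence $\mathcal R(\eta)=TV/(R_\eta^{\mathrm{RR}})=TV/(R_\eta)=A(\eta)$, as an equality of quotients of the same tensor algebra.

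Lemma~\ref{lem:RR.functions.vanish.on.graph}(2) shows that every spanning element $F_\eta(a,b,c_1,c_2)$ vanishes on $\Gamma_\eta$. Therefore $R_\eta^{\mathrm{RR}}\subseteq K_\eta=R_\eta\oplus C_\eta$ by Corollary~\ref{cor:Keta.decomposition}. It remains to show that each $F_\eta$ vanishes at the four vertex pairs $(e_i,e_i)$.

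\textbf{First approach: the Heisenberg action.} I would show that $R_\eta^{\mathrm{RR}}$ is an $H_4$-submodule of $K_\eta$. Translation by a point $\omega$ of $E_\tau[4]$ sends $f(a,b,c;z)$, up to an exponential factor, to a product of four $\theta$'s with shifted zeros. After absorbing the shift into $a,b,c$ (shifting each by the same lift of $\omega$ and adjusting the fourth zero), this gives again a function of the form $f(a',b',c';z)$ times a scalar. One must check that the automorphy factors coming from $z_1$ and $z_2$ combine into a common scalar, so that $T\cdot F_\eta(a,b,c_1,c_2)$ is a multiple of $F_\eta(a',b',c_1',c_2')$. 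Granting this, Proposition~\ref{prop.H4.unique.complement} says $K_\eta$ is a multiplicity-free sum of four pairwise nonisomorphic irreducibles, with $C_\eta=K_{\eta,00}$. Hence $R_\eta^{\mathrm{RR}}$ is a sum of some of these summands. Since its dimension is at least $6$, it contains at least three of them. It is all of $K_\eta$ or equals $R_\eta$ exactly when it avoids $K_{\eta,00}$.

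To exclude $K_\eta$, I would show that some element of $R_\eta^{\mathrm{RR}}$ fails to span all of $C_\eta$, or more directly that $\dim R_\eta^{\mathrm{RR}}\le 6$. The cleanest route, I think, is the direct vertex evaluation below.

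\textbf{Second approach: direct vertex evaluation (the hard step).} Every $F_\eta$ is a rank-two tensor of the form $u_{c_1}\otimes v_{c_2}-u_{c_2}\otimes v_{c_1}$, whose left and right base secants are $L'=t_\eta(L)$ and $L$, with $L=\overline{-\bar a-\bar\nu,\,-\bar b-\bar\nu}$. Evaluated at $(e_i,e_i)$, it becomes $\det\begin{pmatrix}u_{c_1}(e_i)&u_{c_2}(e_i)\\ v_{c_1}(e_i)&v_{c_2}(e_i)\end{pmatrix}$.

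This determinant vanishes if $e_i\in L$ or $e_i\in L'$. In general, the $2\times2$ identification $u_{c}\mapsto v_{c}$ from the proof of Lemma~\ref{lem:RR.functions.vanish.on.graph} is induced by the linear map between the planes $\mathrm{span}(L')^\perp$ and $\mathrm{span}(L)^\perp$ determined by $t_\eta$. So the claim reduces to showing that the projections of $e_i$ from $L'$ and from $L$ correspond under the map $\PP(V^*/L')\to\PP(V^*/L)$ induced by translation.

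I would verify this geometrically. The point $e_i$ is the vertex of the quadric $Q(w_i)$, with $2w_i=0$, and is swept out by the secants $\overline{z,w_i-z}$. The plane through $L'$ and $e_i$ meets $E_\tau$ in the two points of $L'$ plus a pair summing to $w_i$ minus the sum of the points of $L'$, and the analogous statement holds for $L$. Translation by $\eta$ changes the sums by $2\eta$ on each side, so these pencils match. This is the main obstacle, and I would cross-check it against the explicit six relations obtained in Lemma~\ref{prop:six.independent.RR.relations}.

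Once the vertex vanishing is established, $R_\eta^{\mathrm{RR}}\subseteq R_\eta$, and the dimension count above completes the proof.
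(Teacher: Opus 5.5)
Your outer structure (prove $R_\eta^{\mathrm{RR}}\subseteq R_\eta$, then use $\dim R_\eta^{\mathrm{RR}}\ge 6=\dim R_\eta$) is the paper's. Your reduction of vertex vanishing to projective geometry is also correct. Write $L'=\overline{p'q'}$ with $p'=-\overline a+\overline\nu$, $q'=-\overline b+\overline\nu$, and $L=\overline{pq}$ with $p=p'-\overline\eta$, $q=q'-\overline\eta$. Let $\pi_{L'},\pi_L$ denote projection from these lines onto their pencils of planes. The $c$-independent proportionality of $u_c(z)$ and $v_c(z+\eta)$ in the proof of Lemma~\ref{lem:RR.functions.vanish.on.graph} shows that $u_c\mapsto v_c$ induces $\pi_{L'}(z)\mapsto\pi_L(z+\overline\eta)$. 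For $e_i\notin L\cup L'$, the determinant at $(e_i,e_i)$ vanishes iff this map sends $\pi_{L'}(e_i)$ to $\pi_L(e_i)$.

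\textbf{The gap.} The step where you claim to verify this proves nothing. Every plane through $L'$ meets $E_\tau$ residually in a pair summing to $-(p'+q')$; your value $w_i-(p'+q')$ would violate the hyperplane-sum rule whenever $w_i\ne0$. So the residual sum cannot single out the member $\pi_{L'}(e_i)$ of the pencil. Your observation that translation shifts residual sums by $2\overline\eta$ only recovers the fact that $t_\eta$ carries the pencil through $L'$ to the pencil through $L$. That is already the content of $u_c\mapsto v_c$. It says nothing about where the particular point $\pi_{L'}(e_i)$ goes.

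Your first approach does not close the gap either. Once $R_\eta^{\mathrm{RR}}$ is known to be a sum of isotypic summands of $K_\eta$, ruling out $K_{\eta,00}$ is just a restatement of the inclusion you want. Likewise, checking the six relations of Lemma~\ref{prop:six.independent.RR.relations} would place only those six in $R_\eta$, not every $F_\eta(a,b,c_1,c_2)$.

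\textbf{The missing idea} is to identify the residual pair pointwise rather than by its sum.
\begin{enumerate}
\item Every line through the vertex $e_i$ is a secant $\overline{z,w_i-z}$ (Proposition~\ref{prop.geometric.vertices}).
\item Hence the plane $\langle L',e_i\rangle$ contains the lines $\overline{e_ip'}$ and $\overline{e_iq'}$, and therefore the points $w_i-p'$ and $w_i-q'$. Thus $\pi_{L'}(e_i)=\pi_{L'}(w_i-p')$.
\item Its image is $\pi_L(w_i-p'+\overline\eta)=\pi_L(w_i-p)$, and the same reasoning gives $\pi_L(e_i)=\pi_L(w_i-p)$.
\end{enumerate}
This handles general $(a,b)$. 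The degenerate configurations (tangent lines, $e_i\in L\cup L'$, ramification of the projections) follow because $F_\eta(a,b,c_1,c_2;e_i,e_i)$ is holomorphic in the parameters.

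Repaired this way, your argument would be a genuinely different, coordinate-free proof resting only on the secant description of the vertices. The paper instead applies Jacobi's five-term identity. It shows that $\epsilon_{ij}(u_c)$ and $\epsilon_{ij}(v_c)$ are $\vartheta_{ij}(a+b-\eta)$ and $\vartheta_{ij}(a+b+\eta)$ times a common factor depending on $c$. The $2\times2$ matrix therefore visibly has rank at most one.
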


\begin{proof}
Since $\dim R_\eta=6$ and $\dim R_\eta^{\mathrm{RR}}\geq6$, it
suffices to show that $R_\eta^{\mathrm{RR}}\subseteq R_\eta$.
By Lemma~\ref{lem:RR.functions.vanish.on.graph}, every
$F_\eta(a,b,c_1,c_2)$ vanishes on $\Gamma_\eta$.
To prove the inclusion, it therefore suffices to show that every
$F_\eta(a,b,c_1,c_2)$ vanishes on the four vertex pairs.

We start the proof with five bookkeeping items.
\begin{enumerate}
\item To avoid confusing the indices of theta functions with the complex
parameters $a,b,c$ in the notation of Rains--Ruijsenaars, we use $i,j$
for the indices in $\vartheta_{ij}$.  The elements $\epsilon_{ij}$ and
the points $e_{ij}$ are those defined in \S2.1; by
Corollary~\ref{cor.eij.are.vertices}, the $e_{ij}$ are the vertices of
the four singular quadrics containing $E_\tau$.

\item We write
$F_\eta(a,b,c_1,c_2)=u_{c_1}\otimes v_{c_2}
-u_{c_2}\otimes v_{c_1}$, where $u_c$ and $v_c$ are the functions in
Lemma~\ref{lem:RR.functions.vanish.on.graph}\textup{(1)}.
The function $F_\eta(a,b,c_1,c_2)$ vanishes at
$(e_{ij},e_{ij})$ if and only if
\[
 \epsilon_{ij}(u_{c_1})\epsilon_{ij}(v_{c_2})
 -\epsilon_{ij}(u_{c_2})\epsilon_{ij}(v_{c_1})=0.
\]

\item Recall that $\theta=-\vartheta_{11}$.

\item We will apply Jacobi's five-term four-theta identity
\eqref{eq.Jacobi.five-term.four-theta} to
\begin{align*}
 f(a,b,c;z)
 &=\theta(z+a)\theta(z+b)\theta(z+c)
   \theta(z-a-b-c)\\
 &=\vartheta_{11}(z+a)\vartheta_{11}(z+b)
   \vartheta_{11}(z+c)\vartheta_{11}(z-a-b-c).
\end{align*}
If $(x_1,x_2,x_3,x_4)=(z+a,z+b,z+c,z-a-b-c)$, then
\[
 (y_1,y_2,y_3,y_4)=(2z,a+b,a+c,-b-c).
\]

\item Define $s_{11}=s_{00}=1$ and $s_{10}=s_{01}=-1$.
By \eqref{eq.Jacobi.five-term.four-theta},
\[
 2\epsilon_{ij}\bigl(f(a,b,c;z)\bigr)
 =s_{ij}\vartheta_{ij}(a+b)\vartheta_{ij}(a+c)
  \vartheta_{ij}(-b-c).
\]
\end{enumerate}

We now apply the last formula to $u_c$ and $v_c$.  Since
$2\nu=\eta$, it gives
\begin{align*}
 2\epsilon_{ij}(u_c)
 &=s_{ij}\vartheta_{ij}(a+b-\eta)
   \vartheta_{ij}(a+c)\vartheta_{ij}(-b-c),\\
 2\epsilon_{ij}(v_c)
 &=s_{ij}\vartheta_{ij}(a+b+\eta)
   \vartheta_{ij}(a+c)\vartheta_{ij}(-b-c).
\end{align*}
If we define $h_{ij}(c):=\tfrac12s_{ij}
\vartheta_{ij}(a+c)\vartheta_{ij}(-b-c)$, then
\[
 \epsilon_{ij}(u_c)=\vartheta_{ij}(a+b-\eta)h_{ij}(c),
 \qquad
 \epsilon_{ij}(v_c)=\vartheta_{ij}(a+b+\eta)h_{ij}(c)
\]
so the determinant of the matrix 
\[
 \begin{pmatrix}
  \epsilon_{ij}(u_{c_1})&\epsilon_{ij}(u_{c_2})\\
  \epsilon_{ij}(v_{c_1})&\epsilon_{ij}(v_{c_2})
 \end{pmatrix}
 =
 \begin{pmatrix}
  \vartheta_{ij}(a+b-\eta)\\
  \vartheta_{ij}(a+b+\eta)
 \end{pmatrix}
 \begin{pmatrix}h_{ij}(c_1)&h_{ij}(c_2)\end{pmatrix}
\]
is zero.  Hence, by part~\textup{(2)} above,
$F_\eta(a,b,c_1,c_2;e_{ij},e_{ij})=0$ for all
$i,j\in\{0,1\}$.
Since the functions $F_\eta(a,b,c_1,c_2)$ vanish at all four vertex pairs,
$R_\eta^{\mathrm{RR}}\subseteq R_\eta$.  Hence
$R_\eta^{\mathrm{RR}}=R_\eta$, and the result follows from the
definitions of $\mathcal R(\eta)$ and $A(\eta)$.
\end{proof}

\begin{corollary}\label{prop:RR.relations.and.LS.secants}
Suppose that $\overline\eta\notin E_\tau[4]$.  Up to nonzero scalar
multiples, the nonzero Rains--Ruijsenaars functions
$F_\eta(a,b,c_1,c_2)$ are precisely the relations associated by
Levasseur--Smith to the secant lines of $E_\tau$.
More explicitly, $F_\eta(a,b,c_1,c_2)$ is associated to the line
$\overline{pq}$, where $p=-\overline a-\overline\nu$ and
$q=-\overline b-\overline\nu$.
\end{corollary}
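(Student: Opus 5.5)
The plan is to match the description of $F_\eta(a,b,c_1,c_2)$ given by Lemma~\ref{lem:RR.functions.vanish.on.graph} and Theorem~\ref{thm:Reta.equals.RR} against the defining property of the Levasseur--Smith relations. Recall the relevant recipe from \cite[Section~4]{LS}. For a secant line $L=\overline{pq}$, let $L^\perp\subset V$ be the two-dimensional space of linear forms vanishing on $L$, and put $L'=t_{\overline\eta}(L)=\overline{p+\overline\eta,\,q+\overline\eta}$. The relation attached to $L$ is the (unique up to scalar) nonzero element of $R_\eta\cap\bigl(L'^\perp\otimes L^\perp\bigr)$. When $\overline\eta\notin E_\tau[4]$ it has the form $u_1\otimes v_2-u_2\otimes v_1$, where $\{u_1,u_2\}$ is a basis of $L'^\perp$ and $\{v_1,v_2\}$ is a basis of $L^\perp$. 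The first thing I will do is pin down LS's conventions: which tensor factor carries the shifted line, and the sign of $\eta$. If necessary I will use the flip $\sigma$ from Proposition~\ref{prop:A-under-elliptic-automorphism}(2) to align them with ours.

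Step 1 (each nonzero $F_\eta$ is an LS relation). Put $p=-\overline a-\overline\nu$ and $q=-\overline b-\overline\nu$, and suppose $F_\eta(a,b,c_1,c_2)\neq0$. By Lemma~\ref{lem:RR.functions.vanish.on.graph}(1),(3),(5), we have $F_\eta=u_{c_1}\otimes v_{c_2}-u_{c_2}\otimes v_{c_1}$, a tensor of rank two. Here $u_{c_1},u_{c_2}$ is a basis of the forms vanishing on $\overline{-\overline a+\overline\nu,\,-\overline b+\overline\nu}=\overline{p+\overline\eta,\,q+\overline\eta}=L'$, and $v_{c_1},v_{c_2}$ is a basis of $L^\perp$, where $L=\overline{pq}$. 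By Theorem~\ref{thm:Reta.equals.RR}, $F_\eta\in R_\eta$. Hence $F_\eta$ is a nonzero element of $R_\eta\cap(L'^\perp\otimes L^\perp)$ of exactly the LS shape, so it is the LS relation for $\overline{pq}$.

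Step 2 (every LS relation occurs). Given any secant $\overline{pq}$, choose lifts and set $a=-\widetilde p-\nu$ and $b=-\widetilde q-\nu$. Then choose $c_1,c_2$ with $c_1-c_2\notin\Lambda_\tau$ and $a+b+c_1+c_2\notin\Lambda_\tau$. By Lemma~\ref{lem:RR.functions.vanish.on.graph}(3), $F_\eta(a,b,c_1,c_2)\neq0$, and Step~1 applies to it. By part~(6) of the same lemma, different admissible choices of $c_1,c_2$ change $F_\eta$ only by a nonzero scalar. So the association $F_\eta\mapsto\overline{pq}$ is well defined and surjective onto secant lines.

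Main obstacle: uniqueness, i.e.\ $\dim R_\eta\cap(L'^\perp\otimes L^\perp)\le1$. This is what makes ``precisely, up to scalars'' correct. I would first simply quote \cite{LS}, where it is proved under $\overline\eta\notin E_\tau[4]$. If a self-contained argument is wanted, I would try the following. Write an element of $L'^\perp\otimes L^\perp$ as $\sum m_{kl}\,u_k\otimes v_l$. Vanishing on $\Gamma_\eta$ says $\sum m_{kl}u_k(z)v_l(z+\eta)=0$ for all $z$. The proof of Lemma~\ref{lem:RR.functions.vanish.on.graph} shows $u_k(z)=g(z)\,v_k(z+\eta)$ for a fixed meromorphic $g$, after choosing the bases compatibly. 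So the condition becomes a quadratic identity $\sum m_{kl}\,w_kw_l=0$ in the two functions $w_k=v_k(\cdot+\eta)$. These functions are algebraically independent enough (their ratio is a nonconstant degree-two function), so the symmetric part of $m$ vanishes. Hence $m$ is antisymmetric, and the intersection is at most one-dimensional, spanned by the $F_\eta$ already found.
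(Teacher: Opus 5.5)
Your proposal is correct and is essentially the paper's proof. It uses $F_\eta(a,b,c_1,c_2)\in R_\eta$ from Theorem~\ref{thm:Reta.equals.RR}, the fact from Lemma~\ref{lem:RR.functions.vanish.on.graph}(3) that $v_{c_1},v_{c_2}$ is a basis of the forms vanishing on $\overline{pq}$, the uniqueness result of \cite[Lem.~4.3 and Prop.~4.4]{LS}, and the converse obtained by taking $\overline a=-p-\overline\nu$, $\overline b=-q-\overline\nu$ and admissible $c_1,c_2$. Your optional self-contained uniqueness argument is sound but only bounds $K_\eta\cap\bigl((L')^{\perp}\otimes L^{\perp}\bigr)$, with $L'=t_{\overline\eta}(\overline{pq})$, so it needs the separate fact that the LS relation lies in $(L')^{\perp}\otimes L^{\perp}$ and not merely in $V\otimes L^{\perp}$; the paper avoids this by citing LS's uniqueness in $V\otimes L^{\perp}$ directly.
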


\begin{proof}
Suppose $F_\eta(a,b,c_1,c_2)\ne0$.  Let $p,q\in E_\tau$ be the points
$-\overline a-\overline\nu$ and $-\overline b-\overline\nu$.
Lemma~\ref{lem:RR.functions.vanish.on.graph}\textup{(3)} shows
that $\{v_{c_1},v_{c_2}\}$ is a basis for the space of linear forms
vanishing on $\overline{pq}$.  Theorem~\ref{thm:Reta.equals.RR} shows
that $F_\eta(a,b,c_1,c_2)$ belongs to $R_\eta$.  Hence, by the
uniqueness result in \cite[Lem.~4.3 and Prop.~4.4]{LS},
$F_\eta(a,b,c_1,c_2)$ is a nonzero scalar multiple of ``the''
Levasseur--Smith relation associated to $\overline{pq}$.

Conversely, let $\overline{pq}$ be a secant line and let
$u_1\otimes v_2-u_2\otimes v_1$ be ``the'' associated
Levasseur--Smith relation.
Choose $a,b\in\CC$ such that
$\overline a=-p-\overline\nu$ and
$\overline b=-q-\overline\nu$, and choose $c_1,c_2$ so that
$\{v_{c_1},v_{c_2}\}$ is linearly independent, and hence a basis for
the linear forms vanishing on $\overline{pq}$.  By the first part
of the proof, $F_\eta(a,b,c_1,c_2)$ is a nonzero scalar multiple of
$u_1\otimes v_2-u_2\otimes v_1$.
\end{proof}

\begin{corollary}
For every $\eta\in\CC$, the six secant relations in
Lemma~\ref{prop:six.independent.RR.relations} form a basis for
$R_\eta$.  Thus $R_\eta$ has a basis consisting of Rains--Ruijsenaars
functions of the form $u_1\otimes v_2-u_2\otimes v_1$.
\end{corollary}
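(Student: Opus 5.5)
The plan is a dimension count built from three earlier results. Lemma~\ref{prop:six.independent.RR.relations} supplies six linearly independent tensors $r_{t_i,t_j}$, $0\le i\le j\le 2$. Each of them is one of the functions $F_\eta(a_i,a_j,c_1^{ij},c_2^{ij})$, so each lies in $R_\eta^{\mathrm{RR}}$. Theorem~\ref{thm:Reta.equals.RR} gives $R_\eta^{\mathrm{RR}}=R_\eta$, so all six tensors lie in $R_\eta$. Corollary~\ref{cor:dim.Reta} gives $\dim R_\eta=6$ for every $\eta$. Six linearly independent vectors in a six-dimensional space form a basis, and this proves the first sentence.

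For the second sentence, I would invoke Lemma~\ref{lem:RR.functions.vanish.on.graph}(1). It writes each $F_\eta(a,b,c_1,c_2)$ as $u_{c_1}\otimes v_{c_2}-u_{c_2}\otimes v_{c_1}$, with $u_c,v_c\in V$. Setting $u_1=u_{c_1}$, $u_2=u_{c_2}$, $v_1=v_{c_1}$ and $v_2=v_{c_2}$ puts each basis element in the form $u_1\otimes v_2-u_2\otimes v_1$.

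The only point needing care is that each $r_{t_i,t_j}$ is genuinely nonzero and of this shape for the chosen $c_1^{ij},c_2^{ij}$. Here $c_1^{ij},c_2^{ij}$ are chosen so that $u_{c_1},u_{c_2}$ are linearly independent. Lemma~\ref{lem:RR.functions.vanish.on.graph}(3) and (5) then show that $r_{t_i,t_j}$ is a nonzero rank-two tensor. This holds uniformly in $\eta$, including the cases where $\overline\eta$ is $2$-torsion or of exact order four, which Lemma~\ref{prop:six.independent.RR.relations} already handles.

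No real obstacle remains, because all the substantive work is in the cited results.
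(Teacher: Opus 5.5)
Your proposal is correct and is essentially the argument the paper intends. The six tensors of Lemma~\ref{prop:six.independent.RR.relations} are independent elements of $R_\eta^{\mathrm{RR}}=R_\eta$ (Theorem~\ref{thm:Reta.equals.RR}), and $\dim R_\eta=6$ (Corollary~\ref{cor:dim.Reta}), so they form a basis; the shape $u_1\otimes v_2-u_2\otimes v_1$ is Lemma~\ref{lem:RR.functions.vanish.on.graph}(1).
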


\begin{remark}
There is another route to Theorem~\ref{thm:Reta.equals.RR}.  After the
generators and parameters are translated into our notation,
Rains--Ruijsenaars prove that
$\mathcal R(\eta)\cong S(\eta)$ when
$\overline\eta\notin E_\tau[2]$.  Combining their result with
Theorem~\ref{prop:addition-formulas-relations} identifies their relation
space with $R_\eta$
on that locus.  Since both six-dimensional relation spaces depend
holomorphically on $\eta$ as points of
$\operatorname{Gr}(6,V^{\otimes2})$, the equality extends to every
$\eta$.  Our proof above is a useful complement to that argument: it
proves the equality directly at all parameters and exhibits a basis
arising from secant lines, without first passing through
$S(\eta)$.
\end{remark}

\subsection{Feigin and Odesskii's algebras
\texorpdfstring{$Q_{4,1}(\eta)$ and the proof
$A(\eta)\cong Q_{4,1}(-\tfrac12\eta)$}
{Q41(eta) and the proof A(eta) is isomorphic to Q41(-eta/2)}}

The functions $\theta_0,\theta_1,\theta_2,\theta_3$ used in
this subsection are the functions defined in \S2.3 by specializing
\cite[Prop.~2.6]{CKS1} to $n=4$,
not the numbered theta functions defined in
\eqref{eq.numbered.theta.conventions}.

When $\overline{\eta}\notin E_\tau[4]$,
$Q_{4,1}(\eta)$ is the free algebra
$\CC\langle x_0,x_1,x_2,x_3\rangle$ modulo the relations
\begin{equation}
\label{eq.Q41.relations}
 r_{ij}(\eta)=
 \sum_{r\in\ZZ_4}
 \frac{1}
      {\theta_{j-i-r}(-\eta)\theta_r(\eta)}
 x_{j-r}\otimes x_{i+r},
 \qquad i,j\in\ZZ_4,\quad i\ne j.
\end{equation}
Let
\[
 \rel_{4,1}(\eta)
   =\Span\{r_{ij}(\eta)\mid i,j\in\ZZ_4,\ i\ne j\}
   .
\]
The definition extends to all $\eta$ as in \cite[Section~3.3]{CKS1}.

All subscripts in these formulas are read modulo four.

\begin{lemma}
\label{lem.six.CKS.relations}
For all $\eta\in\CC$, $\dim\rel_{4,1}(\eta)=6$.
\end{lemma}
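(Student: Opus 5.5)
We need to show that $\dim\rel_{4,1}(\eta)=6$ for all $\eta$. There are twelve $r_{ij}$ ($i\ne j$). The plan is to show the span has dimension exactly six when $\overline\eta\notin E_\tau[4]$, and then handle the extension to all $\eta$ using \cite[Section~3.3]{CKS1}.

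\textbf{Grading by $j-i$.} The first step is to exploit the $\ZZ_4$-grading. Each monomial $x_{j-r}\otimes x_{i+r}$ occurring in $r_{ij}$ has index sum $i+j$. So $r_{ij}$ lies in the span $M_s$ of the four tensors $x_p\otimes x_q$ with $p+q=s$, where $s=i+j$.

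The next step is to pair the relations. Substituting $r\mapsto j-i-r$ shows that $r_{ij}$ and $r_{ji}$ involve the same monomials, with coefficients
\[
\frac{1}{\theta_{j-i-r}(-\eta)\theta_r(\eta)}
\qquad\text{versus}\qquad
\frac{1}{\theta_{r}(-\eta)\theta_{j-i-r}(\eta)}.
\]
Using the symmetry $\theta_\alpha(-z)=\pm\theta_{-\alpha}(z)$ from \cite[Prop.~2.6]{CKS1}, I expect $r_{ji}=\pm r_{ij}$. This reduces the twelve relations to at most six, namely one for each unordered pair $\{i,j\}$.

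\textbf{Counting per graded piece.} For each $s\in\ZZ_4$, the unordered pairs $\{i,j\}$ with $i+j=s$ and $i\ne j$ number $2$ when $s$ is odd and $1$ when $s$ is even. The total is $2+1+2+1=6$. The pieces $M_s$ are independent, so it remains to show the relations are independent inside each $M_s$.

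\begin{itemize}
\item For $s$ even there is one relation, and it is nonzero because its coefficients are finite and nonzero.
\item For $s$ odd the two relations correspond to $j-i=d$ and $d'=d+2$. The coefficient of $x_p\otimes x_q$ in the first is $1/\bigl(\theta_{q-p}(-\eta)\theta_{q-i}(\eta)\bigr)$ and in the second is the analogous expression with $i'$. Independence then amounts to showing that one $2\times2$ minor is nonzero, or equivalently that the ratio of coefficients is nonconstant in $p$. The ratio is $\theta_{q-i}(\eta)/\theta_{q-i'}(\eta)$, up to the $\theta(-\eta)$ factors.
\end{itemize}

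\textbf{Main obstacle.} I expect the hard step to be showing this ratio is nonconstant. If it were constant, quasi-periodicity $\theta_\alpha(z+\tfrac14\tau)\propto\theta_{\alpha+1}(z)$ combined with the relations $\theta_{\alpha}(\eta)\theta_{\alpha+2}(\eta)=\text{const}$ would force a relation among the values $\theta_\alpha(\eta)$. Via \eqref{eq.basis.comparison}, that relation would force $\eta$ to lie at a vertex, or $\overline\eta\in E_\tau[4]$. I would try verifying this directly with \eqref{eq.basis.comparison}, where the $\theta_\alpha$ are expressed through $Y_{ab}$ evaluated at shifted points.

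\textbf{Extension to all $\eta$.} For $\overline\eta\in E_\tau[4]$, the definition in \cite[Section~3.3]{CKS1} takes the limit of $\rel_{4,1}(\eta)$ in $\operatorname{Gr}(6,V^{\otimes2})$. Limits of six-dimensional spaces are six-dimensional, so the statement holds there as well.
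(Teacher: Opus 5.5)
\textbf{The gap: the four-torsion case.} Your extension to $\overline\eta\in E_\tau[4]$ does not work as stated. At those parameters $\rel_{4,1}(\eta)$ is not defined as a limit point in the Grassmannian. The proof of Proposition~\ref{prop.Q41.relations.in.R} makes this explicit: it is the span of the holomorphically normalized relations $h_{ij}(\eta)$ of \cite[\S3.2.3]{CKS1}, specialized at that $\eta$. The paper then uses the present lemma precisely to conclude that $\overline\eta\mapsto\rel_{4,1}(\eta)$ is a holomorphic map into $\operatorname{Gr}(6,\Theta_4(\tau)^{\otimes2})$.

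Your remark that limits of six-planes are six-planes shows only that each $h_{ij}(\eta)$ lies in the limiting six-plane. That gives $\dim\le 6$. The rank of a holomorphic family of tensors is only lower semicontinuous, and these are exactly the parameters where clearing the poles of the $r_{ij}$ kills some of their terms. Ruling out a drop in rank is a genuine statement: at $\overline\eta=0$ it amounts to $Q_{4,1}(0)$ being the polynomial ring $S\Theta_4(\tau)$. This is what the paper imports from \cite[Cor.~5.3]{CKS1}. You need that citation, or an explicit check at each point of $E_\tau[4]$, for instance a nonvanishing $6\times6$ minor of the specialized $h_{ij}$.

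\textbf{The generic case.} For $\overline\eta\notin E_\tau[4]$ your route differs from the paper's, which simply cites the isomorphism $\operatorname{Alt}^2\Theta_4(\tau)\to\rel_{4,1}(\eta)$ of \cite[Prop.~3.4]{CKS1}. It is sound in outline, but the bookkeeping needs repair.
\begin{itemize}
\item The coefficient of $x_{j-r}\otimes x_{i+r}$ in $r_{ji}$ is $1/\bigl(\theta_{-r}(-\eta)\theta_{r-j+i}(\eta)\bigr)$. The identity $\theta_a(-z)=-e(-4z+\tfrac14a)\theta_{-a}(z)$ then gives $r_{ji}=\ii^{\,j-i}r_{ij}$, not $\pm r_{ij}$.
\item The coefficient of $x_p\otimes x_q$ in $r_{ij}$ is $1/\bigl(\theta_{p-i}(-\eta)\theta_{q-i}(\eta)\bigr)$, not $1/\bigl(\theta_{q-p}(-\eta)\theta_{q-i}(\eta)\bigr)$.
\end{itemize}

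Your decisive independence step is also still missing; here is how to close it. For odd $s$, choose $j-i=1$ and compare $r_{ij}$ with $r_{i+2,j+2}$. The ratio of their coefficients of $x_{j-r}\otimes x_{i+r}$ is $-\theta_{r-1}(\eta)\theta_r(\eta)/\bigl(\theta_{r+1}(\eta)\theta_{r+2}(\eta)\bigr)$. This is constant in $r$ only if $\theta_0(\eta)=\pm\theta_2(\eta)$ and $\theta_1(\eta)=\pm\theta_3(\eta)$. Geometrically, that means the point $\overline\eta\in E_\tau\subset\PP\Theta_4(\tau)^*$ lies on a line joining one of the vertices $[1:0:\pm1:0]$ to one of $[0:1:0:\pm1]$ in \eqref{eq.vertices.CKS.Prop.2.7.basis}. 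It is not that $\overline\eta$ lies at a vertex, as you guessed.

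No such line meets $E_\tau$. It passes through the vertex $e$ of a singular quadric $Q\supset E_\tau$ and through a second vertex. By Proposition~\ref{prop.geometric.vertices}, that second vertex lies on only one quadric of the pencil, so it is not on $Q$. Hence the line meets the cone $Q$ only at $e$, and $e\notin E_\tau$.

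With these repairs your grading argument gives a self-contained proof off $E_\tau[4]$. It says nothing at the four-torsion points, which is where the missing input above is needed.
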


\begin{proof}
If $\overline{\eta}\notin E_\tau[4]$, Proposition~3.4 of \cite{CKS1}
exhibits an explicit linear isomorphism
\[
 \operatorname{Alt}^2\Theta_4(\tau)\longrightarrow\rel_{4,1}(\eta),
\]
so
$\dim\rel_{4,1}(\eta)=\binom42=6$.  If
$\overline{\eta}\in E_\tau[4]$, the same dimension
statement is \cite[Cor.~5.3]{CKS1}.
\end{proof}

We identify the generating subspace of
$Q_{4,1}(\eta)$ with $\Theta_4(\tau)$ by setting
$x_\alpha=\theta_\alpha$ for $\alpha=0,1,2,3$
(cf.~\cite[Section~5.2]{CKS2}).

Thus $A(\eta)$ and $Q_{4,1}(\eta)$ are generated by different
spaces of order-four theta functions, namely $V_0$ and $V_{1/2}$ in
the notation of \S2.1.  The isomorphism
$A(\eta)\to Q_{4,1}(-\tfrac12\eta)$, which we prove in
Theorem~\ref{thm.Q41.Sklyanin}, is induced by the translation
isomorphism $\Psi=\Psi_{1/8}:V \to \Theta_4(\tau)$ in Lemma~\ref{prop.translation.V.Vprime}.

\begin{proposition}
\label{prop.Q41.relations.in.R}
For all $\eta\in\CC$, $(\Psi\otimes\Psi)(R_{-2\eta})=\rel_{4,1}(\eta)$.
\end{proposition}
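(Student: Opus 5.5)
Since $\dim R_{-2\eta}=6$ (Corollary~\ref{cor:dim.Reta}), $\dim\rel_{4,1}(\eta)=6$ (Lemma~\ref{lem.six.CKS.relations}), and $\Psi\otimes\Psi$ is injective, it suffices to prove $(\Psi\otimes\Psi)^{-1}(\rel_{4,1}(\eta))\subseteq R_{-2\eta}$. So I must show that each relation $r_{ij}(\eta)$, pulled back through $\Psi^{-1}\otimes\Psi^{-1}$, vanishes on $\Gamma_{-2\eta}$ and at the four vertex pairs $(e_\alpha,e_\alpha)$. I will first treat $\overline\eta\notin E_\tau[4]$, where the explicit formula \eqref{eq.Q41.relations} is available, and then extend to all $\eta$ by continuity. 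Both $R_{-2\eta}$ and $\rel_{4,1}(\eta)$ are six-dimensional for every $\eta$ and vary holomorphically as points of $\operatorname{Gr}(6,V^{\otimes2})$; the latter by the construction in \cite[Section~3.3]{CKS1}, and the former because of the holomorphic basis in Theorem~\ref{thm:R-eta-basis}(3). Equality on a dense set therefore gives equality everywhere.

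\emph{Vanishing on the graph.} Regard $r_{ij}(\eta)$ as the function $\sum_r \theta_{j-r}(w_1)\theta_{i+r}(w_2)/(\theta_{j-i-r}(-\eta)\theta_r(\eta))$ on $\CC^2$. The pullback under $\Psi^{-1}$ in each factor amounts to evaluating at shifted points: $(\Psi^{-1}\otimes\Psi^{-1}F)(z_1,z_2)=F(z_1+\tfrac18,z_2+\tfrac18)$. Since the displacement $z_2-z_1$ is unchanged, vanishing on $\Gamma_{-2\eta}$ is equivalent to $r_{ij}(\eta)(w,w-2\eta)=0$ for all $w$. This is the standard Feigin--Odesskii fact that the $Q_{n,1}$ relations vanish on the graph of translation by the appropriate multiple of $\eta$. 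I would prove it directly. As a function of $w$, the sum is a theta function of order $8$, and the sign convention fixing $-2\eta$ rather than $+2\eta$ has to be tracked through the quasi-periodicity formulas of \cite[Prop.~2.7]{CKS1}. Its transformation under $w\mapsto w+\tfrac14$ and $w\mapsto w+\tfrac14\tau$ can be computed from \eqref{eq.H4.action.on.theta.alpha.c}. Alternatively, I would invoke \cite{CKS1} where this vanishing is stated, namely the characterization of $\rel_{n,k}$ as the kernel of restriction to the graph intersected with a suitable complement. This step, with its translation and sign bookkeeping, is the main obstacle. If a citation is not acceptable, the fallback is to verify it via a Weierstrass three-term identity \eqref{eq.Weierstrass.three-term} applied to the factorized form of the $\theta_\alpha$.

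\emph{Vanishing at the vertices.} By \eqref{eq.vertices.transport}, $\Psi^*(e_{\alpha,3/2})=e_\alpha$, so it suffices to evaluate $r_{ij}(\eta)$ at $(e_{\alpha,3/2},e_{\alpha,3/2})$ using the coordinates \eqref{eq.vertices.CKS.Prop.2.7.basis}. At $[1:0:\pm1:0]$ only the terms with both $j-r$ and $i+r$ even survive. At $[0:1:0:\pm1]$ only the terms with both odd survive. The value is then a signed sum of coefficients, and I expect the following cancellation. If $i-j$ is odd, no term survives, because the parities of $j-r$ and $i+r$ differ. If $i-j$ is even, the substitution $r\mapsto j-i-r$ pairs the term $x_{j-r}x_{i+r}$ with $x_{i+r}x_{j-r}$. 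The two coefficients $1/(\theta_{j-i-r}(-\eta)\theta_r(\eta))$ and $1/(\theta_r(-\eta)\theta_{j-i-r}(\eta))$ are related by the parity $\theta_\alpha(-\eta)=\pm\theta_{-\alpha}(\eta)$ up to a quasi-periodicity factor. Evaluated at $(e,e)$, the two monomials carry the same product of coordinates $(\pm1)^{\ldots}$, so the coefficients cancel in pairs, or all four surviving terms combine to zero. I would check this with the explicit symmetry of the $\theta_\alpha$ under $z\mapsto -z$.

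Combining the two vanishing statements gives $(\Psi\otimes\Psi)^{-1}\rel_{4,1}(\eta)\subseteq R_{-2\eta}$ for generic $\eta$. The dimension count then gives equality, and the continuity argument above extends it to all $\eta$.
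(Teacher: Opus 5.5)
\textbf{Gap in the vertex step.} Your overall plan matches the paper's: reduce by dimension to $(\Psi^{-1}\otimes\Psi^{-1})(\rel_{4,1}(\eta))\subseteq R_{-2\eta}$, take vanishing on $\Gamma_{-2\eta}$ from \cite[Lem.~3.6]{CKS1}, evaluate at the vertices for $\overline\eta\notin E_\tau[4]$, and extend by continuity. The case $j-i$ odd and the continuity step are fine. The graph vanishing is simply \cite[Lem.~3.6]{CKS1} and is not the real obstacle. The real obstacle is the vertex evaluation when $j-i=2$, and there your argument has a hole.

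Write $c_r=1/(\theta_{2-r}(-\eta)\theta_r(\eta))$. Your pairing $r\mapsto j-i-r=2-r$ swaps $r=0$ and $r=2$, and for these the coefficients do cancel. But it \emph{fixes} $r=1$ and $r=3$. Their tensors are the squares $x_{j-1}\otimes x_{j-1}$ and $x_{j+1}\otimes x_{j+1}$. At the two vertices whose nonzero coordinates have the parity of $j-1$, these are the only surviving terms, so the value of $r_{ij}(\eta)$ there is $c_1+c_3$. The flip symmetry says nothing about $c_1+c_3$. Your fallback, that all four surviving terms combine to zero, also does not describe what happens: at each vertex exactly two terms survive, namely those indexed by $r$ and $r+2$.

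\textbf{The missing idea.} Use the pairing $r\leftrightarrow r+2$, which shifts both tensor indices by $2$. This works for two reasons.
\begin{itemize}
\item At each $e_{\alpha,3/2}$, the substitution $x_a\mapsto x_{a+2}$ multiplies both nonzero coordinates by the same sign. Hence $x_{j-r}\otimes x_{i+r}$ and $x_{j-r-2}\otimes x_{i+r+2}$ take the same value at $(e_{\alpha,3/2},e_{\alpha,3/2})$.
\item The reflection formula $\theta_a(-z)=-e(-4z+\tfrac14a)\theta_{-a}(z)$ gives $\theta_{2-r}(-\eta)\theta_r(\eta)=-\theta_{-r}(-\eta)\theta_{r+2}(\eta)$ for every $r$. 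The reason is that the phases $e(\tfrac14(2-r))$ and $e(-\tfrac14r)$ differ by $e(\tfrac12)=-1$.
\end{itemize}
For $r=1$ this is exactly $c_1=-c_3$. The $a$-dependent phase $\ii^a$ in the reflection formula is what makes this work. If you treat it, as you do, as an unspecified quasi-periodicity factor, you would get $c_1=c_3$, and the squares would not cancel.
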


\begin{proof}
Since ${\rm dim}(\rel_{4,1}(\eta)) = {\rm dim}(R_{-2\eta})$,
it suffices to show that
$(\Psi^{-1}\otimes\Psi^{-1})(\rel_{4,1}(\eta))
\subseteq R_{-2\eta}$.
By \cite[Lem.~3.6]{CKS1}, $\rel_{n,1}(\eta)$ vanishes on
$\{(z,z-(n-2)\overline\eta)\mid z\in E_\tau\}$.
Hence $\rel_{4,1}(\eta)$ vanishes on  
\begin{equation}
\label{eq.CKS.graph}
 \Gamma'_{-2\overline\eta}
   :=\{(z,z-2\overline\eta)\mid z\in E_\tau\}
   \subset\PP\Theta_4(\tau)^*\times\PP\Theta_4(\tau)^*.
\end{equation}

First suppose that $\overline{\eta}\notin E_\tau[4]$.  We will now show
that every $r_{ij}(\eta)$ in \eqref{eq.Q41.relations} vanishes at every
$(e_{\alpha,3/2},e_{\alpha,3/2})$.  At each of the vertex points
$e_{\alpha,3/2}$ in
\eqref{eq.vertices.CKS.Prop.2.7.basis}, precisely two coordinates of
the same parity are nonzero.  If $j-i$ is odd, then the indices
$j-r$ and $i+r$ have opposite parity.  Hence
$x_{j-r}\otimes x_{i+r}$ vanishes at each
$(e_{\alpha,3/2},e_{\alpha,3/2})$.
It remains to consider $j-i=2$ in $\ZZ_4$.  In the sum defining
$r_{ij}(\eta)$, group together, for each $r\in\ZZ_4$, the summand
whose tensor factor is $x_{j-r}\otimes x_{i+r}$ and the summand indexed
by $r+2$, whose tensor factor is
$x_{j-r-2}\otimes x_{i+r+2}$.  Since
$\theta_a(-z)=-e\!\left(-4z+\tfrac14a\right)\theta_{-a}(z)$
\cite[Prop.~2.6(5)]{CKS1}, we obtain
\[
 \frac{1}{\theta_{2-r}(-\eta)\theta_r(\eta)}
 =-
 \frac{1}{\theta_{-r}(-\eta)\theta_{r+2}(\eta)}.
\]
Hence the scalar coefficients of these two
summands are negatives of one another.  At any one of the four points
$e_{\alpha,3/2}$, translating both coordinate indices by $2$ changes the two
nonzero coordinate values by the same sign.  Consequently the two
tensors have the same value at $(e_{\alpha,3/2},e_{\alpha,3/2})$.  Their
contributions to $r_{ij}(\eta)(e_{\alpha,3/2},e_{\alpha,3/2})$ therefore cancel.
Hence, for every $i\ne j$ and every $\alpha$,
$r_{ij}(\eta)(e_{\alpha,3/2},e_{\alpha,3/2})=0$.

We now assume that $\overline{\eta}\in E_\tau[4]$.  The holomorphically
normalized relations
$h_{ij}$ constructed in \cite[Section~3.2.3]{CKS1} extend across these
parameters by \cite[Lem.~3.5]{CKS1} and span
$\rel_{4,1}(\eta)$ for every $\eta$.  Since their span has the constant
dimension six by Lemma~\ref{lem.six.CKS.relations}, the map
$\overline\eta\mapsto\rel_{4,1}(\eta)$ is a holomorphic map
$E_\tau\to\operatorname{Gr}(6,\Theta_4(\tau)^{\otimes2})$.  The subspace of
$\Theta_4(\tau)^{\otimes2}$ consisting of tensors that vanish at the four
pairs $(e_{\alpha,3/2},e_{\alpha,3/2})$ is independent of $\eta$ and
closed.  For every
$\eta\notin\frac14\Lambda_\tau$, the calculation above shows that
$\rel_{4,1}(\eta)$ is contained in this subspace.  By continuity, the same
containment holds when $\eta\in\frac14\Lambda_\tau$.  Thus every relation in
$\rel_{4,1}(\eta)$ vanishes at the four vertex pairs for every $\eta$.

By equation~\eqref{eq.vertices.transport},
$(\Psi^*,\Psi^*)(e_{\alpha,3/2},e_{\alpha,3/2})=(e_\alpha,e_\alpha)$.
Moreover,
$\Psi^*$ translates both projective factors by the same point, so
it carries $\Gamma'_{-2\eta}$ to $\Gamma_{-2\eta}$.  The transported
relation space therefore vanishes both on $\Gamma_{-2\eta}$ and at
those four pairs.  These are precisely the vanishing conditions that
define $R_{-2\eta}$, and hence
$(\Psi^{-1}\otimes\Psi^{-1})(\rel_{4,1}(\eta))\subseteq R_{-2\eta}$.
Applying $\Psi\otimes\Psi$ gives the required inclusion.  Since the two
spaces have the same dimension, it is an equality.
\end{proof}

\begin{theorem}
\label{thm.Q41.Sklyanin}
For all $\eta\in\CC$, the isomorphism
$\Psi:V\to\Theta_4(\tau)$ extends to an isomorphism
\[
 A(\eta)\cong Q_{4,1}(-\tfrac12\eta).
\]
\end{theorem}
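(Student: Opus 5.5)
The theorem follows from Proposition~\ref{prop.Q41.relations.in.R} applied with $-\tfrac12\eta$ in place of $\eta$. That proposition says that for every $\eta'\in\CC$,
\[
 (\Psi\otimes\Psi)(R_{-2\eta'})=\rel_{4,1}(\eta').
\]
Setting $\eta'=-\tfrac12\eta$ gives $-2\eta'=\eta$, so $(\Psi\otimes\Psi)(R_\eta)=\rel_{4,1}(-\tfrac12\eta)$. This holds for every $\eta$, including the parameters with $\overline\eta\in E_\tau[2]$ or $\tfrac12\overline\eta\in E_\tau[4]$. In those cases $\rel_{4,1}$ is understood in the extended sense of \cite[Section~3.3]{CKS1}, which is the sense used in Proposition~\ref{prop.Q41.relations.in.R}.

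Next, extend $\Psi$ to tensor algebras. The linear isomorphism $\Psi:V\to\Theta_4(\tau)$ extends uniquely to a graded algebra isomorphism $T\Psi:TV\to T\Theta_4(\tau)$, and in degree two this map is $\Psi\otimes\Psi$. A graded algebra isomorphism carries the two-sided ideal generated by a subspace of degree-two elements onto the two-sided ideal generated by its image. Hence
\[
 T\Psi\bigl((R_\eta)\bigr)=\bigl(\rel_{4,1}(-\tfrac12\eta)\bigr),
\]
and $T\Psi$ therefore induces a graded isomorphism
\[
 A(\eta)=TV/(R_\eta)\;\longrightarrow\;T\Theta_4(\tau)/\bigl(\rel_{4,1}(-\tfrac12\eta)\bigr).
\]

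It remains to identify the target with $Q_{4,1}(-\tfrac12\eta)$. This uses the identification $x_\alpha=\theta_\alpha$ of the generating space of $Q_{4,1}$ with $\Theta_4(\tau)$. On generators, \eqref{eq.Psi.inverse.on.CKS.basis} gives the isomorphism explicitly: $u_\alpha\mapsto\kappa_\alpha^{-1}x_\alpha$.

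No real obstacle remains, since all the geometric content is in Proposition~\ref{prop.Q41.relations.in.R}. The only point needing care is that $\rel_{4,1}(-\tfrac12\eta)$ at torsion parameters must be the extended relation space, so that the quotient really is $Q_{4,1}(-\tfrac12\eta)$ for every $\eta$. The proposition was proved in exactly that generality.
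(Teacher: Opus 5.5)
Your proposal is correct and is exactly the paper's argument: the paper's proof consists of applying Proposition~\ref{prop.Q41.relations.in.R} with parameter $-\tfrac12\eta$, and you have spelled out the routine step that $T\Psi$ carries the ideal $(R_\eta)$ onto $(\rel_{4,1}(-\tfrac12\eta))$ and identified the target with $Q_{4,1}(-\tfrac12\eta)$ via $x_\alpha=\theta_\alpha$. One notational point: the condition $\tfrac12\overline\eta\in E_\tau[4]$ is not well defined as written, but the intended torsion locus, $\overline{-\tfrac12\eta}\in E_\tau[4]$, is the same as $\overline\eta\in E_\tau[2]$, so nothing in the argument is affected.
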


\begin{proof}
Apply Proposition~\ref{prop.Q41.relations.in.R} with parameter
$-\tfrac12\eta$.
\end{proof}

\begin{remark}
The isomorphism in Theorem~\ref{thm.Q41.Sklyanin}, together with
Proposition~\ref{prop:A-under-elliptic-automorphism}(2), yields
\[
 A(\eta)\cong Q_{4,1}(-\tfrac12\eta)
 \cong Q_{4,1}(\tfrac12\eta)^{\mathrm{op}}.
\]
Indeed, applying the theorem with parameter $-\eta$ gives
$A(-\eta)\cong Q_{4,1}(\tfrac12\eta)$, and the proposition identifies
$A(-\eta)$ with $A(\eta)^{\mathrm{op}}$.
\end{remark}

\begin{corollary}\label{cor.Q41.two-torsion.polynomial}
If $\overline{\eta_1-\eta_2}\in E_\tau[2]$, then
$Q_{4,1}(\eta_1)=Q_{4,1}(\eta_2)$.  In particular, if
$\overline{\eta}\in E_\tau[2]$, then $Q_{4,1}(\eta)=S\Theta_4(\tau)$.
\end{corollary}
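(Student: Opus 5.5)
The plan is to reduce everything to the relation spaces, using Theorem~\ref{thm.Q41.Sklyanin} together with the fact, recorded after the definitions in \S2.2, that $R_\eta$ depends only on $\overline\eta$. By Proposition~\ref{prop.Q41.relations.in.R}, for every $\eta\in\CC$ we have
\[
 \rel_{4,1}(\eta)=(\Psi\otimes\Psi)(R_{-2\eta}),
\]
and $\Psi=\Psi_{1/8}$ does not depend on $\eta$. Both $Q_{4,1}(\eta_1)$ and $Q_{4,1}(\eta_2)$ are quotients of the same tensor algebra $T\Theta_4(\tau)$. Hence it suffices to prove that $R_{-2\eta_1}=R_{-2\eta_2}$ as subspaces of $V^{\otimes2}$; the two algebras are then literally equal, not merely isomorphic.

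First step: if $\overline{\eta_1-\eta_2}\in E_\tau[2]$, then $2(\eta_1-\eta_2)\in\Lambda_\tau$. Consequently $\overline{-2\eta_1}=\overline{-2\eta_2}$ in $E_\tau$, so $\Gamma_{-2\eta_1}=\Gamma_{-2\eta_2}$. The vertex conditions $r(e_i,e_i)=0$ do not involve $\eta$ at all, so $R_{-2\eta_1}=R_{-2\eta_2}$. Applying $\Psi\otimes\Psi$ gives $\rel_{4,1}(\eta_1)=\rel_{4,1}(\eta_2)$, which proves the first assertion.

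Second step: for $\overline\eta\in E_\tau[2]$, apply the first assertion with $\eta_2=0$. This gives $Q_{4,1}(\eta)=Q_{4,1}(0)$, and $\rel_{4,1}(0)=(\Psi\otimes\Psi)(R_0)$. Since $R_0=\bigwedge^2V$, noted right after Proposition~\ref{prop.graph-relations-give-THCR}, and $\Psi\otimes\Psi$ carries $\bigwedge^2V$ onto $\bigwedge^2\Theta_4(\tau)$, the relations are exactly the commutators. Therefore $Q_{4,1}(\eta)=T\Theta_4(\tau)/(\bigwedge^2\Theta_4(\tau))=S\Theta_4(\tau)$.

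The only delicate point is that the equality $\rel_{4,1}(\eta)=(\Psi\otimes\Psi)(R_{-2\eta})$ must hold at the torsion parameters, where $\rel_{4,1}$ is defined by the holomorphic extension of \cite{CKS1}. Proposition~\ref{prop.Q41.relations.in.R} is stated for all $\eta\in\CC$, and its proof treats $E_\tau[4]$ by continuity, so I will simply invoke it as stated. A second, minor check is that $R_0=\bigwedge^2V$. This follows directly: $\Gamma_0$ is the diagonal of $E_\tau$, the forms vanishing there are the alternating tensors together with the kernel of $S^2V\to H^0(\mathcal L^2)$, and the latter kernel is $C_0=D\cap K_0$ (the two quadrics). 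Vanishing at the $(e_i,e_i)$ kills $C_0$, because $C_0\subseteq D$ and the vertex evaluations read off the four $D$-coordinates, as in the proof of Proposition~\ref{prop:intersection-R-C}. Alternatively, this is Theorem~\ref{thm:R-eta-basis}(2) with $\overline\eta=0$.
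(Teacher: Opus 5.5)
Your proposal is correct and follows the paper's proof: $\overline{-2\eta_1}=\overline{-2\eta_2}$ gives $R_{-2\eta_1}=R_{-2\eta_2}$, and transporting this through the $\eta$-independent map $\Psi\otimes\Psi$ gives equality of the relation spaces. The paper cites Theorem~\ref{thm.Q41.Sklyanin} for this transport, which is just Proposition~\ref{prop.Q41.relations.in.R} reparametrized, and the special case again uses $R_0=\bigwedge^2V$; your extra justification of that equality, which the paper simply calls clear, is also sound.
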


\begin{proof}
The hypothesis implies
$\overline{-2\eta_1}=\overline{-2\eta_2}$, so
$R_{-2\eta_1}=R_{-2\eta_2}$.  Theorem~\ref{thm.Q41.Sklyanin} therefore
gives the asserted equality.  For the particular case, take
$\eta_1=\eta$ and $\eta_2=0$.  Then
$R_{-2\eta}=R_0=\bigwedge^2V$, whence
$Q_{4,1}(\eta)=TV/(\bigwedge^2\Theta_4(\tau))=S\Theta_4(\tau)$.
\end{proof}
 
\begin{remark}
There are some private calculations, and some published remarks, to
the effect that the two families are the same; i.e., every $S(\eta)$
is isomorphic to some $Q_{4,1}(\eta')$ and, conversely, if
$\overline{\eta'}\notin E_\tau[4]$, then $Q_{4,1}(\eta')$ is
isomorphic to some $S(\eta)$.  Toby Stafford sent
me a proof of this in a letter dated October 1, 1990, and we alluded
to this fact in \cite[\S5.7]{SmithStafford}.  Having forgotten
that letter, Alex Chirvasitu, Kenneth Chan, and I gave another proof in
2016.  In June 2023, Inomjon Mizomov sent me another proof, which has
now appeared in \cite{Mizomov}.  All these proofs show that $A(\eta)$
is isomorphic to $Q_{4,1}(\eta')$ for some $\eta'$, but $\eta'$ is not
identified.  All these proofs are calculations and do not explain why
such isomorphisms exist.  Tate and Van den Bergh remarked that the two
classes of algebras are the same \cite[p.~620]{TateVdB}.   

The proof above provides a conceptual understanding of the
isomorphism.  The generating spaces of theta functions for $A(\eta)$
and $Q_{4,1}(\eta')$ are not the same: the generating space for
$Q_{4,1}(\eta')$ consists of order-four theta functions whose zeros
sum to $\frac12$ in $E_\tau$.  None of the proofs mentioned in the
previous paragraph address this fact in a direct way.
\end{remark}

\section{Point modules and line modules for \texorpdfstring{$A(\eta)$}{A(eta)}}
\label{sec:point-and-line-modules}

See \cite{SmithSurvey} for the definition of point and line modules.

The fact that the generators and relations for $A(\eta)$ are spaces of
theta functions allows one to give very concrete descriptions of point
modules and line modules for $A(\eta)$; see, for example,
\cite[\S2.4]{OdesskiiSurvey}.
Almost everything
in this section is already known
\cite{LS,OdesskiiSurvey,SmithSurvey,SmithStafford,SmithStaniszkis}.  We spell out the
details here for the benefit of the reader.

\begin{lemma}\label{lem:explicit-point-module}
Fix $x\in\CC$.  Let $M(x)$ be the vector space with basis
$\{e_k\mid k\geq0\}$, and, for $f\in V$, define
\[
 f\cdot e_k=f(x-k\eta)e_{k+1}.
\]
This action makes $M(x)$ a left $A(\eta)$-module generated by $e_0$.
\end{lemma}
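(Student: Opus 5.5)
The plan is to define the action of $TV$ on $M(x)$ by the displayed formula and then check that $R_\eta$ annihilates $M(x)$. Each $f\in V$ acts as a linear endomorphism of $M(x)$, and $f\mapsto f(x-k\eta)$ is linear in $f$. By the universal property of the tensor algebra, this gives a unique left $TV$-module structure on $M(x)$. On $V^{\otimes2}$, for $f\otimes g$, the element $g$ acts first and then $f$:
\[
 (f\otimes g)\cdot e_k=f(x-(k+1)\eta)\,g(x-k\eta)\,e_{k+2}.
\]
By linearity, a general $r\in V^{\otimes2}$, viewed as a function $r(z_1,z_2)$, satisfies
\[
 r\cdot e_k=r\bigl(x-(k+1)\eta,\;x-k\eta\bigr)e_{k+2}.
\]

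Next I set $z=x-(k+1)\eta$. The point at which $r$ is evaluated is then $(z,z+\eta)$, a point of $\Gamma_\eta$. Every $r\in R_\eta$ vanishes on $\Gamma_\eta$, so $r\cdot e_k=0$ for all $k$. Hence $R_\eta M(x)=0$. Since $(R_\eta)$ is the two-sided ideal generated by $R_\eta$, it also annihilates $M(x)$: any element of $(R_\eta)$ is a sum of terms $a\,r\,b$, and $b\,e_k$ is a multiple of some $e_m$, which $r$ kills. Therefore the $TV$-action descends to an $A(\eta)=TV/(R_\eta)$-action. Note that the vertex conditions in $R_\eta$ are not needed here; only $R_\eta\subseteq K_\eta$ is used. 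So $M(x)$ is in fact a module over $B(\eta)$.

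For generation by $e_0$, it suffices to show $e_k\in A(\eta)e_0$ for every $k$. Since $V$ has no base points on $E_\tau$, for each $k$ there is an $f_k\in V$ with $f_k(x-k\eta)\neq0$. Then
\[
 e_k=\Bigl(\prod_{j<k}f_j(x-j\eta)\Bigr)^{-1}f_{k-1}\cdots f_0\cdot e_0.
\]

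The only point needing care, and it is mild, is the order convention: which tensor factor acts first, and the sign of the shift $-k\eta$ in the formula. Matching these is exactly what makes the evaluation point land on $\Gamma_\eta$ rather than on $\Gamma_{-\eta}$, so this bookkeeping must be checked against the definition $\Gamma_\eta=\{(z,z+\overline\eta)\}$. A further subtlety is that values like $f(x-k\eta)$ are only defined up to the automorphy factors of theta functions. Here $x$ is fixed in $\CC$, not in $E_\tau$, so this causes no ambiguity; changing the lift of $x$ rescales the basis $e_k$, giving an isomorphic module.
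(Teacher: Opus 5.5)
Your proposal is correct and follows essentially the same route as the paper. You extend the $V$-action to $TV$ and compute $r\cdot e_k=r\bigl(x-(k+1)\eta,\,x-k\eta\bigr)e_{k+2}$, so every $r\in R_\eta$ (indeed every $r\in K_\eta$) kills $M(x)$ because the evaluation point lies on $\Gamma_\eta$; you then use the absence of base points of $V$ to show by induction that $e_0$ generates $M(x)$. Your remarks about the two-sided ideal and about $M(x)$ being a $B(\eta)$-module are correct refinements that the paper leaves implicit.
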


\begin{proof}
The action of $V$ on $M(x)$ extends uniquely to a left
$TV$-module structure.  Let
\[
 r=\sum_i f_i\otimes g_i\in R_\eta.
\]
For every $k\geq0$,
\begin{align*}
 r\cdot e_k
 &=\sum_i f_i\cdot(g_i\cdot e_k)\\
 &=\sum_i
 f_i\bigl(x-(k+1)\eta\bigr)g_i(x-k\eta)e_{k+2}\\
 &=r\bigl(x-(k+1)\eta,x-k\eta\bigr)e_{k+2},
\end{align*}
which is $0$ because
$\bigl(x-(k+1)\eta,x-k\eta\bigr)\in\Gamma_\eta$.
Since $R_\eta$ annihilates $M(x)$, the $TV$-action factors through
$A(\eta)$.

It remains to prove that $e_0$ generates $M(x)$.  Suppose
inductively that $e_k\in A(\eta)e_0$.  There is an $f\in V$ such that
$f(x-k\eta)\ne0$; hence $f\cdot e_k=f(x-k\eta)e_{k+1}\ne0$; hence
$e_{k+1}\in A(\eta)e_0$; it follows that $M(x)=A(\eta)e_0$.
\end{proof}

Thus $M(x)$ is a point module for $A(\eta)$.  It is easy to show that
$M(x_1)\cong M(x_2)$ if and only if
$\overline{x_1}=\overline{x_2}$.  Thus, we often write
$M(\overline{x})$ for $M(x)$.  The modules $M(\overline{x})$ provide a
complete list of the isomorphism classes of the point modules for
$A(\eta)$ that are parametrized by $E_\tau$.

\begin{lemma}\label{lem:exchange-relation}
Every $r\in R_\eta$ satisfies the identity
\begin{equation}\label{eq:exchange-relation}
 \theta(z_1-z_2+2\eta)r(z_2+\eta,z_1)
 +\theta(z_1-z_2-2\eta)r(z_1+\eta,z_2)=0.
\end{equation}
\end{lemma}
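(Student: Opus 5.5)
By Theorem~\ref{thm:Reta.equals.RR}, $R_\eta=R_\eta^{\mathrm{RR}}$ is spanned by the functions $F_\eta(a,b,c_1,c_2)$. The left-hand side of \eqref{eq:exchange-relation} is linear in $r$, so it suffices to verify the identity for $r=F_\eta(a,b,c_1,c_2)$ with arbitrary $a,b,c_1,c_2$. The plan is to factor $F_\eta$ explicitly, in the spirit of the proof of Lemma~\ref{lem:RR.functions.vanish.on.graph}(2), and then observe that the two terms in \eqref{eq:exchange-relation} become negatives of each other.

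First we factor $F_\eta$. Put $g_c(t)=\theta(t+c)\theta(t-a-b-c)$, and set
\[
P(x)=\theta(x+a-\nu)\theta(x+b-\nu),\qquad Q(y)=\theta(y+a+\nu)\theta(y+b+\nu).
\]
With these definitions, $u_c(x)=P(x)g_c(x+\nu)$ and $v_c(y)=Q(y)g_c(y-\nu)$. Lemma~\ref{lem:RR.functions.vanish.on.graph}(1) then gives
\[
F_\eta(a,b,c_1,c_2;x,y)=P(x)Q(y)\,G(x+\nu,y-\nu),
\]
where $G(s,t)=g_{c_1}(s)g_{c_2}(t)-g_{c_2}(s)g_{c_1}(t)$.

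The key step is to identify $G$. For each $c$, the function $g_c$ lies in the two-dimensional space of order-two theta functions whose zeros sum to $\overline a+\overline b$. Hence $G$ is an antisymmetric element of the tensor square of a two-dimensional space, so it is a multiple of a single fixed antisymmetric tensor. The candidate is $\theta(s-t)\theta(s+t-a-b)$: it is antisymmetric since $\theta$ is odd, and in each variable it has the correct quasi-periodicity and zero sum. Weierstrass's three-term identity \eqref{eq.Weierstrass.three-term} provides a direct check. We therefore expect
\[
G(s,t)=\kappa\,\theta(s-t)\theta(s+t-a-b)
\]
for a constant $\kappa$ independent of $s,t$. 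This is the step that needs the most care: one must confirm the quasi-periodicity factors, or else derive the formula directly from \eqref{eq.Weierstrass.three-term} after writing $g_c(t)=\theta(t-\tfrac{a+b}{2}\pm(c+\tfrac{a+b}{2}))$.

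Finally we substitute, using $2\nu=\eta$. Note that $P(z+\eta)=Q(z)$.

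For the first term, $r(z_2+\eta,z_1)=Q(z_2)Q(z_1)\,\kappa\,\theta(z_2-z_1+2\eta)\,\theta(z_1+z_2+\eta-a-b)$.

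For the second term, $r(z_1+\eta,z_2)=Q(z_1)Q(z_2)\,\kappa\,\theta(z_1-z_2+2\eta)\,\theta(z_1+z_2+\eta-a-b)$.

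Multiply the first by $\theta(z_1-z_2+2\eta)$ and the second by $\theta(z_1-z_2-2\eta)=-\theta(z_2-z_1+2\eta)$. The two products are then negatives of each other, so their sum vanishes. This proves \eqref{eq:exchange-relation} for every $F_\eta$, and hence, by linearity, for every $r\in R_\eta$.
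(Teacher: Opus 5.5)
Your proof is correct and follows the paper's own argument: you reduce to $F_\eta(a,b,c_1,c_2)$ via $R_\eta=R_\eta^{\mathrm{RR}}$, factor it as $\kappa\,P(w_1)Q(w_2)\,\theta(w_1-w_2+\eta)\,\theta(w_1+w_2-a-b)$, and use $P(z+\eta)=Q(z)$ to see that the two terms cancel. The only difference is how the factorization is obtained. You get it up to the constant $\kappa$ from the one-dimensionality of the antisymmetric part of the tensor square of a two-dimensional theta space. The multiplier check you flag does go through, because the multipliers of $\theta(t+c)\theta(t+d)$ depend only on $c+d=-a-b$. The paper instead derives it from Weierstrass's three-term identity, which also gives $\kappa=\theta(c_1+c_2+a+b)\theta(c_1-c_2)$, but the value of $\kappa$ plays no role in this lemma.
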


\begin{proof}
Put $\nu=\frac12\eta$.  We use here the equality
$R_\eta=R_\eta^{\mathrm{RR}}$ from
Theorem~\ref{thm:Reta.equals.RR}.  Since $R_\eta^{\mathrm{RR}}$ is
the linear span of the functions $F_\eta(a,b,c_1,c_2)$ defined in
\eqref{eq.RR.F.definition}, it suffices to prove
\eqref{eq:exchange-relation} when
$r(w_1,w_2)=F_\eta(a,b,c_1,c_2;w_1,w_2)$ for arbitrary
$a,b,c_1,c_2\in\CC$.

We now use Weierstrass's three-term identity
\eqref{eq.Weierstrass.three-term} with
\begin{align*}
 w&=w_1+\nu-\tfrac12(a+b),&
 \alpha&=c_2+\tfrac12(a+b),\\
 \beta&=c_1+\tfrac12(a+b),&
 \gamma&=w_2-\nu-\tfrac12(a+b).
\end{align*}
After using the oddness of $\theta$, the resulting equality is the
Rains--Ruijsenaars factorization
\begin{align*}
&F_\eta(a,b,c_1,c_2;w_1,w_2)\\
&\quad=\theta(c_1+c_2+a+b)\theta(c_1-c_2)\\
&\qquad\cdot\theta(w_1+a-\nu)\theta(w_1+b-\nu)
       \theta(w_2+a+\nu)\theta(w_2+b+\nu)\\
&\qquad\cdot\theta(w_1+w_2-a-b)\theta(w_1-w_2+\eta).
\end{align*}
Set
\begin{align*}
H:={}&\theta(c_1+c_2+a+b)\theta(c_1-c_2)
       \theta(z_1+a+\nu)\theta(z_1+b+\nu)\\
&\cdot\theta(z_2+a+\nu)\theta(z_2+b+\nu)
       \theta(z_1+z_2+\eta-a-b).
\end{align*}
Substitution in the factorization gives
\begin{align*}
F_\eta(a,b,c_1,c_2;z_2+\eta,z_1)
 &=-H\theta(z_1-z_2-2\eta),\\
F_\eta(a,b,c_1,c_2;z_1+\eta,z_2)
 &= H\theta(z_1-z_2+2\eta).
\end{align*}
The two terms on the left side of \eqref{eq:exchange-relation} are
therefore negatives of one another.
\end{proof}

The next result shows that the modules $M(x,y)$ are line modules for
$A(\eta)$.  They are not all the line modules because of the condition
on $x-y$.  After the proposition we define line modules $M(x,y)$ for
all $x,y\in\CC$.

\begin{proposition}\label{prop:explicit-line-module}
Fix $x,y\in\CC$ and suppose that
\begin{equation}\label{eq:line-module-condition}
 x-y\notin\Lambda_\tau+2\ZZ\eta.
\end{equation}
Let $M(x,y)$ be the vector space with basis
$\{e_{ij}\mid i,j\geq0\}$.  For $f\in V$, define
{\small
\begin{equation}\label{eq:explicit-line-module-action}
 f\cdot e_{ij}:=
 \frac{f\bigl(x+(j-i)\eta\bigr)}
      {\theta\bigl(x-y+2(j-i)\eta\bigr)}e_{i+1,j}
 +
 \frac{f\bigl(y+(i-j)\eta\bigr)}
      {\theta\bigl(y-x+2(i-j)\eta\bigr)}e_{i,j+1}.
\end{equation}}
This action makes $M(x,y)$ a left $A(\eta)$-module generated by
$e_{00}$.
\end{proposition}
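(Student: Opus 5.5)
The plan is to compute the action of an arbitrary $r=\sum_k f_k\otimes g_k\in R_\eta$ on a basis vector $e_{ij}$ and show that it vanishes, exactly as in Lemma~\ref{lem:explicit-point-module}. First note that the denominators in \eqref{eq:explicit-line-module-action} are nonzero: $\theta$ vanishes only on $\Lambda_\tau$, and $x-y+2(j-i)\eta\notin\Lambda_\tau$ by \eqref{eq:line-module-condition}. So $V$ acts on $M(x,y)$, and this action extends uniquely to $TV$. Set $d=j-i$. Applying $g_k$ and then $f_k$ to $e_{ij}$ produces three basis vectors: $e_{i+2,j}$, $e_{i,j+2}$ and $e_{i+1,j+1}$.

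The coefficient of $e_{i+2,j}$ is
\[
\frac{r\bigl(x+(d-1)\eta,\,x+d\eta\bigr)}{\theta\bigl(x-y+2(d-1)\eta\bigr)\theta\bigl(x-y+2d\eta\bigr)},
\]
and its numerator vanishes because the point lies on $\Gamma_\eta$. Symmetrically, the coefficient of $e_{i,j+2}$ involves $r\bigl(y-(d+1)\eta,\,y-d\eta\bigr)$, which also vanishes on $\Gamma_\eta$. This is the same argument as for point modules.

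The crux is the coefficient of $e_{i+1,j+1}$, which receives two contributions. The first path is $e_{ij}\to e_{i+1,j}\to e_{i+1,j+1}$. It gives $r\bigl(y-(d-1)\eta,\,x+d\eta\bigr)$ divided by $\theta\bigl(x-y+2d\eta\bigr)\,\theta\bigl(y-x-2(d-1)\eta\bigr)$. The second path is $e_{ij}\to e_{i,j+1}\to e_{i+1,j+1}$. It gives $r\bigl(x+(d+1)\eta,\,y-d\eta\bigr)$ divided by $\theta\bigl(y-x-2d\eta\bigr)\,\theta\bigl(x-y+2(d+1)\eta\bigr)$.

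Now use that $\theta$ is odd and choose $z_1=x+d\eta$ and $z_2=y-d\eta$. Then $z_1-z_2=x-y+2d\eta$, and the two numerators become $r(z_2+\eta,z_1)$ and $r(z_1+\eta,z_2)$. Both denominators contain the common factor $\theta(z_1-z_2)$, with a sign. The remaining factors are $\theta(z_1-z_2-2\eta)$ and $\theta(z_1-z_2+2\eta)$, again up to sign. Clearing the denominators therefore turns the vanishing of the coefficient into an identity of the form
\[
\theta(z_1-z_2+2\eta)\,r(z_2+\eta,z_1)\;\pm\;\theta(z_1-z_2-2\eta)\,r(z_1+\eta,z_2)=0.
\]
This is precisely Lemma~\ref{lem:exchange-relation}, provided the sign works out to $+$. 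The main obstacle is this sign bookkeeping. Each denominator contributes one minus sign from oddness, so the relative sign should be as required, and I would verify this carefully. Once it checks, $R_\eta$ annihilates $M(x,y)$ and the action factors through $A(\eta)$.

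For generation by $e_{00}$, I would induct on $i+j$. Pick $f\in V$ vanishing at $y+(i-j)\eta$ but not at $x+(j-i)\eta$. This is possible because these two points differ in $E_\tau$ by \eqref{eq:line-module-condition}, and the embedded curve separates points. Then $f\cdot e_{ij}$ is a nonzero multiple of $e_{i+1,j}$, and symmetrically we reach $e_{i,j+1}$.
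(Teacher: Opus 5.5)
Your proposal is correct and is essentially the paper's proof. The $e_{i+2,j}$ and $e_{i,j+2}$ coefficients vanish on $\Gamma_\eta$, the $e_{i+1,j+1}$ coefficient is handled by Lemma~\ref{lem:exchange-relation} at $(z_1,z_2)=(p,q)$, and generation follows by induction on $i+j$ because $V$ separates points. The sign you flagged does check out: with $\Delta=p-q$, the $e_{i+1,j+1}$ coefficient is $-\theta(\Delta)^{-1}\bigl(r(q+\eta,p)/\theta(\Delta-2\eta)+r(p+\eta,q)/\theta(\Delta+2\eta)\bigr)$, so the relative sign is $+$, exactly as in \eqref{eq:exchange-relation}.
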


\begin{proof}
Condition \eqref{eq:line-module-condition} ensures that every
denominator in \eqref{eq:explicit-line-module-action} is nonzero.
Fix $i,j\geq0$ and put
\[
 p=x+(j-i)\eta,
 \qquad
 q=y+(i-j)\eta,
 \qquad
 \Delta=p-q.
\]
Let $r=\sum_\ell f_\ell\otimes g_\ell\in R_\eta$.  Applying
\eqref{eq:explicit-line-module-action} twice shows that the
coefficients of $e_{i+2,j}$ and $e_{i,j+2}$ in $r\cdot e_{ij}$ are
scalar multiples of $r(p-\eta,p)$ and $r(q-\eta,q)$,
both of which are zero because $(p-\eta,p)$ and $(q-\eta,q)$ belong to
$\Gamma_\eta$.

Since $\theta$ is odd, the coefficient of $e_{i+1,j+1}$ is
\[
 -\frac{1}{\theta(\Delta)}
 \left(
  \frac{r(q+\eta,p)}{\theta(\Delta-2\eta)}
  +\frac{r(p+\eta,q)}{\theta(\Delta+2\eta)}
 \right).
\]
The expression in parentheses is zero by
\eqref{eq:exchange-relation}.  Hence $r$ acts as zero on every
$e_{ij}$, and the resulting $TV$-action factors through $A(\eta)$.

It remains to prove that $e_{00}$ generates $M(x,y)$.  We argue by
induction on $i+j$.  Suppose that $e_{ij}\in A(\eta)e_{00}$.
Condition \eqref{eq:line-module-condition} implies that $p$ and $q$
represent distinct points of $E_\tau$.  Since $V$ separates points,
there are $f,g\in V$ such that
\[
 f(q)=0,\quad f(p)\ne0,
 \qquad
 g(p)=0,\quad g(q)\ne0.
\]
Formula \eqref{eq:explicit-line-module-action} then shows that
$f\cdot e_{ij}$ is a nonzero scalar multiple of $e_{i+1,j}$, while
$g\cdot e_{ij}$ is a nonzero scalar multiple of $e_{i,j+1}$.
Thus both successors belong to $A(\eta)e_{00}$.  Induction gives
$e_{ij}\in A(\eta)e_{00}$ for all $i,j\geq0$, and therefore
$M(x,y)=A(\eta)e_{00}$.
\end{proof}

If we make $M(x,y)$ a graded $A(\eta)$-module  by giving it the grading
$\deg(e_{ij})=i+j$, then it is a line module.  Its isomorphism class depends only on
$\overline{x}$ and $\overline{y}$, so we often denote it by
$M(\overline{x},\overline{y})$.  

Levasseur and Smith show that the line module associated to a secant line $L$ is isomorphic to $A(\eta)/A(\eta)L^\perp$ where $L^\perp
= \{v \in V \mid v(L)=0\}$. We therefore define, for  arbitrary $x,y\in\CC$, including those for which
\eqref{eq:line-module-condition} fails, 
\[
M(x,y) :=A(\eta)/A(\eta)L^\perp
\]
for all $x,y \in \CC$ where $L$ is the secant line such that $L \cap E_\tau=(\overline{x})+(\overline{y})$; equivalently, $L^\perp=H^0(E_\tau, {\mathcal L}_0(-\overline{x}-\overline{y})) \subset V$. 

For all $(x,y) \in \CC^2$, the isomorphism class of $M(x,y)$ depends only on $\overline{x}$ and $\overline{y}$ so we also denote it by $M(\overline{x},\overline{y})$.

\section{Central elements in \texorpdfstring{$A(\eta)_2$}{A(eta)2} and the
image of \texorpdfstring{$C_\eta$}{C-eta} in
\texorpdfstring{$A(\eta)_2$}{A(eta)2}}

\subsection{Centrality of the image of
\texorpdfstring{$C_\eta$}{C-eta}}

Suppose first that $\overline{\eta}\notin E_\tau[2]$.  Then the four
elements $X_{ab}$ in \eqref{eq.definition.Xab} form a basis for $V$.
In \cite[Thm.~2, p.~268]{Sklyanin1982}, Sklyanin said, without giving a proof, that the linearly
independent elements
\[
 K_0=\sum_{k=0}^3S_k^2,\qquad K_1=S_0^2+\sum_{k=1}^3(1-J_k)S_k^2
 =K_0-\sum_{k=1}^3\frac{\theta_{k+1}(2\eta)\theta_{k+1}(0)}{\theta_{k+1}(\eta)^2}S_k^2,
\]
belong to the center of $S(\eta)$.  
In \cite{CS}, a basis for the same central pencil is denoted
$\Omega_0,\Omega_1$ and, up to the normalizations used to identify the
generators, is written
\[
 \Omega_0=-x_0^2+x_1^2+x_2^2+x_3^2,
 \qquad
 \Omega_1=x_0^2+\beta\gamma x_1^2-\gamma x_2^2+\beta x_3^2,
\]
where their $\beta$ and $\gamma$ are our $\alpha_{01}$ and $\alpha_{10}$, respectively.

By definition,
$C_\eta\subset \operatorname{span}\{X_{00}^{\otimes2},\ldots,
X_{11}^{\otimes2}\}$; the next result gives an explicit expression for
an element of $C_\eta$ as a linear combination of the elements
$X_{ab}^{\otimes2}$.

\begin{proposition}\label{prop:diagonal-expansion}
Suppose that $\overline{\eta}\notin E_\tau[2]$.  For $\xi\in\CC$,
\begin{equation}\label{eq:diagonal-expansion}
 \theta(z_1-z_2\pm\eta)
 \theta(z_1+z_2\pm\xi)
 =
 \sum_{a,b\in\{0,1\}}
 \frac{
   \vartheta_{ab}(\xi+\eta)
   \vartheta_{ab}(\xi-\eta)}
  {2\vartheta_{ab}(\eta)^2}
 X_{ab}^{\otimes2},
\end{equation}
where $X_{ab}^{\otimes2}\in V^{\otimes2}$ is the function
$X_{ab}(z_1)X_{ab}(z_2)\in V(z_1)\otimes V(z_2)$.
\end{proposition}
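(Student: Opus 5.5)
Both sides of \eqref{eq:diagonal-expansion} lie in $D$. The left side does by the addition isomorphism \eqref{eq.addition.isomorphism}, since it has the form $a(z_1-z_2)b(z_1+z_2)$ with $a,b\in W$. The right side does by definition of $D$. When $\overline\eta\notin E_\tau[2]$, the tensors $X_{ab}^{\otimes2}$ form a basis of $D$. So it suffices to determine the coefficient of each $X_{ab}^{\otimes2}$ on the left, and the vertex evaluations from the proof of Proposition~\ref{prop:intersection-R-C} do exactly this. Write the left side as $\sum_{ab}c_{ab}\vartheta_{ab}(2z_1)\vartheta_{ab}(2z_2)$. Evaluating at $(e_{ab},e_{ab})$ picks out $c_{ab}$. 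Since $X_{ab}=\gamma_{ab}\vartheta_{ab}(\eta)\vartheta_{ab}(2z)$, the coefficient of $X_{ab}^{\otimes2}$ is $c_{ab}/(\gamma_{ab}^2\vartheta_{ab}(\eta)^2)$. It therefore remains to compute $c_{ab}$, that is, the value of the left side at the vertex pair.

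I would compute $c_{ab}$ with Jacobi's five-term identity \eqref{eq.Jacobi.five-term.four-theta}, in the form of bookkeeping item (5) in the proof of Theorem~\ref{thm:Reta.equals.RR}. First, as a function of $z_1$ for fixed $z_2$, rewrite the left side as a product $f(a',b',c';z_1)$ of four $\theta$'s with shifts summing to zero: the factors are $\theta(z_1-z_2+\eta)\theta(z_1-z_2-\eta)\theta(z_1+z_2+\xi)\theta(z_1+z_2-\xi)$, and the shifts $-z_2+\eta,\,-z_2-\eta,\,z_2+\xi,\,z_2-\xi$ sum to $0$. Item (5) gives $2\epsilon_{ij}(f)=s_{ij}\vartheta_{ij}(a'+b')\vartheta_{ij}(a'+c')\vartheta_{ij}(-b'-c')$. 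Take $a'=-z_2+\eta$, $b'=z_2+\xi$, $c'=-z_2-\eta$. Then $a'+b'=\eta+\xi$, $a'+c'=-2z_2$ and $-b'-c'=\eta-\xi$, so
\[
 \epsilon_{ij}\bigl(\text{LHS}(\cdot,z_2)\bigr)=\tfrac12 s_{ij}\,\vartheta_{ij}(\xi+\eta)\vartheta_{ij}(\eta-\xi)\vartheta_{ij}(2z_2).
\]
This is proportional to $\vartheta_{ij}(2z_2)$ as a function of $z_2$, which is consistent with membership in $D$. Reading off the coefficient gives $c_{ij}=\tfrac12 s_{ij}\vartheta_{ij}(\xi+\eta)\vartheta_{ij}(\eta-\xi)$.

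The main obstacle is sign and constant bookkeeping. We have $\vartheta_{ij}(\eta-\xi)=\pm\vartheta_{ij}(\xi-\eta)$, with a minus sign only for $ij=11$, because $\vartheta_{11}$ is odd and the others are even. This must be combined with $s_{ij}$ and with $\gamma_{ij}^2=(1,-1,-1,1)$ for $ij=(00,01,10,11)$. The check is that $s_{ij}\cdot(\text{parity sign})/\gamma_{ij}^2=1$ for every $ij$: for $00$ it is $1\cdot1/1$, for $01$ and $10$ it is $(-1)(1)/(-1)$, and for $11$ it is $(1)(-1)/1$. All four products equal $1$, so the coefficient becomes $\vartheta_{ab}(\xi+\eta)\vartheta_{ab}(\xi-\eta)/(2\vartheta_{ab}(\eta)^2)$, as claimed. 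I would also verify that item (5), which was stated with $\theta=-\vartheta_{11}$ and four factors, carries no extra sign: the four factors of $-1$ cancel. If the Jacobi route proved awkward, a fallback would be the explicit change of basis given by \eqref{eq:add1}--\eqref{eq:add4} with $u=z_1+z_2$ and $v=z_1-z_2$. That route needs one extra step, namely expanding $\theta(v\pm\eta)$ and $\theta(u\pm\xi)$ in $\theta_1^2,\theta_2^2$ via \eqref{eq:add1}.
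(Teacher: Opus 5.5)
Your route is essentially the paper's. The paper applies Jacobi's five-term identity \eqref{eq.Jacobi.five-term.four-theta} directly to $(x_1,\dots,x_4)=(z_1-z_2+\eta,\,z_1-z_2-\eta,\,z_1+z_2+\xi,\,z_1+z_2-\xi)$, so that $(y_1,\dots,y_4)=(2z_1,-2z_2,\eta+\xi,\eta-\xi)$. Your use of bookkeeping item (5) in the variable $z_1$ is the same instance of the identity with the $x_r$ reordered. Your preliminary reduction to coefficients in the basis $\{X_{ab}^{\otimes2}\}$ of $D$ is harmless but unnecessary, since the identity gives the whole expansion at once.

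The problem is the sign bookkeeping. You correctly single it out as the crux, but you get it wrong in two places that happen to cancel.

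First, item (5) produces the factor $\vartheta_{ij}(a'+c')=\vartheta_{ij}(-2z_2)$, not $\vartheta_{ij}(2z_2)$. For $ij=11$ this equals $-\vartheta_{11}(2z_2)$, so the correct value is $c_{11}=-\tfrac12\vartheta_{11}(\xi+\eta)\vartheta_{11}(\eta-\xi)=\tfrac12\vartheta_{11}(\xi+\eta)\vartheta_{11}(\xi-\eta)$. Your displayed formula gives the opposite sign.

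Second, your check for $ij=11$ reads $(1)(-1)/1$, which is $-1$, not $1$. Taken at face value, your computation would therefore give the coefficient of $X_{11}^{\otimes2}$ with the wrong sign, and the sentence ``all four products equal $1$'' is false.

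To fix this, include the omitted factor from $\vartheta_{11}(-2z_2)$. The $11$ entry then becomes $(1)(-1)(-1)/1=1$. The three even cases are unchanged, because there $\vartheta_{ij}(-2z_2)=\vartheta_{ij}(2z_2)$. With that correction the argument is complete and agrees with the paper's proof.
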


\begin{proof}
The equality
$X_{ab}(z_1)X_{ab}(z_2)=(-1)^{a+b}\vartheta_{ab}(\eta)^2
\vartheta_{ab}(2z_1)\vartheta_{ab}(2z_2)$ follows from the definition
of $X_{ab}$.  Hence, if we set
\[
 (x_1,x_2,x_3,x_4)
 =
 (z_1-z_2+\eta,z_1-z_2-\eta,
  z_1+z_2+\xi,z_1+z_2-\xi),
\]
then \eqref{eq.Jacobi.five-term.four-theta} implies that
\begin{align*}
&2\theta(z_1-z_2+\eta)\theta(z_1-z_2-\eta)
 \theta(z_1+z_2+\xi)\theta(z_1+z_2-\xi)\\
={}&
 \vartheta_{11}(\xi+\eta)\vartheta_{11}(\xi-\eta)
 \vartheta_{11}(2z_1)\vartheta_{11}(2z_2)\\
&-
 \vartheta_{10}(\xi+\eta)\vartheta_{10}(\xi-\eta)
 \vartheta_{10}(2z_1)\vartheta_{10}(2z_2)\\
&+
 \vartheta_{00}(\xi+\eta)\vartheta_{00}(\xi-\eta)
 \vartheta_{00}(2z_1)\vartheta_{00}(2z_2)\\
&-
 \vartheta_{01}(\xi+\eta)\vartheta_{01}(\xi-\eta)
 \vartheta_{01}(2z_1)\vartheta_{01}(2z_2)\\
={}&
 \frac{\vartheta_{11}(\xi+\eta)\vartheta_{11}(\xi-\eta)}
      {\vartheta_{11}(\eta)^2}X_{11}^{\otimes2}
 +\frac{\vartheta_{10}(\xi+\eta)\vartheta_{10}(\xi-\eta)}
      {\vartheta_{10}(\eta)^2}X_{10}^{\otimes2}\\
&+
 \frac{\vartheta_{00}(\xi+\eta)\vartheta_{00}(\xi-\eta)}
      {\vartheta_{00}(\eta)^2}X_{00}^{\otimes2}
 +\frac{\vartheta_{01}(\xi+\eta)\vartheta_{01}(\xi-\eta)}
      {\vartheta_{01}(\eta)^2}X_{01}^{\otimes2}.
\end{align*}
Dividing by $2$ gives \eqref{eq:diagonal-expansion}.
\end{proof}

\begin{lemma}\label{lem.Omega.tilde.basis.Ceta}
Suppose $\overline\eta\notin E_\tau[2]$, and put
$\beta=\alpha_{01}$ and $\gamma=\alpha_{10}$.  If
$\overline\eta\notin E_\tau[4]$, retain the indexing of the generators
$X_{11},X_{00},X_{01},X_{10}$ fixed in \S2.1.1.  If
$\overline\eta$ has exact order four, make the cyclic relabelling in
Proposition~\ref{prop:order-four-relations}, so that
$\beta=1$ and $\gamma=-1$.  Then the tensors
\begin{align*}
 \widetilde\Omega_0
 &:=-X_{11}^{\otimes2}+X_{00}^{\otimes2}
       +X_{01}^{\otimes2}+X_{10}^{\otimes2},\\
 \widetilde\Omega_1
 &:=X_{11}^{\otimes2}+\beta\gamma X_{00}^{\otimes2}
       -\gamma X_{01}^{\otimes2}+\beta X_{10}^{\otimes2}
\end{align*}
form a basis for $C_\eta=K_{\eta,00}$.
\end{lemma}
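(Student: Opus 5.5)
The plan is to use Proposition~\ref{prop:diagonal-expansion}. For every $\xi$, the element $\theta(z_1-z_2\pm\eta)\theta(z_1+z_2\pm\xi)$ lies in $C_\eta$ by Proposition~\ref{thm.dim.Ceta}(3). Its coordinates in the basis $X_{ab}^{\otimes2}$ of $D$ are
\[
 c_{ab}(\xi)=\frac{\vartheta_{ab}(\xi+\eta)\vartheta_{ab}(\xi-\eta)}{2\vartheta_{ab}(\eta)^2}.
\]
Since $\dim C_\eta=2$ and the $X_{ab}^{\otimes2}$ are independent, it suffices to do two things: show that $\widetilde\Omega_0$ and $\widetilde\Omega_1$ are linearly independent, and show that each is a combination of the vectors $(c_{ab}(\xi))_{ab}$ for suitable $\xi$. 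Equivalently, we can check that both coefficient vectors satisfy the two linear conditions cutting out $C_\eta$ inside $D$.

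Independence is immediate. The coefficient vectors, ordered as $(11,00,01,10)$, are $(-1,1,1,1)$ and $(1,\beta\gamma,-\gamma,\beta)$. Proportionality would force $\beta\gamma=-\gamma=\beta=-1$, which gives $\gamma=1$ and $\beta\gamma=-1$, a contradiction.

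For membership, I would specialize $\xi$ to half-periods and use the standard shift formulas for $\vartheta_{ab}(\eta\pm\xi)$.
\begin{itemize}
\item At $\xi=0$ the coefficients are $c_{ab}(0)=\pm\tfrac12$. The sign is $-1$ for $ab=11$ by oddness and $+1$ otherwise, so this vector is $\tfrac12\widetilde\Omega_0$. That handles $\widetilde\Omega_0$ directly.
\item At $\xi=\tfrac12$ we get $\vartheta_{ab}(\eta+\tfrac12)\vartheta_{ab}(\eta-\tfrac12)$. This equals, up to a common sign, $\vartheta_{a,1-b}(\eta)^2$ or its negative. So the vector has entries proportional to $\pm\vartheta_{a\bar b}(\eta)^2/\vartheta_{ab}(\eta)^2$, which pair $11\leftrightarrow10$ and $00\leftrightarrow01$.
\item Similarly, $\xi=\tfrac12\tau$ gives ratios pairing $11\leftrightarrow01$ and $00\leftrightarrow10$, now with exponential factors that are common to all four entries.
\end{itemize}
After clearing denominators, one should be able to express $\widetilde\Omega_1$ as a combination of these and $\widetilde\Omega_0$, using the definition \eqref{eq.alpha.ab} of $\beta=\alpha_{01}$ and $\gamma=\alpha_{10}$.

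A cleaner alternative avoids choosing the combination explicitly. Since $C_\eta$ is two-dimensional and contains $\widetilde\Omega_0$, it is cut out in $D$ by two linear functionals. One of them can be read off from the $\xi=\tfrac12$ vector, namely the functional orthogonal to both that vector and $\widetilde\Omega_0$; the other comes likewise from $\xi=\tfrac12\tau$. Then it remains to verify that $\widetilde\Omega_1$ is killed by both. Each verification reduces to a quartic identity among the $\vartheta_{ab}(\eta)$, such as Jacobi's relation $\vartheta_{00}^2\vartheta_{00}(\eta)^2-\dots$, or equivalently to \eqref{eq.Sklyanin.parameter.identity}.

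The exact-order-four case is no different. There the relabelling only permutes indices, and $\beta=1$, $\gamma=-1$ are the specialized values of the same $\alpha$'s, so the identity persists by continuity in $\eta$ away from $E_\tau[2]$. Finally, $C_\eta=K_{\eta,00}$ is Proposition~\ref{prop.H4.unique.complement}.

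The main obstacle is the bookkeeping of signs and exponential factors in the half-period shifts, together with identifying the correct quadratic theta-constant identity. I would instead try the following verification of $\widetilde\Omega_1\in K_\eta$. Restrict $\widetilde\Omega_1$ to $\Gamma_\eta$ and show that $\sum_{ab}\lambda_{ab}X_{ab}(z)X_{ab}(z+\eta)\equiv0$, using the addition formulas \eqref{eq:add1}--\eqref{eq:add4} with $u=2z+\eta$ and $v=-\eta$. This reduces the claim to two scalar identities, the coefficients of $\theta_1(u)^2$ and $\theta_2(u)^2$, both of which follow from the definitions of $\beta$ and $\gamma$.
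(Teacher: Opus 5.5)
Your independence step is wrong as written, and it is exactly the step where the hypotheses of the lemma are needed. Proportionality of $(-1,1,1,1)$ and $(1,\beta\gamma,-\gamma,\beta)$ means $\beta\gamma=-1$, $\gamma=1$, $\beta=-1$. This system is consistent: $(\beta,\gamma)=(-1,1)$ gives $\beta\gamma=-1$, so there is no contradiction. The constraint \eqref{eq.Sklyanin.parameter.identity} cannot exclude it either, since $(\alpha,-1,1)$ satisfies it for every $\alpha$, and for such parameters $\widetilde\Omega_1=-\widetilde\Omega_0$. You must argue as the paper does. Off $E_\tau[4]$, Corollary~\ref{cor.alpha.parameters.and.four.torsion} gives $\beta,\gamma\notin\{0,\pm1\}$. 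At exact order four, the cyclic relabelling of Proposition~\ref{prop:order-four-relations} makes $\beta=1\neq-1$. This is why the relabelling appears in the statement at all, yet your proposal never uses it, or the four-torsion hypothesis, in this step.

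Membership is fine via your last paragraph, and that is a different route from the paper's. Put $u=2z+\eta$, $v=-\eta$, and $A=\vartheta_{11}(\eta)^2$, $B=\vartheta_{10}(\eta)^2$, $C=\vartheta_{00}(\eta)^2$, $D=\vartheta_{01}(\eta)^2$ as before Theorem~\ref{thm:explicit-C}. The coefficients of $\theta_1(u)^2$ and $\theta_2(u)^2$ in $\theta_2(0)^2\,\widetilde\Omega_1(z,z+\eta)$ are $(1+\beta)(AB+CD\gamma)$ and $-A^2+C^2\beta\gamma+D^2\gamma-B^2\beta$. Both vanish on substituting $\beta=-AD/(BC)$ and $\gamma=-AB/(CD)$, without Jacobi's identity; this is the mechanism of Theorem~\ref{thm:explicit-C}. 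The paper instead applies Proposition~\ref{prop:diagonal-expansion} at $\xi=\frac12(1+\tau)$, the half-period you skipped. The shift formulas give coefficient vector $c_{11}(1,\beta\gamma,-\gamma,\beta)$ with $c_{11}\neq0$, so Proposition~\ref{thm.dim.Ceta}(3) puts $\widetilde\Omega_1$ in $C_\eta$ at once, with no need to combine the $\xi=\frac12$ and $\xi=\frac12\tau$ vectors.

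Finally, your continuity remark at exact order four misses the point. The relabelling changes which tensor $\widetilde\Omega_1$ is: in the original labelling it becomes the $\xi=\frac12$ or $\xi=\frac12\tau$ vector, so it is not a limit of the tensor you checked. Instead, redo the scalar check for the permuted coefficients (the $\xi=\frac12$ case uses $A^2+C^2=B^2+D^2$), or use the matching half-period. These checks hold at every $\overline\eta\notin E_\tau[2]$, so no continuity argument is needed.
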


\begin{proof}
Proposition~\ref{prop:diagonal-expansion} with $\xi=0$ gives
$\theta(z_1-z_2\pm\eta)\theta(z_1+z_2)^2
=\tfrac12\widetilde\Omega_0$.
With $\xi=\tfrac12(1+\tau)$, the standard half-period transformation
formulas \cite[Table~0, p.~19]{MumfordTataI} give, on putting
\[
 c_{ab}:=
 \frac{\vartheta_{ab}(\xi+\eta)\vartheta_{ab}(\xi-\eta)}
 {2\vartheta_{ab}(\eta)^2},
 \qquad \xi=\tfrac12(1+\tau),
\]
the following coefficient vector in \eqref{eq:diagonal-expansion},
with respect to the ordered basis
$(X_{11}^{\otimes2},X_{00}^{\otimes2},X_{01}^{\otimes2},
X_{10}^{\otimes2})$:
\[
 (c_{11},c_{00},c_{01},c_{10})
 =c_{11}(1,\beta\gamma,-\gamma,\beta).
\]
Thus the corresponding left-hand
side is a nonzero scalar multiple of $\widetilde\Omega_1$.
Both left-hand sides belong to $C_\eta$ by
Proposition~\ref{thm.dim.Ceta}(3), so
$\widetilde\Omega_0,\widetilde\Omega_1\in C_\eta=K_{\eta,00}$.

Their coefficient vectors are linearly independent.  Indeed,
proportionality would give $\beta=-1$ and $\gamma=1$.  This is
impossible off $E_\tau[4]$ by
Corollary~\ref{cor.alpha.parameters.and.four.torsion}, and is excluded
by the chosen relabelling at exact order four.  Since $C_\eta$ has
dimension two by Proposition~\ref{thm.dim.Ceta}, the result follows.
\end{proof}

\begin{remark}
We now assume that $\overline\eta\notin E_\tau[4]$, put
$\alpha=\alpha_{00}$, $\beta=\alpha_{01}$, and
$\gamma=\alpha_{10}$, and identify
$(x_0,x_1,x_2,x_3)=(x_{11},x_{00},x_{01},x_{10})$.  Define
\begin{equation}\label{eq.Stafford.central.elements}
 \Omega_1^{\rm St}:=-x_0^2+x_1^2+x_2^2+x_3^2,
 \qquad
 \Omega_2^{\rm St}:=x_1^2+\frac{1+\alpha}{1-\beta}x_2^2
                 +\frac{1-\alpha}{1+\gamma}x_3^2.
\end{equation}
Stafford denotes these elements by $\Omega_1$ and $\Omega_2$ in
\cite[(0.4.1)]{StaffordRegularity}; they are the images of
\begin{equation}\label{eq.Stafford.central.lifts}
 \widetilde\Omega_1^{\rm St}:=\widetilde\Omega_0,
 \qquad
 \widetilde\Omega_2^{\rm St}
 :=\frac{\widetilde\Omega_0+\widetilde\Omega_1}{1+\beta\gamma},
\end{equation}
both of which belong to $C_\eta$; the formula for
$\widetilde\Omega_2^{\rm St}$ has the stated image because
$(1+\alpha)/(1-\beta)=(1-\gamma)/(1+\beta\gamma)$ and
$(1-\alpha)/(1+\gamma)=(1+\beta)/(1+\beta\gamma)$.
We will use \eqref{eq.Stafford.central.elements} and
\eqref{eq.Stafford.central.lifts} in \cite{StaffordRelations}.
\end{remark}

\begin{theorem}\label{thm:Ceta-central-all}
For every $\eta\in\CC$, the image of $C_\eta$ in $A(\eta)_2$ consists
of central elements.
\end{theorem}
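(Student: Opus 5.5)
My plan is to prove centrality by showing that, for each $c\in C_\eta$ and each $v\in V$, the degree-three tensor $c\otimes v-v\otimes c$ lies in $R_\eta\otimes V+V\otimes R_\eta$. I would work inside $K_\eta^{(3)}:=K_\eta\otimes V\cap V\otimes K_\eta$, the degree-three relation space of $B(\eta)=TV/(K_\eta)$.

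The first step is to show that $c\otimes v-v\otimes c$ lies in $K_\eta\otimes V+V\otimes K_\eta$. Since $c\in K_\eta$, each of the two summands already lies in that space, so this is automatic. More useful is a vanishing check on the triple graph $\{(z,z+\eta,z+2\eta)\}$. Write $c=\theta(z_1-z_2\pm\eta)w(z_1+z_2)$ using Proposition~\ref{thm.dim.Ceta}(2). The factor $\theta(z_1-z_2\pm\eta)$ vanishes on $z_2=z_1\pm\eta$, so $c$ vanishes not only on $\Gamma_\eta$ but also on $\Gamma_{-\eta}$. This symmetry is what should make $c$ commute with $v$.

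The key reduction is this. By Corollary~\ref{cor:Keta.decomposition}, $K_\eta=R_\eta\oplus C_\eta$. Modulo $R_\eta\otimes V+V\otimes R_\eta$, it therefore suffices to show that $c\otimes v-v\otimes c$ lies in $R_\eta\otimes V+V\otimes R_\eta$. Equivalently, I want to show that the image of $c\otimes v-v\otimes c$ in $(C_\eta\otimes V+V\otimes C_\eta)/\bigl((C_\eta\otimes V+V\otimes C_\eta)\cap(R_\eta\otimes V+V\otimes R_\eta)\bigr)$ vanishes.

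I would settle this using the $H_4$-decomposition of Proposition~\ref{prop.H4.unique.complement} together with evaluation. The cleanest route is to separate cases.

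\emph{Generic case.} For $\overline\eta\notin E_\tau[4]$, Lemma~\ref{lem.Omega.tilde.basis.Ceta} identifies the image of $C_\eta$ with the span of the classical elements $\Omega_0,\Omega_1$, and centrality there is \cite[Prop.~6.1]{CS}, transported through Theorem~\ref{prop:addition-formulas-relations}(1).

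\emph{Degenerate case.} For $\overline\eta\in E_\tau[2]$, Theorem~\ref{prop:addition-formulas-relations}(2) gives explicit relations. I would check directly that $Y_{ab}^{2}$, or the appropriate combinations in $C_\eta$, commute with every $Y_{cd}$. In $SV$ this is trivial. At nonzero $2$-torsion, each $u_i$ commutes with elements of $V_+$ and anticommutes with elements of $V_-$, so squares are central.

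\emph{Order-four case.} This is the remaining case and the main obstacle. I would use Proposition~\ref{prop:order-four-relations} and verify by direct manipulation in $S(\alpha,1,-1)$ that $\Omega_0$ and $\Omega_1$ commute with $x_0,\dots,x_3$. The computation is a finite one: for each generator, rewrite $x_k\Omega-\Omega x_k$ using the six relations.

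As an alternative that avoids the order-four computation, I would try a continuity argument. The dimension of $A(\eta)_3$ is constant, namely $20$, because these algebras are flat deformations. Granting that, the condition ``$c\otimes v-v\otimes c\in R_\eta\otimes V+V\otimes R_\eta$'' is a closed condition on $\overline\eta$, since both spaces vary holomorphically. It holds on the dense set $\overline\eta\notin E_\tau[4]$, so it would extend to all $\eta$. The delicate point is justifying that $\dim(R_\eta\otimes V+V\otimes R_\eta)=44$ for all $\eta$, so that the family of subspaces is holomorphic in a Grassmannian. I would deduce this from the Koszul/Hilbert series results of \cite{SmithStafford} at the points where they apply. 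If those results do not cover the torsion points, I would fall back on the explicit check at order four.
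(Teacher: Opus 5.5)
Your case split is the paper's proof: $SV$ at $\overline\eta=0$; at nonzero $2$-torsion, the commutation and anticommutation relations of Theorem~\ref{prop:addition-formulas-relations}(2) make each $Y_{ab}^2$ central, and $C_\eta\subset D$; at exact order four, a direct check in $S(\alpha,1,-1)$ that $\Omega_0,\Omega_1$ (after the relabelling in Lemma~\ref{lem.Omega.tilde.basis.Ceta}, equivalently $x_0^2-x_1^2$ and $x_2^2+x_3^2$) are central; and \cite[Prop.~6.1]{CS} off $E_\tau[4]$. The opening ``reduction'' paragraphs only restate the goal, and the continuity alternative needs $\dim A(\eta)_3=20$ at four-torsion, which \cite{SmithStafford} does not supply, so both can be dropped in favour of the direct order-four check.
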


\begin{proof}
We divide the proof into four cases.

\smallskip
\noindent
\emph{(1) $\overline\eta=0$.}
Here $A(\eta)=SV$, so every element is central.

\smallskip
\noindent
\emph{(2) $\overline\eta$ has order $2$.}
The elements $Y_{ab}$ provide the displayed bases of $V^+$ and
$V^-$ in Theorem~\ref{prop:addition-formulas-relations}(2).  Their
four squares are central: each commutes with the two generators in its
own eigenspace and with the two generators in the other eigenspace.
Indeed, if generators $x$ and $y$ anticommute, then
$x^2y=x(xy)=-x(yx)=yx^2$; thus anticommutation with a generator implies
commutation of its square with that generator.
Since $D=\operatorname{span}\{Y_{ab}\otimes Y_{ab}\mid
a,b\in\{0,1\}\}$ and $C_\eta\subset D$,
every element of $C_\eta$ has central image in $A(\eta)$.

\smallskip
\noindent
\emph{(3) $\overline\eta$ has order $4$.}
Use the cyclic relabelling in Proposition~\ref{prop:order-four-relations}.
Let $P=x_0^2-x_1^2$ and $Q=x_2^2+x_3^2$, and let
$\widetilde P,\widetilde Q\in V^{\otimes2}$ be their diagonal tensor
representatives.  Here $\beta=1$ and $\gamma=-1$, so
\[
 \widetilde\Omega_0=-\widetilde P+\widetilde Q,
 \qquad
 \widetilde\Omega_1=\widetilde P+\widetilde Q.
\]
Lemma~\ref{lem.Omega.tilde.basis.Ceta} therefore gives
$C_\eta=K_{\eta,00}=\CC\widetilde\Omega_0\oplus
\CC\widetilde\Omega_1=\CC\widetilde P\oplus\CC\widetilde Q$.
The six relations in that proposition give
\begin{align*}
 [x_0,x_1^2]&=[x_1,x_0^2]=[x_2,x_3^2]=[x_3,x_2^2]=0,\\
 [x_0,Q]&=[x_1,Q]=[x_2,P]=[x_3,P]=0.
\end{align*}
For example,
\begin{align*}
[x_0,x_1^2]
 &=\alpha\bigl(\{x_2,x_3\}x_1+x_1\{x_2,x_3\}\bigr)=0,\\
[x_0,Q]
 &=(x_3x_1x_2+x_2x_1x_3)
   -(x_2x_1x_3+x_3x_1x_2)=0;
\end{align*}
the other identities follow in the same way.  Thus $P$ and $Q$ are
central, and the displayed description of $C_\eta$ proves the assertion
in this case.

\smallskip
\noindent
\emph{(4) $\overline\eta\notin E_\tau[4]$.}
Lemma~\ref{lem.Omega.tilde.basis.Ceta} gives
$C_\eta=K_{\eta,00}=\CC\widetilde\Omega_0\oplus
\CC\widetilde\Omega_1$.
After identifying the generators, Proposition~6.1 of \cite{CS} says
that the images $\Omega_0,\Omega_1$ of these two tensors are central.
Hence the image of $C_\eta$ is central.
\end{proof}

The order-four case supplies precisely the step missing from the proof
in \cite[Prop.~6.1]{CS} at nonzero four-torsion: a coefficient by which
that proof divides can vanish there.  Proposition
\ref{prop:order-four-relations} gives the direct argument above.

\subsection{Explicit representatives and bases}

Define
\[
F_{ab}(z,\eta)=\vth_{ab}(\eta)^2
\vth_{ab}(2z)\vth_{ab}(2z+2\eta).
\]

Let $h_0,h_1,h_2,h_3$ denote the right-hand sides of
Rains--Ruijsenaars~\cite[(B.23)--(B.26)]{RR}, respectively. In our
notation,
\[
\begin{aligned}
h_0(z)&=\theta_1(\eta)\theta_1(2z),&
h_1(z)&=\theta_2(\eta)\theta_2(2z),\\
h_2(z)&=i\theta_3(\eta)\theta_3(2z),&
h_3(z)&=\theta_4(\eta)\theta_4(2z).
\end{aligned}
\]
Thus,
\[
\begin{aligned}
F_{11}(z,\eta)&=\phantom{-}h_0(z)h_0(z+\eta),&
F_{10}(z,\eta)&=\phantom{-}h_1(z)h_1(z+\eta),\\
F_{00}(z,\eta)&=-h_2(z)h_2(z+\eta),&
F_{01}(z,\eta)&=\phantom{-}h_3(z)h_3(z+\eta).
\end{aligned}
\]
Let $u=2z+\eta$ and $v=\eta$, and put $X=\theta_1(u)^2$ and
$Y=\theta_2(u)^2$.
The addition identities \eqref{eq:add1}--\eqref{eq:add4} can be written
as the matrix equation
\[
\theta_2(0)^2\begin{pmatrix}F_{11}&F_{10}&F_{00}&F_{01}\end{pmatrix}
=\begin{pmatrix}X&Y\end{pmatrix}M,
\]
where
\[
M=\begin{pmatrix}
AB&-AB&CD&CD\\
-A^2&B^2&C^2&D^2
\end{pmatrix},
\]
and
\[
A=\vth_{11}(\eta)^2,\qquad B=\vth_{10}(\eta)^2,
\qquad C=\vth_{00}(\eta)^2,
\qquad D=\vth_{01}(\eta)^2.
\]
Jacobi's quartic identity in
\eqref{eq.Jacobi.quartic.identity.intro} gives
$A^2+C^2=B^2+D^2$; put
\[
\Delta=B^2-A^2=C^2-D^2.
\]
Thus the nullspace of $M$ is the full space of linear relations among the
four functions $F_{ab}$.

\begin{theorem}\label{thm:explicit-C}
Assume $\overline{\eta}\notin E_\tau[2]$.  The following five elements
belong to $C_\eta$:
\begin{align*}
&BC(AC+BD)X_{11}^{\otimes2}
-AC(BC+AD)X_{10}^{\otimes2}
-AB\Delta X_{00}^{\otimes2},\\
&BD(AD+BC)X_{11}^{\otimes2}
-AD(BD+AC)X_{10}^{\otimes2}
+AB\Delta X_{01}^{\otimes2},\\
&CD\Delta X_{11}^{\otimes2}
+AD(BD+AC)X_{00}^{\otimes2}
+AC(BC+AD)X_{01}^{\otimes2},\\
&CD\Delta X_{10}^{\otimes2}
+BD(AD+BC)X_{00}^{\otimes2}
+BC(AC+BD)X_{01}^{\otimes2},\\
&X_{11}^{\otimes2}-X_{10}^{\otimes2}
-X_{00}^{\otimes2}-X_{01}^{\otimes2}.
\end{align*}
Equivalently, evaluating each $X_{ab}^{\otimes2}$ at $(z,z+\eta)$ makes
each of the five displayed expressions equal to zero for
all $z$.
\end{theorem}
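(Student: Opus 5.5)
Each displayed tensor is a linear combination of the $X_{ab}^{\otimes2}$, so it lies in $D$. Since $C_\eta=D\cap K_\eta$, it is enough to prove that each tensor vanishes on $\Gamma_\eta$. This is the ``equivalently'' clause of the statement, and it turns membership in $C_\eta$ into a linear-algebra check against the matrix $M$.

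\emph{Step 1: restrictions to the graph.} By \eqref{eq.definition.Xhat} and \eqref{eq.definition.Xab}, $X_{ab}(z)X_{ab}(z+\eta)=\gamma_{ab}^2\vartheta_{ab}(\eta)^2\vartheta_{ab}(2z)\vartheta_{ab}(2z+2\eta)=\gamma_{ab}^2F_{ab}(z,\eta)$. Since $(\gamma_{00},\gamma_{01},\gamma_{10},\gamma_{11})=(1,\ii,\ii,1)$, the restrictions of $X_{11}^{\otimes2},X_{10}^{\otimes2},X_{00}^{\otimes2},X_{01}^{\otimes2}$ are $F_{11},-F_{10},F_{00},-F_{01}$. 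A tensor $\sum c_{ab}X_{ab}^{\otimes2}$ therefore vanishes on $\Gamma_\eta$ if and only if $c_{11}F_{11}-c_{10}F_{10}+c_{00}F_{00}-c_{01}F_{01}=0$ identically in $z$.

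\emph{Step 2: reduce to the nullspace of $M$.} By the matrix identity preceding the theorem, with $\theta_2(0)\neq0$, this linear combination equals $\theta_2(0)^{-2}\,(X\ Y)\,M\,w$, where $w=(c_{11},-c_{10},c_{00},-c_{01})^{T}$. The functions $X=\theta_1(2z+\eta)^2$ and $Y=\theta_2(2z+\eta)^2$ of $z$ are linearly independent, because they have different zero sets. So vanishing on $\Gamma_\eta$ is equivalent to $Mw=0$, that is, to the two scalar equations
\[
AB(w_1-w_2)+CD(w_3+w_4)=0,\qquad -A^2w_1+B^2w_2+C^2w_3+D^2w_4=0 .
\]

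\emph{Step 3: check the five vectors.} For the last tensor, $w=(1,1,-1,1)$. The first equation holds trivially. The second becomes $(B^2-A^2)-(C^2-D^2)=\Delta-\Delta=0$, using Jacobi's identity \eqref{eq.Jacobi.quartic.identity.intro} in the form $A^2+C^2=B^2+D^2$.

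For the first tensor, $w=(BC(AC+BD),\,AC(BC+AD),\,-AB\Delta,\,0)$. The first equation reads $AB\bigl[BC(AC+BD)-AC(BC+AD)\bigr]-CD\cdot AB\Delta$. The bracket is $CD(B^2-A^2)=CD\Delta$, so the expression vanishes. The second equation expands to $-A^2BC(AC+BD)+B^2AC(BC+AD)-C^2AB\Delta$. After factoring out $ABC$ this becomes $-AC\cdot A^2+BC\cdot B^2-AD\cdot AB+BD\cdot AB$, less $C\Delta$. Since $B^2-A^2=\Delta$, the $C$-terms give $C\Delta$ and the $D$-terms cancel, so the whole expression is $0$.

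The remaining three tensors are checked in the same way, replacing $\Delta$ by $C^2-D^2$ wherever that is the natural form. Their vectors are obtained from the first by the evident symmetries $(A,B)\leftrightarrow(C,D)$ and $C\leftrightarrow D$, combined with the sign changes coming from $\gamma_{ab}^2$.

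\emph{Main obstacle.} I expect the only real difficulty to be bookkeeping. One must track the signs $\gamma_{ab}^2=-1$ for $ab=01,10$, the column order $(F_{11},F_{10},F_{00},F_{01})$ of $M$, and the sign of $F_{00}$ relative to $h_2$. A sign slip would wrongly make one of the vectors appear to lie outside the nullspace.

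As a consistency check, $M$ has rank $2$ whenever $AB\neq0$ and $\Delta\neq0$, so its nullspace is $2$-dimensional. This matches $\dim C_\eta=2$ from Proposition~\ref{thm.dim.Ceta}, and so the five displayed elements must span a space of dimension at most two.
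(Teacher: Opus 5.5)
Your proposal is correct and is essentially the paper's argument: both restrict to $\Gamma_\eta$, use the identity $\theta_2(0)^2(F_{11},F_{10},F_{00},F_{01})=(X\ Y)M$ together with $X_{ab}(z)X_{ab}(z+\eta)=\gamma_{ab}^2F_{ab}(z,\eta)$, and reduce membership in $C_\eta$ to $Mw=0$. The only difference is that the paper generates the null vectors from $\det(q,r)p-\det(p,r)q+\det(p,q)r=0$ applied to triples of columns of $M$, whereas you verify the given vectors by substitution; note also one slip: after factoring $ABC$ out of the second-row expression for the first tensor you should get $-A^2C-ABD+B^2C+ABD-C\Delta$, not the terms you wrote, although your conclusion (the $C$-terms give $C\Delta$ and the $D$-terms cancel) is right, and the other three vectors do check out.
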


\begin{proof}
For $1\leq j\leq4$, let $M_j$ denote the $j$th column of $M$.
These columns correspond, in order, to
$F_{11},F_{10},F_{00},F_{01}$. For any
three $2\times1$ columns $p,q,r$,
\[
\det(q,r)p-\det(p,r)q+\det(p,q)r=0.
\]
Applying this to the four triples of columns of $M$, and using
$X_{ab}(z)X_{ab}(z+\eta)=\gamma_{ab}^2F_{ab}(z,\eta)$, gives
the first four displayed relations. Subtracting the relation obtained from
$(M_1,M_2,M_4)$ from that obtained from $(M_1,M_2,M_3)$, and using
$\Delta=C^2-D^2$, gives the fifth displayed relation.
By the definition of $C_\eta$, these relations belong to $C_\eta$.
\end{proof}

By Theorem~\ref{thm:Ceta-central-all}, the images in
$A(\eta)$ of the five tensors displayed in
Theorem~\ref{thm:explicit-C} are central.

\begin{corollary}\label{cor:bases-C}
Whenever two of the five tensors in Theorem~\ref{thm:explicit-C} are
linearly independent, the set consisting of them is a basis for
$C_\eta$. If
$\overline{\eta}\notin E_\tau[2]$,
$M$ has rank $2$, so its nullspace has dimension $2$ and any two
independent relations among the five displayed elements give such a
basis.
\end{corollary}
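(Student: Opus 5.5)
The five tensors of Theorem~\ref{thm:explicit-C} all lie in $C_\eta$, and $\dim C_\eta=2$ by Proposition~\ref{thm.dim.Ceta}(1). The first assertion is therefore immediate: any two linearly independent elements of a two-dimensional space form a basis of it. The substance of the corollary is the claim that $M$ has rank $2$ when $\overline\eta\notin E_\tau[2]$. I will prove it by identifying the nullspace of $M$ with $C_\eta$, rather than by computing $2\times2$ minors.

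Assume $\overline\eta\notin E_\tau[2]$. Then the $X_{ab}^{\otimes2}$ form a basis of $D$. Consider the linear map $\CC^4\to D$ sending a coefficient vector $(\lambda_{11},\lambda_{10},\lambda_{00},\lambda_{01})$ to $\sum\lambda_{ab}X_{ab}^{\otimes2}$. It is an isomorphism, and by definition it carries
\[
 \Bigl\{\lambda\ \Big|\ \sum\lambda_{ab}X_{ab}(z)X_{ab}(z+\eta)\equiv0\Bigr\}
\]
onto $C_\eta=D\cap K_\eta$. So this space of relations has dimension exactly $2$. Since $X_{ab}(z)X_{ab}(z+\eta)=\gamma_{ab}^2F_{ab}(z,\eta)$ with $\gamma_{ab}^2\neq0$, rescaling coordinates shows that the space of linear relations among $F_{11},F_{10},F_{00},F_{01}$ is also $2$-dimensional.

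Next I will check that this relation space equals the nullspace of $M$. By the matrix identity $\theta_2(0)^2(F_{11},F_{10},F_{00},F_{01})=(X,Y)M$, a vector $\mu$ satisfies $\sum\mu_jF_j\equiv0$ if and only if $(X,Y)M\mu\equiv0$. Here $X=\theta_1(2z+\eta)^2$ and $Y=\theta_2(2z+\eta)^2$ are linearly independent functions of $z$, because they have different zero sets: $\theta_1$ vanishes on $\Lambda_\tau$, whereas $\theta_2$ vanishes on $\tfrac12+\Lambda_\tau$. Hence $(X,Y)M\mu\equiv0$ if and only if $M\mu=0$, and the relation space coincides with $\ker M$. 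Therefore $\dim\ker M=2$ and $\operatorname{rank}M=4-2=2$.

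The last clause of the corollary then follows: $\ker M$ corresponds to $C_\eta$ under the coordinate rescaling above, so any two independent elements among the five displayed relations form a basis of $C_\eta$.

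The step I expect to be the main obstacle is avoiding a direct minor computation. The minors such as $\det(M_1,M_2)=AB\Delta$ and $\det(M_1,M_3)=AC(BC+AD)$ can vanish simultaneously only under algebraic coincidences, and ruling these out from the theta constants, even with Jacobi's identity $A^2+C^2=B^2+D^2$, looks awkward. The dimension argument above sidesteps this entirely. The only point to verify carefully is that $X$ and $Y$ are independent as functions of $z$, which the zero sets of $\theta_1$ and $\theta_2$ settle.
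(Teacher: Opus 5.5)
Your proposal is correct and follows the paper's route. The paper's proof just cites $\dim C_\eta=2$ from Proposition~\ref{thm.dim.Ceta} and relies on its earlier unproved remark that $\ker M$ is the full space of linear relations among the $F_{ab}$. You make that identification explicit via the rescaling by $\gamma_{ab}^2$ and the independence of $\theta_1(u)^2$ and $\theta_2(u)^2$, and it also uses $\theta_2(0)\neq0$, which you rely on implicitly.
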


\begin{proof}
Proposition~\ref{thm.dim.Ceta} gives $\dim C_\eta=2$ for every $\eta$.
\end{proof}

\begin{remark}
After the tensors $X_{ab}$ are replaced by their classes $x_{ab}$ in
$A(\eta)$ and $A(\eta)$ is identified with $S(\eta)$, the five elements in
Theorem~\ref{thm:explicit-C} are, up to normalization and relabelling,
the five quadratic forms displayed by Chirvasitu--Smith
\cite[Prop.~2.8]{CSExotic}.  Proposition~2.8 of that paper does not
assign names to those forms.  Rains--Ruijsenaars'
$K_0,K_2$ \cite[(B.36)--(B.38)]{RR} give another basis for the same
space; after translating notation, they are Sklyanin's $K_0,K_1$.
Smith--Stafford use the basis $\Omega,\Omega'$ and Smith--Staniszkis
use the same central pencil.  Thus all these named elements span the
same two-dimensional subspace of $A(\eta)_2$; their formulas differ
only because the cited papers use different bases and normalizations.
\end{remark}

\subsection{The geometric meaning of \texorpdfstring{$C_0$}{C-0}}

Let $SV$ be the symmetric algebra on $V$, so
$\PP V^*=\operatorname{Proj}(SV)$, and let $I$ be the homogeneous ideal
in $SV$ that vanishes on $E_\tau$.  The quadrics that contain
$E_\tau$ are the zero loci of the nonzero elements in $I_2$, the
quadratic component of $I$.

\begin{proposition}
The inverse image of $I_2$ under the map $D\to S^2V$ is
\[
C_0=\theta(z_1-z_2)^2W(z_1+z_2).
\]
\end{proposition}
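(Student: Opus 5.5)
The plan is to identify both sides as subspaces of $D$ and compare them using the restriction to the diagonal. The map $D\to S^2V$ sends $\sum c_{ab}\vartheta_{ab}(2z_1)\vartheta_{ab}(2z_2)$ to the quadratic form $\sum c_{ab}\vartheta_{ab}(2z)^2$. Composing with restriction to $E_\tau\subset\PP V^*$ gives the function $z\mapsto r(z,z)$. Hence an element $r\in D$ maps into $I_2$ if and only if $r(z,z)=0$ for all $z$, which says exactly that $r\in D\cap K_0$.

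By definition $D\cap K_0=C_0$, with $\Gamma_0$ the diagonal. So the inverse image of $I_2$ is $C_0$, and the displayed formula then follows from Proposition~\ref{thm.dim.Ceta}(2) with $\eta=0$. That proposition gives
\[
C_0=\theta(z_1-z_2\pm0)W(z_1+z_2)=\theta(z_1-z_2)\theta(z_1-z_2)W(z_1+z_2).
\]
Since $\theta(u\pm 0)=\theta(u)^2$, this is $\theta(z_1-z_2)^2W(z_1+z_2)$.

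The only point needing care is the identification of ``vanishing on $E_\tau$'' with the restriction of the image in $S^2V$. A quadratic form $q\in S^2V$ lies in $I_2$ exactly when $q(\iota(z))=0$ for all $z$, where $\iota$ is the embedding. If $r$ is a bilinear form whose symmetrization is $q$, then $q(\iota(z))=r(\iota(z),\iota(z))$. In the theta-function picture this is $r(z,z)$, because the identification of $V^{\otimes 2}$ with functions $f(z_1,z_2)$ is evaluation at $(\iota(z_1),\iota(z_2))$.

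As a sanity check on dimensions, $D\to S^2V$ is injective because the four squares $\vartheta_{ab}(2z)^2$ are linearly independent in $S^2V$. The intersection of its image with $I_2$ should therefore be at most two-dimensional. This is consistent with $\dim I_2=2$, since $E_\tau$ is a complete intersection of two quadrics, and with $\dim C_0=2$. Incidentally, this shows $I_2$ is spanned by diagonal quadrics in the $\vartheta_{ab}(2z)$-coordinates, recovering that the $e_{ab}$ are the vertices.

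I expect no real obstacle beyond bookkeeping. The one subtlety is whether ``inverse image'' should use $D$ itself rather than all of $V^{\otimes2}$. The statement restricts to $D$, so the argument above suffices.
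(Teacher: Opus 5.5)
Your proposal is correct and follows the paper's own argument. An element of $D$ maps into $I_2$ exactly when it vanishes on the diagonal $\Gamma_0$, so the inverse image is $D\cap K_0=C_0$, and the displayed formula is Proposition~\ref{thm.dim.Ceta} with $\eta=0$. Your dimension remark is a correct addition the paper does not make: $D\to S^2V$ is injective and $\dim I_2=\dim C_0=2$, so the image of $C_0$ is all of $I_2$.
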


\begin{proof}
The image in $S^2V$ of an element of $D$ belongs to $I_2$ precisely when
the corresponding quadric vanishes on $E_\tau$. In the function-theoretic
realization of $D$, this says precisely that the element vanishes on the
diagonal $z_1=z_2$, which is the defining condition for membership in
$C_0$. The displayed description follows from Proposition~\ref{thm.dim.Ceta}
with
$\eta=0$.
\end{proof}

The geometric description in
Proposition~\ref{prop.geometric.vertices}, specialized to $c=0$, labels
them $Q_0(w)$, with $Q_0(w)=Q_0(-w)$, and shows that the lines that lie
on them are precisely the secant lines to $E_\tau$.

If $\overline\eta\ne0$, Proposition~\ref{prop:Cnu-Ceta}, with
\(\nu=0\), gives
\[
 C_0\cap C_\eta=\{0\},
 \qquad D=C_0\oplus C_\eta.
\]

\begin{remark}
When $\overline\eta=0$, the quotient
$TV/(R_\eta\oplus C_\eta)$ is the homogeneous coordinate ring of
$E_\tau$ for the embedding defined by
$\mathcal O_{E_\tau}(4[0])$.  At a nonzero two-torsion point the direct
sum above still holds, although the $\eta$-dependent normalization by
the functions $X_{ab}$ can degenerate.
\end{remark}

\subsection{An explicit formula for the central element
\texorpdfstring{$\Omega(\zeta)$}{Omega(zeta)}}

Throughout this subsection, we assume that
$\overline\eta\notin E_\tau[4]$.  The algebras at the four-torsion
parameters have rather different behavior and will not be considered
here.

Fix a point $\zeta \in \CC$. By \cite[Cor.~6.6]{LS},\footnote{The translation parameter in
\cite{LS} is denoted by $\tau$, but we will continue to use $\eta$.}
there is a non-zero central element $\Omega(\overline{\zeta})$ in
$S(\eta)_2$ that annihilates every line module $M(p,q)$ for which
$p+q=\overline{\zeta}$.  See also \cite[\S9, especially Thm.~9.1]{SmithSurvey}.
An $\Omega(\overline{\zeta})$ with this property is only
determined up to a nonzero scalar multiple.  Levasseur and Smith also
show that
\[
 \CC\Omega(\overline{\zeta})=\CC\Omega(-\overline{\zeta}-2\overline{\eta}).
\]
Using the isomorphism $S(\eta)\cong A(\eta)$, we regard these central
elements and the equality of their one-dimensional spans as statements about
$A(\eta)$.  In this section we
determine explicit elements $\Omega(\overline{\zeta})\in A(\eta)_2$ with these
properties.

We now define
\begin{align}
 \Omega_\zeta(z_1,z_2)
 &:=\theta(z_1-z_2\pm\eta)\,
 \theta\bigl(z_1+z_2\pm(\zeta+\eta)\bigr)
 \quad\in C_\eta,
 \label{eq.Omega-zeta-function}\\
 \Omega(\zeta)&:=\Omega_\zeta+R_\eta
 \quad\in A(\eta)_2.
 \label{eq.explicit.Omega.z}
\end{align}
The tensor $\Omega_\zeta$ belongs to $C_\eta$, so its image in
$A(\eta)_2$ is nonzero and central.  Clearly,
\[
 \Omega(\zeta)=\Omega(-\zeta-2\eta),
\]

\begin{theorem}\label{thm.explicit.Omega.z}
Suppose that $\overline\eta\notin E_\tau[4]$.
If $p,q\in E_\tau$, $\zeta\in\CC$,
and $p+q=\overline{\zeta}$, then the element $\Omega(\zeta)$ defined in
\eqref{eq.explicit.Omega.z} annihilates the line module $M(p,q)$.
Thus \eqref{eq.explicit.Omega.z} gives a specific choice for the
element whose existence is proved by Levasseur--Smith.
\end{theorem}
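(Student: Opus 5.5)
The plan is to compute the action of the tensor $\Omega_\zeta$ directly on the explicit line modules of Proposition~\ref{prop:explicit-line-module}, and then to remove the genericity condition \eqref{eq:line-module-condition} by a closedness argument.

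\emph{Step 1: the generic case.} Choose lifts $x,y\in\CC$ of $p,q$ such that $x+y=\zeta$ exactly in $\CC$; this is possible by replacing $y$ with $\zeta-x$, since $\overline{\zeta}=p+q$. Assume first that $x-y\notin\Lambda_\tau+2\ZZ\eta$, so that $M(x,y)$ is given by \eqref{eq:explicit-line-module-action}. Write $\Omega_\zeta=\sum_\ell f_\ell\otimes g_\ell$. The computation in the proof of Proposition~\ref{prop:explicit-line-module} applies to any tensor, not only to elements of $R_\eta$. With $p'=x+(j-i)\eta$ and $q'=y+(i-j)\eta$, it shows:
\begin{itemize}
\item the coefficients of $e_{i+2,j}$ and $e_{i,j+2}$ in $\Omega_\zeta\cdot e_{ij}$ are multiples of $\Omega_\zeta(p'-\eta,p')$ and $\Omega_\zeta(q'-\eta,q')$;
\item the coefficient of $e_{i+1,j+1}$ is a combination of $\Omega_\zeta(q'+\eta,p')$ and $\Omega_\zeta(p'+\eta,q')$.
\end{itemize}
The first two values vanish because $\Omega_\zeta\in C_\eta\subset K_\eta$ vanishes on $\Gamma_\eta$.

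For the other two values, the key observation is that $p'+q'=x+y=\zeta$ in $\CC$. Consider the factor $\theta\bigl(z_1+z_2\pm(\zeta+\eta)\bigr)$ of $\Omega_\zeta$:
\begin{itemize}
\item at $(z_1,z_2)=(q'+\eta,p')$ we have $z_1+z_2=\zeta+\eta$, so the factor $\theta\bigl(z_1+z_2-(\zeta+\eta)\bigr)=\theta(0)=0$;
\item at $(z_1,z_2)=(p'+\eta,q')$ we again have $z_1+z_2=\zeta+\eta$, so the factor vanishes in the same way.
\end{itemize}
Hence $\Omega_\zeta$ acts as zero on every basis vector $e_{ij}$. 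Since the $TV$-action factors through $A(\eta)$, the element $\Omega(\zeta)$ annihilates $M(x,y)$. (Centrality, Theorem~\ref{thm:Ceta-central-all}, would already reduce this to checking $e_{00}$, but the computation works for all $e_{ij}$ anyway.)

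\emph{Step 2: identification and the degenerate case.} I will use the Levasseur--Smith identification $M(x,y)\cong A(\eta)/A(\eta)L^\perp$, where $L=\overline{pq}$. Under this identification, the annihilation statement becomes: $\Omega(\zeta)$ annihilates $M(p,q)$ if and only if $\Omega(\zeta)\in (A(\eta)L^\perp)_2=V\cdot L^\perp$. Equivalently, $\Omega_\zeta\in V\otimes L^\perp+R_\eta$.

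Fix $\zeta$ and let the secant line $L$ vary over the one-parameter family with $p+q=\overline\zeta$. The subspace $V\otimes L^\perp+R_\eta$ has constant dimension, since $\dim A(\eta)_2-\dim M(p,q)_2=10-3=7$. It therefore varies algebraically, as a map from $E_\tau$ (parametrizing $p$) to a Grassmannian, and membership of the fixed tensor $\Omega_\zeta$ in it is a closed condition in $p$. Step~1 proves the condition on the complement of the countable set where $p-q\in 2\ZZ\overline\eta$. That complement is dense, so the condition holds for all $p$, including tangent lines and the excluded differences.

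\emph{Expected obstacle.} The main obstacle is this degenerate case. Specifically, I need to justify that $\dim (A(\eta)L^\perp)_2$ is constant along the family, which requires the Hilbert series of line modules from \cite{LS}, and that the Levasseur--Smith isomorphism matches the explicit modules of Proposition~\ref{prop:explicit-line-module}. Step~1 itself is a one-line evaluation once the lifts are chosen so that $x+y=\zeta$ exactly.
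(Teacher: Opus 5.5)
Your proposal is correct and is essentially the paper's proof. It uses the same lifts with $x+y=\zeta$, the same vanishing of the three coefficients via the factors $\theta(z_1-z_2\pm\eta)$ and $\theta\bigl(z_1+z_2\pm(\zeta+\eta)\bigr)$, and the same Grassmannian closedness argument for the $13$-dimensional spaces $R_\eta+V\otimes L^\perp$. The only difference in the generic case is that you check every $e_{ij}$, whereas the paper checks only $e_{00}$ and then invokes centrality. As in the paper, constancy of dimension comes from the Levasseur--Smith fact that $A(\eta)/A(\eta)L^\perp$ is a line module.

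One point should be stated explicitly. The ``if'' direction of your Step~2 equivalence, i.e.\ passing from $\Omega(\zeta)\cdot\overline{1}=0$ to annihilation of the whole cyclic module at the degenerate parameters, uses the centrality of $\Omega(\zeta)$ (Theorem~\ref{thm:Ceta-central-all}), exactly as the paper's last line does. For the final sentence of the statement, also note that $\Omega(\zeta)\ne0$, because $\Omega_\zeta$ is a nonzero element of $C_\eta$ and $C_\eta\cap R_\eta=\{0\}$.
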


\begin{proof}
Choose preimages \(x,y\in\CC\) of \(p,q\), respectively, such that
\(x+y=\zeta\).  Since \(\Omega(\zeta)\) is central and \(M(x,y)\) is
generated by its degree-zero element \(e_{00}\), it suffices to show
that \(\Omega(\zeta)\cdot e_{00}=0\).  Suppose first that
\begin{equation}\label{eq:Omega-proof-condition}
 x-y\notin\Lambda_\tau+2\ZZ\eta.
\end{equation}
Proposition~\ref{prop:explicit-line-module} gives the basis and action
in \eqref{eq:explicit-line-module-action} for \(M(x,y)=M(p,q)\).
Put \(\Delta=x-y\).  Applying \eqref{eq:explicit-line-module-action}
twice gives,
for \(h\in V^{\otimes2}\),
{\small
\[
 h\cdot e_{00}
 =\frac{h(x-\eta,x)}{\theta(\Delta)\theta(\Delta-2\eta)}e_{20}
 -\frac{1}{\theta(\Delta)}\left(
  \frac{h(y+\eta,x)}{\theta(\Delta-2\eta)}
  +\frac{h(x+\eta,y)}{\theta(\Delta+2\eta)}
 \right)e_{11}
 +\frac{h(y-\eta,y)}{\theta(\Delta)\theta(\Delta+2\eta)}e_{02}.
\]}
Since \(\theta(z_1-z_2\pm\eta)\) is a factor of
\(\Omega_\zeta(z_1,z_2)\),
\(\Omega_\zeta(x-\eta,x)
=\Omega_\zeta(y-\eta,y)=0\); thus, when
\(h=\Omega_\zeta\), the first and third terms in
\(h\cdot e_{00}\) vanish.  Similarly, because
\(\Omega_\zeta(z_1,z_2)\) is a multiple of
\(\theta\bigl(z_1+z_2\pm(\zeta+\eta)\bigr)\),
\(\Omega_\zeta(y+\eta,x)
=\Omega_\zeta(x+\eta,y)=0\), which implies that the
middle term vanishes when \(h=\Omega_\zeta\).
Hence \(\Omega(\zeta)\cdot e_{00}=0\), and centrality shows that
\(\Omega(\zeta)\) annihilates \(M(p,q)\) whenever
\eqref{eq:Omega-proof-condition} holds.

We will now remove that condition.
Fix \(\zeta\), let \(x\) vary, put \(y=\zeta-x\), and write
\[
 W_x:=W_{x,\zeta-x}\subset V,
 \qquad
 U_x:=R_\eta+V\otimes W_x\subset V^{\otimes2}.
\]
The assignment $\overline x\mapsto W_x$ is a morphism
$E_\tau\to\operatorname{Gr}(2,V)$: it assigns to $\overline x$ the
two-dimensional space of linear forms vanishing on the secant or tangent
line whose intersection divisor with $E_\tau$ is
$(\overline x)+(\overline\zeta-\overline x)$.  Every corresponding quotient
$A(\eta)/A(\eta)W_x$ is a line module, so its degree-two component has
dimension three.  Consequently, $\dim U_x=\dim V^{\otimes2}-3=13$ for
every $x$, and $\overline x\mapsto U_x$ is a morphism
$E_\tau\to\operatorname{Gr}(13,V^{\otimes2})$.

Let $Z_\zeta$ be the inverse image under this map of the Schubert
subvariety consisting of the $13$-planes that contain the fixed tensor
$\Omega_\zeta$.  Thus $Z_\zeta$ is closed, and
\[
 \overline x\in Z_\zeta
 \quad\Longleftrightarrow\quad
 \Omega_\zeta\in R_\eta+V\otimes W_x
 \quad\Longleftrightarrow\quad
 \Omega(\zeta)\cdot e_{00}=0
\]
in $M(x,\zeta-x)$.  The calculation above shows that $Z_\zeta$ contains
every $\overline x$ such that
\[
 2x-\zeta\notin\Lambda_\tau+2\ZZ\eta.
\]
The images of these \(x\) in \(E_\tau\), i.e., the points
\(\overline{x}\), form a dense subset: the excluded subset is at most
countable.  Hence the closed set $Z_\zeta$ is all of $E_\tau$.
Therefore \(\Omega(\zeta)\) annihilates the degree-zero generator of every
\(M(x,\zeta-x)\), including those for which
\eqref{eq:Omega-proof-condition} fails.  By centrality it annihilates
each such line module.
\end{proof}

With $K_0$ and $K_2$ defined as in
\cite[(B.37)--(B.38)]{RR}, substitution in
\eqref{eq.explicit.Omega.z} gives
\begin{equation}\label{eq.Omega.specializations}
 \Omega(-\eta)=\tfrac12K_0,
 \qquad
 \Omega(0)=\tfrac12K_2.
\end{equation}

\subsubsection{The \texorpdfstring{$\Omega(\zeta)$}{Omega(zeta)}
whose zero-loci are noncommutative quadrics in the sense of
Smith--Van den Bergh \cite{SmithVdBQuadrics}}

For a connected graded noetherian ring \(R\), write
\({\rm Proj}_{\sf nc}(R)\) for the noncommutative scheme whose
category of quasi-coherent sheaves is the quotient category associated
to \(R\), as in \cite[\S\S1.13, 1.14]{CSExotic}.  We consider
$\PP_{\sf Skly}^3:={\rm Proj}_{\sf nc}\bigl(A(\eta)\bigr)$ as a
noncommutative analogue of \(\PP^3\).  This space has a closed
subvariety isomorphic to \(E_\tau\), namely
\({\rm Proj}_{\sf nc}\bigl(A(\eta)/(C_\eta)\bigr)\).

Similarly, we define
\[
 Q_\zeta:=
 {\rm Proj}_{\sf nc}\bigl(A(\eta)/(\Omega(\zeta))\bigr)
\]
and regard these as a pencil of noncommutative quadrics in
\(\PP_{\sf Skly}^3\) that contain this copy of \(E_\tau\).  The purpose
of \cite{SmithVdBQuadrics} is to show that the noncommutative quadrics
\(Q_\zeta\) behave like the pencil of commutative quadrics that
contain the copy of \(E_\tau\) in \(\PP^3\).
One of their main results is that exactly four of the \(Q_\zeta\) are
singular.

\begin{proposition}\label{prop.Omega.fixed.points}
Assume \(\overline\eta\notin E_\tau[4]\).  For
\(\zeta\in\CC\), the following statements are equivalent.
\begin{enumerate}
\item The noncommutative quadric \(Q_\zeta\) is singular in the sense
of Smith--Van den Bergh.
\item \(\overline\zeta+\overline\eta\in E_\tau[2]\).
\item If \(\Omega_\zeta(z_1,z_2)\) is the function in
\eqref{eq.Omega-zeta-function}, then the intersection
\(\mathcal Z(\Omega_\zeta)\cap(E_\tau\times E_\tau)\) contains the addition fiber
$D_{\zeta+\eta}:=\{(p,q)\in E_\tau\times E_\tau\mid
   p+q=\overline\zeta+\overline\eta\}$
with multiplicity two.
\item The point \(\CC\Omega(\zeta)\) is a branch point of the
degree-two map
\[
 E_\tau\longrightarrow\PP(C_\eta),\qquad
 \overline\zeta\longmapsto\CC\Omega(\zeta).
\]
\item Up to a nonzero scalar multiple, \(\Omega(\zeta)\) is one of the
following four elements:
putting
\[
 \alpha=\alpha_{00}=\frac{AC}{BD},\qquad
 \beta=\alpha_{01}=-\frac{AD}{BC},\qquad
 \gamma=\alpha_{10}=-\frac{AB}{CD},
\]
these elements are
\begin{align*}
 \Theta_0&:=-x_0^2+x_1^2+x_2^2+x_3^2,\\
 \Theta_1&:=x_0^2+\beta\gamma\,x_1^2-\gamma\,x_2^2+\beta\,x_3^2,\\
 \Theta_2&:=x_0^2+\gamma\,x_1^2+\alpha\gamma\,x_2^2-\alpha\,x_3^2,\\
 \Theta_3&:=x_0^2-\beta\,x_1^2+\alpha\,x_2^2+\alpha\beta\,x_3^2.
\end{align*}
These are the four central elements in \cite[Prop.~2.8]{CSExotic}.
\end{enumerate}
\end{proposition}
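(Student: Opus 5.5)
The plan is to take (2) as the hub. We prove (2)$\Leftrightarrow$(3)$\Leftrightarrow$(4)$\Leftrightarrow$(5) by theta-function arguments, and obtain (1)$\Leftrightarrow$(2) from Smith--Van den Bergh's classification of singular quadrics.

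For (2)$\Leftrightarrow$(3), restrict $\Omega_\zeta$ to $E_\tau\times E_\tau$. Its divisor is the sum of $\Gamma_\eta$ (from $\theta(z_1-z_2+\eta)$), of $\Gamma_{-\eta}$ (from $\theta(z_1-z_2-\eta)$), of the addition fiber $D_{\zeta+\eta}$ (from $\theta(z_1+z_2-\zeta-\eta)$), and of the fiber $D_{-\zeta-\eta}$ (from $\theta(z_1+z_2+\zeta+\eta)$). Since $\overline\eta\notin E_\tau[2]$, the curves $\Gamma_{\pm\eta}$ differ from every addition fiber and from each other. Hence $D_{\zeta+\eta}$ appears with multiplicity two exactly when $D_{\zeta+\eta}=D_{-\zeta-\eta}$, that is, when $2(\overline\zeta+\overline\eta)=0$.

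For (2)$\Leftrightarrow$(4), use Proposition~\ref{thm.dim.Ceta}: the map $\overline\zeta\mapsto\CC\Omega(\zeta)$ is the composite of $\zeta\mapsto\zeta+\eta=:\gamma$ with $\gamma\mapsto\CC\theta(v\pm\gamma)\in\PP W(v)\cong\PP(C_\eta)$. The second map is the quotient of $E_\tau$ by $\gamma\mapsto-\gamma$, which has degree two. Injectivity of $W\to C_\eta$, $w\mapsto\theta(u\pm\eta)w$, shows that the fibers are exactly $\{\gamma,-\gamma\}$. The branch points are therefore the images of $\gamma\in E_\tau[2]$. This also recovers $\Omega(\zeta)=\Omega(-\zeta-2\eta)$.

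For (2)$\Leftrightarrow$(5), apply Proposition~\ref{prop:diagonal-expansion} with $\xi=\zeta+\eta=\gamma$ ranging over the four half-periods $0,\tfrac12,\tfrac12\tau,\tfrac12(1+\tau)$. The cases $\gamma=0$ and $\gamma=\tfrac12(1+\tau)$ were already computed in Lemma~\ref{lem.Omega.tilde.basis.Ceta}; they give $\widetilde\Omega_0$ and $\widetilde\Omega_1$, whose images are $\Theta_0$ and $\Theta_1$. For $\gamma=\tfrac12$ and $\gamma=\tfrac12\tau$, the half-period formulas of \cite[Table~0]{MumfordTataI} turn $\vartheta_{ab}(\gamma\pm\eta)$ into $\pm e(\cdot)\,\vartheta_{a'b'}(\eta)^2$. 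The coefficient vector in the ordered basis $(X_{11}^{\otimes2},X_{00}^{\otimes2},X_{01}^{\otimes2},X_{10}^{\otimes2})$ is then a ratio of the quantities $A,B,C,D$. We match these ratios with $(1,\gamma,\alpha\gamma,-\alpha)$ and $(1,-\beta,\alpha,\alpha\beta)$ using the displayed expressions for $\alpha,\beta,\gamma$. Conversely, suppose $\Omega(\zeta)$ is proportional to some $\Theta_k$. Since $C_\eta$ maps isomorphically onto its image, Proposition~\ref{prop:intersection-R-C} gives $\CC\Omega_\zeta=\CC\Omega_{\gamma_k-\eta}$, and injectivity modulo $\gamma\mapsto-\gamma$ from the previous paragraph forces $\overline\zeta+\overline\eta=\pm\gamma_k\in E_\tau[2]$. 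The main bookkeeping risk is the signs and the $\mathrm i$ factors coming from $\gamma_{ab}$ and from $X_{ab}^{\otimes2}=(-1)^{a+b}\vartheta_{ab}(\eta)^2\vartheta_{ab}(2z_1)\vartheta_{ab}(2z_2)$.

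The main obstacle is (1). Smith--Van den Bergh prove that exactly four members of the central pencil give singular quadrics, and they identify them with the four elements of \cite[Prop.~2.8]{CSExotic}, namely the images of the four vertex-type degenerate forms. I would invoke that classification, after checking that the identification of generators there matches ours via Theorem~\ref{prop:addition-formulas-relations}, to get (1)$\Leftrightarrow$(5). If that identification turns out to be delicate, the fallback is the line-module characterization from Theorem~\ref{thm.explicit.Omega.z}. There $Q_\zeta$ contains the line modules $M(p,q)$ with $p+q=\overline\zeta$, and it is singular precisely when its two rulings coincide, i.e.\ when $\zeta\equiv-\zeta-2\eta$. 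This is again condition (2), mirroring Proposition~\ref{prop.geometric.vertices}.
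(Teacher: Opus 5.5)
Your proposal is essentially the paper's proof: (2) serves as the hub, (3) is read off from the multiplicity of $D_{\zeta+\eta}$ in the divisor of $\theta\bigl(z_1+z_2\pm(\zeta+\eta)\bigr)$, (4) comes from identifying the map with the quotient by $\overline\zeta\mapsto-\overline\zeta-2\overline\eta$, and (5) comes from Proposition~\ref{prop:diagonal-expansion} at the four half-periods, with the converse via injectivity on $C_\eta$ and the branch-point argument. The one difference is (1): the paper simply cites \cite[Thm.~10.2]{SmithVdBQuadrics} in the form (1)$\Leftrightarrow$(2), which avoids the generator-matching you anticipate, and your ``two rulings coincide'' fallback should not be relied on, since in the noncommutative setting that criterion is exactly what the Smith--Van den Bergh theorem supplies.
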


\begin{proof}
The equivalence of (1) and (2) is given by
\cite[Thm.~10.2]{SmithVdBQuadrics}.

Put \(a=\zeta+\eta\).  By \eqref{eq.Omega-zeta-function}, the second
theta factor in \(\Omega_\zeta(z_1,z_2)\) is
\(\theta(z_1+z_2+a)\theta(z_1+z_2-a)\).
Its two zero divisors on \(E_\tau\times E_\tau\) coincide if and only if
\(\overline a=-\overline a\), i.e., if and only if
\(\overline a\in E_\tau[2]\); i.e., if and only if its associated
divisor contains \(D_a\) with multiplicity two.  Conversely, \(D_a\) can
occur with multiplicity two only when the two addition fibers
defined by the second theta factor coincide.  This proves the
equivalence of (2) and (3).

The map in (4) is the quotient map for the involution
\(\overline\zeta\mapsto-\overline\zeta-2\overline\eta\).
Its ramification points are precisely the fixed points of this
involution, and the fixed-point equation is
\(2(\overline\zeta+\overline\eta)=0\).  Thus (2) and (4) are
equivalent.

Finally, write
\(E_\tau[2]=\{0,\xi_1,\xi_2,\xi_3\}\), with \(\xi_0=0\), and put
\(\zeta_j=-\overline\eta+\xi_j\).  These are precisely the four parameters
satisfying (2).  For \(j=0\),
Proposition~\ref{prop:diagonal-expansion}, with \(\xi=0\), gives one
half of the first tensor in (5).  For completeness, apply the standard
half-period transformation formulas to the coefficients in
\eqref{eq:diagonal-expansion}.  With respect to the ordered basis
\((X_{11}^{\otimes2},X_{00}^{\otimes2},
X_{01}^{\otimes2},X_{10}^{\otimes2})\) of $D$, whose images in
$A(\eta)_2$ are $(x_0^2,x_1^2,x_2^2,x_3^2)$, the resulting coefficient
vectors, with a common nonzero factor in each row suppressed, are
\[
\begin{array}{c|c}
 \xi&\text{coefficient vector of }\Omega(-\eta+\xi)\\ \hline
 0&(-1,1,1,1)\\
 \tfrac12&(1,-\beta,\alpha,\alpha\beta)\\
 \tfrac12\tau&(1,\gamma,\alpha\gamma,-\alpha)\\
 \tfrac12(1+\tau)&(1,\beta\gamma,-\gamma,\beta).
\end{array}
\]
For example, translation by (1/2) interchanges
\(\vartheta_{11}\) with \(\vartheta_{10}\) and
\(\vartheta_{00}\) with \(\vartheta_{01}\), with the signs dictated
by parity.  Substitution of
\(\alpha=AC/(BD)\), \(\beta=-AD/(BC)\), and
\(\gamma=-AB/(CD)\) gives the second row; the transformations by
\(\tau/2\) and \((1+\tau)/2\) give the last two rows in the same way.
Thus the four rows are exactly the four tensors displayed in (5), with
all signs and coefficients accounted for.  Conversely, the four projective classes of
central elements in (5) are the four branch points of the
degree-two map in (4), so (5) is equivalent to (4).  They agree,
up to scalar, with the four equations in
\cite[Prop.~2.8]{CSExotic}.
\end{proof}

\section{Automorphisms of \texorpdfstring{$A(\eta)$}{A(eta)}}
\label{sec:automorphisms}

First, some comments about notation.  In the paper of Smith--Staniszkis
\cite[Section~2]{SmithStaniszkis},
the subgroup $E_\tau[4]$ is denoted by $E_4$.  Smith--Staniszkis write
$\eta$ for the elliptic modulus and $\tau$ for the translation parameter.
In this paper, $\tau$ denotes the elliptic modulus and $\eta$ denotes
the translation parameter.

If $\xi\in E_\tau[4]$, i.e., if $4\xi=0$, then
$t_\xi^*\mathcal L\cong\mathcal L$ for every degree-four line bundle
$\mathcal L$ on $E_\tau$, where $t_\xi:E_\tau\to E_\tau$ is translation
by $\xi$.  In particular, this
holds for $\mathcal L=\mathcal O_{E_\tau}(4[0])$, so $t_\xi$
induces a projective transformation of $\PP V^*$ whose restriction to
$E_\tau$ is $p\mapsto p+\xi$.  Choose
$\Phi(\xi)\in\operatorname{GL}(V)$ so that the projective
transformation induced by $(\Phi(\xi)^{-1})^*$ on $\PP V^*$ equals
$t_\xi$.  Thus $\Phi(\xi)$ is the contragredient linear lift
appropriate for the action on the generating space $V$; it is determined
up to multiplication by a nonzero scalar.

\begin{proposition}
\label{prop.SS.automorphisms}
For every $\eta\in\CC$ and every $\xi\in E_\tau[4]$, the linear map
$\Phi(\xi)$ extends to
a graded automorphism of $A(\eta)$.  The map
\[
 E_\tau[4]\longrightarrow
 \operatorname{Aut}\bigl(A(\eta)\bigr)/\CC^\times,
 \qquad
 \xi\longmapsto[\Phi(\xi)],
\]
is a group homomorphism, and its induced action on $E_\tau$ is
translation by $\xi$ through the inverse-dual projective action
of $(\Phi(\xi)^{-1})^*$.
\end{proposition}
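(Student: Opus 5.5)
The plan is to follow the geometric pattern of Proposition~\ref{prop:A-under-elliptic-automorphism}(1): show that $\Phi(\xi)\otimes\Phi(\xi)$ maps $R_\eta$ onto itself. Then $\Phi(\xi)$ extends to an automorphism of $TV$ that preserves the ideal $(R_\eta)$, and so it descends to a graded automorphism of $A(\eta)$.

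To see that $R_\eta$ is preserved, regard $r\in V^{\otimes2}$ as a form on $\PP V^*\times\PP V^*$. The tensor $(\Phi(\xi)\otimes\Phi(\xi))(r)$ vanishes at a pair $(p,q)$ exactly when $r$ vanishes at $(t_\xi^{-1}p,t_\xi^{-1}q)$, with $t_\xi$ denoting the projective transformation induced by $(\Phi(\xi)^{-1})^*$. This transformation restricts to translation by $\xi$ on $E_\tau$. The diagonal map $t_\xi\times t_\xi$ sends each pair $(z,z+\overline\eta)$ to $(z+\xi,z+\xi+\overline\eta)$, so it carries $\Gamma_\eta$ onto itself. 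Hence $K_\eta$ is stable.

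For the vertices, I use the argument in the proof of Proposition~\ref{prop.vertices.Heisenberg}. Translation by $\xi$ sends the secant $\overline{x,w-x}$ to $\overline{x+\xi,w-x+\xi}$. It therefore maps $Q_0(w)$ to $Q_0(w+2\xi)$. Since $2\xi\in E_\tau[2]$, the condition $2w=0$ is preserved, so the four singular quadrics are permuted among themselves. Their vertices are permuted accordingly, and the set $\{(e_i,e_i)\}$ is stable. Consequently $R_\eta$ is stable, and equality follows from finite-dimensionality and invertibility. Alternatively, one could note that $E_\tau[4]$ is exactly the image of $H_4$ in $\mathrm{PGL}(V)$ and cite Lemma~\ref{lem.H4.stability}. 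Either way, this part works uniformly in $\eta$, including $\overline\eta\in E_\tau[2]$, because the only inputs are the definition of $R_\eta$ and the geometry of the vertices.

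For the homomorphism property, $\xi\mapsto t_\xi$ is a homomorphism $E_\tau[4]\to\mathrm{PGL}(V^*)$. A projective transformation of $\PP V^*$ that restricts to a given map on $E_\tau$ is unique, because $E_\tau$ spans $\PP V^*$. It follows that $\Phi(\xi_1)\Phi(\xi_2)$ and $\Phi(\xi_1+\xi_2)$ induce the same projective map, and so they differ by a scalar. Finally, a scalar $\lambda$ on $V$ extends to the graded automorphism $\lambda^n$ on degree $n$, and these automorphisms form $\CC^\times\subset\operatorname{Aut}(A(\eta))$. The classes $[\Phi(\xi)]$ are therefore well defined, and the map $\xi\mapsto[\Phi(\xi)]$ is a homomorphism. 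The statement about the induced action on $E_\tau$ then holds by construction.

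The main point requiring care is bookkeeping of contragredience. I need to check that the action on $V^{\otimes2}$ by $\Phi\otimes\Phi$ corresponds to the diagonal action of $(\Phi^{-1})^*$ on pairs of points, and not to its inverse. This does not affect the conclusion, since both $t_\xi$ and $t_{-\xi}$ preserve $\Gamma_\eta$ and permute the vertices.
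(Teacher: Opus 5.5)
Your proposal is correct and follows essentially the same route as the paper: you show $(\Phi(\xi)\otimes\Phi(\xi))(R_\eta)=R_\eta$ because the inverse-dual projective transformation translates $E_\tau$ by $\xi$, and so preserves $\Gamma_\eta$ and permutes the four vertex pairs; you then get the homomorphism property from the fact that a projective transformation of $\PP V^*$ is determined by its restriction to $E_\tau$. The one step to tighten is that last fact, since spanning alone is not enough (a diagonal matrix fixes four spanning points); the paper completes it by noting that a linear lift fixing $E_\tau$ pointwise has every point of $E_\tau$ as an eigenline, so irreducibility puts $E_\tau$ in a single eigenspace and nondegeneracy forces that eigenspace to be all of $V^*$.
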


\begin{proof}
Put $s=\Phi(\xi)$ and let $\widehat s=(s^{-1})^*$ denote its action
on $\PP V^*$.  For $r\in V^{\otimes2}$ and $p,q\in V^*$ one has
\[
 ((s\otimes s)r)(p,q)=r(s^*p,s^*q)
 =r(\widehat s^{-1}p,\widehat s^{-1}q).
\]
By the choice of $s$, the projective transformation $\widehat s$
restricts to $p\mapsto p+\xi$ on $E_\tau$ and permutes the four vertices
$e_0,e_1,e_2,e_3$.  It therefore preserves both components of the
vanishing locus used to define $R_\eta$, namely $\Gamma_\eta$ and the four
vertex pairs:
\[
 (p,p+\overline\eta)\longmapsto
 (p+\xi,p+\overline\eta+\xi),
 \qquad
 (e_i,e_i)\longmapsto(e_j,e_j).
\]
Hence, $(\Phi(\xi)\otimes\Phi(\xi))(R_\eta)=R_\eta$, so
$\Phi(\xi)$ extends to an automorphism of
$TV/(R_\eta)=A(\eta)$.

For $\xi,\xi'\in E_\tau[4]$, the inverse-dual projective transformations
induced by $\Phi(\xi)\Phi(\xi')$ and $\Phi(\xi+\xi')$ both restrict on
$E_\tau$ to translation by $\xi+\xi'$.  Since $E_\tau$ spans
$\PP V^*$, a projective transformation
of $\PP V^*$ is determined by its restriction to $E_\tau$: a linear
lift of a transformation fixing $E_\tau$ pointwise has every point of
$E_\tau$ as an eigenline, so irreducibility forces $E_\tau$ to lie in
one eigenspace, and nondegeneracy forces that eigenspace to be all of
$V^*$.  Hence $[\Phi(\xi)\Phi(\xi')]=[\Phi(\xi+\xi')]$ in
$\operatorname{PGL}(V)$.  This proves the group-homomorphism
assertion.
\end{proof}

\begin{remark}
Notice that passage to the quotient by $\CC^\times$ is essential: the chosen
linear lifts form a Heisenberg central extension and need not multiply
exactly.  Smith--Staniszkis prove that these projective $E_\tau[4]$-actions
lift to automorphisms of $S(\eta)$ in
\cite[Thm.~2.2(a),(b)]{SmithStaniszkis}; as they
note at the bottom of \cite[p.~63]{SmithStaniszkis}, their verification
that the displayed maps preserve the relations uses the criterion in
\cite[Prop.~6, eq.~(25)]{Sklyanin1983}, which characterizes certain
automorphisms by identities among their scalar coefficients.  A related
use of this criterion appears in \cite[\S6]{CS}.  Here we use instead
the following principle: if $s\in\operatorname{GL}(V)$ is such that the
projective transformation induced by $(s^{-1})^*$ preserves the translation
graph and permutes the four vertices, then
$(s\otimes s)(R_\eta)=R_\eta$, so $s$ extends to a graded automorphism of
$A(\eta)$.  This gives a geometric proof that avoids the coefficient
calculation.  Finally, Theorem~\ref{prop:addition-formulas-relations}(1) identifies
$A(\eta)$ with $S(\eta)$, so the present proposition
recovers the Smith--Staniszkis result under that identification.
\end{remark}

We now record explicit choices of these lifts under the assumption that
$\overline{\eta}\notin E_\tau[2]$.  Recall the elements $X_{ab}$ defined
in \eqref{eq.definition.Xab}, and use the ordered basis
$(X_{11},X_{00},X_{01},X_{10})$ of $V$.  Smith--Staniszkis use
Weber's theta functions.  Weber's
$\vth_{11}$ is the negative of Mumford's, while the other three theta
functions agree.  The function $X_{11}$ itself is unchanged, because
both factors $\vth_{11}(\eta)$ and $\vth_{11}(2z)$ change sign.
Coefficients containing a single $\vth_{11}$, however, change sign.
With the convention above
that $(\Phi(\xi)^{-1})^*$ induces translation by $\xi$, the
quarter-period formulas, in Mumford's notation, are as follows.  To
make them more readable, write $\vth_{ab}=\vth_{ab}(\eta)$.
\begin{equation}\label{eq.SS.Phi.one.fourth.Mumford}
\Phi\!\left(\tfrac14\right)(X_{11},X_{00},X_{01},X_{10})
=\left(X_{10},\frac{\vth_{10}\vth_{00}}{\vth_{11}\vth_{01}}X_{01},
-\frac{\vth_{10}\vth_{01}}{\vth_{11}\vth_{00}}X_{00},
\frac{\vth_{10}^2}{\vth_{11}^2}X_{11}\right).
\end{equation}
\begin{equation}\label{eq.SS.Phi.tau.fourth.Mumford}
\Phi\!\left(\tfrac14\tau\right)(X_{11},X_{00},X_{01},X_{10})
=\left(X_{01},-\ii\frac{\vth_{01}\vth_{00}}{\vth_{11}\vth_{10}}X_{10},
-\frac{\vth_{01}^2}{\vth_{11}^2}X_{11},
\ii\frac{\vth_{10}\vth_{01}}{\vth_{11}\vth_{00}}X_{00}\right).
\end{equation}
\begin{equation}\label{eq.SS.Phi.one.plus.tau.fourth.Mumford}
\Phi\!\left(\tfrac14+\tfrac14\tau\right)(X_{11},X_{00},X_{01},X_{10})
=\left(-\frac{\vth_{11}}{\vth_{00}}X_{00},
-\ii\frac{\vth_{00}}{\vth_{11}}X_{11},
-\frac{\vth_{01}}{\vth_{10}}X_{10},
\ii\frac{\vth_{10}}{\vth_{01}}X_{01}\right).
\end{equation}
The formulas for the three nonzero points of $E_\tau[2]$ contain no
theta constants and are unchanged:
\begin{align}
 \Phi\!\left(\tfrac12\right)
 (X_{11},X_{00},X_{01},X_{10})
   &=\phantom{(}(X_{11},-X_{00},-X_{01},X_{10}),\label{eq.SS.Phi.half}\\
 \Phi\!\left(\tfrac12\tau\right)
 (X_{11},X_{00},X_{01},X_{10})
   &=\phantom{(}(X_{11},-X_{00},X_{01},-X_{10}),\label{eq.SS.Phi.tau.half}\\
\Phi\!\left(\tfrac12+\tfrac12\tau\right)
 (X_{11},X_{00},X_{01},X_{10})
   &=\phantom{(}(X_{11},X_{00},-X_{01},-X_{10}).\label{eq.SS.Phi.one.plus.tau.half}
\end{align}
Direct substitution in the theta series shows that the inverse-dual
projective transformations defined by the six displayed formulas act
on $E_\tau$ as translation by, respectively,
\[
 \tfrac14,\quad \tfrac14\tau,\quad
 \tfrac14(1+\tau),\quad \tfrac12,\quad
 \tfrac12\tau,\quad \tfrac12(1+\tau).
\]
Consequently, Proposition~\ref{prop.SS.automorphisms} shows that each
displayed linear map preserves $R_\eta$.

\appendix
\section{A dictionary of theta-function notations}
\label{app:theta.dictionary}

The sign conventions for the odd Jacobi theta function vary between
standard sources.  The following table is organized by function: all
entries in a row denote the same analytic function, subject to the
argument convention stated below.  This makes
the exceptional minus sign in the first row explicit.

In Mumford's normalization, for $a,b\in\{0,1\}$,
\[
 \vartheta_{ab}(z\mid\tau)
 =\sum_{n\in\ZZ}
 \exp\!\left(
  \pi\ii\left(n+\frac a2\right)^2\tau
  +2\pi\ii\left(n+\frac a2\right)
            \left(z+\frac b2\right)
 \right).
\]

\begin{center}
\renewcommand{\arraystretch}{1.25}
\scriptsize
\setlength{\tabcolsep}{2.1pt}
\begin{tabular}{c|c|c|c|c|c|c|c}
This paper & Mumford & Weber & SS & KZ & Sklyanin & RR & Whittaker--Watson \\
\cite[p.~17]{MumfordTataI}
& \cite[p.~17]{MumfordTataI}
& \cite[vol.~III]{Weber}
& \cite{SmithStafford}
& \cite{KZ}
& \cite{Sklyanin1983}
& \cite{RR}
& \cite[Ch.~21]{WhittakerWatson} \\
\hline
$\vartheta_{11}$ & $\vartheta_{11}$ & $-\vartheta_{11}$ & $-\theta_{11}$ & $-\theta_1$ & $-\theta_1$ & $-\theta_1$ & $-\vartheta_1$ \\
$\vartheta_{00}$ & $\vartheta_{00}$ & $\phantom{-}\vartheta_{00}$ & $\phantom{-}\theta_{00}$ & $\phantom{-}\theta_3$ & $\phantom{-}\theta_3$ & $\phantom{-}\theta_3$ & $\phantom{-}\vartheta_3$ \\
$\vartheta_{01}$ & $\vartheta_{01}$ & $\phantom{-}\vartheta_{01}$ & $\phantom{-}\theta_{01}$ & $\phantom{-}\theta_4$ & $\phantom{-}\theta_4$ & $\phantom{-}\theta_4$ & $\phantom{-}\vartheta_4$ \\
$\vartheta_{10}$ & $\vartheta_{10}$ & $\phantom{-}\vartheta_{10}$ & $\phantom{-}\theta_{10}$ & $\phantom{-}\theta_2$ & $\phantom{-}\theta_2$ & $\phantom{-}\theta_2$ & $\phantom{-}\vartheta_2$
\end{tabular}
\end{center}

Equivalently, Weber's functions are numbered
$\vartheta_{11},\vartheta_{10},\vartheta_{00},\vartheta_{01}$ as
$\theta_1,\theta_2,\theta_3,\theta_4$, respectively; this is the
convention used by Kharchev--Zabrodin, Sklyanin, and
Rains--Ruijsenaars.  In the last column, Whittaker--Watson's argument
is $\pi z$ and their nome is $q=e^{\pi\mathrm{i}\tau}$; thus, for
example, its first entry abbreviates
$-\vartheta_1(\pi z,q)=\vartheta_{11}(z\mid\tau)$ in Mumford's
normalization.  Sklyanin uses no theta-function notation in
\cite{Sklyanin1982}.  The SS column records that Smith--Stafford write
$\theta_{ab}$ for Weber's $\vartheta_{ab}$.

Also, throughout this paper
$\theta(z)=\theta_1(z)=-\vartheta_{11}(z)$, with $\vartheta_{11}$
normalized as in Mumford.

\section{A dictionary of generator conventions}
\label{app:generator.dictionary}

We record here the conventions in this paper and others that we reference.
Throughout this appendix, we assume $\overline\eta\notin E_\tau[2]$.
Under the isomorphism $A(\eta)\cong S(\eta)$, the entries in each row of
the following table represent the same generator; in its first column,
$x_{ab}$ denotes the class in $A(\eta)$ of $X_{ab}\in V\subset TV$.

\begin{center}
\renewcommand{\arraystretch}{1.25}
\setlength{\tabcolsep}{3.5pt}
\begin{tabular}{c|c|c|c}
\S2.1 & \cite{SmithStafford,StaffordRegularity}
 & \cite{CSExotic} & \cite{Sklyanin1982,Sklyanin1983,RR} \\
\hline
$x_{11}$ & $x_0$ & $x_0$ & $S_0$ \\
$x_{00}$ & $x_1$ & $x_1$ & $-\ii S_2$ \\
$x_{01}$ & $x_2$ & $x_2$ & $\ii S_3$ \\
$x_{10}$ & $x_3$ & $x_3$ & $\ii S_1$
\end{tabular}
\end{center}

In this paper we also use the relabelling
$(x_{11},x_{00},x_{01},x_{10})=(x_0,x_1,x_2,x_3)$.

\section{A geometric dimension calculation}

For completeness, we give a second, vector-bundle proof of the
dimension statement in Theorem~\ref{prop:addition-formulas-relations}(3).

\begin{lemma}
\label{lem.dimension.R.eta}
For every $\eta\in\CC$, $\dim R_\eta=6$.
\end{lemma}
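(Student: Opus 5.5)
The plan is to compute $\dim R_\eta$ as $\dim K_\eta$ minus the rank of the vertex-evaluation map, using only vector-bundle arguments. Proposition~\ref{prop.dim.Keta}, which was proved with the base-point-free pencil trick, gives $\dim K_\eta=8$. By definition $R_\eta$ is the kernel of
\[
 \mathrm{ev}:K_\eta\longrightarrow\CC^4,\qquad r\longmapsto\bigl(r(\epsilon_i,\epsilon_i)\bigr)_{i=0}^3 .
\]
So $\dim R_\eta\ge 4$ always, and $\dim R_\eta=6$ is equivalent to $\operatorname{rank}(\mathrm{ev})=2$. I will establish this by proving the two inequalities separately, rather than by displaying six explicit relations.

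\emph{Step 1: rank at least two.} Restrict $\mathrm{ev}$ to $C_\eta\subseteq K_\eta$. By the proof of Proposition~\ref{prop:intersection-R-C}, evaluation at the four vertex pairs identifies $D$ with $\CC^4$ via the diagonal coefficients $c_{ab}$. That map is injective on $D$, hence injective on the two-dimensional space $C_\eta$ (Proposition~\ref{thm.dim.Ceta}). So the rank is at least $2$ and $\dim R_\eta\le6$.

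\emph{Step 2: rank at most two.} I would use the $H_4$-module structure. By Lemma~\ref{lem.H4.stability}, $K_\eta$ is $H_4$-stable, and $H_4$ permutes the four vertex pairs (Proposition~\ref{prop.vertices.Heisenberg}). Hence $\mathrm{ev}$ is $H_4$-equivariant for a suitable twisted permutation action on $\CC^4$. I decompose $K_\eta$ into $S^2,\widetilde T^2$-eigenspaces $K_{\eta,ab}$ without using Theorem~\ref{thm:R-eta-basis}, since that would be circular. The key fact is that $S^2$ and $T^2$ fix every vertex and act on $\epsilon_{ab}\otimes\epsilon_{ab}$ by the square of a character, which is trivial by Lemma~\ref{lem.H4.action.on.V} after the normalization $\widetilde T$. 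So any tensor on which $S^2$ or $\widetilde T^2$ acts by a nontrivial sign vanishes at every $(e_i,e_i)$. This gives $\bigoplus_{ab\ne00}K_{\eta,ab}\subseteq R_\eta$, and $\mathrm{ev}$ factors through $K_{\eta,00}$. That space is the $(+,+)$-eigenspace, which is contained in $D$ (the $\vartheta_{ab}(2z_1)\vartheta_{cd}(2z_2)$ with $(a,b)\neq(c,d)$ carry nontrivial eigenvalues), so $K_{\eta,00}=D\cap K_\eta=C_\eta$ has dimension $2$. Thus $\operatorname{rank}(\mathrm{ev})\le2$.

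The main obstacle is the eigenvalue bookkeeping in Step 2. I need to check that the fixed-$H_4$ scalars on $\vartheta_{ab}(2z_1)\vartheta_{cd}(2z_2)$ are trivial exactly for the diagonal tensors, including the factor $\lambda^2$ in Lemma~\ref{lem.H4.action.on.V}, which must cancel after passing to $\widetilde T$. If this check fails, the fallback is semicontinuity: $\dim R_\eta\le6$ everywhere from Step 1, and $\dim R_\eta\ge6$ on a dense set (for example at $\eta=0$, where $R_0=\bigwedge^2V$), combined with closedness of the vertex conditions in the relative $K_\eta$ bundle over $E_\tau$.
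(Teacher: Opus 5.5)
Your proof is correct, but it bounds the rank from above by a different route than the paper.

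\textbf{The paper's route.} The paper gets the lower bound $\operatorname{rank}(\mathrm{ev}|_{K_\eta})\ge2$ exactly as you do, from injectivity on $C_\eta$. For the upper bound it globalizes. It forms the rank-eight bundle $\mathcal K$ over $E_\tau$ with fibers $K_\eta$ and imports $\dim R_\eta=6$ on $E_\tau-E_\tau[2]$ from Theorem~\ref{prop:addition-formulas-relations}, that is, from the explicit Jacobi-addition relations. It then concludes that all $3\times3$ minors of $\mathrm{ev}|_{\mathcal K}$ vanish everywhere because they vanish on a dense set.

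\textbf{Your route.} Your argument works pointwise and uniformly in $\eta$. Your eigenvalue check is right: $\widetilde T^2=e(\tau)T^2$ and $\lambda^2=e(-\tau)$. So $S^2$ and $\widetilde T^2$ act on $\vth_{ab}(2z_1)\vth_{cd}(2z_2)$ by $(-1)^{a+c}$ and $(-1)^{b+d}$. Hence the $(+,+)$-isotypic part of $V^{\otimes2}$ is exactly $D$. Vertex evaluation just reads off the diagonal coefficients in this basis, so it kills the other three isotypic parts.

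Because the projector $\tfrac14(1+S^2)(1+\widetilde T^2)$ preserves $K_\eta$, you actually get $\mathrm{ev}(K_\eta)=\mathrm{ev}(K_\eta\cap D)=\mathrm{ev}(C_\eta)$. So your two steps merge into the single statement that the rank equals $\dim C_\eta=2$.

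\textbf{What each route buys.} Your route is independent of the explicit relations. It uses only Propositions~\ref{prop.dim.Keta} and~\ref{thm.dim.Ceta} and Lemma~\ref{lem.H4.stability}, none of which relies on Theorem~\ref{thm:R-eta-basis}. The paper's appendix argument is really an extension-by-continuity of the generic case, whereas yours is a genuinely second proof. It also yields $R_\eta=\bigoplus_{ab\ne00}K_{\eta,ab}$ and $K_\eta=R_\eta\oplus C_\eta$ along the way. That is most of Proposition~\ref{prop.H4.unique.complement}, obtained without the bases $\rho_{ab},\rho'_{ab}$. The paper's route, in turn, illustrates the general principle that a closed rank condition known on a dense set propagates to all parameters.

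\textbf{Your fallback.} You don't need it, and as stated it would not work. The set where $\dim R_\eta\ge6$ is closed, so you must know the inequality on a dense set. The single point $\eta=0$ is not dense. Supplying that dense set is precisely where the paper has to invoke Theorem~\ref{prop:addition-formulas-relations}.
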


\begin{proof}
Let
$p_1,p_2:E_\tau\times E_\tau\to E_\tau$ be the two projections, write
$\xi$ for the coordinate on the second factor, and let
\[
 a:E_\tau\times E_\tau\longrightarrow E_\tau,
 \qquad a(z,\xi)=z+\xi.
\]
For $\mathcal L=\mathcal O_{E_\tau}(4[0])$, put
\[
 \mathcal H=(p_2)_*\bigl(p_1^*\mathcal L\otimes a^*\mathcal L\bigr).
\]
Its fiber at $\overline{\eta}$ is
$H^0(E_\tau,\mathcal L\otimes t_\eta^*\mathcal L)$.  Since
$\deg(\mathcal L\otimes t_\eta^*\mathcal L)=8$ for all $\eta$,
$H^1(E_\tau,\mathcal L\otimes t_\eta^*\mathcal L)=0$ and
$h^0(E_\tau,\mathcal L\otimes t_\eta^*\mathcal L)=8$ for all $\eta$.
Cohomology and base change therefore show that $\mathcal H$ is a vector
bundle of rank eight.

Every $r\in V^{\otimes2}$ is a section of
$p_1^*\mathcal L\otimes p_2^*\mathcal L$ on
$E_\tau\times E_\tau$.  For each $\eta\in\CC$, restrict $r$ to the
graph $\Gamma_\eta$ of $t_\eta$, parametrized by
$z\mapsto(z,z+\overline\eta)$.  The resulting section
$z\mapsto r(z,z+\overline\eta)$ belongs to
$H^0(E_\tau,\mathcal L\otimes t_\eta^*\mathcal L)$.  As
$\overline\eta$ varies in $E_\tau$, these restriction maps give a
morphism of vector bundles
\[
 V^{\otimes2}\otimes\mathcal O_{E_\tau}\longrightarrow\mathcal H.
\]
Proposition~\ref{prop.dim.Keta} says that it is surjective on every
fiber.  Its kernel
$\mathcal K$ is therefore a vector bundle of rank eight whose fiber at
$\overline{\eta}$ is $K_\eta$.

Let
\[
 \operatorname{ev}:V^{\otimes2}\longrightarrow\CC^4,
 \qquad r\longmapsto
 \bigl(r(\epsilon_0,\epsilon_0),\ldots,
 r(\epsilon_3,\epsilon_3)\bigr).
\]
The fiber of $\mathcal K$ at $\overline\eta$ is $K_\eta$, the space of
tensors that vanish on $\Gamma_\eta$.  By definition, $R_\eta$ is the
subspace of $K_\eta$ consisting of tensors that also vanish at the four
vertex pairs $(e_i,e_i)$, equivalently, at the four pairs
$(\epsilon_i,\epsilon_i)$.  The fiber of the kernel of
$\operatorname{ev}|_{\mathcal K}$ at $\overline{\eta}$ is therefore
$R_\eta$.
If $\overline{\eta}\notin E_\tau[2]$,
Theorem~\ref{prop:addition-formulas-relations} gives
$\dim R_\eta=6$, so $\operatorname{ev}|_{K_\eta}$ has rank two.  After
choosing local frames for $\mathcal K$ and the trivial bundle
$\CC^4\otimes\mathcal O_{E_\tau}$, the morphism
$\operatorname{ev}|_{\mathcal K}$ is represented by a $4\times8$
matrix of regular functions.  All its $3\times3$ minors vanish on the
dense open subset $E_\tau-E_\tau[2]$, and hence vanish everywhere.
Its rank is therefore at most two at every point.

On the other hand, $C_\eta$ is a two-dimensional subspace of $K_\eta$
for every $\eta$, and the proof of
Proposition~\ref{prop:intersection-R-C} shows that evaluation at the four
vertices is injective on $C_\eta$.  Hence
$\operatorname{ev}|_{K_\eta}$ has rank at least two.  Its rank is exactly
two for every $\eta$, whence
$\dim R_\eta=\dim K_\eta-2=6$.
\end{proof}

\section{A uniform basis for the Rains--Ruijsenaars relation space}
\label{app:uniform.RR.basis}

The proof of Lemma~\ref{prop:six.independent.RR.relations} uses
different configurations of secant lines according as
$\overline\eta$ does or does not have order four.  The following
result gives a single choice that works for every $\eta$.  Its proof,
which is essentially due to Rains--Ruijsenaars, uses the product
formula stated as (B.44) in \cite{RR}.  We include a
proof of that formula in order to make the argument self-contained.
We use the abbreviation
$\theta(x\mathbin\pm y)=\theta(x+y)\theta(x-y)$.

\begin{proposition}\label{prop:uniform.six.independent.RR.relations}
Let $\nu=\frac12\eta$.
Fix four different points
$\overline p_1,\ldots,\overline p_4\in E_\tau$ that are not coplanar
(i.e., their sum is $\ne0$), and fix preimages
$p_1,\ldots,p_4\in\CC$.  For $1\leq m<n\leq4$,
let $r,s$ be the two indices different from $m,n$, and put
\[
 G_{mn}:=F_\eta(-p_r,-p_s,-p_n,-p_m).
\]
\begin{enumerate}
\item The six functions $G_{mn}$ are linearly independent elements of
$R_\eta^{\mathrm{RR}}$.
\item Each $G_{mn}$ is of the form
$u_1\otimes v_2-u_2\otimes v_1$.
\item $\{v_1=v_2=0\}$ is the secant line
\[
 L_{mn}:=\overline{\overline{p_r-\nu},\overline{p_s-\nu}}.
\]
\item Consequently, when $\overline\eta\notin E_\tau[4]$, $G_{mn}$
is, up to a nonzero scalar multiple, ``the'' relation associated to
$L_{mn}$ by Levasseur--Smith.
\item The six lines $L_{mn}$ are the six edges of the configuration
formed by joining the images of
$p_1-\nu,\ldots,p_4-\nu$ in $E_\tau$.
\item When the images of those points in $E_\tau$ are not coplanar,
these lines are the edges of a tetrahedron.
\end{enumerate}
\end{proposition}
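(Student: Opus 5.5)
The plan is to put each $G_{mn}$ in the Rains--Ruijsenaars product form and read all six assertions off that factorization. Write $G_{mn}=F_\eta(a,b,c_1,c_2)$ with $a=-p_r$, $b=-p_s$, $c_1=-p_n$, $c_2=-p_m$.

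\emph{Parts (2), (3) and nonvanishing.} Part (2) is Lemma~\ref{lem:RR.functions.vanish.on.graph}(1), and $G_{mn}\in R^{\mathrm{RR}}_\eta$ by definition. For nonvanishing we check condition (3b) of that lemma:
\begin{itemize}
\item $c_1-c_2=p_m-p_n\notin\Lambda_\tau$ because the $\overline p_i$ are distinct;
\item $a+b+c_1+c_2=-(p_1+p_2+p_3+p_4)\notin\Lambda_\tau$ because the four points are not coplanar.
\end{itemize}
The same lemma then says $\{v_{c_1}=v_{c_2}=0\}$ is the secant $\overline{-\overline a-\overline\nu,\,-\overline b-\overline\nu}=\overline{\overline{p_r-\nu},\overline{p_s-\nu}}=L_{mn}$, which proves (3).

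\emph{Parts (4)--(6).} Part (4) follows from (3) and Corollary~\ref{prop:RR.relations.and.LS.secants}. Part (5) is immediate: as $\{m,n\}$ runs over the six pairs, so does the complementary pair $\{r,s\}$. Part (6) is the elementary remark that four non-coplanar points in $\PP^3$ span a tetrahedron whose edges are the six joining lines.

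\emph{Independence, part (1).} Here I would use the factorization derived in the proof of Lemma~\ref{lem:exchange-relation} via Weierstrass's identity \eqref{eq.Weierstrass.three-term}. Up to the nonzero constant $\theta(c_1+c_2+a+b)\theta(c_1-c_2)$ it gives
\[
G_{mn}(w_1,w_2)\doteq \theta(w_1-p_r-\nu)\theta(w_1-p_s-\nu)\theta(w_2-p_r+\nu)\theta(w_2-p_s+\nu)\,\theta(w_1+w_2+p_r+p_s)\,\theta(w_1-w_2+\eta).
\]
Suppose $\sum\lambda_{m'n'}G_{m'n'}=0$, and fix a pair $\{m,n\}$.
\begin{itemize}
\item Evaluating at $w_1=p_m+\nu$ kills every $G_{m'n'}$ whose complementary pair contains $m$, i.e.\ those with $m\notin\{m',n'\}$.
\item Evaluating at $w_2=p_n-\nu$ kills those with $n\notin\{m',n'\}$.
\end{itemize}
So at $(p_m+\nu,\,p_n-\nu)$ only $G_{mn}$ survives. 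Its remaining factors are $\theta(p_m-p_r)$, $\theta(p_m-p_s)$, $\theta(p_n-p_r)$, $\theta(p_n-p_s)$, $\theta(p_1+p_2+p_3+p_4)$ and $\theta(p_m-p_n+2\eta)$. All are nonzero by distinctness and non-coplanarity, except possibly the last. Hence $\lambda_{mn}=0$ whenever $\overline{p_m-p_n}\ne-2\overline\eta$.

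\emph{The main obstacle} is the degenerate case $\overline{p_m-p_n}=-2\overline\eta$.
\begin{itemize}
\item First try the swapped point $(p_n+\nu,\,p_m-\nu)$. The same reasoning isolates $G_{mn}$, now with last factor $\theta(p_n-p_m+2\eta)$.
\item This fails too only if both $\pm(p_m-p_n)=-2\overline\eta$. Then $4\overline\eta=0$, and $\overline{p_m-p_n}$ is a nonzero $2$-torsion point equal to $2\overline\eta$.
\item In that residual case I would evaluate only $w_1=p_m+\nu$ and view the three survivors $G_{mn},G_{mk},G_{ml}$ as elements of $V(w_2)$. All three contain the common factor $\theta(w_1-w_2+\eta)$; after dividing it out they are order-three theta functions with zero pairs $\{p_k,p_l\}-\nu$, $\{p_n,p_l\}-\nu$ and $\{p_n,p_k\}-\nu$, plus one further zero each.
\item Since the three zero pairs are pairwise distinct and overlap in a single point, comparing zeros shows that no nontrivial linear combination vanishes identically, granted the genericity just checked.
\end{itemize}
This divisor comparison is the step I expect to need the most care. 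If it becomes unwieldy, the fallback is to reorder the pairs so that the degenerate difference $p_m-p_n$ is only ever used in the swapped orientation.
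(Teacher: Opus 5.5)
Your proof is correct. Parts (2)--(6) go exactly as in the paper; only the independence argument in (1) differs. The paper also starts from the product formula (B.44). It then changes variables, $y_1=z_1-\nu$, $y_2=z_2+\nu$, so that $G_{mn}=\kappa_{mn}Q_{mn}(y_1,y_2)\,\theta(y_1-y_2+2\eta)$, where $Q_{mn}$ does not depend on $\eta$ and the last factor is common to all six. It proves the $Q_{mn}$ independent by specializing to $\eta=0$, where $G_{mn}=\phi_m\otimes\phi_n-\phi_n\otimes\phi_m$ for an explicit basis $\phi_1,\dots,\phi_4$ of $V$, and then cancels the common nonzero factor.

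You instead isolate each $G_{mn}$ by evaluating at $(p_m+\nu,p_n-\nu)$. This works whenever $\theta(p_m-p_n+2\eta)\ne0$. Your degenerate case ($\overline\eta$ of exact order four and $\overline{p_m-p_n}=2\overline\eta$) arises only because you do not first divide the whole linear relation by the factor $\theta(w_1-w_2+\eta)$, which is common to all six $G_{m'n'}$. If you divide first, the quotients satisfy $Q_{m'n'}(p_m,p_n)\ne0$ if and only if $\{m',n'\}=\{m,n\}$, so your evaluation gives $\lambda_{mn}=0$ for every pair, with no case split. Your residual-case argument is this same division, carried out after fixing $w_1=p_m+\nu$. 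The precise content of your ``comparing zeros'' is this: after dividing out, the quotients of $G_{mk}$ and $G_{ml}$ vanish at $w_2=p_n-\nu$, while the quotient of $G_{mn}$ does not. That follows from the distinctness of the $\overline p_i$ and from $\sum\overline p_i\ne0$. Your proposed fallback of reordering the pairs cannot help, since in the residual case both orientations are degenerate, but you do not need it.

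As for what each route buys: yours is more elementary, and it shows that, modulo the common factor, the six relations are dual to the six points $(p_m,p_n)$. The paper's route exhibits the $G_{mn}$ as a deformation of the exterior-square basis of $R_0=\bigwedge^2V$, which makes the uniformity in $\eta$ transparent.
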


\begin{proof}
We first prove the product formula stated as (B.44) in \cite{RR}.
Clearly, $F_\eta=AD$, where
\begin{align}
A={}&\theta(z_1+a-\nu)\theta(z_1+b-\nu)
    \theta(z_2+a+\nu)\theta(z_2+b+\nu),\notag\\
D={}&
 \theta(z_1+c_1+\nu)\theta(z_1-a-b-c_1+\nu)
 \theta(z_2+c_2-\nu)\theta(z_2-a-b-c_2-\nu)\notag\\
&-\theta(z_1+c_2+\nu)\theta(z_1-a-b-c_2+\nu)
 \theta(z_2+c_1-\nu)\theta(z_2-a-b-c_1-\nu).
 \label{eq.RR.D.definition}
\end{align}
To simplify $D$, apply Weierstrass's three-term
identity \eqref{eq.Weierstrass.three-term} with
\[
 z=z_1+\nu-\tfrac12(a+b),\quad
 \alpha=c_2+\tfrac12(a+b),\quad
 \beta=c_1+\tfrac12(a+b),\quad
 \gamma=z_2-\nu-\tfrac12(a+b).
\]
Substitution in the three-term identity, followed by use of the
oddness of $\theta$, gives
\[
 D=\theta(z_1+z_2-a-b)\theta(z_1-z_2+\eta)
   \theta(c_1+c_2+a+b)\theta(c_1-c_2).
\]
Multiplying this equality by $A$ gives
\begin{equation}\label{eq.RR.B44.product}
\begin{aligned}
F_\eta(a,b,c_1,c_2;z_1,z_2)
={}&\theta(c_1+c_2+a+b)\theta(c_1-c_2)\\
&\cdot\theta(z_1+a-\nu)\theta(z_1+b-\nu)
       \theta(z_2+a+\nu)\theta(z_2+b+\nu)\\
&\cdot\theta(z_1+z_2-a-b)\theta(z_1-z_2+\eta).
\end{aligned}
\end{equation}

For $m=1,2,3,4$, define
\[
 \phi_m(z):=\theta\!\bigl(z+\sum_{j\ne m}p_j\bigr)
             \prod_{j\ne m}\theta(z-p_j).
\]
These functions belong to $V$.  Moreover, $\phi_m(p_n)=0$ if
$m\ne n$, whereas
\[
 \phi_m(p_m)=
 \theta(p_1+p_2+p_3+p_4)
 \prod_{j\ne m}\theta(p_m-p_j)\ne0.
\]
It follows that $\{\phi_1,\phi_2,\phi_3,\phi_4\}$ is a basis for $V$.

Set
\[
 \kappa_{mn}:=
 \theta(-p_1-p_2-p_3-p_4)
 \theta(p_m-p_n).
\]
Our assumptions imply that $\kappa_{mn}\ne0$.  At $\eta=0$, the
definition of $F_\eta$ gives
\begin{equation}\label{eq.RR.exterior.basis}
 G_{mn}=\phi_m\otimes\phi_n-\phi_n\otimes\phi_m.
\end{equation}
For the complementary indices $r,s$, put
\[
 Q_{mn}(y_1,y_2):=
 \theta(y_1-p_r)\theta(y_1-p_s)
 \theta(y_2-p_r)\theta(y_2-p_s)
 \theta(y_1+y_2+p_r+p_s).
\]
After the invertible change of variables
$y_1=z_1-\nu$ and $y_2=z_2+\nu$,
\eqref{eq.RR.B44.product} becomes
\[
 G_{mn}(z_1,z_2)=
 \kappa_{mn}Q_{mn}(y_1,y_2)\theta(y_1-y_2+2\eta).
\]
When $\eta=0$, the six functions
$\kappa_{mn}Q_{mn}(y_1,y_2)\theta(y_1-y_2)$ are the six exterior
products in \eqref{eq.RR.exterior.basis}, and are therefore linearly
independent.  It follows that the six $Q_{mn}$ are linearly
independent: a linear relation among them, multiplied by the nonzero
function $\theta(y_1-y_2)$, would give a linear relation among those
six exterior products.

Now fix an arbitrary $\eta$.  Translation from $(z_1,z_2)$ to
$(y_1,y_2)$ is an invertible linear operation on spaces of functions.
If a linear combination of the six $G_{mn}$ were zero, the last
display would give
\[
 \theta(y_1-y_2+2\eta)
 \sum_{m<n}\lambda_{mn}\kappa_{mn}Q_{mn}(y_1,y_2)=0.
\]
The first factor is not the zero function, so the sum must be zero.
The independence of the $Q_{mn}$ and the fact that every
$\kappa_{mn}$ is nonzero imply that all $\lambda_{mn}$ are zero.
This proves (1).  Parts (2) and (3) follow from
Lemma~\ref{lem:RR.functions.vanish.on.graph}, applied with
$(a,b,c_1,c_2)=(-p_r,-p_s,-p_n,-p_m)$.  Part (4) follows from
Corollary~\ref{prop:RR.relations.and.LS.secants}, and (5) and (6) follow
from the definition of the lines $L_{mn}$.
\end{proof}

\end{document}